\newtheorem{theorem}{Theorem}[section]
\newtheorem{lemma}[theorem]{Lemma}
\newtheorem{corollary}[theorem]{Corollary}
\newtheorem{proposition}[theorem]{Proposition}
\theoremstyle{definition}
\newtheorem{assumption}[theorem]{Assumption}
\newtheorem{remark}[theorem]{Remark}
\numberwithin{equation}{section}
\theoremstyle{plain}
\numberwithin{equation}{section} 
\numberwithin{figure}{section} 
\theoremstyle{plain}
\theoremstyle{plain}
\theoremstyle{remark}
\newtheorem*{acknowledgement*}{Acknowledgement}
\theoremstyle{example}
\newcommand{\cA}{{\mathcal A}}
\newcommand{\cB}{{\mathcal B}}
\newcommand{\cC}{{\mathcal C}}
\newcommand{\cD}{{\mathcal D}}
\newcommand{\cE}{{\mathcal E}}
\newcommand{\cF}{{\mathcal F}}
\newcommand{\cG}{{\mathcal G}}
\newcommand{\cH}{{\mathcal H}}
\newcommand{\cL}{{\mathcal L}}
\newcommand{\cM}{{\mathcal M}}
\newcommand{\cP}{{\mathcal P}}
\newcommand{\cS}{{\mathcal S}}
\newcommand{\cX}{{\mathcal X}}
\newcommand{\te}{{\theta}}
\newcommand{\Om}{{\Omega}}
\newcommand{\om}{{\omega}}
\newcommand{\ve}{{\varepsilon}}
\newcommand{\del}{{\delta}}
\newcommand{\Del}{{\Delta}}
\newcommand{\gam}{{\gamma}}
\newcommand{\Gam}{{\Gamma}}
\newcommand{\sig}{{\sigma}}
\newcommand{\al}{{\alpha}}
\newcommand{\be}{{\beta}}
\newcommand{\ka}{{\kappa}}
\newcommand{\la}{{\lambda}}
\newcommand{\bbC}{{\mathbb C}}
\newcommand{\bbE}{{\mathbb E}}
\newcommand{\bbN}{{\mathbb N}}
\newcommand{\bbP}{{\mathbb P}}
\newcommand{\bbR}{{\mathbb R}}
\newcommand{\bbZ}{{\mathbb Z}}
\newcommand{\bbI}{{\mathbb I}}
\begin{document}
\title[]{A local limit theorem for number of multiple recurrences generated by some mixing processes with applications to Young towers}
 \vskip 0.1cm
 \author{Yeor Hafouta \\
\vskip 0.1cm
Department of Mathematics\\
The Ohio State University\\
}%
\email{yeor.hafouta@mail.huji.ac.il, hafuta.1@osu.edu}%

\thanks{ }
\dedicatory{  }
 \date{\today}

\maketitle
\markboth{Y. Hafouta}{LLT for nonconventional sums}
\renewcommand{\theequation}{\arabic{section}.\arabic{equation}}
\pagenumbering{arabic}

\begin{abstract}\noindent
We prove a local central limit theorem for ``nonconventional" sums 
generated by some classes of sufficiently fast mixing sequences.
\end{abstract}

\section{Introduction}\label{sec1}
Since the ergodic theory proof of Szemer\' edi's theorem on arithmetic progressions due to Furstenberg \cite{Fur}, limits of expressions having the form 
$S_N/N=1/N\sum_{n=1}^NT^{q_1(n)}f_1\cdots T^{q_\ell(n)}f_\ell$ have been extensively studied in literature,
where $T$ is a measure preserving transformation, $\ell$ is a positive integer, $f_i$'s are bounded measurable
functions and $q_i$'s are linear or polynomial functions taking  integer
values on the set of integers. 
For the proof of  Szemer\' edi's theorem, we only need to consider the case when 
all the $f_i$'s are the indicator of the same measurable set $A$. In this case $S_N|A$ counts the number of multiple recurrences to the set $A$.
Most of the results in this direction are $L^2$-convergences of $S_N/N$ (see, for instance \cite{Berg}), expect for the results in \cite{Bourg} in which an almost sure convergence was established in the case when $\ell=2$ and $q_1$ and $q_2$ are linear. Almost sure convergence was obtained when $\ell>2$ only in particular cases, see for instance \cite{Ki1.1}, \cite{HSY} and references therein.

From the probabilistic point of view  the orbits of the underlying dynamical system $T$ are viewed as random variables $U_n=T^nU_0$, where $U_0$ is distributed according to the invariant measure $\mu$. Thus,
ergodic theorems can be viewed as laws of large numbers and once they are derived it is natural to inquire about other classical limit theorems of probability.
Partially motivated by that, central limit theorems and  large deviations principles for ``nonconventional sums" (the term comes from \cite{Fur}) of the  form 
\begin{equation}\label{Noncon}
S_N^{\{q_j\}}G=\sum_{n=1}^NG(X_{q_1(n)},X_{q_2(n)},...,X_{q_\ell(n)})
\end{equation}
were obtained by Kifer \cite{Ki2} and Kifer and Varadhan \cite{KV1,KV2}. 
 Here $G$ is a real-valued function satisfying some regularity and growth conditions,
 and $\{X_n\}$ is a sequence of random vectors satisfying some mixing, stationarity and moment conditions, which are satisfied for wide classes of  Markov chains and when $X_n$
 has the form $X_n=f(T^nU_0)=f(U_n)$, where $f$ is a sufficiently regular vector-valued function and $T$ is a sufficiently chaotic dynamical system such as a topologically mixing subshift of finite type or an Anosov map and $U_0$ is distributed according to an equilibrium state (i.e. a Gibbs measure), as well as when $T$ is a Young tower with sufficiently fast decaying tails.  Since then a variety  of nonconventional limit theorems  were obtained:  a moderate deviations principle and exponential concentration inequalities were derived in   \cite{HafMD}, stable laws were proven in \cite{KV3} and Berry-Esseen type estimates and other results were derived  in \cite{HK2,book} (see also references therein). 
 
The local central limit theorem (LCLT) concerns the asymptotic behavior of expectations of the form 
 $\bbE[g(S_N)]$, where $g$ is an indicator of a bounded interval or a continuous function with compact support, and it has origins in the classical De Moivre-Laplace theorem.
In the ``conventional" case when $\ell=1$, $q_1(n)=n$ and $S_N=\sum_{n=1}^N G(X_n)$
 the LCLT for Markov chains $\{X_n\}$   was obtained by Nagaev \cite{Neg2} (in the countable state case),  for expanding interval maps $T$ by J.~Rousseau-Egele \cite{RE83} (where $X_n=T^nX_0$), for subshifts of finite type and Anosov maps  it was derived by Guivarach and Hardy \cite{GH} and for Gibbs-Markov maps by Aaronson and Denker \cite{Jon}. These papers  used what these days is commonly referred to as the ``Nagaev-Guivarch method" (spectral gap). In fact, all of these LCLT's follow from a general theory of quasi-compact Markov operators,
and we refer to \cite{HH} for an abstract description of this method.

In \cite{HK1} we proved an LCLT for nonconventional sums of the form
\begin{equation}\label{Noncon0}
\sum_{n=1}^NG(\xi_{n},\xi_{2n},...,\xi_{\ell n})
\end{equation}
for some classes of stationary $\psi$-mixing Markov chains $\{\xi_n\}$ satisfying a two sided version of the Doeblin condition, whose state space is a compact metric space, and measurable functions $G$ satisfying some moment conditions.
 This LCLT was extended in \cite[Ch.2]{book} to Markov chains whose transition operator is the dual of the Koopman operator (with respect to a Gibbs measure) corresponding to certain types of distance expanding maps $T$.
In this setup the function $G$ was assumed to be bounded and H\"older continuous.
The latter yields the LCLT for 
sums of the form 
\begin{equation}\label{Noncon10.1}
\sum_{n=1}^NG(f^{-n}X_0,f^{-2n}X_0,...,f^{-\ell n}X_0)
\end{equation}
where $f$ is   an Anosov map, and $X_0$ is distributed according to an underlying Gibbs-measure.
In the ``conventional" case the sums $\sum_{n=0}^{N-1}G(f^{n}X_0)$ and $\sum_{n=0}^{N-1}G(f^{-n}X_0)$ have the same distribution, but this is no longer true for nonconventional sums.
 Still, since $f^{-1}$ is also Anosov,  by replacing $f$ with $f^{-1}$ in \eqref{Noncon10.1} the LCLT for 
 \begin{equation}\label{Noncon10.11}
\sum_{n=1}^NG(f^{n}X_0,f^{2n}X_0,...,f^{\ell n}X_0)
\end{equation}
follows.

The goal of the current paper is twofold. First, we extend the LCLT for the sums \eqref{Noncon0} for Markov chains $\{\xi_n\}$  whose transition operator is  the dual of the Koopman operator corresponding to a Young tower with exponential tails, where here $G$ is a bounded H\"older continuous function. This has applications to the LCLT for sums of the form \eqref{Noncon10.1}, where now $f$ is 
a partially hyperbolic diffeomorphism which can be modeled by a Young tower \cite{Y1}. The second goal is to obtain the LCLT for sums of the form
\begin{equation}\label{Noncon1}
S_N^{\{q_j\}}G=\sum_{n=1}^NG(X_{q_1(n)},X_{q_2(n)},...,X_{q_\ell(n)})
\end{equation}
for indexes $q_1(n),...,q_\ell(n)$ exhibiting some nonlinear growth. More precisely, we will assume that  $q_1,...,q_k$ are linear for some $k<\ell$, and that $q_j$ grows faster than linearly and faster than $q_{j-1}$ for $k<j\leq\ell$, in a certain quantitative  way (the case $k=0$ corresponds to having no linear functions). 
 For instance, the case when all $q_i$'s are polynomials so that  $q_1,...,q_k$ are linear for some $k<\ell$, $\deg q_{k+1}>1$ and $\deg q_{i}<\deg q_{i+1},i>k$ will be a particular case of our assumptions. 
It turns out that for such indexes the LCLT holds true for a wide class of sufficiently fast (approximately) mixing sequences $\{X_n\}$ taking values in some metric space, which are not necessarily generated by a Markov operator or a chaotic dynamical system,  and bounded H\"older continuous functions $G$. We note that in this setup the H\"older continuity is only needed when  $\{X_n\}$ is not strongly mixing in the probabilistic sense, and it can only be approximated sufficiently well by strongly  mixing sequences. For instance, when $X_n=(\xi_n,\xi_{n+1},...,\xi_{n+m})$ for some $m$ and a geometrically ergodic Markov chain $\{\xi_j\}$ then our results hold true for bounded functions $G$ which are not necessarily continuous. When $X_n=(\xi_k)_{k\geq n}$ then we need $G$ to be H\"older continuous in order to approximate $G(X_{q_1(n)},X_{q_2(n)},...,X_{q_\ell(n)})$ by expressions which depend only on $\big(\xi_{q_1(n)+s},\xi_{q_2(n)+s},...,\xi_{q_\ell(n)+s}\big)_{s=0}^m$, with an error term depending on $m$. 
To the best of our knowledge, even the case when $\ell=1$ and $q_1(n)$ grows faster than linearly  was not considered in literature (the LCLT when $\ell=1$ and $q_1(n)=n$ requires more than some mixing conditions).


In the ``conventional setup" (when $\ell=1$ and $q_1(n)=n$), for certain classes of Markov chains $\{\xi_n\}$ and chaotic dynamical systems $T$, 
 the LCLT  established in \cite{Neg2, RE83, GH, Jon} relies on the following idea. First, there are operators $\cL_{it}$ so that
\begin{equation}\label{Govern}
\bbE(e^{it S_N G})=\mu(\cL_{it}^N \textbf{1})
\end{equation} 
where $\mu$ is the underlying  stationary distribution, $\textbf{1}$ is the function taking the constant value $1$ and $S_N G=\sum_{n=0}^{n-1}G\circ T^n$ or $S_NG=\sum_{n=1}^N G(\xi_n)$ are the usual Birkhoff sums. 
 The operators $\cL_{it}$ have the form $\cL_{it}(g)=\cL_0(ge^{it G})$, where $\cL_0$ is either the Markov operator defining the Markov chain, or the dual of the Koopman (transfer) operator corresponding to the map $T$ with respect to the stationary distribution. The operators $\cL_{it}$  are  smooth in $t$.
 Moreover, they are quasi-compact when their spectral radius is $1$. In the transfer operator case the right hand side of \eqref{Govern} is the characteristic function of the partial sums $\sum_{n=1}^{N}G(\tilde\xi_n)$ of the Markov chain $\{\tilde\xi_n\}$ whose transition operator is $\cL_0$ and its stationary distribution is $\mu$. Thus, the LCLT in both setups above can be obtained by 
using the spectral theory of quasi-compact Markov operators (and we refer again to \cite{HH}).

Quasi-compactness properties of transfer operators also yield the LCLT for 
uniform Young towers (in the terminology of \cite{ChazGO}) with exponential tails, since then the appropriate perturbations of the dual of the Koopman operator are quasi-compact, though this was not explicitly formulated in literature, probably because in the ``conventional" case the LCLT holds true for non-uniform towers with sub-exponential tails. This was proven by  S. Gou\"ezel \cite{GO} using  operator renewal theory (i.e. by inducing), which replaces the quasi-compactness, but it is less relevant to 
our paper since when $q_i(n)=in$ our methods rely on quasi-compactness (and exponential tails), as explained in the following paragraphs.


In the arithmetic progression case $q_i(n)=in$, for sufficiently fast mixing Markov chains $\{\xi_n\}$ the main obstacle in the proof of the LCLT in the nonconventional setup is that a family of operators which ``govern" the characteristic functions in the sense of \eqref{Govern} does not seem to exist, in view of the non-stationarity and long range dependence of the summands $G(\xi_n,\xi_{2n},...,\xi_{\ell n})$.
Still, the idea behind the proofs of the LCLT's from \cite{HK1} and \cite[Ch.2]{book} is based on
``spectral" properties, but now we have to consider random operators instead of a single one.
 Roughly speaking, we showed that when $X_n=\xi_n$ is one of the chains considered in \cite{HK1} and \cite[Ch.2]{book}, then there exist a mixing probability preserving system $(\Om,\cF,P,\te)$ and a family of random  operators $\cL_{it}^{\om},\,\om\in\Om$ so that, on compact sets of $t$'s we have
\begin{equation}\label{Red}
|\bbE(e^{it S_N^{\{q_j\}}G})|\leq \int|\mu(\cL_{it}^{\te^{a_\ell N}\om}\circ \dots\circ\cL_{it}^{\te\om}\circ\cL_{it}^{\om}\textbf{1})|dP(\om)+o(N^{-1/2})
\end{equation}
where $a_\ell$ is some constant and $\mu(g)=\int gd\mu$ for any integrable function $g$. This is achieved by a conditioning argument, which in this paper is referred to as ``\textit{the conditioning step}". 
In general, given any sequence of random variables $\{Z_N\}$ so that $N^{-1/2}(Z_N-mN)$, $m\in\bbR$ satisfies  the CLT, the LCLT follows from certain types of decay rates of the characteristic functions of $Z_N$ (see Theorem \ref{General LLT }). 
Using \eqref{Red}, the type of control  over the characteristic functions needed  to obtain the LCLT for $Z_N=S_N^{\{q_j\}}G$ was achieved by studying the products of random operators appearing in \eqref{Red}, which in some sense reduces the main problem to the random dynamics setup.
For one sided  topologically mixing subshifts of finite type and other expanding maps, these random operators were studied using a complex version of the Hilbert projective metric due to H.H. Rugh \cite{Rug}, corresponding to the canonical complexification of the classical cones of logarithmically H\"older continuous functions.

For Markov chains generated by uniform Young towers (as described above), the  strategy of the proof of the LCLT for $S_N^{\{q_j\}}G$ when $q_i(n)=in$ is as follows. First, using the semi-conjugacy with a Bernoulli shift  established in \cite{Kor1},  we will also have a certain conditioning step, which yields upper bounds of the form \eqref{Red} with $\cL^{\om}_{it}$ now being perturbations of $P^\ell=\cL_0^{\om}$, where $P$ is the dual of the Koopman operator $g\to g\circ F$ corresponding to the tower map $F$.
Already in this conditioning step we need the tower to have exponential tails, since the semi-conjugacy was only proven in this case.
For $t$'s which are not close to $0$, we will study the asymptotic behavior of the latter product by using quasi-compactness of certain related deterministic transfer operators determined by a periodic orbit of the tower. 

For $t$'s close to $0$ we will study the asymptotic behavior of the product of the random operators from \eqref{Red} by showing that these operators contract certain type of complex cones. In contrast with the expanding case, these cones do not consist of (complexficiations of) logarithmically H\"older continuous functions, and instead we will obtain the desired estimates on the norm of the product of the random operators using the canonical complexification of the cones introduced in \cite{Viv2}. This also requires exponential tails, but the results are established for non-uniform towers, so the need in uniform ones only arises when dealing with $t$'s far away from $0$.
In terms of general techniques, showing that these cones satisfy the conditions needed for Rugh's theory \cite{Rug} (see also \cite{Dub1,Dub2}) to be effective, and that the  random complex operators $\cL_{it}^{\om}$ contract these cones is the main novelty of this manuscript. In order not to overload the paper we present the results concerning complex cones in a separate section (Section \ref{sec tower}). 

There are three reasons we need here the tower to have exponential tails. First, it is needed to establish 
 \eqref{Red}. Second, the exponential tails are needed to obtain the desired projective contraction of the random complex operators $\cL_{it}^{\om}$ described above. 
   For deterministic operators one can use the operator renewal theory from \cite{GO} instead of complex cones, but it is still not clear how to adapt this theory  to study the $\om$-wise asymptotic behavior of the products of the random operators on the right hand side of \eqref{Red}.  
   The third reason is that the estimates we obtain on 
 the right hand side of \eqref{Red} for $t$'s bounded away from $0$ rely on quasi-compactness of certain associated deterministic system. Also in this case it is less clear how to adapt the operator renewal theory, or to make a reduction to a deterministic system for such $t$'s without some kind of quasi-compactness assumption.


When some of the $q_j$'s grow faster than linearly, we will also have a conditioning step, but of a different form. This step does not require the underlying sequence $\{X_n\}$ to be a Markov chain, and instead we only need it to satisfy certain mixing and approximation conditions. Because of the nonlinear growth of $q_\ell$, this conditioning argument  yields  a different upper bound of the form 
\begin{equation}\label{Re1}
|\bbE(e^{it S_N^{\{q_j\}}G})|\leq \bbE\left[\prod_{n=[aN]+1}^{N}\zeta(Y_n,t)\right]+o(N^{-1/2})
\end{equation}
on compact sets of $t$'s, where $a\in(0,1)$ is some constant, $\{Y_n\}$
 is some sufficiently fast mixing  process satisfying some stationarity conditions, and $\zeta(y,t)$ are certain functions taking values in $[0,1]$.
 In this case the general estimates needed for the LCLT (see again Theorem \ref{General LLT }) do not require to study compositions of random operators, which is the reason that general mixing conditions are sufficient for the LCLT. Roughly speaking, we will use the mixing properties of $\{Y_n\}$ to replace $\bbE\left[\prod_{n=[aN]+1}^{N}\zeta(Y_n,t)\right]$ with $\prod_{n=[aN]+1}^{N}\bbE\left[\zeta(Y_n,t)\right]=(\zeta(t))^{N-[aN]}$, $\zeta(t)=\bbE[\zeta(Y_n,t)]$. Thus, the function $\zeta(t)$ controls the rate of decay of the characteristic functions.


\section{A nonconventional LLT with nonlinear indexes and some classes of mixing processes}\label{sec2}
Let $(\Om,\cF,\bbP)$ be a probability space and let $\cF_{n,m}\subset\cF$, $n,m\in\bbZ$ be a family of $\sig$-algebras so that $\cF_{n,m}\subset\cF_{n_1,m_1}$ if $n_1\leq n\leq m\leq m_1$. We will measure the dependence between these $\sig$-algebras by the classical $\phi$-mixing coefficients $\phi(n)$ given by 
\begin{equation}\label{phi n}
\phi(n)=\sup\left\{|\bbP(B|A)-\bbP(B)|:\,k\in\bbZ,\,A\in\cF_{-\infty,k},\,B\in\cF_{k+n,\infty},\, \bbP(A)>0\right\}
\end{equation}
where $\cF_{l,\infty}$ is the union  of $\cF_{l,n},\,n\geq l$ and $\cF_{-\infty,k}$ is the union of $\cF_{j,k},j\leq k$.
Let $(\cX,d)$ be a  metric space and let $X_n:\Om\to\cX$ be a sequence of measureable functions (i.e. random variables).
We do not require $\{X_n\}$ to be strongly stationary, and instead we only assume that for each $n\leq m$ the distribution of the pair $(X_n,X_m)$ depends only on $m-n$. In particular, all the $X_n$'s are identically distributed.

We also do not assume that $X_n$ is measurable with respect to $\cF_{n,n}$, and instead we will impose restrictions on the approximation rate.
The $r$-th approximation rate of order $p\geq1$ is given by 
\begin{equation}\label{beta}
\beta_p(r)=\sup_{n}\inf_{Z_{n,r}}\|d(X_n,Z_{n,r})\|_{L^p}
\end{equation}
where the infimum is taken over all the $\cX$-valued  and $\cF_{n-r,n+r}$-measureable random variables $Z_{n,r}$. 
The results described in this section are obtained under assumptions of the following form.
\begin{assumption}\label{AssMix}
There are constants $c>0$  and $\te_1,\te_2>0$ so that for all $n\in\bbN$,
\begin{equation}\label{MixA}
\phi(n)\leq cn^{-\te_1}\,\,\text{ and }\,\,\beta_{2}(n)\leq cn^{-\te_2}.
\end{equation}
\end{assumption}
We refer the readers to \cite[Section 8]{Haf-Arrays} for some examples of processes satisfying Assumption \ref{AssMix}. These examples include the case  when $X_n=T^nX_0$,
where $T$ is a two (or one) sided topologically mixing subshift of finite type \cite{Bow}, $X_0$ is distributed
according to a Gibbs measure and $\cF_{n,m}$ is the $\sig$-algebra generated by the cylinders corresponding to the coordinates at places $n,...,m$. In this case we have $\max(\phi(n),\beta_\infty(n))\leq C\del^n$ for some $C>0$ and $\del\in(0,1)$.
Another example are functionals  $X_n=\sum_{j}a_jf(\xi_{n+j})$ of geometrically ergodic Markov chains $\{\xi_n\}$, where $f$ is a measurable function taking values in some Banach space $(B,|\cdot|)$ so that $\|f\|_2:=\sup_n\||f(\xi_n)|\|_{L_2}<\infty$ and $\sum_{j}a_j$ is a converging series.
If we take $\cF_{n,m}=\sig\{\xi_n,...,\xi_m\}$ then $\phi(n)$ converges exponentially fast to $0$ and  
$\beta_2(r)\leq\|f\|_2\sum_{|j|>r}|a_j|$. Thus, Assumption \ref{MixA} will hold if $\sum_{|j|>r}|a_j|=O(r^{-\te_2})$.
In fact, we can take any  stationary sequence $\{\xi_n\}$ so that with  $\cF_{n,m}=\sig\{\xi_n,...,\xi_m\}$ we have $\phi(n)=O(n^{-\te_1})$, and define $X_n$ similarly.
We refer to Section \ref{BS} for an application to functions of Bernoulli shifts (which has applications to Young towers with exponential tails \cite[Section 5.3]{Kor1}).

Next, let $\ell$ be a positive integer and let $q_1(n),q_2(n),...,q_\ell(n)$ be integer-valued non-negative sequences. 
We assume here that there is an integer $0\leq k<\ell$ so that for all $j>k$  the function $q_j$ grows faster than linearly in the sense that 
there exists $\al\in(0,1)$ so that
for all $n\in\bbN$ large enough 
\begin{equation}\label{NonLinGr} 
q_{j}(n+1)-q_{j}(n)\geq n^{\al},\,\, j=k+1,k+2,...,\ell.
\end{equation}
Furthermore, $q_{i+1}$ grows faster than $q_i$ for $i>k$ in the sense that 
\begin{equation}\label{NonLinGr0}
\forall\ve>0\,\text{ we have }\,\,\,\lim_{n\to\infty}\left(q_{i+1}(\ve n)-q_{i}(n)\right)=\infty,\,\,\,i=k+1,...,\ell-1.
\end{equation}
When $k=0$ then the above conditions hold for functions with nonlinear growth.
 However, we can also consider the case when some of the functions are linear polynomials. This corresponds to $k>0$ and in this case, for the sake of simplicity we assume that 
\begin{equation}\label{kLin}
q_j(n)=jn,\,\,\forall n\in\bbN,\,\,j=1,2,...,k.
\end{equation}

Set $\cX^\ell=\cX\times\cX\times\dots\times\cX$ ($\ell$ times) and let $\ka\in(0,1]$.
 Let $G:\cX^\ell\to\bbR$ be a bounded function so that with some $K>0$ for all $(x_1,...,x_\ell), (y_1,...,y_\ell)\in\cX^{\ell}$ we have
\begin{equation}\label{G Hold}
 |G(x_1,...,x_\ell)-G(y_1,...,y_\ell)|\leq K\sum_{j=1}^{\ell}\left(d(x_i,y_i)\right)^\ka.
\end{equation}
For each $N$ set 
\[
S_N^{\{q_j\}}G=\sum_{n=1}^N G(X_{q_1(n)},X_{q_2(n)},...,X_{q_\ell(n)}).
\]
Set also
\begin{equation}\label{bar G def1}
\bar G=\int G(x_1,x_2,...,x_\ell)d\mu(x_1)d\mu(x_2)\cdots d\mu(x_\ell)
\end{equation}
where $\mu$ is the common distribution of the $X_n$'s. 
The main result in this section is a local central limit theorem (LCLT) for the sequence of random variables $Z_N=S_N^{\{q_i\}}G$.

\subsubsection{The CLT}
Before proving the LCLT we need to discuss the central limit theorem (CLT). As mentioned in Section \ref{sec1}, the CLT for $N^{-1/2}\big(S_N^{\{q_j\}}G-N\bar G\big)$ does not follow from existing results since  $X_n$ do not take values at $\bbR^s$  for some $s$. 

\begin{theorem}[CLT]\label{CLT}
Suppose that Assumption \ref{AssMix} holds true with $\te_1>4$ and $\te_2>\frac{2}{\ka}$, where $\ka$ is the exponent from the right hand side of \eqref{G Hold}.
  Then  the limit 
\[
D^2=\lim_{N\to\infty}\frac 1N\bbE\left[\big(S_N^{\{q_j\}}G-\bar G N\big)^2\right]
\]
exists.
Moreover, the sequence $N^{-\frac12}\big(S_N^{\{q_j\}}G-\bar G N\big)$ converges in distribution as $N\to\infty$ towards a centered normal random variable with variance $D^2$. Furthermore, if $X_n$ is $\cF_{n-r,n+r}$-measurable for some $r\in\bbN$ and all $n\in\bbN$, then the above holds true for any bounded function $G$ (i.e. without \eqref{G Hold}).
\end{theorem}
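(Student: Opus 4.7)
My plan is to first reduce the general-metric-space setting to one covered by existing CLTs for $\phi$-mixing arrays via a controlled approximation, and then to exploit the nonlinear growth (\ref{NonLinGr})--(\ref{NonLinGr0}) to decouple summands at distinct $n$.

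\textbf{Approximation step.} For a slowly growing $r_N$, I would use \eqref{beta} to pick, for each $n$, an $\cF_{n-r_N,n+r_N}$-measurable $Z_{n,r_N}$ with $\|d(X_n,Z_{n,r_N})\|_{L^2}\leq 2cr_N^{-\te_2}$. The H\"older estimate \eqref{G Hold} combined with Jensen's inequality applied to the concave map $x\mapsto x^\ka$ (valid since $0<\ka\leq 1$) gives
\[
\|G(X_{q_1(n)},\dots,X_{q_\ell(n)})-G(Z_{q_1(n),r_N},\dots,Z_{q_\ell(n),r_N})\|_{L^2}\leq C r_N^{-\te_2\ka}.
\]
Since $\te_2\ka>2$, choosing $r_N=\lfloor N^{1/(2\te_2\ka)+\ve}\rfloor$ for small $\ve>0$ makes the total $L^2$ error accumulated over $n\leq N$ of order $o(N^{1/2})$. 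Hence it suffices to prove the CLT for $\tilde S_N:=\sum_{n=1}^N\tilde G_n$ with $\tilde G_n:=G(Z_{q_1(n),r_N},\dots,Z_{q_\ell(n),r_N})$, each of which is measurable with respect to $\cA_n:=\sig\bigl(\bigcup_{j=1}^\ell\cF_{q_j(n)-r_N,\,q_j(n)+r_N}\bigr)$.

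\textbf{Asymptotic variance.} For $n<m$, $\phi$-mixing applied to pairs of measurable sets yields $|\mathrm{Cov}(\tilde G_n,\tilde G_m)|\leq C\phi(g_{n,m})$, where $g_{n,m}$ is the minimum separation between any index interval entering $\cA_n$ and any entering $\cA_m$. The nonlinear growth \eqref{NonLinGr} forces $q_j(m)-q_j(n)\geq n^\al$ for $j>k$, while the linear indices contribute separations of order $\min(m-n,n)$; and \eqref{NonLinGr0} together with \eqref{kLin} gives $\bbE[\tilde G_n]\to\bar G$. Combining these bounds with $\te_1>4$ shows that $\frac1N\sum_{n,m\leq N}\mathrm{Cov}(\tilde G_n-\bar G,\tilde G_m-\bar G)$ converges to a nonnegative constant $D^2$.

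\textbf{Gaussian limit.} I would then apply a big-block/small-block decomposition to $\{\tilde G_n\}$, in the spirit of the $\phi$-mixing array machinery of \cite{Haf-Arrays}. The point of the decomposition is that within a big block \eqref{NonLinGr} makes the $X$-values at nonlinear indices essentially independent across distinct values of $n$, so intra-block correlations reduce to those of a standard nonconventional sum in the $k$ linear variables, to which a Kifer-type argument applies. Between blocks, $\phi$-mixing with $\te_1>4$ allows replacement of block sums by independent copies with negligible error, after which a Lindeberg CLT for the independent block sums yields the Gaussian limit with variance $D^2$. If $X_n$ is already $\cF_{n-r,n+r}$-measurable one sets $Z_{n,r_N}=X_n$ and skips the H\"older step entirely, giving the second assertion for arbitrary bounded $G$. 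I expect the most delicate point to be the calibration in this last step: block lengths, $r_N$, and intra-block gaps must be chosen to simultaneously tame the approximation error (governed by $\te_2$), the within-block correlations (governed by the linear-index Kifer CLT), and the between-block mixing error (governed by $\te_1$), which is exactly what the thresholds $\te_1>4$ and $\te_2>2/\ka$ are calibrated to support.
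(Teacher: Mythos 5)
Your approximation step matches the paper's Step 1 (Lemma \ref{ApLema}) essentially verbatim, and the variance computation via covariance bounds and independent copies is in the same spirit as the paper's use of Corollary \ref{lem3.1-StPaper}. The divergence — and the gap — is in the Gaussian-limit step. The paper does not use a big-block/small-block (Bernstein) decomposition: it explicitly notes that martingale and related classical machinery is hard to adapt here, and instead proves asymptotic normality for the approximating sum $W_N$ via Stein's method with a \emph{strong dependency graph} on $\{1,\dots,N\}$ whose edges are the pairs $(n,m)$ with $\rho(n,m)=\min_{i,j}|q_i(n)-q_j(m)|\le r_N$ (Lemma \ref{D lemma} plus \cite[Theorem 1.2.1]{book}).

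The big-block/small-block step as you describe it would fail when $k>0$. Partitioning the summation index $n$ into blocks does not produce block sums that can be replaced by independent copies with small error: for linear indices $q_i(n)=in$, two values $n,m$ in widely separated blocks of $n$ can still satisfy $q_i(n)\approx q_{i'}(m)$ (take $n\approx(i'/i)m$), so the corresponding summands share the same underlying $X$'s and the block sums remain genuinely correlated. The small blocks you insert between \emph{adjacent} big blocks of $n$ do nothing to decorrelate such non-adjacent but time-entangled blocks, and these cross-block covariances are exactly the ones that contribute to $D^2$ — so "Lindeberg CLT for independent block sums" would either fail or produce the wrong variance. The dependency-graph/Stein approach sidesteps this entirely: it never assumes independence between far blocks, but instead controls, for each $n$, the $O(r_N)$ indices $m$ with $\rho(n,m)$ small, and quantifies the error in the normal approximation directly in terms of graph-ball sizes. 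To repair your proposal you would need either a substantially more careful argument that the cross-block entanglements contribute $o(1)$ after rescaling (which is false without renormalizing the limit variance), or to switch to a method that does not rely on block independence — which is what the paper's Stein-method argument does.
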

While the proof of the existence of $D^2$ proceeds exactly as in \cite{KV1}, it is less clear to us how to adapt the martingale approximation techniques from \cite{KV1} to the situation when $X_n$ are not vector-valued. The point is that the main estimates needed for the martingale approximation to work depend on the dimension (see \cite[Theorem 3.4]{KV1}). Therefore, similarly to \cite[Ch.1]{book}, the proof of the CLT is based on Stein's method and (strong) dependency graph.

Next, when $D^2=0$ the CLT is degenerate, and it is interesting to have a characterization for the  positivity of $D^2$. 
In order to present such characterizations,
we first need the following notations.
Let $\mu$ be the distribution of $X_n$, and for every $k<j<\ell$ let us set 
\[
G_j(x_1,...,x_j)=\int G(x_1,...,x_j,z)d\mu^{\ell-j}(z)-\int G(x_1,...,x_{j-1},z)d\mu^{\ell-j+1}(z)
\]
while for $j=\ell$,
\begin{equation}\label{G ell}
G_\ell(x_1,...,x_\ell)=G(x_1,...,x_\ell)-\int G(x_1,...,x_{\ell-1},z)d\mu(z).
\end{equation}
When $k>0$  let  us also consider the function $G_k$ given by
\begin{equation}\label{G k}
G_k(x_1,...,x_k)=\int G(x_1,...,x_k,z)d\mu^{\ell-k}(z)-\bar G.
\end{equation}
Here $\mu^s=\mu\times\mu\times\cdots\times\mu\,$ ($s$-times) for any $s$.
\begin{theorem}[Positivity of the asymptotic variance]\label{Dthm}
Under the conditions of Theorem \ref{CLT} we have the following.
When all the functions $q_j$ grow faster than linearly (i.e. $k=0$) then $D^2=0$ if and only if the function $G$ is constant $\mu^\ell$-a.s. 
When some of the functions are linear (i.e. $k>0$) then  $D^2=0$ if and only if $G_j$ vanishes for every $j>k$ ($\mu^\ell$-a.s.), and $G_k$ is an $L^2$-coboundary with respect to the map $F\times F^2\times\cdots\times F^k$. 

If $X_n$ is $\cF_{n-r,n+r}$-measurable for some $r\in\bbN$ and all $n\in\bbN$, then the above holds true for any bounded function $G$.
\end{theorem}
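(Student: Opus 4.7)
The plan is to establish the characterisation by a telescoping (Hoeffding-type) decomposition of $G-\bar G$ into the pieces $G_j$ and then to analyse the contribution of each piece to the asymptotic variance separately, exploiting the key vanishing property $\int G_j(x_1,\ldots,x_{j-1},z)\,d\mu(z)=0$ for every $j>k$. Concretely, I would introduce $H_i(x_1,\ldots,x_i)=\int G(x_1,\ldots,x_i,z_{i+1},\ldots,z_\ell)\,d\mu^{\ell-i}(z)$ for $0\le i\le \ell$, so that $H_0=\bar G$, $H_\ell=G$, $G_j=H_j-H_{j-1}$ for $k<j\le\ell$, and (when $k>0$) $G_k=H_k-H_0$. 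Telescoping then yields $G-\bar G=\mathbf{1}_{\{k>0\}}G_k+\sum_{j=k+1}^{\ell}G_j$; when $k=0$ this reads $G-\bar G=\sum_{j=1}^{\ell}G_j$ with $G_1=H_1-\bar G$ given by the $k<j<\ell$ formula at $j=1$. Accordingly I split
\[
S_N^{\{q_j\}}G-N\bar G=\mathbf{1}_{\{k>0\}}T_k(N)+\sum_{j=k+1}^{\ell}T_j(N),\qquad T_j(N)=\sum_{n=1}^NG_j(X_{q_1(n)},\ldots,X_{q_j(n)}).
\]

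The technical heart of the argument is the variance splitting $D^2=\mathbf{1}_{\{k>0\}}D_k^2+\sum_{j=k+1}^{\ell}D_j^2$ with $D_j^2:=\lim_N N^{-1}\mathrm{Var}(T_j(N))$. To prove that $N^{-1}\mathrm{Cov}(T_i(N),T_j(N))\to 0$ for $i\neq j$ (and that the off-diagonal contributions inside $\mathrm{Var}(T_j)$ with $j>k$ also vanish), I would observe that in any expectation $\bbE[G_i^{(n)}G_j^{(m)}]$ with $j>\max(i,k)$ the largest index among $\{q_1(n),\ldots,q_i(n),q_1(m),\ldots,q_j(m)\}$ is $q_j(n)$ or $q_j(m)$, and its gap to the next largest is at least $\min\{q_j(n)-q_{j-1}(n),\,q_j(m)-q_{j-1}(m),\,|q_j(n)-q_j(m)|\}$, which grows without bound by \eqref{NonLinGr}--\eqref{NonLinGr0}. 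Coupling $X_{q_j(\cdot)}$ to a $\cF_{q_j(\cdot)-r,q_j(\cdot)+r}$-measurable proxy via $\beta_2(r)$ and then centring using $\int G_j(\cdot,z)\,d\mu(z)=0$ together with the $\phi$-mixing bound on the ``largest'' coordinate produces covariance estimates whose sum over $(n,m)\in\{1,\ldots,N\}^2$ is $o(N)$ under Assumption \ref{AssMix}. The diagonal terms $\bbE[(G_j^{(n)})^2]$ converge to $\int G_j^{\,2}\,d\mu^j$ by the same mixing/approximation reasoning (for $j>k$ the coordinates $q_1(n),\ldots,q_j(n)$ have all pairwise gaps tending to infinity), so $D_j^2=\int G_j^{\,2}\,d\mu^j$ and hence $D_j^2=0\iff G_j\equiv 0$ $\mu^j$-a.s.

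For $k>0$ the surviving piece $T_k(N)=\sum_n G_k(X_n,X_{2n},\ldots,X_{kn})$ is a genuine Birkhoff sum of $G_k$ under the product transformation $F\times F^2\times\cdots\times F^k$, where $F$ is the shift on the canonical stationary realisation of $\{X_n\}$. The classical variance identity for $\phi$-mixing stationary sequences (plus ergodicity, coming from $\phi(n)\to 0$) then gives $D_k^2=0$ iff $G_k$ is an $L^2$-coboundary for $F\times F^2\times\cdots\times F^k$. Collecting the pieces: for $k>0$, $D^2=0$ iff $G_j\equiv 0$ for every $j>k$ and $G_k$ is a coboundary; for $k=0$, $D^2=0$ forces $G_j\equiv 0$ for all $1\le j\le \ell$, and iterating $H_j=H_{j-1}$ down to $j=0$ gives $G=H_\ell=H_0=\bar G$ $\mu^\ell$-a.s. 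The final sentence of the theorem (no Hölder hypothesis needed when $X_n$ is $\cF_{n-r,n+r}$-measurable) is automatic because in that case the $\beta_2$-approximation step is vacuous.

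The main obstacle will be the covariance bound in the variance splitting: the zero-integral property of $G_j$ must be combined with a \emph{simultaneous} use of $\phi$-mixing and the $\beta_2$-approximation (the latter being needed to replace the non-$\cF$-measurable $X_{q_j(\cdot)}$ by a measurable proxy before the $\phi$-mixing inequality can be applied on the ``largest'' coordinate), and the resulting bounds must be summable under $\te_1>4$ and $\te_2>2/\ka$. Everything else is elementary telescoping together with standard stationary-process theory for the linear piece.
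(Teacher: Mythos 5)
Your decomposition $G-\bar G = \bbI_{\{k>0\}}G_k + \sum_{j>k}G_j$, the variance splitting $D^2=\bbI_{\{k>0\}}D_k^2 + \sum_{j>k}D_j^2$ with $D_j^2=\int G_j^2\,d\mu^j$ for $j>k$, and the coboundary characterization of $D_k^2=0$ for the linear block is exactly the route the paper takes: the paper's proof consists of citing \cite{HK2} and \cite{PolyPaper} and observing that boundedness of $G$ reduces the needed summability to $\sum_n(n+1)\bigl(\phi(n)+(\beta_2(n))^\kappa\bigr)<\infty$. Your sketch is essentially the unfolded version of that argument, and the last reduction (the $\beta_2$-approximation step becoming vacuous when $X_n$ is $\cF_{n-r,n+r}$-measurable) is also right.

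There is, however, one genuine gap, in the cross covariance $\mathrm{Cov}(T_k,T_j)$ for $k>0$ and $j>k$. Your ``integrate out the largest coordinate using $\int G_j(\cdot,z)\,d\mu(z)=0$'' step needs the dominant index to lie in the $G_j$-block, but there are on the order of $N^{(2+\alpha)/(1+\alpha)}$ pairs $(n,m)$ with $kn>q_j(m)$, far too many to discard; for those the top index $kn$ belongs to $G_k$, which does \emph{not} satisfy a last-coordinate vanishing condition, since $\int G_k(x_1,\dots,x_{k-1},z)\,d\mu(z)=H_{k-1}(x_1,\dots,x_{k-1})-\bar G$ is generally nonzero. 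To close this case one must iterate the peeling (as in \cite{HK2,PolyPaper}): integrating out $X_{kn}$ leaves $H_{k-1}-\bar G$, still mean zero under $\mu^{k-1}$; then peel the new top index, and so on, until either $q_j(m)$ becomes dominant (and the zero integral of $G_j$ kills the term) or all $n$-coordinates are removed and $H_0-\bar G=0$ finishes it. A milder version of the same point occurs in $\mathrm{Cov}(T_i,T_j)$ for $k<i<j$ when $q_i(n)$ dominates: there one invokes $\int G_i(\cdot,z)\,d\mu(z)=0$, not $\int G_j$, contrary to what you wrote. With these fixes the summability you already identified does the rest, so the conclusion stands, but the covariance bound as you stated it would not close.
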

For vector-valued $X_n$'s, such a characterization was obtained in \cite{HK2} and \cite{PolyPaper}. For bounded functions $G$ the proof when $X_n$ take values in some metric space is essentially the same.

\subsection{The LCLT}
 Usually, the local central limit theorem  concerns two cases, ``non-arithmetic" and ``lattice". 
We call the case non-arithmetic  if there exists no $t\not=0$ so that for some function $\be:\Lambda^{\ell-1}\to[0,2\pi)$ we have
\begin{equation}\label{NonLat}
e^{it G(x_1,...,x_\ell)}=e^{i\be(x_1,...,x_{\ell-1})},\,\mu^\ell\text{-a.s.}
\end{equation}
In particular, $G(x_1,x_2,...,x_\ell)$ is a not a function of the variables $x_1,...,x_{\ell-1}$ ($\mu^\ell$-almost surely), namely the function $G_\ell(x_1,x_2,....,x_\ell)$ is not identically $0$, $\mu^\ell$-almost surely.

\begin{theorem}[LCLT in the non-arithmetic case]\label{LCLT1}
Suppose that Assumption \ref{AssMix} holds true with some $\te_1>\max(4,\frac{3}{2\al})$ and $\te_2>\max(\frac{3}{2\ka\al},\frac{2}{\ka})$ 
 where $\al$ comes from \eqref{NonLinGr} and $\ka$ from \eqref{G Hold}. When some of the functions $q_j$ are linear, we also assume that $\te_2>3$ and $\te_1\geq \te_2\ka$.
 In addition we assume
 that $D^2>0$. Then in the above non-arithmetic case for any continuous function $g:\bbR\to\bbR$ with compact support (or an indicator of a bounded closed interval) we have
\[
\lim_{N\to\infty}\sup_{u\in\bbR}\left|\sqrt{2\pi N}D\bbE[g(S_N^{\{q_j\}}G-u)]-e^{-\frac{(u-\bar G N)^2}{2ND^2}}\int g(x)dx\right|=0.
\]

If $X_n$ is $\cF_{n-r,n+r}$-measurable for some $r\in\bbN$ and all $n\in\bbN$, then the above LCLT holds true for any bounded function $G$.
\end{theorem}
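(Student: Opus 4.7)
The plan is to derive Theorem \ref{LCLT1} from the CLT (Theorem \ref{CLT}) via the general LCLT criterion referenced as Theorem \ref{General LLT }. That criterion reduces the LCLT, once the CLT is in hand, to establishing
\[
\sup_{\delta\leq|t|\leq T}\bigl|\bbE\bigl[e^{it S_N^{\{q_j\}}G}\bigr]\bigr|=o(N^{-1/2})
\]
for every fixed $0<\delta<T$. Everything below is aimed at this single characteristic function bound.

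The main device is the conditioning step producing an estimate of the form \eqref{Re1}. Fix $a\in(0,1)$ and split the sum into a ``head'' on $n\leq[aN]$ and a ``tail'' on $n>[aN]$. By \eqref{NonLinGr}, for $n>[aN]$ the indices $q_\ell(n)$ have consecutive gaps at least $(aN)^\al$, and by \eqref{NonLinGr0} they eventually lie far to the right of $q_{\ell-1}(N)$. Choose $r=r_N=N^\eta$ with $\eta<\al$; then the intervals $[q_\ell(n)-r,q_\ell(n)+r]$ are pairwise disjoint and, for $N$ large, disjoint from the $r$-neighborhoods of all $q_j(m)$ with $j<\ell$ and $m\leq N$. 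Condition on the $\sig$-algebra $\cG$ containing the head summands and all $X_{q_j(n)}$ with $j<\ell$. Approximate each $X_{q_\ell(n)}$ ($n>[aN]$) by an $\cF_{q_\ell(n)-r,q_\ell(n)+r}$-measurable proxy; the Hölder hypothesis \eqref{G Hold} combined with $\beta_2(r)\leq cr^{-\te_2}$ costs an $L^1$ error of order $N\beta_2(r)^\ka$ in the tail summands. The $\phi$-mixing assumption then lets one telescope through the $N-[aN]$ conditional expectations with an additional error of order $N\phi(r)$, producing \eqref{Re1} with $Y_n=(X_{q_1(n)},\dots,X_{q_{\ell-1}(n)})$ and
\[
\zeta(y,t)=\bigl|\bbE\bigl[e^{itG(y_1,\dots,y_{\ell-1},X)}\bigr]\bigr|,\qquad X\sim\mu.
\]

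The non-arithmetic hypothesis \eqref{NonLat} is exactly what is needed to push \eqref{Re1} to $o(N^{-1/2})$. Indeed, $\zeta(y,t)=1$ forces $e^{itG(y_1,\dots,y_{\ell-1},x)}$ to be $\mu$-a.s.\ constant in $x$, so if this held for $\mu^{\ell-1}$-a.e.\ $y$ a measurable selection would produce the function $\be$ forbidden by \eqref{NonLat}. Hence for each $t\neq 0$ we have $\zeta(y,t)<1$ on a set of positive $\mu^{\ell-1}$-measure; continuity of $\zeta(\cdot,t)$ in $t$ (via dominated convergence) together with compactness yields a uniform gap $\zeta(t):=\int\zeta(y,t)\,d\mu^{\ell-1}(y)\leq 1-\ve_0<1$ on $\delta\leq|t|\leq T$. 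A second application of the $\phi$-mixing of $\{Y_n\}$ decouples the product and bounds $\bbE\bigl[\prod_{n>[aN]}\zeta(Y_n,t)\bigr]$ by $\zeta(t)^{N-[aN]}$ up to admissible errors; exponential decay of this factor dominates any polynomial loss and the required characteristic function estimate follows.

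The main obstacle is the fine bookkeeping inside the conditioning step: one must simultaneously control the Hölder approximation error $N\beta_2(r)^\ka$ from replacing each tail variable by its $\cF_{q_\ell(n)-r,q_\ell(n)+r}$-measurable proxy, the telescoped $\phi$-mixing error $N\phi(r)$ from iteratively decoupling the $N-[aN]$ conditional characteristic functions, and, when $k>0$, the correlation between the linear head $X_{q_j(n)}$, $j\leq k$, and the nonlinear tail $X_{q_\ell(n)}$, $n>[aN]$. Tuning $r=N^\eta$ to balance these three errors is precisely what produces the quantitative hypotheses $\te_1>\max(4,3/(2\al))$, $\te_2>\max(3/(2\ka\al),2/\ka)$, and in the linear case $\te_2>3$ and $\te_1\geq\te_2\ka$, in the statement. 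The final assertion, that \eqref{G Hold} can be dropped when each $X_n$ is itself $\cF_{n-r,n+r}$-measurable, is then immediate: in that case $\beta_2$ plays no role, only the $\phi$-mixing decoupling is needed, and the argument above works verbatim for any bounded measurable $G$.
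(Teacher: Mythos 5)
There is a genuine gap at the very first step. You claim that Theorem \ref{General LLT } reduces the LCLT, given the CLT, to showing that $\sup_{\delta\le|t|\le T}|\varphi_N(t)|=o(N^{-1/2})$ for every $0<\delta<T$. That is only Assumption \ref{DecRate2}. Theorem \ref{General LLT }(i) requires \emph{both} Assumption \ref{DecRate1} and Assumption \ref{DecRate2}, and Assumption \ref{DecRate1} is a Gaussian-type bound $|\varphi_N(t)|\le c_0e^{-d_0Nt^2}+b_N$ valid on a \emph{fixed} interval $[-\delta_0,\delta_0]$. The CLT only controls $\varphi_N(t)$ for $|t|=O(N^{-1/2})$, so it does not give Assumption \ref{DecRate1}; this has to be proved separately, and your proposal never addresses it.

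The missing piece is nontrivial and is where the paper spends real effort. When $k=0$ it is relatively painless: the paper Taylor-expands $\zeta(t)=1-\tfrac12t^2\int G_\ell^2\,d\mu^\ell+O(|t|^3)$ and, using that $G_\ell\not\equiv 0$, obtains $\zeta(t)\le e^{-ct^2}$ near $0$; combined with the $(\zeta(t))^{c_0N}$ bound from the conditioning step this yields Assumption \ref{DecRate1}. When $k>0$ the product $\prod_{n>[aN]}\zeta(Y_n,t)$ does not factor into a deterministic power, and the paper needs Lemma \ref{A set Lemma 1}: a concentration estimate showing that, off an exceptional set of $(Y_{M_N+1},\dots,Y_N)$ with probability $o(N^{-1/2})$, at least a positive fraction of the $\zeta(Y_n,t)$ are bounded by $e^{-c't^2}$. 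This in turn uses the variance bound \eqref{Vst} coming from the mixing of the $Y_{n,r}$. None of this is sketched in your proposal, and "tuning $r=N^\eta$" for the $\phi$-mixing and approximation errors cannot substitute for it — the exponents $\theta_2>3$ and $\theta_1\ge\theta_2\kappa$ in the linear case are imposed precisely by that concentration argument, not by the far-from-zero bound. Your treatment of Assumption \ref{DecRate2} (the decoupling of $\bbE[\prod\zeta(Y_n,t)]$ by blocking plus the non-arithmetic gap $\sup_{t\in J}\zeta(t)<1$) matches the paper's Lemma \ref{LastLemma}, so that part is essentially right; the proposal would be complete once the near-zero estimate is supplied.
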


Next, we call the case a lattice one if $G$ is integer-valued and for all $t\in[-\pi,\pi]\setminus\{0\}$ there exists no function  $\beta:\Lambda^{\ell-1}\to[0,2\pi)$ satisfying (\ref{NonLat}). 
As in the non-arithmetic case, also in the lattice case  $G_\ell$ does not vanish $\mu^\ell$-almost surely.
More general ``lattice cases" can be considered, but we prefer to focus on integer valued-functions.
\begin{theorem}[LCLT in the lattice case]\label{LCLT2}
Under the assumptions of Theorem \ref{LCLT1}, in the above lattice case, for any continuous function $g:\bbR\to\bbR$ with compact support (or an indicator of a bounded closed interval) we have
\[
\lim_{N\to\infty}\sup_{u\in\bbZ}\left|\sqrt{2\pi N}D\bbE[g(S_N^{\{q_j\}}G-u)]-e^{-\frac{(u-\bar G N)^2}{2ND^2}}\sum_{k\in\bbZ}g(k)\right|=0.
\]
If $X_n$ is $\cF_{n-r,n+r}$-measurable for some $r\in\bbN$ and all $n\in\bbN$, then the above LCLT holds true for any bounded function $G$.
\end{theorem}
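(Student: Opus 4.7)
The plan is to imitate the proof of Theorem \ref{LCLT1} but exploit that in the lattice case $G$ is integer-valued, so the characteristic function $\varphi_N(t) = \mathbb{E}[e^{it S_N^{\{q_j\}}G}]$ is $2\pi$-periodic. Consequently, the general LCLT criterion (Theorem \ref{General LLT }) only requires control of $|\varphi_N(t)|$ on the compact interval $[-\pi,\pi]$, not on all of $\mathbb{R}$. More precisely, for integer-valued $Z_N = S_N^{\{q_j\}}G$, Fourier inversion gives
\[
\sqrt{2\pi N}\,D\,\mathbb{E}[g(Z_N-u)] = \frac{D\sqrt{N}}{\sqrt{2\pi}}\sum_{k\in\mathbb{Z}} g(k) \int_{-\pi}^{\pi} e^{-it(u-k)}\varphi_N(t)\,dt,
\]
and a standard computation reduces the desired asymptotic to showing that the contribution of $t$ bounded away from $0$ is negligible, while the contribution near $0$ produces the Gaussian factor via the CLT (Theorem \ref{CLT}).

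Thus I would split $[-\pi,\pi]$ into three regions and handle each separately. For $|t|\leq N^{-1/2}\log N$, the CLT (together with the standard uniform quantitative control of $\varphi_N(t)$ on that scale, obtained from a second-order expansion) gives that this region contributes $e^{-(u-\bar G N)^2/(2ND^2)}\sum_k g(k)$ plus an $o(1)$ error after multiplication by $\sqrt{N}$. For intermediate $t$'s, say $N^{-1/2}\log N\leq |t|\leq \delta$ with $\delta$ small, the quadratic decay from the CLT gives $|\varphi_N(t)|\leq e^{-cN t^2}$, which integrates to something $o(N^{-1/2})$.

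The decisive region is $\delta\leq|t|\leq\pi$, which is where the lattice hypothesis plays the same role that non-arithmeticity plays in Theorem \ref{LCLT1}. Here I would invoke the conditioning step yielding the bound \eqref{Re1} when $k<\ell$ (or the analogous bound \eqref{Red} when $k=\ell-1,\ldots$ with the random-operator reduction). The key point is that the negation of the identity \eqref{NonLat} on $[-\pi,\pi]\setminus\{0\}$ forces $\zeta(t)=\mathbb{E}[\zeta(Y_n,t)]<1$ for every such $t$, exactly as in the non-arithmetic case; continuity of $\zeta$ and compactness of $\{\delta\leq|t|\leq\pi\}$ then give $\sup_{\delta\leq|t|\leq\pi}\zeta(t)=\rho<1$, and approximating the expectation in \eqref{Re1} by the product of expectations using the $\phi$-mixing and $\beta_2$-approximation assumptions (with the quantitative bounds from Assumption \ref{AssMix} and the growth conditions \eqref{NonLinGr}, \eqref{NonLinGr0}) produces $|\varphi_N(t)|\leq \rho^{cN}+o(N^{-1/2})$ uniformly in $t$ on this compact set. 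Integrating over $t\in[\delta,\pi]\cup[-\pi,-\delta]$ then gives an $o(N^{-1/2})$ contribution after multiplying by $\sqrt{N}$.

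The main obstacle, as in Theorem \ref{LCLT1}, is verifying that the lattice hypothesis implies $\zeta(t)<1$ for every $t\in[-\pi,\pi]\setminus\{0\}$ in both the case $k=0$ (purely nonlinear indices, via the product structure of \eqref{Re1}) and the case $0<k<\ell$ (mixed linear/nonlinear, where one must combine \eqref{Re1} with the quasi-compactness/random-cone analysis used for the linear block $q_1,\ldots,q_k$); this ultimately reduces to showing that an equality of the form $e^{itG}=e^{i\beta(x_1,\ldots,x_{\ell-1})}$ holds almost surely whenever $\zeta(t)=1$, contradicting the lattice assumption. The rest of the argument is then a routine packaging of the three estimates above into Theorem \ref{General LLT }, with $\int g(x)dx$ replaced throughout by $\sum_{k\in\mathbb{Z}} g(k)$. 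The extension to the case where $X_n$ is $\cF_{n-r,n+r}$-measurable without the H\"older assumption on $G$ follows because the $\beta_2$-approximation error terms disappear and the conditioning step can be carried out directly for bounded measurable $G$.
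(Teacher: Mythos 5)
Your overall strategy is on track: reduce the lattice LCLT to controlling the characteristic function $\varphi_N(t)$ on $[-\pi,\pi]$, verify Gaussian decay near $t=0$ and strict decay away from $0$, and feed this into a Fourier-inversion argument with $\sum_k g(k)$ replacing $\int g$. Your three-region split is essentially what Theorem \ref{General LLT } packages; the paper simply verifies Assumptions \ref{DecRate1} and \ref{DecRate3} (the latter with $h_0=1$) and invokes Theorem \ref{General LLT }(ii). Re-deriving the inversion is fine but adds nothing.

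However, your treatment of the region $\delta\le|t|\le\pi$ contains a genuine misconception. You write that in the case $0<k<\ell$ one must ``combine \eqref{Re1} with the quasi-compactness/random-cone analysis used for the linear block $q_1,\ldots,q_k$'', and you mention the random-operator bound \eqref{Red}. That machinery is \emph{not} used and is not needed here. Theorems \ref{LCLT1} and \ref{LCLT2} concern the case where at least the top index $q_\ell$ grows superlinearly; the operator-theoretic apparatus of Sections \ref{LLT}, \ref{Arith}, and \ref{sec tower} (quasi-compactness, complex cones, \eqref{Red}) is reserved exclusively for the purely linear case $q_j(n)=jn$. For $0<k<\ell$, the conditioning step (Lemma \ref{ZeLem2}) already gives the bound \eqref{Re1} by passing to independent copies, and the decay on $[\delta,\pi]$ is then obtained from Lemma \ref{LastLemma} using only Lemma \ref{Mix Y} ($\phi$-mixing of the $Y_{n,r}$'s), the approximation estimate \eqref{L1}--\eqref{L2}, and continuity of $\zeta$. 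Crucially, the inequality $\zeta(t)<1$ for $t\in[-\pi,\pi]\setminus\{0\}$ in the lattice case is a single elementary fact about the function $\zeta$ from \eqref{zeta t def} (equality $\zeta(t)=1$ forces the inner integral to have modulus $1$ a.e., which is exactly \eqref{NonLat}); it does not depend on $k$, and no spectral input enters. If you tried to actually run the random-cone analysis you would be importing assumptions (Markov structure, uniform Young-tower setting, a periodic orbit) that are not present in the hypotheses of Theorem \ref{LCLT2}, so the proof as you outline it would not close.

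A second, smaller issue: your middle-region bound $|\varphi_N(t)|\le e^{-cNt^2}$ for $N^{-1/2}\log N\le|t|\le\delta$ does not come from the CLT itself (the CLT gives pointwise convergence of $\varphi_N(t/\sqrt N)$, not a uniform exponential envelope); in the paper it is precisely what Assumption \ref{DecRate1} asserts, and it is obtained from the conditioning step together with the quantitative expansion $\zeta(y,t)=1-\tfrac12 t^2 H(y)+O(|t|^3)$ and the concentration argument of Lemma \ref{A set Lemma 1}. You should make that step explicit rather than attributing it to the CLT.
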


\begin{remark}
The lattice condition specified above includes the case when $G$ has the form
$G(x_1,...,x_\ell)=\prod_{j=1}^\ell\bbI_{\al_i}$ for some sets $\al_i$ with positive measure. Indeed, suppose that for some nonzero $t$ there exists a function $\be(x_1,...,x_{\ell-1})$ so that
\[
e^{it\prod_{j=1}^\ell \bbI_{\al_j}(x_j)}=e^{i\beta(x_1,...,x_{\ell-1})},\,\,\mu^\ell\text{-a.s.}
\]
When $y:=(x_1,...,x_{\ell-1})\not\in \al_1\times\alpha_2\times\cdots\times\alpha_{\ell-1}$ we get $1=e^{i\be(y)}$ and hence $\be(y)=0$. When $y\in\al_1\times\alpha_2\times\cdots\times\alpha_{\ell-1}$ but $x_\ell\not\in\al_\ell$ we still get that $e^{i\be(y)}=1$ and therefore $\be(y)=0$ for almost all $y$. Taking now $x_\ell\in \al_\ell$ we conclude that $e^{it}=1$
and hence $t=2\pi k\not\in[-\pi,\pi]\setminus\{0\}$.
\end{remark}

\begin{remark}
Suppose that $k>0$ and that  $q_1,...,q_k$ are linear.
When the function $G(x_1,x_2,...,x_\ell)$ does not depend on the variable $x_\ell$, but it is also not a function of  $x_1,...,x_k$ then we can write $G(x_1,x_2,...,x_\ell)=G(x_1,...,x_s)$ for a minimal $k<s<\ell$.
In this case, we can take $\be(x_1,...,x_{\ell-1})=itG(x_1,...,x_s)$ in \eqref{NonLat}, and so the conditions of Theorems \ref{LCLT1} and \ref{LCLT2} are never met. However, now we can replace $\ell$ with $s$ in \eqref{NonLat} (and in Theorems \ref{LCLT1} and \ref{LCLT2}). It will be clear from the proofs of Theorems \ref{LCLT1} and \ref{LCLT2} that we can also replace $\ell$ with $k<s<\ell$ when  $G(x_1,x_2,...,x_\ell)=G(x_1,x_2,...,x_s)$ only $\mu^\ell$-almost surely.
 Thus we obtain the LCLT in the non-arithmetic and lattice cases, formulated with $s$ instead of $\ell$. When $G$ depends only on  $x_1,x_2,...,x_k$ then we are in the arithmetic progression case $q_i(n)=in$ considered in the next section. 
\end{remark}

\begin{remark}
Condition \eqref{NonLinGr0} is only needed for the CLT to hold true, but the proofs of Theorems \ref{CLT} and \ref{Dthm} proceed similarly when all the $q_j$'s are polynomials so that  $\deg q_j\leq \deg q_{j+1}$ which take positive integer values on the set of positive integers.
Assuming that the CLT holds true with $D^2>0$ the proofs of Theorems \ref{LCLT1} and \ref{LCLT2} proceed similarly when, instead of \eqref{NonLinGr0}, we assume that
\begin{equation}\label{NonLinGr01}
\exists\ve_0\in(0,1)\,\text{ such that}\,\,\,\lim_{n\to\infty}\left(q_{i+1}(\ve_0 n)-q_{i}(n)\right)=\infty,\,\,\,i=k+1,...,\ell-1.
\end{equation}
Thus, after replacing \eqref{NonLinGr0} with \eqref{NonLinGr01}, we get that if $N^{-1/2}\big(S_N^{\{a_j\}}-\bar G N\big)$ obeys the CLT then the LCLT holds true in both lattice and non-arithmetic cases. We conclude that Theorems \ref{LCLT1} and \ref{LCLT2} hold true  when $q_j$ are polynomials so that $\deg q_j\leq \deg q_{j+1}$, and if the degrees are equal then the leading coefficient of $q_{j+1}$ is larger than the leading coefficient of $q_j$.
\end{remark}


\section{A nonconventional local CLT for Markov chains on Young towers}\label{LLT}
In this section we  describe our results for indexes having the form $q_j(n)=jn$ for $j=1,2,...,\ell$. As explained in Section \ref{sec1}, the LCLT will be obtained for Markov chains whose transition operator is the dual of the Koopman operator of a  Young tower. In Section \ref{HYPER} we will discuss certain applications to the LCLT for partially hyperbolic maps. For readers' convenience, in the following section we recall the definition of a Young tower first introduced in  \cite{Y1,Y2}.

\subsection{Young towers}\label{YTsec}
Let $(\Del_0,\cF_0,\nu_0)$ be a probability space, $\{\Del_0^j:\,j\geq1\}$ be a partition of $\Del_0$ (\text{mod} $\nu_0$), and $R:\Del_0\to\bbN$ be a (return time) function which is constant on each one of the $\Del_0^j$'s. We  identify each element $x$ in $\Del_0$ with the pair $(x,0)$, and
for each nonnegative integer $k$ let the $k$-th floor of the tower be defined by
\[
\Del_k=\{(x,k)\in\Del_0\times\{k\}:\,\,R(x)>k\}.
\]
For each $j$ so that $R|\Del_0^j>k$ set 
\[
\Del_k^j=\{(x,k)\in\Del_k:\,x\in\Del_0^j\}\subset\Del_k.
\]
The whole tower is defined by 
\[
\Del=\{(x,k):\,k\geq0,\,\,(x,k)\in\Del_k\}=\bigcup_{k\geq0}\Del_k.
\]
Let $f_0:\Del_0\to\Del_0$ be so that for each $j$ the map $f_0|\Del_0^j:\Del_0^j\to\Del_0$ is bijective (\text{mod} ${\nu_0}$). The dynamics on the tower is given by the map $F:\Del\to\Del$ defined by
\[
F(x,k)=\begin{cases}(x,k+1) & \text{ if }R(x)>k+1 \\(f_0(x),0)& \text{ if }R(x)=k+1\end{cases}.
\]
We think of $(f_0(x),0)$ as the return (to the base $\Del_0$) function corresponding to $F$, and when $R(x)=k+1$ we will also write $F^R(x,0):=F(x,k)=(f_0(x),0)$. It will also be convenient to set 
$F^R(x,k)=F^R(x,0)$ for any $k\geq1$ and $(x,k)\in\Del_k$.
We note that in applications usually $\Del_0$ is a subset of a larger set, and $f_0=f^R$ is the return time function (to $\Del_0$) of a different function $f$ (so that the tower is constructed in order to study statistical properties of $f$). We assume here that the partition $\cC=\{\Del_k^j\}$ is generating in the sense that 
\[
\bigvee_{i=0}^\infty F^{-i}\cC
\]
is a partition into points. For each $k\geq1$ and $x\in\Del$, we will denote the element of the partition 
\[
\cC_k=\bigvee_{i=0}^{k-1} F^{-i}\cC
\]
containing $x$ by $\cC_k(x)$ (so that $\{x\}=\cap_{k\geq0}\cC_k(x)$). The partition elements of $\cC_k$ are called cylinders of length $k$.

Next, we lift the $\sig$-algebra $\cF_0$ to $\Del$ by identifying $\Del_k^j$ with $\Del_0^j$ and lift the  probability measure $\nu_0$ to a measure on $\Del$, by assigning the mass $\nu_0(\Gam)$ to each subset $\Gamma$ of each $\Del_k^j$, for any $k$ and $j$ so that $R|\Del_0^j>k$. Let us denote the above $\sig$-algebra and measure on $\Del$  by $\cF$ and $m$, respectively.
Then the dual of the Koopman operator $g\to g\circ F$ with respect to the measure $m$ is given by 
\begin{equation}\label{P def}
Pg(x)=\sum_{y\in F^{-1}\{x\}}\frac{g(y)}{J F(y)}
\end{equation}
where $JF=\frac{dF_*m}{dm}$.
We will always assume that $\int Rd\nu_0<\infty$ which means that $m(\Del)<\infty$. Henceforth we will assume that $\nu_0$ has been normalized so that $m(\Del)=1$.

\subsubsection{Uniform towers}
The uniform distance $d_U$ on the space $\Del$ is defined as follows: for 
every $x$ and $y$ in $\Del$, we denote by $s_U(x,y)$ the greatest positive integer $n$ so that $F^px$ and $F^p y$ belong the same element of the partition $\{\Del_k^j\}$, for all $p<n$ (namely, they  belong to the same partition element in $\cC_{n}$  but not to the same element of $\cC_{n+1}$).  When $x=y$ we set $\beta(x,y)=\infty$.
Let $\be\in(0,1)$ and define a metric $d_U(\cdot,\cdot)$ on $\Del\times\Del$ by $d_U(x,y)=\beta^{s_U(x,y)}$ (where $\beta^\infty:=0$). The tower is called uniform if for every $k,j$,
\[
F^R:\Del_k^j\to\Del_0
\]
and its inverse are both non-singular with respect to $m$, and   the (inverse) Jacobian $JF^R$ is logarithmically locally Lipschitz continuous in the sense that for all $k$ and $x,y\in\Del_k^j$,
\begin{equation}\label{Jack Reg1}
\left|\frac{JF^R(x)}{JF^R(y)}-1\right|\leq Cd_U(F^Rx,F^Ry)
\end{equation}
for some constant $C$ which does not depend on $k,j,x$ and $y$.
We remark that uniform towers arise as extensions for certain classes of partially hyperbolic diffeomorphisms after collapsing along stable manifolds, see \cite{Y1}. 

In the next section we will obtain nonconventional limit theorems for uniform Young towers, but for the sake of clarity let us describe the setup of non-uniform towers.

\subsubsection{Non-uniform towers}
The non-uniform (separation) distance on the space $\Del$ is defined as follows: for 
any $x=(x^0,0)$ and $y=(y^0,0)$ in $\Del_0$, denote by $s_{NU}(x,y)$ the greatest positive integer $n$ so that $(F^R)^p(x)=f_0^p(x^0)$ and $(F^R)^p(y)=f_0^p(y^0)$ belong to the same element of the partition $\{\Del_0^j\}$ of $\Del_0$, for all $p<n$. If $x=(x^0,k)$ and $y=(y^0,k)$ belong to the same floor $\Del_k$ for some $k\geq1$ we  set $s_{NU}(x,y)=s_{NU}(x^0,y^0)$. When $x$ and $y$ are not in the same floor we set $s_{NU}(x,y)=0$. Let $\be\in(0,1)$ and define a metric $d_{NU}(\cdot,\cdot)$ on $\Del\times\Del$ by $d_{NU}(x,y)=\beta^{s_{NU}(x,y)}$. The tower is called non-uniform if for every $j$,
\[
F^R:\Del_0^j\to\Del_0
\]
and its inverse are both non-singular with respect to $m$ (or $\nu_0$), and for all $x,y\in\Del_0^j$,
\begin{equation}\label{Jack Reg}
\left|\frac{JF^R(x)}{JF^R(y)}-1\right|\leq Cd_{NU}(F^R x,F^R y)
\end{equation}
for some constant $C$ which does not depend on $j$. 
\vskip0.2cm
It is evident that $d_U\leq d_{NU}$. It is also clear that the topologies induced by $d_U$ and $d_{NU}$ coincide.

We have the following result.
\begin{theorem}[\cite{Y2}]
Let $(\Del,F)$ be a non-uniform Young tower so that
$\int R d\nu_0<\infty$. Then there exists a strictly positive Lipschitz continuous function $h:\Del\to\bbR$ (w.r.t. to $d_{NU}$) which is bounded and bounded away from $0$ and the measure $\mu=hd m$ is $F$-invariant. The measure $\mu$ is the unique absolutely continuous $F$-invariant measure and $h$ satisfies $Ph=h$. 
\end{theorem}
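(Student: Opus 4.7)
The plan is to induce to the first-return map $F^R\colon \Del_0 \to \Del_0$, apply standard Gibbs--Markov Perron--Frobenius theory there to obtain a Lipschitz invariant density $h_0$ on the base, and then lift back to the whole tower via the explicit Kac construction. The condition \eqref{Jack Reg}, combined with the fact that each branch $F^R\colon \Del_0^j \to \Del_0$ is bijective, makes $F^R$ a full-branch Gibbs--Markov map in the sense of Aaronson--Denker, so all genuine work happens on $\Del_0$.

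For the first step I would introduce the transfer operator $P_R$ of $F^R$ with respect to $\nu_0$ and the cone $\cC$ of strictly positive bounded functions $g\colon\Del_0\to(0,\infty)$ satisfying $|\log(g(x)/g(y))| \leq L\,d_{NU}(x,y)$. Because $d_{NU}(x,y) = \be\,d_{NU}(F^R x, F^R y)$ whenever $x,y$ lie in a common branch (one extra level of agreement increases $s_{NU}$ by $1$), the distortion estimate \eqref{Jack Reg} yields, for $y, y' \in \Del_0$ and preimages $x_j = (F^R|_{\Del_0^j})^{-1}(y)$, $x_j' = (F^R|_{\Del_0^j})^{-1}(y')$, the branchwise bound
\[
\left|\log\frac{g(x_j)/JF^R(x_j)}{g(x_j')/JF^R(x_j')}\right| \leq (L\be + C)\,d_{NU}(y,y').
\]
Summing over $j$ and applying the standard log-convexity inequality gives $|\log(P_R g(y)/P_R g(y'))| \leq (L\be+C)\,d_{NU}(y,y')$; for $L \geq C/(1-\be)$ this gives $P_R\cC\subset\cC$ with uniform log-Lipschitz contraction. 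Starting from $\mathbf{1}_{\Del_0}$, the iterates $P_R^n\mathbf{1}_{\Del_0}$ stay in $\cC$ with uniform $L^\infty$ and Lipschitz bounds, and Arzel\`a--Ascoli on each branch together with a diagonal extraction from Cesaro averages yields a fixed density $h_0 \in \cC$ with $\int h_0\,d\nu_0 = 1$. Strict positivity, boundedness, and Lipschitz regularity are built into $\cC$.

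For the second step, define $h(x,k) := h_0(x,0)$ for $(x,k)\in\Del_k$ and set $\mu = h\,dm$. A direct computation from the definition of $F$ and of $P$ in \eqref{P def}---on upper floors $P$ merely translates down, on $\Del_0$ it aggregates returning branches in a pattern that matches the action of $P_R$---gives $Ph = h$, i.e.\ $F$-invariance of $\mu$. Probability normalization follows from $m(\Del) = 1$ and $\int h_0\,d\nu_0 = 1$. The Lipschitz property of $h$ on $(\Del,d_{NU})$ is automatic from that of $h_0$ on $\Del_0$, since $d_{NU}$ is zero between points on different floors and identifies canonically within a floor.

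Uniqueness I would handle by running the inducement backwards: any other $F$-invariant a.c.i.p.\ $\mu'$ restricts (after normalization) to an $F^R$-invariant a.c.i.p.\ on $\Del_0$ with density $h_0'$ fixed by $P_R$; the ratio $h_0/h_0'$ is $F^R$-invariant, bounded, and measurable, and ergodicity of the Gibbs--Markov system $(\Del_0,F^R,h_0\nu_0)$ then forces it to be constant, whence $\mu'=\mu$. The main technical obstacle is verifying the cone-inclusion $P_R\cC\subset\cC$ with constants uniform across the infinitely many full branches; this hinges on the fact that \eqref{Jack Reg} is formulated in terms of $d_{NU}(F^R x, F^R y)$ rather than $d_{NU}(x,y)$, which supplies the decisive factor of $\be$ per return step and turns the recursion $L\mapsto L\be + C$ into a contraction.
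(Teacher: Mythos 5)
The paper gives no proof of this statement; it is cited verbatim from Young \cite{Y2}. Your strategy --- induce to the base $\Del_0$, use the bounded-distortion estimate \eqref{Jack Reg} together with the one-step metric contraction $d_{NU}(x,y)=\be\,d_{NU}(F^Rx,F^Ry)$ to get a log-Lipschitz cone contraction for the transfer operator $P_R$ of the return map, obtain a fixed density $h_0$ there, and lift to the tower by taking $h$ constant along columns --- is the standard argument and is essentially the one underlying Young's construction, so the overall route is fine. The lift computation ($Ph=h$ on upper floors because $P$ merely shifts down with Jacobian one, and on the base because the preimages of $(x,0)$ carry Jacobian $JF^R$ so $Ph(x,0)=P_Rh_0(x)=h_0(x)$) is correct, as is the uniqueness-by-induction argument: one should just phrase it as saying the ratio $h_0'/h_0$ is fixed by the normalized transfer operator $\cA_R g = P_R(gh_0)/h_0$, hence constant by ergodicity of $(\Del_0,F^R,h_0\nu_0)$, rather than as literal $F^R$-invariance of the ratio.

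Two details to correct. First, $d_{NU}$ is not zero between points on different floors; by definition $s_{NU}(x,y)=0$ there, so $d_{NU}(x,y)=\be^0=1$. The Lipschitz conclusion still holds --- within a floor the constant carries over from $h_0$ because the floor identification is an isometry for $d_{NU}$, and across floors the bound is trivial since $h$ is bounded and $d_{NU}=1$ --- but the stated reason is wrong. Second, the normalization step is incorrect as written: with $h(x,k)=h_0(x,0)$ one has $\int_\Del h\,dm = \int_{\Del_0} R\,h_0\,d\nu_0$, and this is not determined by $m(\Del)=\int R\,d\nu_0=1$ together with $\int h_0\,d\nu_0=1$. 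What one actually uses is that $h_0$ is bounded (which your cone argument does supply, since the log-Lipschitz bound on a diameter-one space together with $\int h_0\,d\nu_0=1$ pins $h_0$ between $e^{-L}$ and $e^{L}$) and $\int R\,d\nu_0<\infty$, whence $\int_\Del h\,dm<\infty$; one then divides $h$ by this total mass. Both slips are local and do not affect the viability of the argument.
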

We remark that this theorem was also  obtained in \cite{Y1} for uniform Young towers, and that in this case the function $h$ is Lipschitz continuous w.r.t. $d_U$.

As discussed in Section \ref{sec1}, when $q_i(n)=in$ we consider uniform Young towers
with exponential tails, namely we assume that there exist constants $p>0$ and $q>0$ so that for all $n\geq1$,
\begin{equation}\label{ExTails}
m\{x:\,R(x)>n\}\leq qe^{-pn}.
\end{equation}
Of course, this is equivalent to having exponential tails $\nu_0\{x:\,R(x)>n\}\leq q_1e^{-p_1n}$ for $\nu_0$ with some $q_1,p_1>0$.
Finally, we will assume in this paper that the tower is aperiodic in the sense that $\gcd\{R_j\}=1$. This is equivalent to $F$ being mixing with respect to $\mu$.

\subsection{Markov chains on towers}\label{MC}
Let $(\Del,m,F)$ be an aperiodic uniform Young tower satisfying \eqref{ExTails}, and let $\mu=hdm$ be its  unique absolutely continuous invariant measure. Let $P$ be the transfer operator defined by \eqref{P def}, and consider the operator $\cA$ given by $\cA g=P(gh)/h$.
 Let $\{\xi_n:\,\,n\geq0\}$ be the stationary Markov chain on $\Del$ whose initial distribution is $\mu$, having $\cA$ as its transition operator. Namely, for every $n\in\bbN$ and a Borel measurable set $\Gamma\subset\Del$ we have
 $$
\bbP(\xi_n\in \Gamma|\xi_{n-1},...,\xi_1)=\bbP(\xi_n\in \Gamma|\xi_{n-1})
 =h(\xi_{n-1})^{-1}\sum_{y\in\Gamma: Fy=\xi_{n-1}}\frac{h(y)}{J F(y)}.
 $$
 Let $X_0$ be a $\Del$-valued random variable whose distribution is $\mu$. Then, it follows by induction  that for every $n\in\bbN$ we have
\begin{equation}\label{Inversion}
(F^{n-1}X_0,...,FX_0, X_0)\overset{d}{=}(\xi_0,\xi_1,...,\xi_{n-1})
\end{equation}
where $\overset{d}{=}$ stands for equality in distribution. 
Let $\ell\in\bbN$ and set $\Del^\ell=\Del\times\Del\times\dots\times\Del$ ($\ell$ times).
 Let $G:\Del^\ell\to\bbR$ be a bounded function so that with some $K>0$ for all $(x_1,...,x_\ell), (y_1,...,y_\ell)\in\Del^{\ell}$ we have
\begin{equation}\label{G Hold1}
 |G(x_1,...,x_\ell)-G(y_1,...,y_\ell)|\leq K\sum_{j=1}^{\ell}d_{U}(x_i,y_i).
\end{equation}
For each $N$ set
\[
S_N^{\{q_j\}}G=\sum_{n=1}^N G(\xi_{n},\xi_{2n},...,\xi_{\ell n})
\]
where $q_j(n)=jn$. We also set 
\begin{equation}\label{bar G def2}
\bar G=\int G(x_1,x_2,...,x_\ell)d\mu(x_1)d\mu(x_2)\cdots d\mu(x_\ell).
\end{equation}
The main result in this section is an LCLT for the sequence of random variables $S_N^{\{q_i\}}G$.

Before discussing the LCLT, let us present our results concerning the CLT.
\begin{theorem}[CLT and asymptotic variance]\label{CLT1}
Suppose that \eqref{Jack Reg} and \eqref{ExTails} hold true and that $\gcd\{R_j\}=1$. Moreover, 
assume that  $G$ is a bounded function satisfying \eqref{G Hold1}. Then:

(i) the limit
\[
D^2=\lim_{N\to\infty}\frac 1N\bbE\left[\big(S_N^{\{q_j\}}G-\bar G N\big)^2\right]
\]
exists.
Moreover, the sequence $N^{-\frac12}\big(S_N^{\{q_j\}}G-\bar G N\big)$ converges in distribution as $N\to\infty$ towards a centered normal random variable with variance $D^2$.

(ii)  $D^2=0$ if and only if $G-\bar G$ is an $L^2$-coboundary with respect to the map $F\times F^2\times\cdots\times F^\ell$.
\end{theorem}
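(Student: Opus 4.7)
The plan is to derive both parts of Theorem \ref{CLT1} by combining standard Young-tower spectral theory with the nonconventional CLT machinery already developed for other classes of chains in \cite{KV1, HK1, HK2, book}. The argument splits into three steps: (a) extract a sufficiently strong mixing/spectral property for $\{\xi_n\}$ from the tower assumptions, (b) feed this into the existing nonconventional CLT to get part (i), and (c) adapt the variance-degeneracy characterization of \cite{HK2, PolyPaper} to the present metric-space setting to get part (ii).

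\textbf{Spectral setup and CLT.} Under \eqref{Jack Reg}, \eqref{ExTails} and $\gcd\{R_j\} = 1$, classical Young-tower theory \cite{Y1,Y2} gives that $\cA g = P(gh)/h$ is quasi-compact on an appropriate Banach space of bounded $d_U$-H\"older functions, with a simple leading eigenvalue $1$ and a spectral gap. From this one extracts exponential decay of correlations for $d_U$-H\"older observables, and in particular exponential $\beta$-mixing of $\{\xi_n\}$ with respect to its natural filtration $\cF_{n,m}=\sigma(\xi_n,\ldots,\xi_m)$. Applying the Kifer-Varadhan decomposition $G = \bar G + \sum_{j=1}^\ell G_j$, with $\int G_j(x_1,\ldots,x_{j-1},z)\,d\mu(z) = 0$, the summands $G_j(\xi_n, \ldots, \xi_{jn})$ decorrelate quickly in $|n-m|$, and cross-scale covariances between levels $j\neq j'$ are similarly bounded by the same mixing estimates. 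The existence of $D^2$ and asymptotic normality then follow by the blocking/martingale-approximation argument of \cite{KV1, HK1}, which only uses exponential mixing, boundedness of $G$, and the zero-mean property of each $G_j$.

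\textbf{Variance characterization.} For part (ii) I would follow the strategy of \cite{HK2, PolyPaper}. The reversal identity \eqref{Inversion} allows us to express finite-dimensional joint distributions of $(\xi_n, \xi_{2n},\ldots,\xi_{\ell n})$ in terms of iterates of the tower map $F$, and thus to match the asymptotic covariance sum $N^{-1}\bbE[(S_N^{\{q_j\}}G - N\bar G)^2]$, scale by scale, with the asymptotic Birkhoff variance of $G - \bar G$ under the product map $T := F\times F^2\times\cdots\times F^\ell$ on $(\Del^\ell, \mu^\ell)$. Degeneracy of the latter is then the usual $L^2(\mu^\ell)$-coboundary condition $G - \bar G = g - g\circ T$, and this equivalence transports back to the chain.

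\textbf{Main obstacle.} The most delicate piece is the coboundary characterization: the arguments in \cite{HK2, PolyPaper} are formulated for vector-valued $X_n$ and invoke certain spectral/Fourier-theoretic ingredients (orthogonal decompositions of $L^2$, identification of the variance with a matrix coefficient of a resolvent, and Gordin-type construction of the transfer function $g$) that implicitly use the vector-space structure. Verifying the assertion that for bounded $G$ the same argument goes through on a general metric state space amounts to checking that each such ingredient has an intrinsic counterpart on $(\Del^\ell, \mu^\ell)$, and in particular that the constructed $g$ lies in $L^2(\mu^\ell)$ and not merely in some weaker space; this step, rather than the CLT itself, is where I expect the real work to lie.
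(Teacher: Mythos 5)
Your proposal has two genuine gaps, both flagged (at least implicitly) by the paper itself.

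First, you propose to close part (i) by "the blocking/martingale-approximation argument of \cite{KV1, HK1}." The paper explicitly rejects this route: in the discussion following Theorem~\ref{CLT} it notes that the martingale-approximation estimates of \cite{KV1} depend on the dimension of the ambient vector space, and hence do not transfer to a chain $\{\xi_n\}$ valued in a metric space such as the tower $\Del$. This is exactly the present situation. The paper instead proves the CLT via Stein's method and a (strong) dependency graph, following \cite[Ch.~1]{book} (and Section~4 of this paper), not via a Gordin/martingale decomposition. So the core probabilistic engine you invoke is precisely the one the author says cannot be used here.

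Second, the step ``quasi-compactness $\Rightarrow$ exponential decay of correlations for $d_U$-H\"older observables, and in particular exponential $\beta$-mixing of $\{\xi_n\}$ with respect to its natural filtration $\cF_{n,m}=\sigma(\xi_n,\ldots,\xi_m)$'' does not follow. Decay of correlations for H\"older observables controls only smooth test functions; $\beta$- or $\phi$-mixing requires control over all measurable sets in the relevant $\sigma$-algebras, and the natural filtration of a tower Markov chain is in general \emph{not} $\phi$-mixing (this is one of the reasons the paper develops the two-parameter Assumption~\ref{AssMix} with separate $\phi(n)$ and $\beta_2(n)$). What the paper actually does is use Korepanov's semi-conjugacy \cite{Kor1} (via Propositions~\ref{MixProp} and~\ref{CLTPROP}) to realize $X_n = F^n X_0$ as a function of a two-sided Bernoulli sequence, and to build from that an \emph{enlarged} filtration $\cF_{n,m}$ for which $\phi(n)\equiv 0$ together with exponential approximation rates $\beta_p(r)\leq Ce^{-cr/p}$. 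The identity \eqref{Inversion} then converts the Markov chain $\{\xi_n\}$ into $\{X_{\ell N - n}\}$, so this Bernoulli structure applies. With $\phi(n)\equiv 0$ the dependency graph in Section~\ref{Step2} becomes a genuine (exact-independence) dependency graph, which is what makes the Stein argument clean. Your sketch has no counterpart to the semi-conjugacy/Bernoulli step, and without it the mixing you claim is not available.

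On part (ii) you are closer to the paper: the characterization does proceed as in \cite{HK2,PolyPaper}, and your observation that one must make the coboundary construction intrinsic to $(\Del^\ell,\mu^\ell)$ is reasonable. The paper's view is that for bounded $G$ the required estimates in \cite{HK2,PolyPaper} only need summability of $(n+1)(\phi(n)+\beta_2(n)^\kappa)$, which the Bernoulli-shift representation supplies, so the adaptation is routine; but again you need the Korepanov setup to even write down the mixing quantities that enter.
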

This theorem is proved similarly to Theorems \ref{CLT} and \ref{Dthm} taking into account Korepanov's semi-conjugacy \cite{Kor1} and Section \ref{BS}.

In what follows
we will  assume that the function $G_\ell$ defined by \eqref{G ell}
is not a coboundary with respect to $F_\ell=F\times F^2\times\cdots\times F^\ell$.
This means that
\[
D_\ell^2:=\lim_{N\to\infty}\frac 1N\bbE\left[(S_{N}^{\{q_j\}}G_\ell)^2\right]>0.
\]
 Note that $D^2>0$ if $D_\ell^2>0$ since $G_\ell$ admits a coboundary representation with respect to $F_\ell$ if $G$ admits such a representation.

We will obtain the LCLT under the following.
\begin{assumption}[Regularity around a periodic orbit]\label{Second continuity of F}
(i) The map $F$ has a periodic point $x_0$.  
\vskip0.1cm
(ii) Let $n_0$ be the period of $x_0$. Then
the map $y\to G_y:=G(y,\cdot)$, $y\in \Del^{\ell-1}=\Del\times\Del\times\dots\times\Del$
is continuous at the points 
$y=(F^k x_0,F^{2k}x_0,...,F^{(\ell-1)k}x_0),\,k=0,1,...,n_0-1$ when considered as a map from 
$\Del^{\ell-1}$ to the space of bounded  Lipschitz continuous functions  equipped with the norm $\|g\|_{\text{Lip},U}=\sup|g|+\text{Lip}_{U}(g)$, where $\text{Lip}_{U}(g)$ is the smallest Lipschitz constant of $g$ with respect to the metric $d_{U}$.
\end{assumption}

As in the case when $q_\ell$ grows faster than linearly, in order to present the LCLT we will need to distinguish between non-arithmetic and lattice cases. In contrast with that case, this will be  done in a close manner to the ``conventional case" $\ell=1$ and $q_1(n)=n$, as described in the following.
Under Assumption \ref{Second continuity of F} we define a function $G_{x_0,n_0}:\Del\to\bbR$ by
\begin{equation}\label{F x0 m}
 G_{x_0,n_0}(x)=
\sum_{k=0}^{n_0-1}G(F^kx_0,F^{2k}x_0,...,F^{(\ell-1)k}x_0,F^{\ell k}x)=\sum_{k=0}^{n_0-1}G\circ F_\ell^k( x_0,x_0,...,x_0,x)
\end{equation} 
where $F_\ell=F\times F^2\times\cdots\times F^\ell$. We call the case ``non-arithmetic" if there exists no real nonzero $t$ so that with some $\la\in\mathbb S^1$ and a Lipschitz continuous non-vanishing function $g$ we have
\[
e^{it  G_{x_0,n_0}}=\la g/(g\circ F^{n_0\ell}),\,\mu\text{-a.s.}
\]
In other words, $G_{x_0,n_0}$ is non-arithmetic  with respect to $(F^{n_0\ell},\mu)$ in the classical sense (see \cite{GH,HH,GO}).

\begin{theorem}[LCLT in the non-arithmetic case]\label{Non lattice LLT3}
Assume that \eqref{Jack Reg1} and \eqref{ExTails} hold true, and that $\gcd\{R_j\}=1$. 
Suppose also that $D^2_\ell>0$ (so $D^2>0$) and that $G$ is a bounded function satisfying \eqref{G Hold1}  so that Assumption \ref{Second continuity of F} holds true. Then, in the non-arithmetic case for any continuous function $g:\bbR\to\bbR$ with compact support (or an indicator of a bounded closed interval) we have
\[
\lim_{N\to\infty}\sup_{u\in\bbR}\left|\sqrt{2\pi N}D\bbE[g(S_{N}^{\{q_j\}}G-u)]-e^{-\frac{(u-\bar G N)^2}{2ND^2}}\int g(x)dx\right|=0.
\]
\end{theorem}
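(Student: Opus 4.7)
The plan is to invoke the general characteristic-function criterion of Theorem \ref{General LLT } in combination with the CLT of Theorem \ref{CLT1}. The local near-origin behavior of $\vf_N(t)=\bbE[e^{itS_N^{\{q_j\}}G}]$ needed by that criterion is already essentially encoded in the CLT (via a quadratic expansion around $t=0$), so the substantive task is to prove that for every compact $K\subset\bbR\setminus\{0\}$ one has $\sup_{t\in K}|\vf_N(t)|=o(N^{-1/2})$.

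First I would carry out the conditioning step outlined in the introduction. Using Korepanov's semi-conjugacy \cite{Kor1} between the tower and a Bernoulli shift (available thanks to the exponential tails \eqref{ExTails}) together with the Markov structure of $\{\xi_n\}$ and the transfer-operator identity, one constructs a probability-preserving system $(\Om,\cF,P,\te)$ and random operators $\cL_{it}^\om$, each a perturbation of $\cL_0^\om=P^\ell$, satisfying
\[
|\vf_N(t)|\leq \int\bigl|\mu\bigl(\cL_{it}^{\te^{a_\ell N}\om}\circ\cdots\circ\cL_{it}^{\te\om}\circ\cL_{it}^{\om}\textbf{1}\bigr)\bigr|\,dP(\om)+o(N^{-1/2})
\]
for a constant $a_\ell>0$. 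It then suffices to show that the right-hand integral is $o(N^{-1/2})$ uniformly in $t\in K$, and I would split $K$ into a small punctured neighborhood of $0$ and its complement.

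For $t$ close to (but not equal to) $0$ I would prove that the complex operators $\cL_{it}^\om$ uniformly contract, in the Dubois--Rugh sense \cite{Rug,Dub1,Dub2}, the canonical complexification of the real cones introduced in \cite{Viv2}. Verifying that these complexified cones are proper, reproducing, and have finite aperture in the sense needed for Rugh's projective-contraction theory, and then showing that the perturbed random operators $\cL_{it}^\om$ map the cone strictly inside itself with a uniform diameter gap, is the principal technical obstacle; it is the material that Section \ref{sec tower} is designed to prepare. Once available, projective contraction delivers geometric decay in $N$ of the integrand for each small nonzero $t$, uniformly in $\om$, and dominated convergence yields the desired bound.

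For $t$ in a compact set bounded away from $0$ I would reduce the random product to a deterministic spectral problem via the periodic orbit. Under Korepanov's semi-conjugacy the orbit of $x_0$ corresponds to a cylinder $C\subset\Om$ of positive $P$-measure on which, by Assumption \ref{Second continuity of F}, the random operator $\cL_{it}^\om$ is close in Lipschitz operator norm to a deterministic twisted transfer operator $\cM_{it}$ associated with the iterate $F^{n_0\ell}$ and the weight $e^{itG_{x_0,n_0}}$. The uniform Jacobian regularity \eqref{Jack Reg1} together with the exponential tails give quasi-compactness of $\cM_{it}$ on Lipschitz functions, while the non-arithmeticity of $G_{x_0,n_0}$ forces its spectral radius to be strictly less than one for every $t\in K$, with a uniform gap. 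An ergodic (or Borel--Cantelli) counting argument then shows that a typical $\te$-orbit $\om,\te\om,\ldots,\te^{a_\ell N}\om$ visits $C$ a positive fraction of the time, and each visit contributes a fixed multiplicative loss to the operator product. This produces a uniform bound $\sup_{t\in K}|\vf_N(t)|\leq e^{-c_K N}$, and inserting both regimes into Theorem \ref{General LLT } completes the proof.
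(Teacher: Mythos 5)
The skeleton of your argument — reduce to the criterion of Theorem~\ref{General LLT }, condition to pass to products of random transfer operators, control small $t$ by complex cone contraction and $t$ bounded away from zero by quasi-compactness at the periodic orbit — matches the paper's strategy. The large-$t$ part in particular lines up with Section~\ref{TO2}: Assumption~\ref{Second continuity of F} yields parametric continuity of $P_{it}^{\bar y,m_0}$ near the periodic orbit (Lemma~\ref{ContLem}), quasi-compactness of the deterministic operator at $\bar v_0=(x_0,\ldots,x_0)$ together with non-arithmeticity gives a spectral gap (Lemma~\ref{Lemma 5.9}, Corollary~\ref{Cor 5.10}), and a quantitative recurrence estimate (via \cite[Section 3]{ChazGO}) turns cylinder visits into a uniform exponential bound, exactly as you describe. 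Your description of the conditioning step, leaning on the Markov structure plus Korepanov's Bernoulli-shift approximation, is also consistent with Proposition~\ref{Basic}.

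The genuine gap is your first paragraph's claim that the near-origin input required by Theorem~\ref{General LLT } is ``already essentially encoded in the CLT via a quadratic expansion around $t=0$.'' It is not. Assumption~\ref{DecRate1} demands $|\vf_N(t)|\le c_0 e^{-d_0Nt^2}+b_N$ for \emph{all} $t\in[-\del_0,\del_0]$ with $\sqrt N\,b_N\to0$, i.e.\ a uniform Gaussian-type upper bound on a fixed interval, whereas the CLT of Theorem~\ref{CLT1} only gives pointwise convergence of $\vf_N(t/\sqrt N)$ and says nothing of this sort. Nor does your cone-contraction step supply it: ``geometric decay in $N$ for each small nonzero $t$'' gives $e^{-c(t)N}$ at fixed $t$, but the Fourier inversion behind Theorem~\ref{General LLT } needs the explicit $t^2$-dependence so that the bound degenerates correctly as $t\to0$. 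In the paper this is the content of Proposition~\ref{Prp}, and it requires three ingredients you do not mention: the random complex RPF theorem (Theorem~\ref{RPFthm}, built on the cone results of Section~\ref{sec tower}); a second-order Taylor expansion of the random pressure $\Pi_{\om,N}(it)$ at $0$, using $\Pi_{\om,N}'(0)=0$ and $|\Pi_{\om,N}''(0)-\sig_{\om,N}^2|\le C_0$ to obtain $\|\cL_{it}^{\Xi,N}\|\le Ce^{-t^2\sig_{\Xi,N}^2/2+C_0t^2/2+c_0N|t|^3}$ (Lemma~\ref{Lem III}); and a concentration estimate showing $\sig_{\Xi,N}^2\ge c_1N$ off a set of probability $O(1/N)$ (Lemma~\ref{Lemm IIII}, resting on the hypothesis $D_\ell^2>0$ together with \cite[Section 3]{ChazGO} and \cite[Proposition 5.2.1]{Hafouta ETDS}). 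Without these steps Assumption~\ref{DecRate1} remains unverified, and the proof does not close.
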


Next, we call the case a lattice one if $G$ is integer-valued and the function
$G_{x_0,n_0}$ cannot be written in the form
\begin{equation}\label{LLT NonCon lattice equality}
G_{x_0,n_0}=a+\be-\be\circ F^{\ell n_0}+q_0\textbf{k},\,\,\mu\text{-a.s.}
\end{equation}
for some  $q_0>1$, $a\in\bbR$, $\be:\Del\to\bbR$ and an integer valued function $\textbf{k}:\Del\to\bbZ$. This means that $e^{it G_{x_0,n_0}}$ is not cohomologous to a constant when $0<|t|<2\pi$.

\begin{theorem}[LCLT in the lattice case]\label{Lattice LLT3}
Assume that \eqref{Jack Reg1} and \eqref{ExTails} hold true, and that $\gcd\{R_j\}=1$.
Suppose also that $D^2_\ell>0$ (so $D^2>0$). Then, in the lattice case for any continuous function $g:\bbR\to\bbR$ with compact support (or an indicator of a bounded closed interval) we have
\[
\lim_{N\to\infty}\sup_{u\in\bbZ}\left|\sqrt{2\pi N}D\bbE[g(S_{N}^{\{q_j\}}G-u)]-e^{-\frac{(u-\bar G N)^2}{2ND^2}}\sum_{k\in\bbZ}g(k)\right|=0.
\]
\end{theorem}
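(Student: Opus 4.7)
The strategy is to apply the general lattice LCLT criterion (Theorem \ref{General LLT }) to the sequence $Z_N = S_N^{\{q_j\}}G$. Since Theorem \ref{CLT1} already supplies the CLT with $D^2 > 0$, the task reduces to bounding the characteristic functions $\phi_N(t) = \bbE\bigl[e^{it(Z_N - N \bar G)}\bigr]$ on the compact interval $[-\pi,\pi]$: after the Gaussian contribution from $|t| \leq A N^{-1/2}$ is isolated via the CLT, it suffices to show that $\sqrt{N}\int_{A N^{-1/2} \leq |t| \leq \pi} |\phi_N(t)|\, dt \to 0$ as first $N\to\infty$ and then $A \to \infty$. The first step of the proof is to invoke the conditioning argument outlined in Section \ref{sec1}: using Korepanov's semi-conjugacy \cite{Kor1} of the Young tower with a Bernoulli shift (for which the exponential tails \eqref{ExTails} are essential), one obtains, uniformly on compact sets of $t$,
\[
|\phi_N(t)| \leq \int_\Om \bigl|\mu\bigl(\cL_{it}^{\te^{a_\ell N}\om}\circ\cdots\circ\cL_{it}^{\te\om}\circ\cL_{it}^{\om}\mathbf{1}\bigr)\bigr|\, dP(\om)+ o(N^{-1/2}),
\]
where the $\cL_{it}^{\om}$ are random perturbations of $\cL_0^\om = P^\ell$. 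The problem is thereby reduced to controlling products of random complex transfer operators.

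\textbf{Two Ranges of $t$.} I split the integration range into an intermediate range $A N^{-1/2} \leq |t| \leq \delta_0$ and a far range $\delta_0 \leq |t| \leq \pi$ for a small $\delta_0 > 0$. On the intermediate range, apply the complex cone contraction results of Section \ref{sec tower}: the operators $\cL_{it}^{\om}$ contract the canonical complexification (in the sense of \cite{Rug, Dub1, Dub2}) of the real cones from \cite{Viv2}, producing an exponential decay bound of the shape $|\phi_N(t)| \leq C e^{-c N t^2}$; the corresponding contribution to $\sqrt{N}\int|\phi_N|$ vanishes as $A \to \infty$ by a Gaussian tail estimate, exactly as in the non-arithmetic Theorem \ref{Non lattice LLT3}. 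On the far range, group the composition into blocks of length $n_0$ and use Assumption \ref{Second continuity of F} to replace $\cL_{it}^{\om}$ along each block by its deterministic ``frozen'' analog at the periodic orbit $\{x_0, Fx_0,\ldots,F^{n_0-1}x_0\}$; the uniform Jacobian regularity \eqref{Jack Reg1} together with the exponential tails make this replacement effective in the norm $\|\cdot\|_{\text{Lip},U}$. The frozen block product reduces to an iterate of the classical twisted transfer operator $g \mapsto P^{n_0\ell}(e^{it G_{x_0,n_0}} g)$ associated to $F^{n_0\ell}$. The negation of \eqref{LLT NonCon lattice equality} is precisely the statement that $e^{it G_{x_0,n_0}}$ is not multiplicatively cohomologous to a constant over $(F^{n_0\ell},\mu)$ for any $t\in[-\pi,\pi]\setminus\{0\}$, so by the classical Nagaev--Guivarc'h argument \cite{GH, HH} the twisted operator is quasi-compact with spectral radius strictly less than $1$; a standard compactness and continuity argument over $t\in[\delta_0,\pi]$ then delivers a uniform decay rate $\rho(t) \leq \rho_0 < 1$, hence exponential decay of $|\phi_N(t)|$ on the far range.

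\textbf{Main Obstacle.} The most delicate step is the freezing procedure on the far range $[\delta_0,\pi]$: one needs quantitative, not merely qualitative, estimates when passing from the $\om$-wise random products to the deterministic periodic-orbit operator. This is precisely why Assumption \ref{Second continuity of F} is phrased with continuity in the Lipschitz norm $\|\cdot\|_{\text{Lip},U}$ at the iterates of $x_0$, and why the uniform tower hypothesis \eqref{Jack Reg1} (rather than the non-uniform \eqref{Jack Reg}) enters at this stage: together they ensure that the perturbation error incurred when freezing each block at the periodic orbit is controlled in a norm on which the quasi-compact deterministic operator has a spectral gap, so that the gap survives on the random product after finitely many iterates. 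Once this reduction is in place, combining the intermediate and far range estimates with the Gaussian piece and feeding the result into Theorem \ref{General LLT } yields the lattice LCLT in its claimed form.
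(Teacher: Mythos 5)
Your overall architecture matches the paper's (conditioning step to pass to random transfer-operator products, complex cone contraction near $t=0$, quasi-compactness of the periodic-orbit operator away from $t=0$, then Theorem \ref{General LLT }), but the far-range step $[\delta_0,\pi]$ has a genuine gap. You write that one should ``group the composition into blocks of length $n_0$ and use Assumption \ref{Second continuity of F} to replace $\cL_{it}^{\om}$ along each block by its deterministic frozen analog at the periodic orbit.'' This replacement is not available block-by-block: the random operator at time $j$ is the twist by $G_\ell(\om_j,\cdot)$, and for a generic realization $\om_j\in\Del^{\ell-1}$ is nowhere near $(x_0,\dots,x_0)$. Assumption \ref{Second continuity of F} (via Lemma \ref{ContLem}) controls the approximation error only when $\om_j$ lies in a small cylinder $\mathscr C_{\ell-1}$ around the periodic point; elsewhere the only estimate is the uniform bound $B_J$ from Proposition \ref{Prop LY}, which gives no decay. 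Thus a proof that freezes every block would be false.

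What actually closes the argument (Proposition \ref{LeP}) is a density-of-visits mechanism that you do not invoke: one fixes a cylinder $\mathscr C_{\ell-1}$ and a block length $n_0n_J$ so that the frozen operator on a block satisfies $\|(P_{it}^{\bar v_0,n_0})^{n_J}\|_W<\tfrac{1}{4B_J}$ uniformly on $J$, and then appeals to the exponential concentration inequality of \cite[Section 3]{ChazGO} for the product map $T=F\times\cdots\times F^{\ell-1}$ to show that, outside a set of realizations of probability $O(e^{-c_1N})$, the orbit $(T^j\bar y_0)_{j<N}$ visits $\mathscr C_{\ell-1}$ at least $cN$ times. Each such visit contributes a contraction factor $<\tfrac1{2B_J}$ after using Lemma \ref{ContLem}, while the uncontrolled stretches in between only cost the bounded factor $B_J$; since the number of good visits is linear in $N$, the random product decays like $2^{-c_2N}$. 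Finally, one has to truncate the tower at floor $\approx C\ln N$ to convert the bound in the weighted norm $\|\cdot\|_W$ into the required $L^1(m)$ estimate. Your ``Main Obstacle'' paragraph correctly flags that quantitative control at the periodic orbit is delicate, but it does not identify this positive-frequency-of-returns mechanism, nor the concentration estimate that makes it uniform in $\om$ and hence usable inside the expectation over realizations; without both, the far-range decay of $|\varphi_N(t)|$ does not follow.
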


\section{Nonconventional CLT limit theorems with nonlinear indexes}\label{Sec4}
Let $(\Om,\cF,\bbP)$ be the probability space from Section \ref{sec2} on which $\{X_n\}$ is defined.
First, by replacing $G$ with $G-\bar G$, where $\bar G$ is defined in \eqref{bar G def1}, we can assume without loss of generality that $\bar G=0$.
For every $n$ and $r$ in $\bbN$ let us take an $\cX$-valued and $\cF_{n-r,n+r}$ measureable random variable $X_{n,r} $ so that 
\begin{equation}\label{X n r }
\|d(X_n,X_{n,r})\|_{L^2}\leq 2\beta_2(r)=O(r^{-\te_2}).
\end{equation}
The difference in Theorem \ref{CLT} in comparison with \cite[Ch.1]{book} is that in the present paper the sequence $\{X_n\}$ is not vector-valued.
 However, this was only needed in \cite[Ch.1]{book} since the functions considered there were not bounded, and instead they had polynomial growth. 
 Using $X_{n,r}$ instead of $\bbE[X_n|\cF_{n-r,n+r}]$, 
the proof of the CLT  proceeds almost exactly as in \cite[Ch.1]{book}.
 For readers' convenience we will provide most of the details.

\subsubsection{Expectation estimates and the asymptotic variance}
Recall that the $\phi$-mixing (dependence) coefficient between two sub-$\sig$-algerbras $\cG,\cH\subset\cF$ is defined by
\begin{equation}\label{GenPhi}
\phi(\cG,\cH)=\sup\{|\bbP(B|A)-\bbP(B)|:\,A\in\cG, B\in\cH, \bbP(A)>0\}.
\end{equation}
By \cite[Ch. 4]{Br}, it can also be written in the form
\begin{equation}\label{PhiId}
\phi(\cG,\cH)=\frac12\sup\{\|\bbE[h|\cG]-\bbE[h]\|_{L^\infty}:\,h\in L^\infty(\Om,\cH), \|h\|_{L^\infty}\leq 1\}.
\end{equation}
Using \eqref{PhiId}, we have the following.
\begin{lemma}\label{thm3.11-StPaper}
Let $\cG_1,\cG_2\subset\cF$ be two sub-$\sigma$-algebras of $\cF$ and for $i=1,2$ let
$V_i$ be an $\cX^{d_i}$-valued random variable which is $\cG_i$-measurable. 	Let us denote by $\mu_i$ the distribution of $V_i$ and  set
$d=d_1+d_2$ and $\mu=\mu_1\times\mu_2$. Denote by $\zeta$
the distribution of  $(V_1,V_2)$
and consider the measure $\nu=\frac12(\zeta+\mu)$. Let $\cB$  be the Borel $\sigma$-algebra on $\cX^d$ and 
$H\in L^\infty(\cX^d,\cB,\nu)$. Then $\bbE[H(V_1,V_2)|\cG_1]$ and $\bbE[H(v,V_2)]$ exist for $\mu_1$-almost
every $v\in\cX^{d_1}$ and
\begin{equation}\label{Meas-StPaper}
|\bbE[H(V_1,V_2)|\cG_1]-h(V_1)|
\leq2\|H\|_{L^\infty(\cX^d,\cB,\nu)}\phi(\cG_1,\cG_2),\,\,\bbP-a.s.
\end{equation}
 where $h(v)=\bbE[H(v,V_2)]$ and a.s. stands for almost surely.
\end{lemma}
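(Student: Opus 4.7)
The plan is to reduce to the classical $\phi$-mixing inequality \eqref{PhiId} by approximating $H$ by functions in which the $V_1$-variable is ``frozen'' into a simple-function form. First I would settle the measurability and essential-boundedness issues. Since $\nu=\frac12(\zeta+\mu)$ dominates both $\zeta$ (the law of $(V_1,V_2)$) and $\mu=\mu_1\times\mu_2$, any $H\in L^\infty(\cX^d,\cB,\nu)$ is $\zeta$-essentially bounded, so $H(V_1,V_2)\in L^\infty(\bbP)$ with norm at most $M:=\|H\|_{L^\infty(\nu)}$; it is also $\mu$-essentially bounded, so by Fubini the map $v\mapsto H(v,\cdot)$ is $\mu_2$-essentially bounded by $M$ for $\mu_1$-a.e.\ $v$, making $h(v)=\int H(v,v_2)\,d\mu_2(v_2)$ well-defined with $|h(v)|\le M$ there, and since $V_1\sim\mu_1$ the random variable $h(V_1)$ is defined $\bbP$-a.s. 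After rescaling we may and will assume $M=1$.

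Next I would establish the inequality on a convenient class of elementary functions. For a product $H(v_1,v_2)=f(v_1)g(v_2)$ with $f,g$ bounded and measurable, the $\cG_1$-measurability of $V_1$ gives
\[
\bbE[H(V_1,V_2)\mid\cG_1]-h(V_1)=f(V_1)\bigl(\bbE[g(V_2)\mid\cG_1]-\bbE[g(V_2)]\bigr),
\]
and \eqref{PhiId} applied to the $\cG_2$-measurable random variable $g(V_2)$ bounds this $\bbP$-a.s.\ by $|f(V_1)|\cdot 2\|g\|_\infty\phi(\cG_1,\cG_2)\le 2\|H\|_\infty\phi(\cG_1,\cG_2)$. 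The same computation extends to step functions of the form $H^*(v_1,v_2)=\sum_{j=1}^n\mathbf{1}_{B_j}(v_1)g_j(v_2)$, where $\{B_j\}_{j=1}^n$ is a finite measurable partition of $\cX^{d_1}$ and each $g_j$ is bounded and measurable. Intersecting the finitely many $\bbP$-null sets produced by \eqref{PhiId} for the $g_j(V_2)$'s, and using that for each $\omega$ exactly one indicator $\mathbf{1}_{B_j}(V_1(\omega))$ is non-zero, yields
\[
|\bbE[H^*(V_1,V_2)\mid\cG_1]-h^*(V_1)|\le 2\max_{j}\|g_j\|_\infty\phi(\cG_1,\cG_2)=2\|H^*\|_{L^\infty(\nu)}\phi(\cG_1,\cG_2)\quad\bbP\text{-a.s.}
\]

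To pass to a general $H$, I would approximate by such step functions via martingale convergence. Assuming separability of $\cX^{d_1}$ so that $\cB(\cX^{d_1})$ is countably generated, choose an increasing sequence of finite sub-$\sigma$-algebras $\cA_n\subset \cB(\cX^{d_1})$ whose union generates $\cB(\cX^{d_1})$, and set $H_n=\bbE_\nu[H\mid \cA_n\otimes \cB(\cX^{d_2})]$. Then each $H_n$ has the step form above with $\|H_n\|_{L^\infty(\nu)}\le 1$, and $H_n\to H$ in $L^1(\nu)$. Because $\nu$ dominates both $\zeta$ and $\mu$, this yields $\bbE[H_n(V_1,V_2)\mid\cG_1]\to\bbE[H(V_1,V_2)\mid\cG_1]$ and $h_n(V_1)\to h(V_1)$ in $L^1(\bbP)$; extracting a $\bbP$-a.s.\ convergent subsequence and applying the previous paragraph's bound to each $H_n$ produces the claimed inequality.

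The hard part is converting the ``pointwise in $v_1$, $\bbP$-a.s.\ in $\omega$'' estimate from \eqref{PhiId} into a single $\bbP$-a.s.\ estimate in which the frozen variable $v_1$ is replaced by $V_1(\omega)$: the null sets in \eqref{PhiId} depend on the particular $\cG_2$-measurable function, so one cannot naively substitute $v_1=V_1(\omega)$. This is why the proof is organised around finite step functions, where only finitely many $\bbP$-null sets must be intersected, and why the dominating measure $\nu$ is introduced, as it is the natural vehicle for converting an $L^1$-approximation of $H$ into simultaneous $L^1(\bbP)$-convergence on both sides of the inequality, which then promotes the $L^\infty$ bound along a subsequence.
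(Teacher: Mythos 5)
Your proof is correct, and it follows the same reduction to a step--function base case as the paper, but replaces the paper's two-stage approximation by a one-step martingale argument. The paper first upgrades the step-function bound to uniformly continuous $H$ by uniform approximation (a finite partition into small sets plus one large remainder), and then handles general $H\in L^\infty(\nu)$ by Lusin's theorem, approximating by continuous compactly supported functions in $L^1(\nu)$ with controlled $L^\infty$ norm. You instead take $H_n=\bbE_\nu[H\mid \cA_n\otimes\cB(\cX^{d_2})]$ for an increasing sequence of finite $\sigma$-algebras $\cA_n$ on $\cX^{d_1}$ generating $\cB(\cX^{d_1})$; each $H_n$ is already of the required step form, $\|H_n\|_{L^\infty(\nu)}\le\|H\|_{L^\infty(\nu)}$ since conditional expectation is an $L^\infty$-contraction, and martingale convergence gives $H_n\to H$ in $L^1(\nu)$. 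Both arguments then exploit the dominating measure $\nu$ in the same way, to convert $L^1(\nu)$-convergence into simultaneous $L^1(\bbP)$-convergence of $\bbE[H_n(V_1,V_2)\mid\cG_1]$ and of $h_n(V_1)$. Your route avoids any appeal to the topology of $\cX$ beyond needing $\cB(\cX^{d_1})$ to be countably generated (separability of $\cX$), whereas the paper's Lusin step implicitly requires $\nu$ to be a Radon measure. Either hypothesis is satisfied in the settings where the lemma is used. One small remark: your identification $2\max_j\|g_j\|_\infty=2\|H^*\|_{L^\infty(\nu)}$ should really be an inequality $\max_j\|g_j\|_{L^\infty(\mu_2)}\le\|H^*\|_{L^\infty(\nu)}$ for those $j$ with $\mu_1(B_j)>0$ (which is what \eqref{PhiId} actually requires, since it involves $\|g_j(V_2)\|_{L^\infty(\bbP)}$), and this inequality does hold because $\nu\ge\frac12\mu_1\times\mu_2$ ensures $\mu_2$ is absolutely continuous with respect to the relevant conditional marginal of $\nu$ on each such $B_j$.
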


The proof proceeds exactly as the proof of \cite[Lemma 1.3.11]{book}, and it is given here for readers' convenience.
\begin{proof}
Clearly $H$ is bounded $\mu$ and $\zeta$ almost surely. Thus,  
$\bbE[H(V_1,V_2)|\cG_1]$ exists  and existence of $\bbE[H(v,V_2)]$ ($\mu_1$-a.s.)
 follows from the Fubini theorem.
 Relying on (\ref{PhiId}), inequality (\ref{Meas-StPaper}) 
follows easily for functions 
of the form $G(v_1,v_2)=\sum_i\bbI_{\{v_1\in A_i\}}g_i(v_2)$, where $\{A_i\}$
is a measurable partition of the support of $\mu_1$ and $\bbI_{\{v_1\in A_i\}}=1$
when $v_1\in A_i$ and equals $0$ otherwise.
 Any uniformly continuous function
$H$ is a uniform limit of functions of the above form, which implies that (\ref{Meas-StPaper}) 
holds true for uniformly continuous functions. 
Finally, by Lusin's theorem, 
any  function $H\in L^\infty(\cX^d,\cB,\nu)$ is an $L^1$ (and a.s.) limit of a sequence 
$\{H_n\}$ of continuous functions with compact support satisfying  
$\|H_n\|_{L^\infty(\cX^d,\cB,\nu)}\leq\|H\|_{L^\infty(\cX^d,\cB,\nu)}$
 and (\ref{Meas-StPaper}) follows for all  $H\in L^\infty(\cX^d,\cB,\nu)$.
\end{proof}

Next, let $k_0\in\bbN$ and $U_i,\,i=1,2,...,k_0$ be  random variables so that $U_i$ is $\cX^{d_i}$-valued for some $d_i\in\bbN$, which are defined on the
probability space $(\Om,\cF,\bbP)$, and
$\{\cC_j: 1\leq j\leq s\}$ be a partition of $\{1,2,...,k_0\}$. 
Consider the random variables $U(\cC_j)=\{U_i: i\in\cC_j\}$, $j=1,...,s$, 
and let 
\[
U^{(j)}(\cC_i)=\{U_i^{(j)}: i\in\cC_j\},\,\, j=1,...,s
\]
be independent copies of the $U(\cC_j)$'s.
For each  $1\leq i\leq k_0$ let $a_i\in\{1,...,s\}$ be the unique index
such that $i\in\cC_{a_i}$, and 
for every  bounded Borel function $H:\bbR^{d_1+d_2+...+d_{k_0}}\to\bbR$ set
\begin{equation}\label{ExpDiffDef}
\cD(H)=\big|\bbE[H(U_1,U_2,...,U_{k_0})]-\bbE[H(U_1^{(a_1)},U_2^{(a_2)},...,U_{k_0}^{(a_{k_0})})]\big|.
\end{equation}
Relying on Lemma \ref{Meas-StPaper}, the following result is proved exactly as \cite[Corollary 1.3.11]{book}.
\begin{corollary}\label{lem3.1-StPaper}
Suppose that each $U_i$ is $\cF_{m_i,n_i}$-measurable, where $n_{i-1}<m_i\leq n_i<m_{i+1}$, 
$i=1,...,k_0$, $n_0=-\infty$ and $m_{k_0+1}=\infty$.  
Then, for every bounded
Borel function $H:\cX^{d_1+d_2+...+d_{k_0}}\to\bbR$,
\begin{equation*}
\cD(H)\leq 4\sup|H|\sum_{i=2}^{k_0}\phi(m_i-n_{i-1})
\end{equation*}
where $\sup|H|$ is the supremum of $|H|$. 
\end{corollary}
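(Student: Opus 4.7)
The plan is to bound $\cD(H)$ by triangulating through the fully independent product distribution. On a product probability space carrying the original sequence $(U_i)$, the group copies $(U^{(p)}(\cC_p))_{p=1}^{s}$, and an auxiliary family $W_1,\ldots,W_{k_0}$ of mutually independent random variables with $W_i\sim\mu_{U_i}$ (the three families being mutually independent), the triangle inequality gives
\begin{align*}
\cD(H) &\leq \bigl|\bbE[H(U_1,\ldots,U_{k_0})]-\bbE[H(W_1,\ldots,W_{k_0})]\bigr|\\
&\quad+\bigl|\bbE[H(W_1,\ldots,W_{k_0})]-\bbE[H(U_1^{(a_1)},\ldots,U_{k_0}^{(a_{k_0})})]\bigr|,
\end{align*}
and it suffices to bound each term by $2\sup|H|\sum_{i=2}^{k_0}\phi(m_i-n_{i-1})$.

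For the first term, iterate Lemma~\ref{thm3.11-StPaper} on the original space in reverse order: for $i=k_0,k_0-1,\ldots,2$ successively substitute the independent $W_i$ for $U_i$. At step $i$, the previously placed $W_j$ with $j>i$ are independent of everything on the original space and can be absorbed as external parameters by a conditional Fubini. Applying the Lemma with its $V_1=(U_1,\ldots,U_{i-1})$ (measurable in $\cF_{-\infty,n_{i-1}}$) and its $V_2=U_i$ (measurable in $\cF_{m_i,n_i}$), the separation yields $\phi(\cG_1,\cG_2)\leq\phi(m_i-n_{i-1})$, contributing $2\sup|H|\phi(m_i-n_{i-1})$ per step.

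For the second term, both distributions have identical marginals and both factorize across groups; the only discrepancy is that inside each group $\cC_p$ the $W$-law is a product while the group-copy law preserves the joint distribution of $(U_j)_{j\in\cC_p}$. Handle one group $p$ at a time on its own copy space $\Om^{(p)}$, which by construction inherits the $\phi$-mixing rate of the original, and iterate Lemma~\ref{thm3.11-StPaper} through the elements $j_1^{(p)}<j_2^{(p)}<\cdots$ of $\cC_p$ to swap the within-group joint for a product of marginals. This contributes $2\sup|H|\sum_{r}\phi(m_{j_{r+1}^{(p)}}-n_{j_r^{(p)}})$ per group; since $\phi$ is non-increasing and each consecutive pair in $\cC_p$ spans at least one adjacent gap of $\{1,\ldots,k_0\}$, summation over $p$ is bounded by $2\sup|H|\sum_{i=2}^{k_0}\phi(m_i-n_{i-1})$.

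The main obstacle, and the reason the constant is $4$ rather than $2$, is that Lemma~\ref{thm3.11-StPaper} always replaces by a globally independent variable, whereas the target must preserve within-group correlations. Passing through the fully independent intermediate distribution factors the problem into two iterative decoupling procedures on two different families of probability spaces: the original one for the first term, and the group copy spaces $\Om^{(p)}$ for the second.
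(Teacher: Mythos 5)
Your proof is correct and matches the mechanism by which the factor $4$ arises: the paper defers to \cite[Corollary 1.3.11]{book}, whose argument also triangulates through the fully independent product law and applies Lemma~\ref{thm3.11-StPaper} iteratively on each side, yielding $2\sup|H|\sum_{i=2}^{k_0}\phi(m_i-n_{i-1})$ for each leg. Your accounting for the second leg — replacing $\phi(m_{j_{r+1}^{(p)}}-n_{j_r^{(p)}})$ by the larger $\phi(m_{j_r^{(p)}+1}-n_{j_r^{(p)}})$ via monotonicity of $\phi$ and injectivity of $(p,r)\mapsto j_r^{(p)}+1$ into $\{2,\ldots,k_0\}$ — is precisely the step needed to land on the stated sum.
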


\begin{proof}[Existence of asymptotic variance and proof of Theorem \ref{Dthm}]
Using Corollary \ref{lem3.1-StPaper} instead of \cite[Lemma 4.3]{KV1}, the proof of the existence of the limit $D^2$ proceeds similarly to \cite{KV1}, as well as the proofs of the characterizations of positivity of $D^2$ from \cite{HK2} and \cite{PolyPaper} since for bounded functions the arguments in these  proofs only require that
$$
\sum_{n=0}^{\infty}(n+1)\left(\phi(n)+(\beta_2(n))^\ka\right)<\infty.
$$
\end{proof}

\subsection{Proof of the CLT}
First, it is enough to prove the CLT when $D^2>0$, for otherwise we get convergence in $L^2$ to $0$.
We will brake the proof of the CLT into two parts.
\subsubsection{Step 1: approximation.}\label{S1}
Recall first that $\te_1>4$ and $\ka\te_2>2$. Let us fix some $0<\zeta<\frac14$ so that $\zeta\te_1>1$ and $\ka\te_2\zeta>1/2$.  Set $r_N=[N^\zeta]$ and 
$$
S_{N,r_N}=S_{N,r_N}^{\{q_j\}}G=\sum_{n=1}^{N}G(X_{q_1(n),r_N},X_{q_2(n),r_N},...,X_{q_\ell(n),r_N}).
$$
\begin{lemma}\label{ApLema}
We have
\begin{equation}\label{L2 approx}
\lim_{n\to\infty}\left\|N^{-1/2}S_N^{\{q_j\}}G-N^{-1/2}\left(S_{N,r_N}-\bbE[S_{N,r_N}]\right)\right\|_{L^2}=0.
\end{equation}
Therefore, in order to prove the CLT for $N^{-1/2}S_N^{\{q_j\}}G$ it is enough to prove that
\begin{equation}\label{W N}
W_N:=\frac{S_{N,r_N}-\bbE[S_{N,r_N}]}{\sqrt N D}
\end{equation}
converges in distribution to the standard normal law.
\end{lemma}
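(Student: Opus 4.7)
The plan is to expand the squared $L^2$-distance as
\[
\|S_N^{\{q_j\}}G - (S_{N,r_N} - \bbE S_{N,r_N})\|_{L^2}^2 = \mathrm{Var}(S_N^{\{q_j\}}G - S_{N,r_N}) + \bigl(\bbE S_N^{\{q_j\}}G\bigr)^2,
\]
which reduces the claim to showing both pieces are $o(N)$.

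For the variance term I will apply the H\"older bound \eqref{G Hold} summand by summand, then take $L^2$ norms. The triangle inequality in $L^2$, combined with Jensen's inequality (the map $x\mapsto x^\ka$ is concave on $[0,\infty)$ since $\ka\in(0,1]$), reduces $\|d(X_n,X_{n,r_N})^\ka\|_{L^2}$ to $(2\beta_2(r_N))^\ka$ via \eqref{X n r }, yielding
\[
\|S_N^{\{q_j\}}G - S_{N,r_N}\|_{L^2} \le K\ell N\,(2\beta_2(r_N))^\ka = O(Nr_N^{-\ka\te_2}) = O(N^{1-\zeta\ka\te_2}),
\]
so that $\mathrm{Var}(S_N^{\{q_j\}}G - S_{N,r_N})/N = O(N^{1-2\zeta\ka\te_2}) \to 0$ by the choice $\zeta\ka\te_2 > 1/2$.

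For the bias term, the normalization $\bar G = 0$ means I must show $|\bbE S_N^{\{q_j\}}G| = o(\sqrt N)$; I will in fact obtain $O(1)$. For each $n$, set $\Del(n) = \min_{i\ge 2}(q_i(n) - q_{i-1}(n))$ and $s_n = \lfloor\Del(n)/4\rfloor$. I will replace each $X_{q_j(n)}$ by its $\cF_{q_j(n)-s_n,q_j(n)+s_n}$-measurable approximant; Corollary \ref{lem3.1-StPaper} then decouples the now-disjoint windows at cost $O(\phi(\Del(n)/2))$, and the two H\"older/$\beta_2$ replacements (once to pass from the $X$'s to approximants, once from the independent marginals back to $\mu$) each cost $O((\beta_2(s_n))^\ka)$. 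Combining these with $\bar G = 0$,
\[
\bigl|\bbE G(X_{q_1(n)},\ldots,X_{q_\ell(n)})\bigr| = O\bigl(\Del(n)^{-\min(\te_1,\ka\te_2)}\bigr).
\]
When $k=0$, condition \eqref{NonLinGr} gives $\Del(n)\ge n^\al$, and the hypotheses $\te_1>3/(2\al)$, $\te_2>3/(2\ka\al)$ force $\al\min(\te_1,\ka\te_2)>3/2$; when $k>0$, \eqref{kLin} gives $\Del(n)\ge n$ for $i\le k$, while \eqref{NonLinGr} forces the linear/nonlinear gap $q_{k+1}(n)-kn$ to be of order $n^{1+\al}$, so combined with $\te_1>4$ and $\ka\te_2>2$ the series $\sum_n n^{-\min(\te_1,\ka\te_2)}$ still converges.

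The most delicate part of the plan is the exponent bookkeeping in the mixed case, where the extra hypothesis $\te_1\ge\ka\te_2$ keeps the H\"older and decoupling errors in balance across the linear/nonlinear boundary. Once both bounds are in hand, the final assertion of the lemma is immediate: $L^2$ convergence implies convergence in probability, so Slutsky's theorem gives $N^{-1/2}S_N^{\{q_j\}}G = D W_N + o_{\bbP}(1)$, and any weak limit of $W_N$ transfers to $N^{-1/2}S_N^{\{q_j\}}G$.
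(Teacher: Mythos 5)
The structure of your argument matches the paper's: bound $\|S_N^{\{q_j\}}G - S_{N,r_N}\|_{L^2}$ using \eqref{G Hold}, \eqref{X n r } and the $L^{2/\ka}$-to-$L^2$ H\"older trick, then show the bias $\bbE[S_{N,r_N}]$ (or, as you do, $\bbE[S_N^{\{q_j\}}G]$ directly) is $o(\sqrt N)$ by a decouple-then-replace argument based on Corollary \ref{lem3.1-StPaper}, and finish with Slutsky. Your two departures are cosmetic but fine: you decompose the squared $L^2$-distance into $\mathrm{Var} + (\text{bias})^2$ instead of invoking the triangle inequality directly, and you estimate the bias with a per-$n$ approximation level $s_n\asymp\Del(n)$ rather than the paper's uniform level $r_N=[N^\zeta]$. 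The per-$n$ choice is actually slightly cleaner — it gives $O(1)$ for the bias and avoids the extra $O(N^{1-\ka\zeta\te_2})$ term in the paper's bound.

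Two points of the exponent bookkeeping need fixing. First, \eqref{NonLinGr} controls the \emph{temporal} increment $q_j(n+1)-q_j(n)$, not the \emph{cross-index} gap $q_{j}(n)-q_{j-1}(n)$, so it does not by itself yield $\Del(n)\ge n^\al$. One needs both \eqref{NonLinGr} and \eqref{NonLinGr0}: write $q_{i+1}(n)-q_i(n)=[q_{i+1}(n)-q_{i+1}(\lceil n/2\rceil)]+[q_{i+1}(\lceil n/2\rceil)-q_i(n)]$; \eqref{NonLinGr} bounds the first bracket below by $c\,n^{1+\al}$ and \eqref{NonLinGr0} with $\ve=1/2$ makes the second eventually nonnegative, giving $\Del(n)\ge c\,n$ (and indeed $\ge cn^{1+\al}$) for $n$ large — consistent with the paper's $d_n\ge c_0 n$. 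Second, you invoke $\te_1>3/(2\al)$, $\te_2>3/(2\ka\al)$, and $\te_1\ge\ka\te_2$, which are hypotheses of the LCLT Theorem \ref{LCLT1}, not of Theorem \ref{CLT}, whose proof this lemma belongs to; only $\te_1>4$ and $\ka\te_2>2$ are available here. Fortunately, with the corrected bound $\Del(n)\ge cn$ the series $\sum_n n^{-\min(\te_1,\ka\te_2)}$ converges using only $\min(\te_1,\ka\te_2)\ge 2$, so the argument closes without the LCLT exponents.
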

\begin{proof}
By \eqref{X n r } and the H\"older inequality we have
\begin{equation}\label{kappa}
\bbE[d(X_n,X_{n,r})^\ka]=\|d(X_n,X_{n,r})^\ka\|_{L^1}\leq \|d(X_n,X_{n,r})^\ka\|_{L^{1/\ka}}
\end{equation}
$$
=\left(\bbE[d(X_n,X_{n,r})]\right)^{\ka}\leq \left(2\be_2(r)\right)^\ka.
$$
Thus, by \eqref{G Hold} and \eqref{X n r },
\begin{equation}\label{Ap}
\|S_N^{\{q_j\}} G-S_{N,r_N}\|_{L^2}=O(N(\beta_2(r_N))^\ka)=O(N^{1-\ka\te_2\zeta}).
\end{equation}
Since  $\ka\te_2\zeta>1/2$, we have
$$
\lim_{n\to\infty}\|N^{-1/2}S_N^{\{q_j\}}G-N^{-1/2}S_{N,r_N}\|_{L^2}=0
$$
and 
$$
D^2=\lim_{N\to\infty}N^{-1}\bbE[(S_{N,r_N})^2].
$$
In order to complete the proof of the lemma, it is enough to show that
\begin{equation}\label{Cent}
\lim_{N\to\infty}N^{-1/2}\bbE[S_{N,r_N}]=0.
\end{equation}
For $j=1,2,...,\ell$, let $\{(X_{n}^{(j)}, X_{n,r}^{(j)})\}$ be independent copies of $\{(X_n,X_{n,r})\}$.
Using  \eqref{X n r }, Corollary \ref{lem3.1-StPaper} with $U_j=X_{q_j(n),r_N}$, \eqref{kappa} with the independent copies  and that $\bar G=0$ we have 
$$
|\bbE[G(X_{q_1(n),r_N}, X_{q_2(n),r_N},...,X_{q_\ell(n),r_N})]|\leq 
|\bbE[G(X_{q_1(n),r_N}^{(1)}, X_{q_2(n),r_N}^{(2)},...,X_{q_\ell(n),r_N}^{(\ell)})]|
$$
$$+
\ell\phi(([d_n/3]))\leq |\bbE[G(X_{q_1(n)}^{(1)}, X_{q_2(n)}^{(2)},...,X_{q_\ell(n)}^{(\ell)})]|+K\ell(2\be_1(r_N))^\ka+\ell\phi(([d_n/3]))$$
$$= |\bar G|+K\ell(\be_2(r_N))^\ka+\ell\phi([d_n/3])=O(N^{-\ka\zeta\te_2})+\ell\phi([d_n/3])
$$
where $d_n=\min\{|q_{i+1}(n)-q_i(n)|:\,1\leq i<\ell\}$. Since $d_n\geq c_0n$ for some $c_0>0$ and all $n$ large enough, by \eqref{MixA}  we have
$$
|\bbE[S_{N,r_N}]|=O\left(\sum_{n}n^{-\te_1}\right)+O(N^{1-\ka\zeta\te_2})=o(N^{1/2})
$$
where we have used that $\te_1>4$ and $\ka\zeta\te_2>1/2$. Now
 \eqref{Cent} follows, and the proof of the lemma is complete.
\end{proof}

\subsubsection{Step 2: dependency graphs-the CLT for $W_N$}\label{Step2}
For every $n,m\in\bbN$ set 
\begin{equation}\label{rhoDef}
\rho(n,m)=\min\{|q_i(n)-q_j(m)|:\,\,1\leq i,j\leq\ell\}.
\end{equation}
Consider the graph $\cG_N=(V_N,E_N)$, where $V_N=\{1,...,N\}$ and $(n,m)\in E_N$ if $\rho(n,m)\leq r_N$. For every $n\in V_N$ set 
$$Q_{n,N}=G(X_{q_1(n),r_N},X_{q_2(n),r_N},...,X_{q_\ell(n),r_N})$$
and 
$$
Z_n=Z_{n,N}=\frac{Q_{n,N}-\bbE[Q_{n,N}]}{\sqrt N D}.
$$
Then  $W_N$ from \eqref{W N} satisfies
\begin{equation}\label{WN}
W_N=\sum_{n\in V_N}Z_n\,\,\text{ and }\,\,\lim_{N\to\infty}\bbE[W_N^2]=1.
\end{equation}
We have the following.
\begin{lemma}\label{D lemma}
There exists $A_0>0$ which depends only on $q_1,q_2,...,q_\ell$ so that
the size of a ball of radius $1$ in the graph $G_N$ does not exceed  $A_0r_N$. 
Thus,  a ball of radius $3$ is at most of size $A_1r_N^3$, where $A_1>0$ is another constant.
\end{lemma}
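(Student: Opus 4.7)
The plan is to bound the degree of every vertex $n\in V_N$ by $O(r_N)$ by reducing the degree estimate to a counting problem for each of the finitely many pairs of indexes $(i,j)\in\{1,\dots,\ell\}^2$, and then deduce the ball of radius $3$ estimate from the degree bound by a standard graph-theoretic iteration. Fix $n\in V_N$. By definition of $\rho$, a neighbor $m$ of $n$ in $\cG_N$ must satisfy $|q_i(n)-q_j(m)|\leq r_N$ for at least one pair $(i,j)$. Hence the size of the $1$-ball at $n$ is at most $1+\sum_{i,j=1}^\ell M_{i,j}(n)$, where $M_{i,j}(n)$ is the number of $m\in\{1,\dots,N\}$ for which $q_j(m)$ lies in the interval $I_{i,j}=[q_i(n)-r_N,\,q_i(n)+r_N]$ of length $2r_N$.

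For pairs with $j\leq k$ we have $q_j(m)=jm$, so $M_{i,j}(n)\leq \lfloor 2r_N/j\rfloor+1\leq 2r_N+1$. For pairs with $j>k$, by hypothesis \eqref{NonLinGr} there exists $M_0$ (independent of $N$ and of the specific pair, since $\ell$ is fixed) such that for $m\geq M_0$ one has $q_j(m+1)-q_j(m)\geq m^\al\geq 1$. Thus, on $\{M_0,M_0+1,\dots\}$ the function $q_j$ is strictly increasing with integer values and consecutive differences at least $1$. If $m_1<m_2<\cdots<m_s$ are the elements of $\{m\geq M_0:q_j(m)\in I_{i,j}\}$, then
\[
2r_N\;\geq\; q_j(m_s)-q_j(m_1)\;\geq\; \sum_{\ka=m_1}^{m_s-1}\ka^\al\;\geq\; s-1,
\]
so $s\leq 2r_N+1$; adding the at most $M_0$ elements $m<M_0$ gives $M_{i,j}(n)\leq 2r_N+1+M_0$. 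Summing over the $\ell^2$ pairs $(i,j)$ yields the desired bound $A_0 r_N$ for the $1$-ball, with $A_0$ depending only on $\ell$, $k$, and the threshold $M_0$ determined by $q_{k+1},\dots,q_\ell$.

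For the ball of radius $3$, note that the maximum degree $\Delta$ of $\cG_N$ satisfies $\Delta\leq A_0 r_N-1$ by what was just proven, and the number of vertices within graph-distance $3$ of any fixed vertex is bounded by
\[
1+\Delta+\Delta(\Delta-1)+\Delta(\Delta-1)^2\;\leq\; \Delta^3+3\Delta^2\;\leq\; A_1 r_N^3
\]
for a suitable constant $A_1>0$ (absorbing the lower-order term, valid once $r_N\geq 1$).

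The argument is essentially combinatorial and I do not expect a serious obstacle: the only subtlety is handling the finitely many ``small'' indices $m<M_0$ where the growth hypothesis \eqref{NonLinGr} need not apply, which is absorbed into the additive constant $M_0$.
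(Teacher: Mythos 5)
Your proof is correct and follows essentially the same route as the paper's: fix $n$, reduce the degree estimate to counting, for each pair $(i,j)$, the number of $m$ with $q_j(m)$ in an interval of length $2r_N$, treating the linear and superlinear cases separately, and then pass from the degree bound to the $3$-ball bound. The paper's version is terser (it merely asserts the count in the superlinear case and omits the explicit $3$-ball computation), whereas you supply the telescoping argument using \eqref{NonLinGr} and the standard $1+\Delta+\Delta(\Delta-1)+\Delta(\Delta-1)^2$ bound, both of which are sound.
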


\begin{proof}
Let us fix some $n\in V_N$ and let $m\in V_N$ be so that $d_{\cG_N}(n,m)=1$ (i.e $(n,m)$ is an edge). Then there are indexes $1\leq i,j\leq \ell$  so that 
$$
|q_i(n)-q_j(m)|\leq r_N.
$$
Therefore $q_j(m)\in[q_i(n)-r_N,q_i(n)+r_N]:=I_{n,i,r_N}$. If  $q_j$ is linear then $q_j(m)=jm$ and so there are at most $(2r_N+1)/j$ positive integers satisfying $jm\in I_{n,i,r_N}$. If $q_j$ grows faster than linearly then there are at most $c_0r_N$ positive integers satisfying $jm\in I_{n,i,r_N}$, for some $c_0>0$ which does not depend on $n$.
\end{proof}

\begin{proof}[Proof of the CLT for $W_N$]
Fix some $N$ and  let $\cG_N=(V_N,E_N)=(V,E)$ be the above graph. Set $Z_{n,N}=Z_n$ and $\gamma_4=\gamma_4(N)=\max_{n\in V}\|Z_{n}\|_{L^4}=O(N^{-1/2})$. Then 
$$
r_N^2(\gamma_4^3|V_N|+\gamma_4^2|V_N|^{1/2})=O(N^{2\zeta-1/2})=o(1)
$$
where $|V_N|=|V|=N$ is the cardinality of $|V|$. For every $v\in V$ set
\begin{equation}
N_v=B(1,v)=\{v\}\cup\{u\in V: (u,v)\in E\},\,\,N_v^c=V\setminus N_v.
\end{equation} 
Thus, applying \cite[Theorem 1.2.1]{book} with  $W=W_N$, $\rho=3$ and the graph $\cG_N=(V_N,E_N)$, the CLT for $W_N$ will follow if the following sequences converge to $0$ as $N\to\infty$:
$$
\del_1(N)=\sup\left\{\left|\sum_{n\in V}\bbE\left[Z_{n}g\left(\sum_{u\in N_v^c}Z_{u}\right)\right]\right|: \sup|g|\leq1\right\},$$
$$
\del_2(N)=\left|\sum_{v\in V}\sum_{u\in N_v^c}\bbE[Z_{n}Z_{u}]\right|,$$
$$
\del_3(N)=\left|\sum_{(v_1,u_1,v_2,u_2)\in\Gam}\text{Cov}(Z_{v_1}Z_{u_1},Z_{v_2}Z_{u_2})\right|^{1/2}
$$
where $\Gam=\Gamma_N=\{(v_1,u_1,v_2,u_2): d(v_1,v_2)>3
\text{ and }u_i\in N_{v_i}, i=1,2\}\subset V^4=V\times V\times V\times V$ (and $d(\cdot,\cdot)$ is the distance function in the graph $\cG$).

Proceeding as in  \cite[Corollary 1.3.16]{book} with $b=\infty$, we obtain from Corollary \ref{lem3.1-StPaper} and Lemma \ref{D lemma} that
$$\del_1=O(N^{1/2}\phi(r_N))=O(N^{1/2-\zeta\te_1}),\, \del_2=O(N\phi(r_N))=O(N^{1-\zeta\te_1})$$ and $\del_3^2=O(r_N^2\phi(r_N))=O(N^{2\zeta-\te_1\zeta})$. Using that $\zeta\te_1>1$ and $\zeta<1/4$ we see that $\del_i(N)=o(1)$ for $i=1,2,3$. 
\end{proof}

\section{A Nonconventional LCLT with nonlinear indexes}

\subsection{General conditions for the local CLT}
In this section we will state a general result showing that the LCLT follows from the CLT plus certain decay rates of the characteristic functions on appropriate domains.

Let $Z_1,Z_2,Z_3,...$ be a sequence of real-valued random variables and let
 $\varphi_N:\bbR\to\bbC$ be the
characteristic function of $Z_N$ given by $\varphi_N(t)=\bbE[e^{itZ_N}]$.  

We will consider the following
growth properties of the characteristic functions $\varphi_N$. 
\begin{assumption}\label{DecRate1}
There exist $\del_0\in(0,1)$, positive constants $c_0$ and $d_0$  
and a sequence $(b_N)_{N=1}^\infty$ of real numbers such that
$\lim_{N\to\infty}N^{\frac12}b_N=0$ and   
\[
|\varphi_N(t)|\leq c_0e^{-d_0Nt^2}+b_N
\]
for all $N\in\bbN$ and $t\in[-\del_0,\del_0]$.
\end{assumption}
\begin{assumption}\label{DecRate2}
For every $\del>0$ we have
\[
\lim_{N\to\infty}N^{\frac12}\sup_{t\in J_\del}|\varphi_N(t)|=0
\]
where
$J_\del=[-\del^{-1},-\del]\cup[\del,\del^{-1}]$.
\end{assumption}

Assumption \ref{DecRate2} cannot hold when $Z_N\in h_0\bbZ=\{kh_0:\,k\in\bbZ\}$ for some $h_0>0$ since then $\varphi_N$ is periodic. In this lattice  case we consider the following assumption.

\begin{assumption}\label{DecRate3}
There exists $h_0>0$ so that for all $N\in\bbN$ we have $\bbP(Z_N\in h_0\bbZ)=1$. Moreover,
for every $\del>0$,
\[
\lim_{N\to\infty}N^{\frac12}\sup_{t\in J_{\del,h_0}}|\varphi_N(t)|=0
\]
where $J_{\del,h_0}=[-\frac{\pi}{h_0},\frac{\pi}{h_0}]\setminus(-\del,\del)$.
\end{assumption}

\begin{theorem}[Theorem 2.2.3, \cite{book}]\label{General LLT }
Suppose that  for some $m\in\bbR$ the sequence $N^{-\frac12}(Z_N-mN)$ converges in distribution as $N\to\infty$
to a centered normal random variable with variance $\sig^2>0$.
Then the following types of local central limit theorem hold true: 
\vskip0.2cm
(i) Under  Assumptions \ref{DecRate1} and \ref{DecRate2},
 for any real continuous function $g$ on $\bbR$ with compact support (or an indicator of a bounded interval),
\begin{equation}\label{LLT-General}
\lim_{N\to\infty}\sup_{u\in\bbR}
\left|\sig\sqrt{2\pi N}\bbE[g(Z_N-u)]-e^{-\frac{(u-mN)^2}{2N\sig^2}}
\int g(x)dx\right|=0.
\end{equation} 
\vskip0.2cm
(ii)
Under Assumptions  \ref{DecRate1} and \ref{DecRate3},
 for any real continuous function $g$ on $\bbR$ with compact support (or an indicator of a bounded interval),
\begin{equation}\label{LLT-Genera2}
\lim_{N\to\infty}\sup_{u\in\bbZ}
\left|\sig\sqrt{2\pi N}\bbE[g(Z_N-uh_0)]-e^{-\frac{(u-mN)^2}{2N\sig^2}}
\sum_{k\in\bbZ}g(h_0k)\right|=0.
\end{equation} 
\end{theorem}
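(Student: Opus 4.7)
The plan is the classical Fourier-inversion plus three-range splitting strategy that yields the local CLT from characteristic function estimates; see \cite{HH} for the abstract spectral version. Here is how I would organize it.

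\emph{Reduction and Fourier representation.} First I would reduce both (i) and (ii) to test functions $g$ whose Fourier transform $\hat g$ is continuous and compactly supported, say in $[-T,T]$. This is done by a convolution-smoothing argument: pick an approximate identity $\phi_\ve$ whose Fourier transform $\hat\phi_\ve$ is compactly supported and converges uniformly to $1$ on compacts as $\ve\downarrow 0$; then $g_\ve:=g*\phi_\ve$ has $\hat g_\ve=\hat g\,\hat\phi_\ve$ compactly supported, and $g_\ve\to g$ uniformly. The error from replacing $g$ by $g_\ve$ in $\sig\sqrt{2\pi N}\,\bbE[g(Z_N-u)]$ will be controlled uniformly in $u$ by letting $N\to\infty$ first and $\ve\downarrow 0$ second. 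For such $g_\ve$, Fourier inversion yields
\[
\bbE[g_\ve(Z_N-u)] = \frac{1}{2\pi}\int_{-T_\ve}^{T_\ve}\hat g_\ve(-t)e^{-itu}\varphi_N(t)\,dt,
\]
and in part (ii) the analogous Fourier-series identity holds on the circle $[-\pi/h_0,\pi/h_0]$ (where $\varphi_N$ is $2\pi/h_0$-periodic since $Z_N\in h_0\bbZ$). I then split the $t$-integral at $|t|=\del_0$.

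\emph{The two ranges.} On the outer range $\del_0\leq|t|\leq T_\ve$, Assumption \ref{DecRate2} in part (i), respectively Assumption \ref{DecRate3} in part (ii), gives $\sup_{t}|\varphi_N(t)|=o(N^{-1/2})$, so after multiplication by $\sig\sqrt{2\pi N}$ that piece vanishes uniformly in $u$. On the inner range $|t|\leq\del_0$, I substitute $t=s/\sqrt N$ and factor $\varphi_N(s/\sqrt N)=e^{imNs/\sqrt N}\psi_N(s)$, where $\psi_N$ is the characteristic function of $(Z_N-mN)/\sqrt N$. The CLT hypothesis gives $\psi_N(s)\to e^{-\sig^2 s^2/2}$ pointwise, and Assumption \ref{DecRate1} gives the uniform domination $|\psi_N(s)|\leq c_0 e^{-d_0 s^2}+b_N$ for $|s|\leq\del_0\sqrt N$, with $\sqrt N\,b_N\to 0$. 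Dominated convergence and the fact that the Gaussian is its own Fourier transform then give
\[
\sig\sqrt{2\pi N}\cdot\frac{1}{2\pi}\int_{|t|\leq\del_0}\hat g_\ve(-t)e^{-itu}\varphi_N(t)\,dt \longrightarrow e^{-\frac{(u-mN)^2}{2N\sig^2}}\int g_\ve(x)\,dx
\]
uniformly in $u$, because $u$ enters only through an oscillatory factor $e^{-is(u-mN)/\sqrt N}$ of modulus one. Sending $\ve\downarrow 0$ gives the claim in (i); in (ii) the lattice sum $\sum_{k\in\bbZ}g(h_0 k)$ replaces $\int g$ after collecting the contributions of the Fourier series at the sampling points $kh_0$.

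\emph{Expected obstacle.} The most delicate step is the uniform-in-$u$ smoothing reduction, since the naive error is bounded by $\|g-g_\ve\|_\infty\cdot\sup_u\bbP(Z_N\in J_u)$ for small intervals $J_u$, a quantity which is only $O(1)$ a priori. To close this gap I would first prove an auxiliary concentration estimate $\sup_u\bbP(Z_N\in[u,u+h])\leq Ch$ (for $h$ fixed and $N$ large), derived by exactly the same inner-range Fourier-inversion argument using only Assumption \ref{DecRate1} applied to an indicator. With this preliminary estimate in hand, the smoothing error is $O(\|g-g_\ve\|_{L^1}\cdot N^{-1/2})$ and can be pushed below any prescribed threshold by first taking $N$ large and then $\ve$ small; the remaining Fourier manipulations are then routine dominated-convergence calculations.
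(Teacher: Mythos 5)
The paper states this result as \cite[Theorem 2.2.3]{book} and gives no proof here, so there is no internal argument to compare against; I can only assess your plan on its own merits. Your route is the classical Fourier-inversion plus range-splitting argument: reduce to test functions with compactly supported Fourier transform, split the characteristic-function integral at $\pm\delta_0$, kill the outer range by Assumption \ref{DecRate2} (resp.\ \ref{DecRate3}), and handle the inner range after rescaling $t=s/\sqrt N$ via dominated convergence, using pointwise convergence of $\psi_N(s)=\bbE[e^{is(Z_N-mN)/\sqrt N}]$ to $e^{-\sigma^2 s^2/2}$ (from the CLT) together with the uniform domination $|\psi_N(s)|\le c_0e^{-d_0s^2}+b_N$ from Assumption \ref{DecRate1}; since the only $u$-dependence enters through a factor of modulus one, the limit is uniform in $u$. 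This is the standard proof (Stone's theorem) and is correct in outline.

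Two imprecisions in your final paragraph are worth flagging. First, the concentration estimate as you state it, $\sup_u\bbP(Z_N\in[u,u+h])\le Ch$ for fixed $h$, does not decay in $N$ and therefore cannot by itself yield the claimed smoothing error $O(\|g-g_\ve\|_{L^1}N^{-1/2})$; what the same Fourier argument actually gives, and what you need, is $\sup_u\bbP(Z_N\in[u,u+h])=O_h(N^{-1/2})$ (majorize $\bbI_{[u,u+h]}$ by a Beurling--Selberg-type function whose Fourier transform is supported in $[-\delta_0,\delta_0]$, then apply Assumption \ref{DecRate1}). Second, $g_\ve=g*\phi_\ve$ is no longer compactly supported, so converting $\|g-g_\ve\|_\infty$ on the support of $g$ into the required summable bound needs the tail decay of the kernel $\phi_\ve$. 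Both are routine to repair. A cleaner alternative that avoids the concentration estimate altogether is the two-sided sandwich: choose real $g_-\le g\le g_+$ with $\hat g_\pm$ compactly supported and $\int(g_+-g_-)<\epsilon$; the LCLT for $g_\pm$, to which your Fourier argument applies directly, then squeezes the LCLT for $g$ with an error $O(\epsilon)$ that is uniform in $u$ and in $N$, and $\epsilon$ can be sent to $0$ last.
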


\subsection{Proof of Theorems \ref{LCLT1} and \ref{LCLT2} without linear indexes}\label{Sec5.2}
To increase readability  we will treat the case when all $q_j$ grow faster than linearly separately (namely the case $k=0$). This will also motivate the proof in the case when some of the functions are linear, which is more complicated.
 In the course of the proof in both cases we will need the following simple observation, which for the sake of convenience is formulated as a lemma.
\begin{lemma}\label{Qlem}
There exist $\del_0,a\in(0,1)$ so that for every $N\in\bbN$ large enough,  we have
\begin{equation}\label{A}
q_i(N)+\del_0N\leq q_{i+1}([aN])-\del_0 N,\,i=1,2,...,\ell-1.
\end{equation}
\end{lemma}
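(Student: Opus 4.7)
The plan is to establish the inequality $q_{i+1}([aN]) \geq q_i(N) + 2\delta_0 N$ for all $1 \leq i \leq \ell-1$ and large $N$ by splitting into three regimes determined by the threshold $k$, and then choosing $a,\delta_0 \in (0,1)$ that work uniformly in $i$. The regimes are (A) $1 \leq i < k$, both $q_i,q_{i+1}$ linear (nonempty only when $k \geq 2$); (B) $i = k$, the transition from linear to nonlinear (relevant only when $k \geq 1$); and (C) $k+1 \leq i \leq \ell-1$, where both $q_i,q_{i+1}$ satisfy \eqref{NonLinGr}.

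Regime (A) reduces to algebra. By \eqref{kLin}, $q_i(N) = iN$ and $q_{i+1}([aN]) \geq (i+1)(aN - 1)$, hence the gap is at least $((i+1)a - i)N - (i+1)$. Choosing $a \in ((k-1)/k, 1)$---for instance $a = (2k-1)/(2k)$ when $k \geq 1$ and any $a \in (0,1)$ when $k = 0$---keeps the slope $(i+1)a - i$ uniformly bounded below by a positive constant over the finitely many indices $1 \leq i \leq k-1$, so (A) follows for $N$ large. Regime (B) uses only \eqref{NonLinGr}: iterating the difference inequality, $q_{k+1}(n) \geq q_{k+1}(n_0) + \sum_{m=n_0}^{n-1} m^\alpha$, which is of order $n^{1+\alpha}$. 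Substituting $n = [aN]$, the gap $q_{k+1}([aN]) - kN$ dominates $2\delta_0 N$ for $N$ large and any $\delta_0 > 0$.

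Regime (C) is the main obstacle, because \eqref{NonLinGr0} is purely qualitative: it yields $q_{i+1}(\varepsilon n) - q_i(n) \to \infty$ but no rate, and in particular no linear-in-$N$ lower bound. The key trick is to split the interval up to $[aN]$ into two pieces and combine both hypotheses. First, applying \eqref{NonLinGr0} at the reduced scale $\varepsilon = a/2$ yields $q_{i+1}([aN/2]) - q_i(N) \to \infty$, and in particular $q_{i+1}([aN/2]) \geq q_i(N)$ once $N$ is large. Second, \eqref{NonLinGr} controls the remaining block $[[aN/2],[aN]-1]$ quantitatively:
\[
q_{i+1}([aN]) - q_{i+1}([aN/2]) \;\geq\; \sum_{n=[aN/2]}^{[aN]-1} n^\alpha \;\geq\; c\, N^{1+\alpha}
\]
for some $c > 0$ and all $N$ large, since the summand is eventually at least $n^\alpha$ and there are of order $N$ terms of size of order $N^\alpha$. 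Adding the two estimates gives $q_{i+1}([aN]) \geq q_i(N) + c N^{1+\alpha}$, which swallows $2\delta_0 N$ for any fixed $\delta_0 > 0$ once $N$ is large.

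Finally, choosing $a$ as in regime (A) (or any $a \in (0,1)$ when $k = 0$) and taking $\delta_0$ smaller than the minimum of the positive constants produced across the three regimes (say $\delta_0 = 1/4$) gives the claimed inequality for every $1 \leq i \leq \ell - 1$ and all sufficiently large $N$. The main novelty is the splitting argument in regime (C), which bridges the merely qualitative divergence from \eqref{NonLinGr0} and the quantitative linear gap required by the lemma by inserting a window of length of order $N$ on which \eqref{NonLinGr} produces polynomial growth.
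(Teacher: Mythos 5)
Your argument is correct in substance and supplies a detail the paper leaves implicit. The paper's ``proof'' of Lemma~\ref{Qlem} is a one-paragraph remark: it asserts the lemma ``follows directly from \eqref{NonLinGr0} and \eqref{kLin}'' and offers the constants $a = 1 - \frac{1}{4k}$, $\delta_0 = \frac16$ for the linear indices $i < k$, but it does not explain why the gap is \emph{linear in $N$} in the regime $k < i \leq \ell-1$. As you correctly observe, \eqref{NonLinGr0} alone is purely qualitative and cannot by itself produce a gap of size $2\delta_0 N$; your two-piece splitting of the interval up to $[aN]$ --- applying \eqref{NonLinGr0} at the reduced scale $\varepsilon = a/2$ to force $q_{i+1}([aN/2]) - q_i(N) \geq 0$ eventually, and then summing the increment bound from \eqref{NonLinGr} over the window $\bigl[[aN/2],[aN]\bigr)$ to obtain a contribution of order $N^{1+\alpha}$ --- is exactly what is needed and is the natural way to substantiate the paper's ``follows directly.'' (The paper does invoke \eqref{NonLinGr} immediately after the lemma, but only to justify \eqref{B}, not as an ingredient of Lemma~\ref{Qlem}.) Your regimes (A) and (B) match the paper's intent. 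One small numerical slip: with your choice $a = \frac{2k-1}{2k}$, the slope $(i+1)a - i$ in regime (A) at the worst index $i = k-1$ equals exactly $\frac12$, so the requirement $(i+1)a - i - 2\delta_0 > 0$ forces $\delta_0 < \frac14$ strictly; at $\delta_0 = \frac14$ the coefficient of $N$ vanishes and the $O(1)$ floor-function error is not absorbed. Any $\delta_0 < \frac14$ works (the paper's $a = 1 - \frac1{4k}$, $\delta_0 = \frac16$ leaves more room, since there the worst slope is $\frac34$).
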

This lemma follows directly from \eqref{NonLinGr0} and \eqref{kLin} (the latter is used when some of the functions are linear). When  the functions $q_2,...,q_\ell$ grow faster than linearly (i.e. $k=1$) we can take any $a$ and $\del_0$, while when $q_1,...,q_k$ are the linear ones, $1<k<\ell$, we can take $a=a_k=1-\frac1{4k}$ and $\del_0=\frac{1}{6}$ (which insures \eqref{A} for $i=1,2,...,k-1$). Note that by \eqref{NonLinGr}, for every $N$ large enough
and for all $n\geq [aN]$,
\begin{equation}\label{B}
q_{j}(n+1)-q_j(n)\geq ([aN])^\al,\,j=k+1,...,\ell.
\end{equation} 


Next, for each $r,N\in\bbN$ set
\begin{equation}\label{SNr}
S_{N,r}^{\{q_j\}}G=\sum_{n=1}^N G(X_{q_1(n),r},...,X_{q_\ell(n),r}).
\end{equation}
Using \eqref{MixA} and  \eqref{X n r } 
and the H\"older inequality we have 
$$
\|d(X_n,X_{n,r})^\ka\|_{L^2}\leq\|d(X_n,X_{n,r})^\ka\|_{L^{2/\ka}}=\|d(X_n,X_{n,r})\|_{L^2}^\ka\leq 2(\beta_2(r))^\ka.
$$
Thus, by \eqref{G Hold} with some constants $C_1,C_2>0$, for all $N$ and $r$ we have
\begin{equation}\label{SNr approx}
\|S_{N}^{\{q_j\}}G-S_{N,r}^{\{q_j\}}G\|_{L^1}\leq\|S_{N}^{\{q_j\}}G-S_{N,r}^{\{q_j\}}G\|_{L^2}\leq C_1N(\be_2(r))^{\ka}\leq C_2Nr^{-\te_2}.
\end{equation}

Now, since $S_N^{\{q_j\}}G$ obeys the CLT, our goal here is to verify Assumption \ref{DecRate1} and either Assumption \ref{DecRate2} (in the non-arithmetic case) or Assumption \ref{DecRate3} with $h_0=1$ (in the lattice case) with $Z_N=S_N^{\{q_j\}}G$. 
The main ingredient in the proof when $q_1,q_2,...,q_\ell$  grow faster than linearly is the following lemma.

\begin{lemma}[Conditioning step without linear indexes]\label{ZeLem1}
Let $\zeta:\bbR\to\bbR$ be given by 
\begin{equation}\label{zeta t def}
\zeta(t)=\int\left|\int e^{itG(x_1,...,x_{\ell-1},x_\ell)}d\mu(x_\ell)\right|d\mu(x_1)d\mu(x_2)...d\mu(x_{\ell-1}).
\end{equation}
Then for every compact set $J\subset\bbR$
there exist a sequence $(b_N)$ so that $b_N=o(N^{-1/2})$ and a constant $c_0>0$ such that for all $N\in\bbN$  and $t\in J$ we have
\begin{equation}\label{PP}
\left|\bbE\big[\exp\big(itS_{N}^{\{q_j\}}G\big)\big]\right|\leq b_N+\left(\zeta(t)\right)^{c_0 N}.
\end{equation}
\end{lemma}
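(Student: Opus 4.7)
The plan is to combine an $L^1$-approximation step with an iterative $\phi$-mixing decoupling argument. First, choose $r_N=[N^\eta]$ for some $\eta\in(\frac{3}{2\ka\te_2},\al)$, which is a nonempty interval since $\ka\te_2>\frac{3}{2\al}$. By \eqref{SNr approx}, $\|S_N^{\{q_j\}}G-S_{N,r_N}^{\{q_j\}}G\|_{L^1}=O(Nr_N^{-\ka\te_2})=o(N^{-1/2})$, so $|\bbE[e^{itS_N^{\{q_j\}}G}]-\bbE[e^{itS_{N,r_N}^{\{q_j\}}G}]|=o(N^{-1/2})$ uniformly for $t\in J$. With $a,\del_0\in(0,1)$ as in Lemma \ref{Qlem}, split $S_{N,r_N}^{\{q_j\}}G=A+B$ where $A=\sum_{n\leq[aN]}G_n$, $B=\sum_{n>[aN]}G_n$ and $G_n=G(X_{q_1(n),r_N},\ldots,X_{q_\ell(n),r_N})$. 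Combining \eqref{NonLinGr} (which for $k=0$ gives $q_j(n+1)-q_j(n)\geq n^\al$ for every $j$) with Lemma \ref{Qlem}, we obtain the key separation property: for $n>[aN]$, $j\in\{1,\ldots,\ell\}$ and any $(i,n')\neq(j,n)$ with $1\leq i\leq\ell-1$ and $n'>[aN]$, the $\sig$-algebras $\cF_{q_j(n)-r_N,q_j(n)+r_N}$ and $\cF_{q_i(n')-r_N,q_i(n')+r_N}$ are separated by a gap of at least $\min(([aN])^\al,2\del_0 N)-2r_N\sim N^\al$.

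I then decouple in rounds $j=\ell,\ell-1,\ldots,1$. In round $\ell$, I successively replace $X_{q_\ell(n),r_N}$ (for $n>[aN]$) by an independent copy $V_\ell^{(n)}\sim\mu$, independent of the process. By Lemma \ref{thm3.11-StPaper}, the replacement by a copy with the true distribution of $X_{q_\ell(n),r_N}$ costs $O(\phi(N^\al))=O(N^{-\al\te_1})$, while changing this distribution to $\mu$ costs an additional $O(|t|(\beta_2(r_N))^\ka)=O(|t|N^{-\eta\ka\te_2})$ via the $\ka$-H\"older continuity \eqref{G Hold} together with the Wasserstein bound $\bbE[d(X_{q_\ell(n)},X_{q_\ell(n),r_N})^\ka]\leq(2\beta_2(r_N))^\ka$. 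Summing over the $N-[aN]$ replacements, the total round-$\ell$ error is $o(N^{-1/2})$ uniformly for $t\in J$, under the standing assumptions $\te_1>\frac{3}{2\al}$ and $\eta\ka\te_2>\frac{3}{2}$. Integrating out the $V_\ell^{(n)}$'s produces in each factor the function $\psi(x_1,\ldots,x_{\ell-1}):=\int e^{itG(x_1,\ldots,x_{\ell-1},y)}d\mu(y)$, and taking absolute values inside the expectation (using $|e^{itA}|=1$) yields the intermediate bound
\[
|\bbE[e^{itS_{N,r_N}^{\{q_j\}}G}]|\leq\bbE\bigg[\prod_{n>[aN]}|\psi(X_{q_1(n),r_N},\ldots,X_{q_{\ell-1}(n),r_N})|\bigg]+o(N^{-1/2}).
\]
The remaining rounds $j=\ell-1,\ldots,1$ (decoupling $X_{q_j(n),r_N}$ for $n>[aN]$) run on $n>[aN]$ alone with the same per-round error estimate. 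Integrating out the iid copies at each step introduces functions $\Psi_k(x_1,\ldots,x_{\ell-k}):=\int\Psi_{k-1}(x_1,\ldots,x_{\ell-k},y)d\mu(y)$ (with $\Psi_0=|\psi|$), each bounded by $1$ and $\ka$-H\"older in every argument with constant $|t|K$. After round $1$ the product collapses to $\bigl(\int\Psi_{\ell-2}(y)d\mu(y)\bigr)^{N-[aN]}=(\zeta(t))^{N-[aN]}\leq(\zeta(t))^{c_0 N}$ with $c_0=1-a$, and summing the $\ell$ rounds of error together with the initial approximation error yields \eqref{PP}.

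The main obstacle is that a naive decoupling in round $1$ would fail: the variables $X_{q_1(n),r_N}$ for $n>[aN]$ sit at positions $q_1(n)$, which can be much smaller than $q_\ell(n')$ for $n'\leq[aN]$, so there is no $\phi$-mixing separation from the variables encoded in $e^{itA}$. The resolution, which dictates the order of rounds, is to perform round $\ell$ first and then immediately pass to absolute values inside the expectation: this step kills $e^{itA}$ and eliminates all dependence on variables indexed by $n'\leq[aN]$. In each subsequent round $j<\ell$ one only needs to separate $X_{q_j(n),r_N}$ from variables $X_{q_i(n'),r_N}$ with $n'>[aN]$, and this separation is provided by \eqref{NonLinGr} (for $i=j$) and by Lemma \ref{Qlem} applied between consecutive levels (for $i\neq j$).
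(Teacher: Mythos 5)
Your proof is correct and takes essentially the same approach as the paper's: an $L^1$-approximation of $S_N^{\{q_j\}}G$ by $S_{N,r_N}^{\{q_j\}}G$ via $\beta_2(r_N)$, then decoupling of the $X_{q_j(n),r_N}$'s into i.i.d.\ $\mu$-distributed copies by $\phi$-mixing (exploiting the separations from \eqref{NonLinGr} and Lemma~\ref{Qlem}, with the crucial observation that the $q_\ell$-level for $n>[aN]$ is decoupled first so that passing to absolute values then kills the contribution of $n\le[aN]$), yielding the product $(\zeta(t))^{N-[aN]}$ up to an $o(N^{-1/2})$ error. The ``rounds'' you describe, together with your slightly different choice $r_N=[N^\eta]$, are a different bookkeeping of the same steps the paper carries out via the block conditioning on $(V_1,V_2)$ (Lemma~\ref{thm3.11-StPaper}) followed by Corollary~\ref{lem3.1-StPaper}; the only expository gap is that your stated ``key separation property'' is restricted to $n'>[aN]$, whereas round $\ell$ itself also uses the separation of $q_\ell(n)$, $n>[aN]$, from the indices in $A$ (that is, $n'\le[aN]$) — which does hold, by \eqref{A} and \eqref{B}, so the argument is sound.
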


\begin{proof}
First, let us take some compact set $J\subset\bbR$ and fix some $t\in J$ and $N\in\bbN$.
Set $r=r_N=[aN^\al]/4$ and $M_N=[aN]$, where $\al$ comes from \eqref{NonLinGr} and $a$ from Lemma \ref{Qlem}. By \eqref{SNr approx} and the mean value theorem we get that
\begin{equation}\label{Onee}
\left|\bbE\big[\exp\big(itS_{N}^{\{q_j\}}G\big)\big]-\bbE\big[\exp\big(itS_{N,r_N}^{\{q_j\}}G\big)\big]\right|
\end{equation}
$$
\leq |t|\|S_N^{\{q_j\}}G-S_{N,r}^{\{q_j\}}G\|_{L^1}\leq C'|t| N^{1-\ka\te_2\al}=o(N^{-1/2})
$$
where $C'>0$ is some constant, and in the last inequality we have used our assumption that $\ka\te_2\al>3/2$ (and that $|t|\leq C(J)$ for some $C(J)$ which depends only on $J$).
Consider now the random variables $V_1,V_2$ and $V_3$ given by
$$
V_1=V_{1,N}=\{X_{q_i(n),r_N}:\,1\leq i\leq \ell,\,1\leq n\leq M_N\},
$$
$$
V_2=V_{2,N}=\{X_{q_i(n),r_N}:\,1\leq i<\ell,\,M_N<n\leq N\}
$$
and 
$$
V_3=V_{3,N}=\{X_{q_\ell(n),r_N}:\, M_N<n\leq N\}.
$$
Let the functions $H_1=H_{1,t,N}$ and $H_2=H_{2,t,N}$ be given by 
\begin{equation}\label{H1}
H_1(\{x_{q_j(n)}\})=\exp\left(it\sum_{n=1}^{M_N}G(x_{q_1(n)},x_{q_2(n)},...,x_{q_\ell(n)})\right)
\end{equation}
and 
\begin{equation}\label{H2}
H_2(\{x_{q_j(n)}\},\{z_{q_\ell(n)}\})=\exp\left(it\sum_{n=M_N+1}^{N}G(x_{q_1(n)},...,x_{q_{\ell-1}(n)},z_{q_\ell(n)})\right).
\end{equation}
Then $|H_1|=|H_2|=1$,
 $$
\exp\left(itS_{M_N,r_N}^{\{q_j\}}G\right)=H_1(V_1)
 $$
and
 $$
 \exp\left(it\big(S_{N,r_N}^{\{q_j\}}G-S_{M_N,r_N}^{\{q_j\}}G\big)\right)=H_2(V_2,V_3).
 $$

 Using these notations we have
 \begin{equation}\label{WEHAVE0}
\left|\bbE\big[\exp\big(itS_{N,r_N}^{\{q_j\}}G\big)\big]\right|=|\bbE[H_1(V_1)H_2(V_2,V_3)]|=\left|\bbE\left[H_1(V_1)\bbE\big[H_2(V_2,V_3)|V_1,V_2\big]\right]\right|
\end{equation}
$$
\leq \bbE\left[\left|\bbE\big[H_2(V_2,V_3)|V_1,V_2\big]\right|\right].
$$
 
Next, when $N$ large enough then by \eqref{A}, \eqref{B} and the definition of $r_N$, the random variable $(V_1,V_2)$ is $\cF_{-\infty,q_{\ell}(M_N)-2r_N}$ measurable, while $V_3$ is $\cF_{q_{\ell}(M_N)-r_N,\infty}$ measurable. Therefore, if we set $h_2(v)=\bbE[H_2(v,V_3)]$ then by
Lemma \ref{thm3.11-StPaper} we have
\begin{equation}\label{H rel 2}
\|\bbE[H_2(V_2,V_3)|V_1,V_2]-h_2(V_2)\|_{L^\infty}\leq \|\bbE[H_2(V_2,V_3)|\cF_{-\infty,q_{\ell}(M_N)-2r_N}]-h_2(V_2)\|_{L^\infty}
\end{equation}
$$
\leq 2\phi(r_N)=O(N^{-\al\te_1})=o(N^{-1/2})
$$
where we have used our assumption that $\al\te_1>3/2$, and the above estimate holds uniformly in $t\in J$.
Hence, by \eqref{WEHAVE0} we have
\begin{equation}\label{WEHAVE}
\left|\bbE\big[\exp\big(itS_{N,r_N}^{\{q_j\}}G\big)\big]\right|\leq \bbE[|h_2(V_2)|]+o(N^{-1/2}).
\end{equation}
Next, set 
$$
U_{n,r_N}(y;t)=\bbE\left[\exp\left(itG(y,X_{q_\ell(n),r_N})\right)\right].
$$
Set $\bar{X}_{n,r}=(X_{q_1(n),r},...,X_{q_{\ell-1}(n),r})$. 
Then, using definition of the function $H_2$, applying Corollary \ref{lem3.1-StPaper} with the collection of random variables $U_n=X_{q_\ell(n),r_N}$ for $M_N=[aN]<n\leq N$ and $i=1,2,...,\ell-1$ and the function $H=H_2(v,\cdot)$ (for any fixed $v$), taking into account \eqref{B}, uniformly in $t\in J$ we have
\begin{equation}
\sup_{v}\left|\bbE[H_2(v,V_3)]-\prod_{n=[aN]+1}^{N}U_{n,r_N}(\bar v_n;t)\right|=
O(N\phi(r_N))=o(N^{-1/2})
\end{equation}
where $v=\{v_{q_j(n)}:\,1\leq j<\ell,\,M_N<n\leq N\}$ and $\bar v_n=(v_{q_1(n)},v_{q_2(n)},...,v_{q_{\ell-1}(n)})$, and 
 we have used that $N\phi(r_N)=O(N^{1-\al\te_1})=o(N^{-1/2})$.
Plugging in $v=V_2$ we conclude that uniformly in $t\in J$ we have
\begin{equation}\label{CON0}
\left\|h_2(V_2)-\prod_{n=[aN]+1}^{N}U_{n,r_N}(\bar X_{n,r_{N}};t)\right\|_{L^\infty}=o(N^{-1/2}).
\end{equation}
Next, since $\mu$ is the distribution of $X_{q_\ell(n)}$, by \eqref{X n r }, \eqref{G Hold}, the mean value theorem and \eqref{kappa} we have
\begin{equation}\label{Con00}
\left|U_{n,r_N}(\bar X_{n,r_{N}};t)- \int\exp\left(it G(\bar X_{n,r_{N}},x)\right)d\mu(x)\right|=
\end{equation}
$$
\left|\bbE\left[\exp\left(itG(y,X_{q_\ell(n),r_N})\right)\right]-\bbE\left[\exp\left(itG(y,X_{q_\ell(n)})\right)\right]\right|_{y=\bar X_{n,r_N}}
$$
$$
\leq K|t|\|d(X_{q_\ell(n)},X_{q_\ell(n),r_N})^\ka\|_{L^1}\leq
|t|O(\be_2(r_N))^{\ka}=O(N^{-\al\te_2\ka})=o(N^{-3/2})
$$
where we have used our assumption that $\te_2>\frac{3}{2\al\ka}$.
We conclude from \eqref{CON0} and \eqref{Con00} that 
\begin{equation}\label{CON}
\left\|h_2(V_2)-\prod_{n=[aN]+1}^{N}\int \exp\left(it G(\bar X_{n,r_{N}},x)\right)d\mu(x)\right\|_{L^\infty}=o(N^{-1/2}).
\end{equation}

Finally, let us take $\ell-1$ independent copies of the collection of variables $X_{n}$ and $X_{n,r}$, where $n,r\in\bbN$ and denote them by $X_n^{(i)}$ and $X_{n,r}^{(i)}$, $i=1,2,...,\ell-1$. Set 
$$
\tilde X_{n,r_{N}}=\left(X_{q_1(n),r_N}^{(1)},X_{q_2(n),r_N}^{(2)},...,X_{q_{\ell-1}(n),r_N}^{(\ell-1)}\right).
$$
By Corollary \ref{lem3.1-StPaper} applied with $U_i=X_{q_i(n),r_N}$, taking into account  \eqref{A}, we see that when $N$ is large enough then, uniformly in $t\in J$ and $M_N<n\leq N$
 we have
\begin{equation}\label{W1}
\bbE\left[\left|\int \exp\left(it G(\bar X_{n,r_{N}},x)\right)d\mu(x)\right|\right]=\zeta_{n,r_N}(t)+O(\phi([\del_0 N/2]))=\zeta_{n,r_N}(t)+o(N^{-3/2})
\end{equation}
where
$$
\zeta_{n,r_N}(t)=\bbE\left[\left|\int \exp\left(it G(\tilde X_{n,r_{N}},x)\right)d\mu(x)\right|\right]
$$
and  we have used that $\phi(u)=O(u^{-\te_1})$ and $\te_1>4$. Set 
$$
\tilde X_{n}=\left(X_{q_1(n)}^{(1)},X_{q_2(n)}^{(2)},...,X_{q_{\ell-1}(n)}^{(\ell-1)}\right).
$$
Then, $\tilde X_{n}$ is distributed according to $\mu^{\ell-1}$ and so the function $\zeta(t)$ defined in \eqref{zeta t def} can also be written as
$$
\zeta(t)=\bbE\left[\left|\int \exp\left(it G(\tilde X_{n},x)\right)d\mu(x)\right|\right].
$$
Thus by \eqref{X n r }, which also holds true for the independent copies, \eqref{G Hold} and \eqref{kappa} we have 
\begin{equation}\label{W2}
|\zeta_{n,r_N}(t)-\zeta(t)|\leq\bbE\left[\int\left|\exp\left(it G(\tilde X_{n,r_{N}},x)\right)-\exp\left(it G(\tilde X_{n},x)\right)\right|d\mu(x)\right]
$$
$$
\leq |t|O(\be_2(r_N))^{\ka}=|t|O(N^{-\al\te_2\ka})=o(N^{-3/2})
\end{equation}
where we have used again that $|t|\leq C(J)$.
Hence, by  \eqref{WEHAVE}, \eqref{CON}, \eqref{W1} and \eqref{W2}  we get that
$$
\left|\bbE\big[\exp\big(itS_{N,r_N}^{\{q_j\}}G\big)\big]\right|\leq\bbE[|h_2(V_2)|]+o(N^{-1/2})$$
$$=\prod_{n=[aN]+1}^{N}\left(\zeta(t)+o(N^{-3/2})\right)
+o(N^{-1/2})=\left(\zeta(t)\right)^{N-[aN]}+o(N^{-1/2})
$$
and the proof of the lemma is complete, taking into account \eqref{Onee}.
\end{proof}

\begin{proof}[Proof of Theorem \ref{LCLT1} without linear indexes ($k=0$)]

Observe  that
\[
\zeta(t)=1-\frac12t^2\int G_\ell^2(x_1,x_2,...,x_\ell)d\mu(x_1)d\mu(x_2)...d\mu(x_\ell)+O(|t|^3)
\]
where $G_\ell$ is defined in \eqref{G ell}. 
 Therefore, since the function $G_\ell$ is not $\mu^\ell$-almost surely constant, there exist constants $c,c',\del_1>0$ so that for every $t\in[-\del_1,\del_1]$  we have $\zeta(t)\leq 1-c't^2\leq e^{-ct^2}$. Hence, for every $N\in\bbN$ and $t\in[-\del_1,\del_1]$  we have 
\[
(\zeta(t))^{c_0 N}\leq e^{-c_0cNt^2}. 
\]
This together with (\ref{One}) and Lemma \ref{ZeLem1} yields the validity of Assumption \ref{DecRate1} with $Z_N=S_N^{\{q_j\}}G$. In order to verify Assumption \ref{DecRate2} (in the non-arithmetic case) or Assumption \ref{DecRate3} (in the lattice case)
with $Z_N=S_N^{\{q_j\}}G$,  we first observe that the function $\zeta(t)$ is continuous. In  what we have designated as  the non-arithmetic case, we have $\zeta(t)=|\zeta(t)|<1$ for every nonzero $t$, for otherwise (\ref{NonLat}) would have hold true with some $\be(\cdot)$. Therefore for every compact set $J\subset\bbR\setminus\{0\}$ we have $\sup_{t\in J}\zeta(t)<1$,
which together with \eqref{Conc0} yields that 
$$
\sup_{t\in J}\left|\bbE[\exp(it S_N^{\{q_j\}}G)]\right|=o(N^{-1/2})
$$
and thus Assumption \ref{DecRate2} holds true. 
In the lattice case, for all $t\in[-\pi,\pi]\setminus\{0\}$ we have $\zeta(t)<1$ and so for  every compact set $J\subset[-\pi,\pi]\setminus\{0\}$ we have $\sup_{t\in J}\zeta(t)<1$, which yields Assumption \ref{DecRate3}.
\end{proof}

\subsection{Linear and nonlinear indexes: proof of Theorems \ref{LCLT1} and \ref{LCLT2} when $0<k<\ell$}\label{sec k}
We assume here that  the polynomials $q_k,...,q_\ell$ satisfy \eqref{NonLinGr} and \eqref{NonLinGr0} for some $0<k<\ell$ and that $q_j(n)=jn$ for all $1\leq j\leq k$ and $n\in\bbN$.
Since the CLT holds, as in the previous section, our goal here is to verify  Assumptions \ref{DecRate1} and \ref{DecRate2} (in the non-arithmetic case) or \ref{DecRate3} with $h_0=1$ (in the lattice case) appearing in Theorem \ref{General LLT } with $Z_N=S_N^{\{q_j\}}G$.
The proof shares some similarities with the proof in the case when all the functions grow faster than linearly, but there are additional complications because of the linear indexes. The main difference is that we cannot pass to independent copies of $X_{q_i(n)}$ for $M_N=[aN]<n\leq N$ and $i=1,2,...,k$ because of the linearity of $q_i$. 	Instead, after a conditioning argument, using \eqref{A} we will show that we can pass to independent copies of $\{X_{q_i(n)}:\,M_N<n\leq N\},\,i=1,2,...,k$. The sequence $\{Y_n\}$ from \eqref{Re1} will be defined using these copies.

\subsubsection{Some preparations}
Let $X_{r,n}$ be as specified before \eqref{X n r }.
 By considering the space $\Om^k=\Om\times\Om\times\dots\times\Om$, we get  copies $\{X_{n}^{(i)}\}$ and $\{X_{n,r}^{(i)}\}$ of $\{X_{n}\}$ and $\{X_{q_i(n),r}\}$, respectively, which are independent of each other (when $i=1,2,...,k$), so that for all $n,r\in\bbN$ and $1\leq i\leq k$ we have 
\begin{equation}\label{AppR}
\left\|d\left(X_{n}^{(i)},X_{n,r}^{(i)}\right)\right\|_{L^2}\leq 2\be_2(r).
\end{equation}
Next,  set
\begin{equation}\label{Y n}
Y_n=(X^{(1)}_{n},X^{(2)}_{2n},...,X^{(k)}_{kn}).
\end{equation}
Then the joint distribution of $Y_n$ and $Y_m$ depends only on $m-n$, and  all $Y_n$'s are distributed according to $\mu\time\mu\times\dots\times\mu=\mu^{\ell-1}$. 
We also set 
\begin{equation}\label{Y nr}
Y_{n,r}=(X^{(1)}_{n,r},X^{(2)}_{2n,r},...,X^{(k)}_{kn,r}).
\end{equation}
The mixing properties of $\{Y_{n,r}\}$ needed for the verification of Assumptions \ref{DecRate1}, \ref{DecRate2} and \ref{DecRate3} are specified in the following.
\begin{lemma}\label{Mix Y}
Let $n_1<n_2<...<n_m$ be positive integers and let $r\in\bbN$ be so that $n_{s+1}-n_s>2r$ for all $s=1,2,...,m-1$. Let $f_1,....,f_m:\cX^{k}\to[-1,1]$ be measurable functions. Then 
$$
\left|\bbE\left[\prod_{s=1}^m f_s(Y_{n_s,r})\right]-\prod_{s=1}^m\bbE[f_s(Y_{n_s,r})]\right|\leq 4\sum_{j=1}^{k}\sum_{s=1}^{m-1}\phi(j(n_{s+1}-n_s-2r)).
$$
\end{lemma}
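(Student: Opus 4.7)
My plan is to iterate Corollary \ref{lem3.1-StPaper} across the $k$ independent copies of the base process, peeling off one coordinate of $Y_{n_s,r}$ at each step. The independence of $\{X^{(i)}\}_{i=1}^{k}$ (which is the whole reason we passed to copies in the first place) allows this coordinate-by-coordinate reduction; each individual peel is governed by the $\phi$-mixing of a single copy.

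The key step is the first peel. Condition on $(X^{(2)},\dots,X^{(k)})$ and define the (now random) functions
\[
g^{(1)}_s(x):=f_s(x,X^{(2)}_{2n_s,r},\dots,X^{(k)}_{kn_s,r}),\qquad s=1,\dots,m,
\]
which satisfy $\|g^{(1)}_s\|_\infty\le 1$. The variable $X^{(1)}_{n_s,r}$ is $\cF^{(1)}_{n_s-r,n_s+r}$-measurable, and since $n_{s+1}-n_s>2r$ consecutive intervals are separated by $n_{s+1}-n_s-2r>0$. Applying Corollary \ref{lem3.1-StPaper} in the first copy of $\Om$, with singleton blocks and with the product as $H$ (so $\sup|H|\le 1$), gives
\[
\left|\bbE^{(1)}\!\left[\prod_{s=1}^m g^{(1)}_s(X^{(1)}_{n_s,r})\right]-\prod_{s=1}^m \bbE^{(1)}[g^{(1)}_s(X^{(1)}_{n_s,r})]\right|\le 4\sum_{s=1}^{m-1}\phi(n_{s+1}-n_s-2r)
\]
almost surely in the remaining coordinates. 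Integrating this bound against the law of $(X^{(2)},\dots,X^{(k)})$ and using the triangle inequality, I am reduced to controlling the expectation of $\prod_{s=1}^m \bbE^{(1)}[g^{(1)}_s(X^{(1)}_{n_s,r})]$, which is a product of bounded measurable functions of $(X^{(2)}_{2n_s,r},\dots,X^{(k)}_{kn_s,r})_s$; this is exactly the same shape of problem as before, with one fewer coordinate.

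I would then iterate this for $j=2,\dots,k$: at stage $j$ I condition on $(X^{(j+1)},\dots,X^{(k)})$, treat the remaining factors as $[-1,1]$-valued random functions of $X^{(j)}_{jn_s,r}$, and again invoke Corollary \ref{lem3.1-StPaper} in the $j$-th copy of $\Om$. The relevant $\sig$-algebras are $\cF^{(j)}_{jn_s-r,jn_s+r}$, and the gap between consecutive ones is $j(n_{s+1}-n_s)-2r\ge j(n_{s+1}-n_s-2r)>0$; by monotonicity of $\phi$ the error at stage $j$ is at most $4\sum_{s=1}^{m-1}\phi(j(n_{s+1}-n_s-2r))$. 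After the $k$-th peel, every original random factor has been replaced by its marginal expectation, and by independence of the $k$ copies the surviving product equals $\prod_s\bbE[f_s(Y_{n_s,r})]$. Summing the $k$ triangle-inequality errors yields the stated bound.

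I do not foresee a substantive obstacle; this is a routine telescoping. The only bookkeeping points are that (i) conditional expectations of $[-1,1]$-valued functions are again $[-1,1]$-valued, so the $\sup|H|\le 1$ hypothesis of Corollary \ref{lem3.1-StPaper} is preserved at every stage, and (ii) the gap at stage $j$ must be correctly identified as $j(n_{s+1}-n_s)-2r$ rather than $j(n_{s+1}-n_s-2r)$, with monotonicity of $\phi$ absorbing the slack into the cleaner bound stated in the lemma.
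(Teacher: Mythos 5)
Your argument is correct and is essentially the same as the paper's proof: the paper also peels off one coordinate of $Y_{n_s,r}$ at a time, applying Corollary \ref{lem3.1-StPaper} once per independent copy and absorbing the slack between the actual gap $j(n_{s+1}-n_s)-2r$ and the cleaner $j(n_{s+1}-n_s-2r)$ via monotonicity of $\phi$. The only cosmetic difference is that the paper phrases the telescoping as a formal induction on $k$ (peeling off the last coordinate in the inductive step), whereas you iterate directly from $j=1$ to $j=k$.
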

\begin{proof}
We will prove the lemma  by induction on $k$. For $k=1$ the lemma follows from Corollary \ref{lem3.1-StPaper} applied with $U_s=X_{n_s,r}$ and  the partition $\cC=\{s\}$. 

Let us assume that the lemma is true for some $k$ and all possible choices of $n_1,...,n_m$, $r$ and $f_1,...,f_m$ as specified in the lemma. Let $n_1<n_2<...<n_m$ and let $r\in\bbN$ be so that $n_{s+1}-n_s> 2r$. Let $f_1,....,f_m:\cX^{k+1}\to[-1,1]$ be measurable functions. 

Let $\{X_{n,r}^{(j)}:\, n\in\bbN\}$, $j=1,2,...,k,k+1$ be $k+1$ independent copies of $\{X_{n,r}:\,n\in\bbN\}$.
 Set 
$$Y_{k,n_s,r}=(X_{n_s,r}^{(1)},X_{2n_s,r}^{(2)},...,X_{k n_s,r}^{(k)}).$$
Then 
$$
Y_{n_s,r}=Y_{k+1,n_s,r}=(Y_{k,n_s,r},X_{(k+1)n_s,r}^{(k+1)}).
$$
By conditioning on  $\{Y_{k,n_s,r}:\,1\leq s\leq m\}$ we see that 
$$
\bbE\left[\prod_{s=1}^m f_s(Y_{n_s,r})\right]=\bbE\left[\tilde f\left(\{Y_{k,n_s,r}\}\right)\right]
$$
where 
$$
\tilde f(\{y_{k,n_s,r}\})=\bbE\left[\prod_{s=1}^m f_s(y_{k,n_s,r},X_{(k+1)n_s,r}^{(k+1)})\right]=
\bbE\left[\prod_{s=1}^m f_s(y_{k,n_s,r},X_{(k+1)n_s,r})\right].
$$
Now, by Lemma \ref{lem3.1-StPaper} applied with $U_s=X_{(k+1)n_s,r}$ and  the partition $\cC=\{s\}$, for a fixed realization $\{y_{k,n_s,r}\}$ of $\{Y_{k,n_s,r}\}$ we have
$$
\left|\tilde f(\{y_{k,n_s,r}\})-\prod_{s=1}^m\bbE[f_s(y_{k,n_s,r},X_{(k+1)n_s,r})]\right|\leq 4\sum_{s=1}^{m-1}\phi\big((k+1)(n_{s+1}-n_s-2r)\big).
$$
Thus, if we define $\tilde f_s:\cX^{k}\to[-1,1]$ by
$$
\tilde f_s(x_1,...,x_k)=\bbE[f_s(x_1,...,x_k,X_{(k+1)n_s,r})]=\bbE[f_s(x_1,...,x_k,X_{(k+1)n_s,r}^{(k+1)})]
$$
then
$$
\left|\bbE\left[\prod_{s=1}^m f_s(Y_{n_s,r})\right]-\bbE\left[\prod_{s=1}^m \tilde f_s(Y_{k,n_s,r})\right]\right|\leq 4\sum_{s=1}^{m-1}\phi\big((k+1)(n_{s+1}-n_s-2r)\big).
$$
To complete the induction we use now the induction hypothesis with the functions $\tilde f_s$.
\end{proof}

Next, consider the family of functions $\zeta(y,\cdot):\bbR\to[0,1]$, $y=(y_1,...,y_k)\in\cX^k$ given by 
\begin{equation}\label{zeta y def}
\zeta(y,t)=\int\left|\int \exp(it G(y,x_{k+1},...,x_\ell))d\mu(x_\ell)\right|d\mu(x_{k+1})\cdots d\mu(x_{\ell-1}).
\end{equation}
The following result follows from the definitions of $\zeta(y_1,...,y_k,t)$ and $Y_n$ and  the H\"older continuity of $G$ (i.e. \eqref{G Hold}).
\begin{lemma}
(1) For every $n\in\bbN$ and $t\in\bbR$ we have
 \begin{equation}\label{Expec}
\bbE[\zeta(Y_n,t)]=\zeta(t)
\end{equation}
where $\zeta(t)$ was defined in (\ref{zeta t def}).

(2)  For all $y=(y_i)$ and $y'=(y'_i)$ in $\cX^{k}$ we have
\begin{equation}\label{Hold}
|\zeta(y,t)-\zeta(y',t)|\leq C|t|\sum_{j=1}^{k}\left(d(y_j,y'_j)\right)^{\ka}
\end{equation}
where $C>0$ is some constant.
\end{lemma}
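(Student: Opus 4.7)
My plan is that both parts follow directly from unfolding the definitions; the only real content is Fubini for (1) and the standard estimate $|e^{iu}-e^{iv}|\leq|u-v|$ combined with \eqref{G Hold} for (2). There is no substantial obstacle, only bookkeeping about which arguments of $G$ are being compared.

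For part (1), I would first observe that because $\{X_n^{(1)}\},\dots,\{X_n^{(k)}\}$ were constructed as independent copies of $\{X_n\}$, each marginal $X^{(i)}_{in}$ has distribution $\mu$ and the $k$ coordinates of $Y_n=(X^{(1)}_n,X^{(2)}_{2n},\dots,X^{(k)}_{kn})$ are mutually independent. Hence the law of $Y_n$ is $\mu^k$, independently of $n$. Substituting this into $\bbE[\zeta(Y_n,t)]$ and writing $\mu^k$ as $d\mu(y_1)\cdots d\mu(y_k)$, Fubini's theorem (applicable because the integrand is bounded by $1$) rearranges the iterated integral into the definition \eqref{zeta t def} of $\zeta(t)$ after renaming $y_j$ to $x_j$.

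For part (2), the plan is to push the difference inward. For every fixed $(x_{k+1},\dots,x_\ell)$, the H\"older assumption \eqref{G Hold} applied to $G$ in its first $k$ arguments gives
\[
|G(y,x_{k+1},\dots,x_\ell)-G(y',x_{k+1},\dots,x_\ell)|\leq K\sum_{j=1}^{k}\bigl(d(y_j,y'_j)\bigr)^{\ka}.
\]
Combining with $|e^{iu}-e^{iv}|\leq|u-v|$ yields a pointwise bound on the integrands of the form $|t|K\sum_j d(y_j,y'_j)^\ka$, which is independent of $x_{k+1},\dots,x_\ell$. Integrating in $x_\ell$ and then applying the reverse triangle inequality $\bigl||a|-|b|\bigr|\leq|a-b|$ to the outer absolute value in the definition of $\zeta(y,t)$, followed by integration against $d\mu(x_{k+1})\cdots d\mu(x_{\ell-1})$ (a probability measure), preserves the bound. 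This gives the claimed inequality with $C=K$.

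The whole argument is essentially mechanical: no mixing or spectral input is needed here, and the lemma is stated precisely so that it can be invoked in the conditioning step of Section~\ref{sec k} in the same way Lemma~\ref{ZeLem1} was used in Section~\ref{Sec5.2}. The only point where one must be careful is to apply the H\"older bound on $G$ only in the first $k$ coordinates, leaving the last $\ell-k$ untouched so that the integration against $d\mu(x_{k+1})\cdots d\mu(x_\ell)$ can proceed without introducing any additional dependence on $(y,y')$.
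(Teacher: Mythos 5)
Your proof is correct and uses exactly the elementary route one would expect: $Y_n$ has law $\mu^k$ by construction of the independent copies, so Fubini gives part (1), and part (2) follows from $|e^{iu}-e^{iv}|\le|u-v|$, the H\"older bound \eqref{G Hold} in the first $k$ coordinates, and the reverse triangle inequality $||a|-|b||\le|a-b|$ before the final integration. The paper does not write out a proof for this lemma (it is stated as an immediate consequence of the definitions and \eqref{G Hold}), and your argument is the natural justification.
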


The following lemma is a consequence of \eqref{Hold}, \eqref{AppR} and \eqref{kappa}.

\begin{lemma}
There exists a constant $A>0$ so that for every $n,r\in\bbN$, $1\leq i\leq k$ and $t\in\bbR$ we have
\begin{equation}\label{L1}
\|\zeta(Y_{n},t)-\zeta(Y_{n,r},t)\|_{L^1}\leq A|t|\left(\beta_2([r/2))\right)^\ka.
\end{equation}
\end{lemma}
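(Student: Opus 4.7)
The plan is to argue directly by invoking the three referenced inequalities in sequence. Apply \eqref{Hold} pointwise with $y=Y_n=(X^{(1)}_n,X^{(2)}_{2n},\dots,X^{(k)}_{kn})$ and $y'=Y_{n,r}=(X^{(1)}_{n,r},X^{(2)}_{2n,r},\dots,X^{(k)}_{kn,r})$ to obtain
\[
|\zeta(Y_n,t)-\zeta(Y_{n,r},t)|\le C|t|\sum_{j=1}^{k}\bigl(d(X^{(j)}_{jn},X^{(j)}_{jn,r})\bigr)^{\ka} \quad \bbP\text{-a.s.}
\]
The key observation is that this inequality is pointwise in $\Om$, so no measurability or independence issue enters at this step; we only need the uniform H\"older bound on $\zeta(\cdot,t)$ that \eqref{Hold} provides.

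Next, take the $L^1(\Om,\bbP)$-norm of both sides and apply the triangle inequality in $L^1$, reducing the problem to bounding $\|d(X^{(j)}_{jn},X^{(j)}_{jn,r})^{\ka}\|_{L^1}$ for each $j=1,\dots,k$. Since $(X^{(j)}_n,X^{(j)}_{n,r})$ is a distributional copy of $(X_n,X_{n,r})$ (the $L^1$-norm depends only on the joint distribution), the estimate \eqref{kappa}, which gives $\|d(X_n,X_{n,r})^{\ka}\|_{L^1}\le(2\beta_2(r))^{\ka}$, transfers verbatim to the independent copies. Summing over $j$ yields
\[
\|\zeta(Y_n,t)-\zeta(Y_{n,r},t)\|_{L^1}\le C|t|\cdot k\cdot(2\beta_2(r))^{\ka}.
\]

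Finally, setting $A=2^{\ka}kC$ and using that $\beta_2$ is non-increasing (so $\beta_2(r)\le\beta_2([r/2])$), we obtain the desired bound $A|t|(\beta_2([r/2]))^{\ka}$. There is no real obstacle: the lemma is essentially a bookkeeping step that packages the pointwise H\"older control of $\zeta(\cdot,t)$ together with the $L^2$-approximation of $X_n$ by $X_{n,r}$ into a single $L^1$-estimate to be used later in the verification of Assumptions \ref{DecRate1}--\ref{DecRate3}. The factor $[r/2]$ rather than $r$ is presumably included for compatibility with subsequent applications where the approximating variables are taken with radius $[r/2]$ in order to preserve independence between consecutive blocks.
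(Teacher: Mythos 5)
Your proof is correct and takes essentially the same route as the paper, which states only that the lemma ``is a consequence of \eqref{Hold}, \eqref{AppR} and \eqref{kappa}'' — precisely the pointwise H\"older bound on $\zeta(\cdot,t)$, the $L^2$-approximation of the independent copies, and the power-$\ka$ estimate that you chain together. Your additional observation that the $[r/2]$ can be absorbed by the monotonicity of $\beta_2$ (which is non-increasing by its definition as an infimum over a growing family of $\sig$-algebras) is a valid way to account for the factor appearing in the stated bound.
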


\begin{corollary}
For every positive integers $q_1<q_2$, $r\in\bbN$ and $t\in\bbR$ we have 
\begin{equation}\label{L2}
\left|\bbE\left[\prod_{n=q_1+1}^{q_2}\zeta(Y_n,t)\right]-\bbE\left[\prod_{n=q_1+1}^{q_2}\zeta(Y_{n,r},t)\right]\right|\leq A|t|(q_2-q_1)\left(\beta_1([r/2))\right)^\ka.
\end{equation}
\end{corollary}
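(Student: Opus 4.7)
The proof is a standard telescoping argument exploiting the fact that the factors $\zeta(Y_n,t)$ and $\zeta(Y_{n,r},t)$ both lie in $[0,1]$, together with the $L^1$ bound \eqref{L1} on the individual differences.

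First I would write down the telescoping identity: for any two sequences $(a_n)_{n=q_1+1}^{q_2}$ and $(b_n)_{n=q_1+1}^{q_2}$ of real numbers,
\[
\prod_{n=q_1+1}^{q_2}a_n-\prod_{n=q_1+1}^{q_2}b_n=\sum_{n=q_1+1}^{q_2}\left(\prod_{m=q_1+1}^{n-1}a_m\right)(a_n-b_n)\left(\prod_{m=n+1}^{q_2}b_m\right),
\]
where empty products are taken to be $1$. Applying this with $a_n=\zeta(Y_n,t)$ and $b_n=\zeta(Y_{n,r},t)$, taking absolute values and using that both $a_n,b_n\in[0,1]$ pointwise (hence each auxiliary product is bounded by $1$), the identity yields the pointwise estimate
\[
\left|\prod_{n=q_1+1}^{q_2}\zeta(Y_n,t)-\prod_{n=q_1+1}^{q_2}\zeta(Y_{n,r},t)\right|\leq \sum_{n=q_1+1}^{q_2}\bigl|\zeta(Y_n,t)-\zeta(Y_{n,r},t)\bigr|.
\]

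Next I would take expectations of both sides and apply \eqref{L1} term by term. This gives
\[
\left|\bbE\!\left[\prod_{n=q_1+1}^{q_2}\zeta(Y_n,t)\right]-\bbE\!\left[\prod_{n=q_1+1}^{q_2}\zeta(Y_{n,r},t)\right]\right|\leq \sum_{n=q_1+1}^{q_2}\bigl\|\zeta(Y_n,t)-\zeta(Y_{n,r},t)\bigr\|_{L^1}\leq A|t|(q_2-q_1)\bigl(\beta_2([r/2])\bigr)^{\ka},
\]
which is the desired bound (modulo the indexing of $\beta$ as written in the statement).

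There is essentially no obstacle here: the only conceptual point is the use of $0\leq \zeta(y,t)\leq 1$ to discard the auxiliary factors in the telescoping sum, which follows immediately from the definition \eqref{zeta y def} since it is an integral of the modulus of a complex number of modulus at most $1$ against a probability measure. The constant $A$ is the same as the one from \eqref{L1}, and no further assumptions on $q_1,q_2,r,t$ are required.
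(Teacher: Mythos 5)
Your proof is correct and takes essentially the same route as the paper: the paper invokes the same elementary inequality $\bigl|\prod_j\alpha_j-\prod_j\beta_j\bigr|\leq\sum_j|\alpha_j-\beta_j|$ for complex numbers of modulus at most one (which is exactly your telescoping identity with the auxiliary factors discarded), applies it pointwise with $\alpha_n=\zeta(Y_n,t)$ and $\beta_n=\zeta(Y_{n,r},t)$, and then takes expectations and uses \eqref{L1}. You also correctly spotted that the bound from \eqref{L1} involves $\beta_2$ rather than $\beta_1$, so the subscript in the corollary statement appears to be a typo.
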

\begin{proof}
For any complex numbers $\al_j,\beta_j,\,q_1<j\leq q_2$ so that $|\alpha_j|,|\beta_j|\leq1$ we have 
$$
\left|\prod_{j=q_1+1}^{q_2}\al_j-\prod_{j=q_1+1}^{q_2}\be_j\right|\leq\sum_{j=q_1+1}^{q_2}|\al_j-\be_j|.
$$
Applying this with $\al_j=\zeta(Y_n,t)$ and $\be_j=\zeta(Y_{n,r},t)$ and then using \eqref{L1} we obtain \eqref{L2}.
\end{proof}

\subsubsection{The conditioning step}
The first step of the proof of Theorems \ref{LCLT1} and \ref{LCLT2} when $k$ from \eqref{NonLinGr} and \eqref{NonLinGr0} is positive  and $q_1,...,q_k$ satisfy \eqref{kLin} is the following lemma.

\begin{lemma}[Conditioning step with linear and nonlinear indexes]\label{ZeLem2}
Let $a\in(0,1)$ be the number from Lemma \ref{Qlem} and set $M_N=[aN]$. 
Then for every compact set $J\subset\bbR$ there exists a sequence $(b_N)$ so that $b_N=o(N^{-1/2})$ and for all $N\in\bbN$ and $t\in J$ we have
\begin{equation}\label{Conc0}
\left|\bbE\big[\exp\big(itS_N^{\{q_j\}}G\big)\big]\right|\leq b_N+\bbE\left[\prod_{M_N+1}^{N}\zeta(Y_n,t)\right].
\end{equation}
\end{lemma}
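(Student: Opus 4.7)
The proof will follow the pattern of Lemma \ref{ZeLem1} (the case $k = 0$) with two additional reductions to accommodate the linear indexes $q_1, \ldots, q_k$. First I would set $r_N = [aN^\alpha]/4$ and approximate $S_N^{\{q_j\}}G$ by $S_{N,r_N}^{\{q_j\}}G$ via \eqref{SNr approx} and the mean value theorem, picking up an $L^1$ error $O(|t|N(\beta_2(r_N))^\kappa) = o(N^{-1/2})$ (using $\kappa\theta_2\alpha > 3/2$). Then split the sum at $M_N = [aN]$ and define
\[
V_1 = \{X_{q_i(n),r_N}: 1 \leq i \leq \ell,\, 1 \leq n \leq M_N\},\quad V_2 = \{X_{q_i(n),r_N}: 1 \leq i < \ell,\, M_N < n \leq N\},
\]
$V_3 = \{X_{q_\ell(n),r_N}: M_N < n \leq N\}$, factor $\exp(itS_{N,r_N}^{\{q_j\}}G) = H_1(V_1)H_2(V_2,V_3)$ as in \eqref{H1}, \eqref{H2}, and apply Lemma \ref{thm3.11-StPaper} using the separation \eqref{A} between $(V_1,V_2)$ and $V_3$ to conclude
\[
\bigl|\bbE[\exp(itS_{N,r_N}^{\{q_j\}}G)]\bigr| \leq \bbE[|h_2(V_2)|] + o(N^{-1/2}),\quad h_2(v) = \bbE[H_2(v, V_3)].
\]

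Exactly as in Lemma \ref{ZeLem1}, apply Corollary \ref{lem3.1-StPaper} to $U_n = X_{q_\ell(n), r_N}$ for $M_N < n \leq N$ (gaps $\geq [aN]^\alpha$ by \eqref{B}) to pass to independent copies in the $V_3$ slot, then replace $X_{q_\ell(n), r_N}$ by $X_{q_\ell(n)} \sim \mu$ via \eqref{X n r } and \eqref{G Hold}, yielding
\[
|h_2(V_2)| \leq \prod_{n=M_N+1}^{N} \psi(\bar X_{n,r_N}; t) + o(N^{-1/2}),
\]
where $\psi(y; t) = |\int e^{itG(y,x)}\, d\mu(x)|$ and $\bar X_{n,r_N} = (X_{q_1(n),r_N}, \ldots, X_{q_{\ell-1}(n),r_N})$. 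The new ingredient, which is the main technical step, is to introduce the independent copies $\{X_n^{(j)}\}$ underlying the process $\{Y_n\}$. Apply Corollary \ref{lem3.1-StPaper} to the $\ell-1$ random vectors $U_j = \{X_{q_j(n),r_N}: M_N < n \leq N\}$, $j = 1, \ldots, \ell-1$: by \eqref{A} these blocks are supported on disjoint integer intervals with mutual gaps $\geq \delta_0 N/2$, so the error is $O(\ell\phi(\delta_0 N/2)) = o(N^{-1/2})$. This replaces each $X_{q_j(n),r_N}$ by $X_{q_j(n),r_N}^{(j)}$ drawn from independent copies of the process, and since $q_j(n) = jn$ for $j \leq k$, the first $k$ coordinates at index $n$ assemble into exactly $Y_{n,r_N}$.

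For $j = k+1, \ldots, \ell-1$, the within-copy gaps $\geq [aN]^\alpha$ (by \eqref{B}) permit a second application of Corollary \ref{lem3.1-StPaper}, conditional on the first $k$ copies (which are independent of the rest), to make the values across $n$ independent within each nonlinear copy; replacing them by i.i.d.\ samples from $\mu$ via \eqref{X n r } and \eqref{G Hold} and integrating then produces, by the definition \eqref{zeta y def} of $\zeta$,
\[
\bbE[|h_2(V_2)|] \leq \bbE\Bigl[\prod_{n=M_N+1}^{N} \zeta(Y_{n,r_N}; t)\Bigr] + o(N^{-1/2}).
\]
Finally, invoke \eqref{L2} with $r = r_N$ to replace $Y_{n,r_N}$ by $Y_n$ with additional error $A|t|N(\beta_2([r_N/2]))^\kappa = o(N^{-1/2})$. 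All error estimates are uniform in $t \in J$ since $|t|$ is bounded on $J$. The main obstacle is the book-keeping in the double reduction to independent copies: one must preserve within-copy dependence for $j \leq k$ (captured by $Y_{n,r_N}$) while fully breaking it for $j > k$, and separating these two independence arguments into one block-level and one within-copy application of Corollary \ref{lem3.1-StPaper} accomplishes this cleanly.
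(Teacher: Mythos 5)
Your proposal follows the paper's overall strategy (approximate by $r_N$-dependent variables, split at $M_N$, condition, decouple via Corollary \ref{lem3.1-StPaper}, pass to independent copies, and finally invoke \eqref{L2}), and the key structural observation — that after decoupling the $j$-blocks the first $k$ coordinates reassemble into $Y_{n,r_N}$ — is exactly what is needed. However, you order the decoupling steps differently from the paper, and this creates a step that is more delicate than you indicate.

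The paper writes $\bar X_{n,r_N}=(\check X_{n,r_N},\hat X_{n,r_N})$, and in a \emph{single} application of Corollary \ref{lem3.1-StPaper} (taking $U_0=\{\check X_{n,r_N}:\,M_N<n\leq N\}$ as one block and each $X_{q_j(n),r_N}$, $j>k$, $M_N<n\leq N$, as its own singleton block, all living on the one filtration $\cF_{\cdot,\cdot}$ on $\Om$ and ordered by \eqref{A}, \eqref{B}) it simultaneously makes the nonlinear entries independent of the linear block and of each other; integrating them out then gives $\bbE[\prod_n\zeta(\check X_{n,r_N},t)]$, and a second application decouples the $k$ linear blocks $\{X_{in,r_N}\}_n$ into $Y_{n,r_N}$. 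You instead decouple the $\ell-1$ full $j$-blocks $\{X_{q_j(n),r_N}\}_n$ from each other first (fine — \eqref{A} gives the separation), obtaining $\ell-1$ independent copies, and then want to break the within-copy dependence across $n$ for $j>k$.

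That last step is not ``a second application of Corollary \ref{lem3.1-StPaper}'': after step 5 the nonlinear copies live on $\ell-1-k$ \emph{independent} copies of $\Om$, each with its own filtration $\cF^{(j)}_{\cdot,\cdot}$, and there is no natural common parametrization — for $j<j'$ the intervals $[q_j(n)-r_N,q_j(n)+r_N]$ and $[q_{j'}(n)-r_N,q_{j'}(n)+r_N]$ sit at disjoint and rapidly separating places, so $\hat X^{(\cdot)}_{n,r_N}$ and $\hat X^{(\cdot)}_{n+1,r_N}$ are not measurable with respect to non-overlapping windows of any single filtration of the form required by the Corollary. The correct repair is to iterate over the nonlinear copies, conditioning on all of them but one and applying Corollary \ref{lem3.1-StPaper} once per copy (error $O(N\phi(cN^\al))$ each), which is precisely the inductive mechanism in the proof of Lemma \ref{Mix Y}; alternatively, follow the paper's ordering and avoid the issue entirely, since there each application of Corollary \ref{lem3.1-StPaper} involves only variables on the original space $\Om$. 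Apart from this under-specified step (and the implicit replacement of the $X_{q_j(n),r_N}$ by $\mu$-distributed variables, a further $o(N^{-1/2})$ error that you handle correctly), the proposal is sound and yields \eqref{Conc0}.
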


\begin{proof}
Let us fix some compact set $J\subset\bbR$ and $N\in\bbN$.
Let $\del_0,a\in(0,1)$ satisfy \eqref{A} and \eqref{B}. As in the proof of Lemma \ref{ZeLem1}, let us also set  $r=r_N=[aN^\al]/4$.
 Then, by \eqref{SNr approx}, 
\begin{equation}\label{One}
\left|\bbE\big[\exp\big(itS_N^{\{q_j\}}G\big)\big]-\bbE\big[\exp\big(itS_{N,r_N}^{\{q_j\}}G\big)\big]\right|
\end{equation}
$$
\leq |t|\|S_N^{\{q_j\}}G-S_{N,r_N}^{\{q_j\}}G\|_{L^1}\leq C'|t| N^{1-\al\te_2\ka}=o(N^{-1/2})
$$
where we have used our assumption that $\al\te_2\ka>3/2$.
Similarly to the proof of Lemma \ref{ZeLem1}, 
let us consider the random variables $V_1,V_2$ and $V_3$ given by
$$
V_1=V_{1,N}=\{X_{q_i(n),r_N}:\,\,1\leq i\leq \ell,\,1\leq n\leq M_N\},
$$
$$
V_2=V_{2,N}=\{X_{q_i(n),r_N}:\,\,1\leq i<\ell, M_N<n\leq N\}
$$
and 
$$
V_3=V_{3,N}=\{X_{q_\ell(n),r_N}:\,\, M_N<n\leq N\}.
$$
Let $H_1=H_{1,t,N}$ and $H_2=H_{2,t,N}$ be the functions given by \eqref{H1} and \eqref{H2}. Then
 $|H_1|=|H_2|=1$,
 $$
\exp\left(itS_{M_N,r_N}^{\{q_j\}}G\right)=H_1(V_1)
 $$
and
 $$
 \exp\left(it\big(S_{N,r_N}^{\{q_j\}}G-S_{M_N,r_N}^{\{q_j\}}G\big)\right)=H_2(V_2,V_3).
 $$
Proceeding similarly to the proof of Lemma \ref{ZeLem1}, relying on \eqref{A} and \eqref{B} we have
\begin{equation}\label{1.1}
\left|\bbE\big[\exp\big(itS_{N,r_N}^{\{q_j\}}G\big)\big]\right|=|\bbE[H_1(V_1)H_2(V_2,V_3)]|=\left|\bbE\left[H_1(V_1)\bbE[H_2(V_2,V_3)|V_1,V_2]\right]\right|
\end{equation}
$$=|\bbE\left[H_1(V_1)h_2(V_2)\right]|+o(N^{-1/2})\leq \bbE[|h_2(V_2)|]+o(N^{-1/2})$$
where $h_2(v_2)=h_{2,t,N}(v_2)=\bbE[H_2(v_2,V_3)].$
Moreover, uniformly in $t\in J$ we have
\begin{equation}\label{h2Def}
\left\|h_2(V_2)-\prod_{n=M_N+1}^{N}\int\exp\left(itG(\bar X_{n,r_N},x)\right)d\mu(x)\right\|_{L^\infty}=o(N^{-1/2})
\end{equation}
where $\bar X_{n,r_N}=(X_{q_1(n),r_N},X_{q_2(n),r_N},...,X_{q_{\ell-1}(n),r_N})$.

Now, as explained at the beginning of Section \ref{sec k}, the difference in comparison to Lemma \ref{ZeLem1} is that the random variables in the definition of $V_2$ are not close to being independent (because of the linear indexes). Let us write 
$$\bar X_{n,r_N}=(\check X_{n,r_N},\hat X_{n,r_N})$$
where
 $$\check X_{n,r_N}=(X_{n,r_N},X_{2n,r_N},...,X_{kn,r_N})$$ and $\hat X_{n,r_N}=(X_{q_{k+1}(n),r_N},...,X_{q_{\ell-1}(n),r_N})$.
By applying Corollary \ref{lem3.1-StPaper} with the collection of random variables $\{U_l\}$ whose members are $\{\check X_{n,r_N}: M_N<n\leq N\}$ and $\{X_{q_j(n),r_N}\}$, $M_N<n\leq N$, $j=k+1,...,\ell$  together the trivial partition $\cC_l=\{l\}$, and taking into account \eqref{A} and \eqref{B}
we get that
\begin{equation}\label{U0}
\bbE\left[\prod_{n=M_N+1}^{N}\left|\int\exp\left(itG(\check X_{n,r_N},\hat X_{n,r_N}, x)\right)d\mu(x)\right|\right]
\end{equation}
$$=\bbE\left[\prod_{n=M_N+1}^{N}\zeta(\check X_{n,r_N},t)\right]+O(N\phi(r_N)).
$$
Since $\phi(r_N)=O(N^{-\te_1\al})$ and $\al\te_1>3/2$, by \eqref{MixA} we have $O(N\phi(r_N))=o(N^{-1/2})$, and so we can  disregard this term.
Similarly,  applying Corollary \ref{lem3.1-StPaper} with $U_i=\{X_{in,r_N}:M_N<n\leq N\},\,i=1,2,...,k$ and with the partition $\cC_{i}=\{i\}$, and taking into account \eqref{A}, we
have
\begin{equation}\label{U1}
\bbE\left[\prod_{n=M_N+1}^{N}\zeta(\check X_{n,r_N},t)\right]=
\bbE\left[\prod_{n=M_N+1}^{N}\zeta(Y_{n,r_N},t)\right]+O(\phi([\del_0 N])).
\end{equation}
Here $Y_{n,r}$ is given by \eqref{Y nr}.
Finally, by \eqref{L2} we have
\begin{equation}\label{1.3}
\bbE\left[\prod_{n=M_N+1}^{N}\zeta(Y_{n,r_N},t)\right]=\bbE\left[\prod_{n=M_N+1}^{N}\zeta(Y_{n},t)\right]+|t|O(N^{1-\te_2\ka})=o(N^{-1/2})
\end{equation}
where we have used that $\te_2\ka\geq\te_2\ka\al>3/2$.
Since $\phi([\del_0 N])=o(N^{-1/2})$
 the proof of the lemma is completed by \eqref{1.1}, \eqref{h2Def} and \eqref{1.3}, \eqref{U0} and \eqref{U1}.
\end{proof}

\subsubsection{Verification of Assumption \ref{DecRate1}}
After establishing Lemma \ref{ZeLem2}, 
the second ingredient needed to verify  Assumption \ref{DecRate1} for $Z_N=S_N^{\{q_j\}}G$ is the following.
\begin{lemma}\label{A set Lemma 1}
There exist constants $c_3,N_3,\del_3>0$ and measurable sets $B_N\subset(\cX^{k})^{N-M_N}$ so that 
$$
\lim_{N\to\infty}\sqrt{N}\bbP((Y_{M_N+1},...,Y_N)\in B_N)=0
$$
\footnote{Henceforth we will use $\bbP$ as a generic notation for the probability of sets of the form $\{Z\in A\}$ (writing $\bbP(Z\in A)$) where $Z$ is some random variable and $A$ is some measurable set.}and for all $N\geq N_3$ and $t\in[-\del_3,\del_3]$, 
when $(Y_{M_N+1},...,Y_N)\notin B_N$ we have
$$
\prod_{n=M_N+1}^{N}\zeta(Y_n,t)\leq e^{-c_3Nt^2}.
$$
Therefore, for all $N\geq N_3$ and $t\in[-\del_3,\del_3]$ we have
\begin{equation}\label{Conc}
\bbE\left[\prod_{n=M_N+1}^{N}\zeta(Y_n,t)\right]\leq 
b_N+e^{-c_3Nt^2}
\end{equation}
where $b_N=o(N^{-1/2})$.
\end{lemma}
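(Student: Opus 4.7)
The plan is to choose $B_N$ as the set where the empirical average of $\sigma^2(Y_n)$ departs significantly from its mean, where $\sigma^2$ is the quadratic coefficient in the $t$-expansion of $\zeta(y,t)$ at $t=0$. Since $G$ is bounded, writing $h(y,z,t)=\int e^{itG(y,z,x_\ell)}d\mu(x_\ell)$ and Taylor expanding gives $|h|^2=1-t^2 V(y,z)+O(|t|^3)$ uniformly, where $V(y,z)=\int G_\ell^2(y,z,x_\ell)d\mu(x_\ell)\geq0$; combined with $\sqrt{1-u}\leq 1-u/2$ (and the trivial bound $|h|\leq 1$ when the expansion becomes vacuous) this yields, uniformly in $y$,
\[
\zeta(y,t)\leq 1-\tfrac{t^2}{2}\sigma^2(y)+C_0|t|^3,\qquad \sigma^2(y):=\int G_\ell^2(y,z,x_\ell)\,d\mu(x_\ell)\,d\mu^{\ell-1-k}(z).
\]
Integrating against $\mu^k$ gives $\bbE[\sigma^2(Y_n)]=\int G_\ell^2\,d\mu^\ell=:v$, and the non-arithmetic (respectively lattice) assumption forces $G_\ell$ to not vanish $\mu^\ell$-a.s., so $v>0$.

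Set $B_N=\{(y_{M_N+1},\dots,y_N):\sum_{n=M_N+1}^N\sigma^2(y_n)<\tfrac{v}{2}(N-M_N)\}$. Applying $\log(1+u)\leq u$ to the above pointwise bound, on $B_N^c$ we get
\[
\log\prod_{n=M_N+1}^N\zeta(y_n,t)\leq -\tfrac{t^2}{2}\sum_n\sigma^2(y_n)+C_0|t|^3(N-M_N)\leq (N-M_N)t^2\bigl(-\tfrac{v}{4}+C_0|t|\bigr).
\]
Choosing $\delta_3=v/(8C_0)$ forces the bracket to be at most $-v/8$ on $[-\delta_3,\delta_3]$, and since $N-M_N\geq(1-a)N$ this delivers the pointwise bound with $c_3=(1-a)v/8$.

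It remains to show $\bbP((Y_{M_N+1},\dots,Y_N)\in B_N)=o(N^{-1/2})$, which by Chebyshev will follow from $\mathrm{Var}\bigl(\sum_{n=M_N+1}^N\sigma^2(Y_n)\bigr)=O(N)$. The diagonal contribution is $O(N)$ because $\sigma^2$ is bounded. For the off-diagonal, $\sigma^2$ inherits $\kappa$-Hölder continuity from $G_\ell^2$; combining \eqref{AppR} with the trick in \eqref{kappa} yields
\[
\|\sigma^2(Y_n)-\sigma^2(Y_{n,r})\|_{L^1}=O((\beta_2(r))^\kappa)=O(r^{-\kappa\theta_2}).
\]
Taking $r=[|n-m|/4]$ so that the two time windows $[in-r,in+r]$ and $[im-r,im+r]$ remain separated by at least $|n-m|/2$ in each coordinate $i=1,\dots,k$, Lemma \ref{Mix Y} applied to the two points $n,m$ gives
\[
|\mathrm{Cov}(\sigma^2(Y_{n,r}),\sigma^2(Y_{m,r}))|\leq 4\|\sigma^2\|_\infty^2\sum_{j=1}^k\phi(j|n-m|/2)=O(|n-m|^{-\theta_1}).
\]
Splitting $\mathrm{Cov}(\sigma^2(Y_n),\sigma^2(Y_m))$ into this main term plus two approximation errors controlled by the previous display gives $|\mathrm{Cov}(\sigma^2(Y_n),\sigma^2(Y_m))|=O(|n-m|^{-\min(\theta_1,\kappa\theta_2)})$, which is summable because $\theta_1>4$ and $\kappa\theta_2>2$. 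Hence $\mathrm{Var}=O(N)$ and Chebyshev yields $\bbP(B_N)=O(N^{-1})=o(N^{-1/2})$, which may be taken as $b_N$. Inequality \eqref{Conc} then follows by decomposing the expectation over $B_N$ and $B_N^c$.

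The main obstacle is the variance estimate: it requires simultaneously using the Hölder regularity of $\sigma^2$ to replace $Y_n$ by its $\cF_{in-r,in+r}$-measurable approximations and invoking Lemma \ref{Mix Y} across the $k$ independent coordinates, with $r$ tuned as a fraction of $|n-m|$ so that both the approximation error $r^{-\kappa\theta_2}$ and the mixing error $|n-m|^{-\theta_1}$ end up summable in $|n-m|$.
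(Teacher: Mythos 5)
Your proof is correct, and while the high-level strategy matches the paper's (Taylor expand $\zeta(y,t)$ at $t=0$, introduce a bad set on which the pointwise Gaussian bound fails, control its probability by a variance estimate coming from mixing and Hölder approximation, then Chebyshev), the technical execution differs in two genuine respects. First, you define $B_N$ directly through the deviation of the sum $\sum_n\sigma^2(Y_n)$ from its mean and bound the variance of that sum, whereas the paper thresholds each summand and works with the bounded Bernoulli variables $W_{n,r}=\bbI(H_{n,r}\geq v/3)$, so its variance estimate is over a sum of indicators. Second, you let the truncation scale $r\approx|n-m|/4$ depend on the pair when estimating $\text{Cov}(\sigma^2(Y_n),\sigma^2(Y_m))$, balancing the $L^1$ approximation error $O(r^{-\kappa\te_2})$ against the mixing error from Lemma~\ref{Mix Y}; this gives a covariance tail of order $|n-m|^{-\min(\te_1,\kappa\te_2)}$, hence $\text{Var}=O(N)$ and $\bbP(B_N)=O(N^{-1})$. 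The paper instead fixes a single scale $r_N=[N^q]$, swaps $Y_n$ for $Y_{n,r_N}$ via a pointwise Markov bound on $d(Y_n,Y_{n,r_N})$ at cost $O(Nr_N^{-\te_2})$ (exponent $\te_2$, not $\kappa\te_2$), and arrives at $\text{Var}=O(r_NN)$; the requirements $q<1/2$ and $q\te_2>3/2$ are then what force the hypothesis $\te_2>3$ stated in Theorem~\ref{LCLT1}. Your per-pair truncation only needs $\min(\te_1,\kappa\te_2)>1$ for this step, so it is somewhat more economical here, though it does not by itself remove $\te_2>3$ from the theorem since that hypothesis is also used elsewhere (e.g., in the proof of Lemma~\ref{LastLemma}).
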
 

\begin{corollary}
The sequence $Z_N=S_N^{\{q_j\}}G$ satisfies the conditions of
Assumption \ref{DecRate1}.
\end{corollary}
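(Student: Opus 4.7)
The plan is to simply paste together Lemmas \ref{ZeLem2} and \ref{A set Lemma 1}, which have already delivered all the substantive work. First I would apply Lemma \ref{ZeLem2} with the compact set $J = [-\delta_3, \delta_3]$, where $\delta_3$ is the constant produced by Lemma \ref{A set Lemma 1}. This yields a sequence $b_N^{(1)} = o(N^{-1/2})$ and the bound
\[
|\varphi_N(t)| = \left|\bbE\bigl[\exp(itS_N^{\{q_j\}}G)\bigr]\right| \leq b_N^{(1)} + \bbE\left[\prod_{n=M_N+1}^{N}\zeta(Y_n,t)\right]
\]
valid for every $t \in [-\delta_3,\delta_3]$ and every $N \in \bbN$.

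Next I would invoke Lemma \ref{A set Lemma 1} to estimate the expectation on the right-hand side: for $N \geq N_3$ and $t \in [-\delta_3,\delta_3]$ there is a sequence $b_N^{(2)} = o(N^{-1/2})$ such that
\[
\bbE\left[\prod_{n=M_N+1}^{N}\zeta(Y_n,t)\right] \leq b_N^{(2)} + e^{-c_3 N t^2}.
\]
Summing these two estimates gives
\[
|\varphi_N(t)| \leq e^{-c_3 N t^2} + \bigl(b_N^{(1)} + b_N^{(2)}\bigr)
\]
for all $N \geq N_3$ and $t \in [-\delta_3,\delta_3]$. Setting $\delta_0 = \delta_3$, $c_0 = 1$, $d_0 = c_3$, and $b_N = b_N^{(1)} + b_N^{(2)}$ for $N \geq N_3$ yields exactly the inequality in Assumption \ref{DecRate1}, and since $N^{1/2} b_N \to 0$ by construction, the assumption is verified on the tail of $N$.

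To finish, I would deal with the finitely many remaining values $N < N_3$ by simply redefining $b_N$ there to be any value exceeding $1$ (for example $b_N = 1$), which trivially guarantees $|\varphi_N(t)| \leq 1 \leq c_0 e^{-d_0 N t^2} + b_N$ without affecting the asymptotic condition $N^{1/2} b_N \to 0$. I do not anticipate any obstacle: all the analytic effort has been spent establishing the conditioning step (Lemma \ref{ZeLem2}) and the product decay estimate (Lemma \ref{A set Lemma 1}), so the corollary is a one-line bookkeeping consequence.
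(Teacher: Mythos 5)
Your proof is correct and follows exactly the paper's own argument: the corollary is read off by combining the conditioning bound \eqref{Conc0} from Lemma \ref{ZeLem2} (applied with $J=[-\delta_3,\delta_3]$) with the product decay bound \eqref{Conc} from Lemma \ref{A set Lemma 1}. The extra remarks about absorbing the finitely many indices $N<N_3$ into the error term $b_N$ and the harmless choice of constants are routine bookkeeping that the paper leaves implicit.
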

\begin{proof}
This is a direct consequence of \eqref{Conc0} and \eqref{Conc}.
\end{proof}

\begin{proof}[Proof of Lemma \ref{A set Lemma 1}]
Since $G$ is a bounded function, 
uniformly in $y=(y_1,...,y_k)\in\cX^k$ and $\bar x=(x_k,...,x_{\ell-1})\in\cX^{\ell-k-1}$ we have
$$
\left|\int\exp\left(itG(y,\bar x,x_\ell)\right)d\mu(x_\ell)\right|=1-\frac{t^2}2\int G_\ell^2(y,\bar x,x_\ell) d\mu(x_\ell)+O(|t|^3)
$$
where $G_\ell(y,\bar x,x_\ell)= G(y,\bar x,x_\ell)-\int G(y,\bar x,z)d\mu(z)$ (which was also defined in \eqref{G ell}).
Therefore, uniformly in $y$ we have
\begin{equation}\label{Al}
\zeta(y,t)=1-\frac12t^2\int G_\ell^2(y,x_{k+1},...,x_{\ell})d\mu(x_{k+1})\dots d\mu(x_\ell)+O(|t|^3).
\end{equation} 
Consider the sequence of random variables $\{H_n\}$ given by
\[
H_n=\int G_\ell^2(Y_n,x_{k+1},...,x_{\ell})d\mu(x_{k+1})\dots d\mu(x_\ell):=H(Y_n).
\]
Then $H_1,H_2,...$ are equally distributed. Moreover, since $G$ is not $\mu^\ell$-almost surely a function of the first $\ell-1$ variables $x_1,...,x_{\ell-1}$, the function $G_\ell$ does not vanish $\mu^\ell$-almost surely (where $\mu^\ell=\mu\times\mu\times\dots\times\mu$), and therefore
\[
\bbE[H_n]=\int G_\ell^2(x_1,x_2,...,x_\ell)d\mu(x_1)d\mu(x_2)\dots d\mu(x_\ell):=v>0.
\]
The idea behind the proof of the lemma is to show that with sufficiently high probability we can replace $H_n$ with its expectation, that is we will prove a certain type of concentration inequality involving the variables $H_n$. Using it, and taking into account \eqref{Al}, with high probability we can replace $\zeta(Y_n,t)$ with $1-t^2v'+O(|t|^3)$ for some $0<v'<v$. Then when $|t|$ is small enough we will multiply the variables $\zeta(Y_n,t)$ for $n=M_N+1,...,N$.

In order to formalize the above idea, we first set  $\ve=P(H_{n}\geq v/2)/2>0$
 and for every $N\in\bbN$ set 
\[
A_N=\left\{\sum_{n=M_N+1}^N\bbI(H_n\geq v/4)\leq \ve(N-M_N)/2\right\}
\]
and
\[
B_N=\left\{(\bar y_{M_N+1},...,\bar y_N)\in(\cX^{k})^{N-M_N}:\,\sum_{n=M_N+1}^N\bbI(H(\bar y_n)\geq v/4)\leq \ve(N-M_N)/2\right\}.
\] 
Then $A_N=\{(Y_{M_N+1},...,Y_N)\in B_N\}$.
Now, by \eqref{Al}, taking into account that $0\leq\zeta(y,t)\leq 1$ and  that $M_N=[aN]$, $a\in(0,1)$ we obtain that there are positive constants $c_3,\del_3$ and $N_3$ so that for all $N\geq N_3$ and $t\in[-\del_3,\del_3]$,
on the complement of $A_{N}$ (i.e. when $(Y_{M_N+1},...,Y_N)\not\in B_N$) we have 
\begin{equation}\label{On A comp}
\prod_{n=M_N+1}^{N}\zeta(Y_n,t)\leq \left(1-t^2v/8+O(|t|^3)\right)^{\ve(N-M_N)/2}
\end{equation}
$$
=\left(1-t^2v/8+O(|t|^3)\right)^{\ve(N-M_N)/2}\leq \left(1- t^2v/9\right)^{(N-M_N)\ve/2}\leq e^{-c_3 Nt^2}.
$$
Hence, 
$$
\bbE\left[\prod_{n=M_N+1}^{N}\zeta(Y_n,t)\right]\leq \bbP(A_{N})+e^{-c_3 Nt^2}.
$$
We conclude that the lemma will follow if we show that 
\begin{equation}\label{Accom}
\bbP(A_N)=\bbP((Y_{M_N+1},...,Y_N)\in B_N)=o(N^{-1/2}).
\end{equation}

To prove \eqref{Accom},  applying the Markov inequality and then using \eqref{AppR} for every $\del>0$ we have
\begin{equation}\label{Y approx}
\bbP(d(Y_n,Y_{n,r})\geq\del)\leq\frac{\bbE[d(Y_n,Y_{n,r})]}{\del}\leq\frac{2k\be_2(r)}{\del}\leq\frac{2kcr^{-\te_2}}{\del}
\end{equation}
where $Y_{n,r}$ was defined in \eqref{Y nr}, and the distance on $\cX^{k}=\cX\times\cX\times\cdots\times\cX$ is given by $d(x,y)=\sum_{i=1}^k d(x_i,y_i)$, where $x=(x_i)$ and $y=(y_i)$.
The idea in the proof of \eqref{Accom} is to replace $Y_n$ with $Y_{n,r}$, for some $r=r_N$ relying on \eqref{Y approx}, and then to use the mixing properties of $\{Y_{n,r}:\,n\geq1\}$ from Lemma \ref{Mix Y}.
For this purpose, we first set
\[
H_{n,r}=\int G_\ell^2(Y_{n,r},x_{k+1},...,x_{\ell})d\mu(x_{k+1})\dots d\mu(x_\ell)=H(Y_{n,r}).
\]
Since $H$ is a bounded H\"older continuous function, $v=\bbE[H_n]$ and $\bbP(H_{n}\geq v/2)=2\ve$, using \eqref{Y approx} we see that 
\begin{equation}\label{liminf}
\liminf_{r\to\infty}\inf_{n}\bbP(H_{n,r}\geq v/3)>\ve.
\end{equation}
Set
\[
A_{N,r}=\left\{\sum_{n=M_N+1}^N\bbI(H_{n,r}\geq v/3)\leq \ve(N-M_N)/2\right\}.
\]
Let $\del$ be small enough so that $|H(y)-H(y')|<v/3-v/4=v/12$ if $d(y,y')<\del$.
Then by (\ref{Y approx}), taking into account that $H$ 
is H\"older continuous, if $\del$ is small enough then for all $r$ and $N$ we have
\begin{equation}\label{A N r}
\bbP(A_N)\leq \bbP(A_{N,r})+\sum_{n=M_N+1}^{N}\bbP(d(Y_n,Y_{n,r})\geq\del)\leq \bbP(A_{N,r})+O(Nr^{-\te_2}).
\end{equation}
Next, set $W_{n,r}=\bbI(H_{n,r}\geq v/3)$. Let us assume that $r$ is large enough so that
$\bbE[W_{n,r}]=\bbP(H_{n,r}\geq v/3)\geq \ve$ for all $n$ (recall \eqref{liminf}). Then
\begin{equation}\label{PRO.0}
\bbP(A_{N,r})\leq \bbP\left\{\sum_{n=M_N+1}^N(W_{n,r}-\bbE[W_{n,r}])\geq(N-M_N)\ve/2 \right\}.
\end{equation}
We claim that 
\begin{equation}\label{PRO}
\bbP\left\{\sum_{n=M_N+1}^N(W_{n,r}-\bbE[W_{n,r}])\geq (N-M_N)\ve/2 \right\}\leq \frac{4C_1r}{\ve^2(N-M_N)}
\end{equation}
where $C_1>0$ is some constant which does not depend on $r$ and $N$. Let us complete the proof of the lemma relying on \eqref{PRO}.
Since $\te_2>3$, there exists $q\in(0,\frac12)$ so that $q\te_2>\frac32$. Let us take $r=r_N=[N^q]$. Then the second term $O(Nr^{-\te_2})$ on the right hand side of \eqref{A N r}  is $o(N^{-1/2})$ and the right hand side of \eqref{PRO} is $o(N^{-1/2})$. The proof of Lemma \ref{A set Lemma 1} is completed now by \eqref{A N r} and \eqref{PRO.0}  applied with $r=r_N$.

Finally, let us prove \eqref{PRO}. By an application of the Markov inequality, it is enough to show that
\begin{equation}\label{Vst}
\text{Var}\left(\sum_{n=M_N+1}^NW_{n,r}\right)\leq C_1rN
\end{equation}
for some constant $C_1$ which does not depend on $N$ and $r$. The above inequality  holds true since by  Lemma \ref{Mix Y}  for all $s\geq 1$ and $r\in\bbN$ we have
$$
\text{Cov}(W_{n,r},W_{n+2r+s,r})\leq C\phi(s)\leq C's^{-\te_1}
$$
and $\sum_{s}s^{-\te_1}<\infty$ (since $\te_1>4$), where $C,C'>0$ are some constants.
\end{proof}

\subsubsection{Verification of Assumptions \ref{DecRate2} or Assumption \ref{DecRate3}}
The following result is the additional ingredient needed to verify either  Assumption \ref{DecRate2} or  Assumption \ref{DecRate3}   with $Z_N=S_N^{\{q_j\}}G$.
\begin{lemma}\label{LastLemma}
In the lattice case, let $J\subset[-\pi,\pi]\setminus\{0\}$ be a compact set, while in the  non-arithmetic case let $J\subset\bbR\setminus\{0\}$ be a compact set. Then, in both cases
\[
\lim_{N\to\infty}\sqrt{N}\sup_{t\in J}\bbE\left[\prod_{n=M_N+1}^{N}\zeta(Y_n,t)\right]=0.
\]
\end{lemma}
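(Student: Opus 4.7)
The plan is to bound the expectation by restricting to a sparse, well-separated subset of indices, decoupling the product via Lemma \ref{Mix Y}, and exploiting the fact that on $J$ the quantity $\zeta(t)$ defined in \eqref{zeta t def} is strictly less than $1$. Indeed, $\zeta$ is continuous on $\bbR$ and, in both cases, $\zeta(t)<1$ for every $t\in J$: in the non-arithmetic case $\zeta(t)=1$ would force \eqref{NonLat} to hold with some phase $\be$, and in the lattice case the same reasoning rules out any $t\in[-\pi,\pi]\setminus\{0\}$. By compactness, $\rho:=\sup_{t\in J}\zeta(t)<1$.

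Fix $C>1/(2\log(1/\rho))$, set $L_N=\lceil C\log N\rceil$, pick an arithmetic progression $n_1<n_2<\dots<n_{L_N}$ inside $\{M_N+1,\dots,N\}$ with common difference $m_N=\lfloor(N-M_N)/L_N\rfloor\asymp N/\log N$, and pick an approximation scale $r_N=[N^\eta]$ with $\eta\in(0,1)$ chosen so that $\eta\te_2\ka>1/2$, which is possible since $\te_2\ka>2$ under the hypotheses of Theorem \ref{LCLT1}. Since $0\le\zeta(Y_n,t)\le 1$, the product over $\{M_N+1,\dots,N\}$ is dominated by the product over $\Lambda_N=\{n_s\}_{s=1}^{L_N}$, and one then runs, uniformly in $t\in J$, three successive estimates. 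First, replace $Y_{n_s}$ by $Y_{n_s,r_N}$ using the telescoping argument underlying \eqref{L2} applied to the $L_N$-fold product together with \eqref{Hold} and \eqref{AppR}, at cost $O(|t|L_N(\beta_2(r_N))^\ka)=O(\log N\cdot N^{-\eta\te_2\ka})=o(N^{-1/2})$. Second, apply Lemma \ref{Mix Y} with $f_s(\cdot)=\zeta(\cdot,t)$, $r=r_N$ and spacing $m_N\gg 2r_N$ to decouple, with error $O(kL_N\phi(m_N-2r_N))=O(\log N\cdot (N/\log N)^{-\te_1})=o(N^{-1/2})$ since $\te_1>4$. Third, observe that \eqref{Expec}, \eqref{Hold} and \eqref{AppR} give
\[
\bbE[\zeta(Y_{n_s,r_N},t)]=\zeta(t)+O\big(|t|(\beta_2(r_N))^\ka\big)\le \rho':=\tfrac{\rho+1}{2}<1
\]
for $N$ large, uniformly in $s$ and $t\in J$. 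Combining these three estimates gives
\[
\bbE\left[\prod_{n=M_N+1}^{N}\zeta(Y_n,t)\right]\le (\rho')^{L_N}+o(N^{-1/2})=o(N^{-1/2})
\]
by the choice of $C$.

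The main obstacle is ensuring that each error term is uniform in $t\in J$; this is handled by the compactness of $J$ (giving a uniform upper bound on $|t|$) together with the strict inequality $\rho<1$. The only delicate point is the joint calibration of the scales $L_N$, $r_N$ and $m_N$: $L_N$ must grow fast enough that $(\rho')^{L_N}$ beats $N^{-1/2}$, yet slowly enough that the mixing spacing $m_N$ stays polynomially large. The hypotheses $\te_1>4$ and $\te_2\ka>\max(3/(2\al),2)$ from Theorem \ref{LCLT1} leave a comfortable margin, so any logarithmically growing $L_N$ together with a polynomial $m_N$ and a suitably small $r_N=[N^\eta]$ works.
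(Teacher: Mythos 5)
Your proof is correct and takes essentially the same approach as the paper: restrict the product to a well-separated sparse subset of indices, approximate each $Y_n$ by $Y_{n,r_N}$, decouple the product via Lemma~\ref{Mix Y}, replace each factor by its expectation $\approx\zeta(t)$, and conclude from $\sup_{t\in J}\zeta(t)<1$. The only difference from the paper is that you keep a logarithmic number $L_N\asymp\log N$ of factors spaced $\asymp N/\log N$ apart (with an independently chosen approximation scale $r_N=[N^\eta]$), whereas the paper keeps a polynomial number $\asymp N^{1-b}$ of factors spaced $3r_N\asymp N^b$ apart, tying the spacing to the approximation scale; both choices are fine. One minor calibration slip: after bounding each expectation by $\rho':=\tfrac{\rho+1}{2}$, you need $C>1/(2\log(1/\rho'))$, not $C>1/(2\log(1/\rho))$ (which is weaker since $\rho'>\rho$). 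As written the argument appears not to close, but it is self-repairing: the per-factor error is $O(N^{-\eta\te_2\ka})$ uniformly in $t\in J$, so $\prod_s\bbE[\zeta(Y_{n_s,r_N},t)]\le(\rho+O(N^{-\eta\te_2\ka}))^{L_N}\le\rho^{L_N}e^{O(L_N N^{-\eta\te_2\ka})}\le 2\rho^{L_N}$ for $N$ large, and then $C>1/(2\log(1/\rho))$ does suffice. Either state that, or simply take $C$ large enough.
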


\begin{corollary}
In the non-arithmetic case, the sequence $Z_N=S_N^{\{q_j\}}G$ verifies the conditions of
Assumption \ref{DecRate2}, while in the lattice case it verifies the conditions of Assumption \ref{DecRate3} with $h_0=1$.
\end{corollary}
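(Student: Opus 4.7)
The plan is to exploit the fact that in both the non-arithmetic and the lattice cases the continuous function $\zeta(\cdot)$ defined in \eqref{zeta t def} satisfies $\zeta(t)<1$ on $J$. Indeed, $\zeta(t)\leq 1$ always, and equality at some $t_0\in J$ would force, via the equality case of the triangle inequality applied to the inner integral, the map $x_\ell\mapsto e^{it_0 G(x_1,\ldots,x_{\ell-1},x_\ell)}$ to be $\mu$-a.s.\ constant in $x_\ell$ for $\mu^{\ell-1}$-a.e.\ $(x_1,\ldots,x_{\ell-1})$, which is exactly \eqref{NonLat}; this is excluded in the non-arithmetic case by definition and in the lattice case by the assumption $J\subset[-\pi,\pi]\setminus\{0\}$. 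By continuity and compactness, $\gamma:=\sup_{t\in J}\zeta(t)<1$, and the target becomes to bound the expectation by essentially $\gamma^{m_N}$ for some $m_N$ growing polynomially in $N$.

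First I would thin the product. Since $\zeta(y,t)\in[0,1]$, for any subset $S_N\subset\{M_N+1,\ldots,N\}$ one has $\prod_{n=M_N+1}^N\zeta(Y_n,t)\leq\prod_{n\in S_N}\zeta(Y_n,t)$. Take $r_N=\lfloor N^{q_0}\rfloor$, $s_N=\lfloor N^{q_1}\rfloor$ with $q_0,q_1\in(0,1)$ to be chosen, set $d_N=2r_N+s_N$, and let $S_N$ be the arithmetic progression in $\{M_N+1,\ldots,N\}$ with common difference $d_N$, so $m_N:=|S_N|$ is of order $N^{1-\max(q_0,q_1)}$. The telescoping inequality used to prove \eqref{L2} (namely $|\prod\alpha_j-\prod\be_j|\leq\sum|\alpha_j-\be_j|$ for $|\alpha_j|,|\be_j|\leq1$) applies verbatim on an arbitrary index set, so
\[
\left|\bbE\left[\prod_{n\in S_N}\zeta(Y_n,t)\right]-\bbE\left[\prod_{n\in S_N}\zeta(Y_{n,r_N},t)\right]\right|\leq A|t|\,m_N(\beta_2(r_N/2))^\ka.
\]
Since $d_N>2r_N$, Lemma \ref{Mix Y} applied with $f_s=\zeta(\cdot,t)$, together with $\phi(j s_N)\leq\phi(s_N)$, gives
\[
\left|\bbE\left[\prod_{n\in S_N}\zeta(Y_{n,r_N},t)\right]-\prod_{n\in S_N}\bbE[\zeta(Y_{n,r_N},t)]\right|\leq 4k(m_N-1)\phi(s_N).
\]
Finally \eqref{L1} combined with \eqref{Expec} yields $|\bbE[\zeta(Y_{n,r_N},t)]-\zeta(t)|\leq A|t|(\beta_2(r_N/2))^\ka$, so the decoupled product is at most $\bigl(\gamma+A|t|(\beta_2(r_N/2))^\ka\bigr)^{m_N}$.

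To conclude, it remains to choose $q_0,q_1\in(0,1)$ so that the three error terms $m_N\phi(s_N)$, $m_N(\beta_2(r_N/2))^\ka$, and $\bigl(\gamma+A|t|(\beta_2(r_N/2))^\ka\bigr)^{m_N}$ are each $o(N^{-1/2})$ uniformly on $J$. Since $\phi(n)=O(n^{-\te_1})$ with $\te_1>4$ and $(\beta_2(n))^\ka=O(n^{-\te_2\ka})$ with $\te_2\ka>2$, the first two conditions reduce to $\max(q_0,q_1)+q_1\te_1>3/2$ and $\max(q_0,q_1)+q_0\te_2\ka>3/2$, which are easily satisfied by any $q_0=q_1\in(1/2,1)$; for such a choice $m_N$ grows like a positive power of $N$, so the third term decays super-polynomially in $N$. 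All bounds are uniform in $t\in J$ since $\gamma$ and $\sup_{t\in J}|t|$ depend only on $J$. The main obstacle is the book-keeping of these three exponents against $N^{-1/2}$; the analytical content --- the separation $\gamma<1$ on $J$ --- is automatic from the case assumptions, so no new ingredient beyond Lemma \ref{Mix Y} and the telescoping estimate is required.
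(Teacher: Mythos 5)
Your proof is correct and follows essentially the same route as the paper: the paper disposes of the corollary in one line by invoking Lemmas \ref{ZeLem2} and \ref{LastLemma}, and your argument amounts to re-proving Lemma \ref{LastLemma} starting from the bound \eqref{Conc0}. The only differences are cosmetic --- the paper replaces $Y_n$ by $Y_{n,r_N}$ on the full product via \eqref{L2} and then thins to a subsequence with spacing $3r_N$, whereas you thin first to an arithmetic progression with gap $d_N=2r_N+s_N$ (a two-parameter family) and then approximate each surviving factor via \eqref{L1}; thinning first is marginally cheaper but the mechanism (approximate, decouple by Lemma \ref{Mix Y}, then use continuity and compactness to get $\sup_{t\in J}\zeta(t)<1$) is identical. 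The one point you leave implicit is that Assumption \ref{DecRate3} also demands $\bbP(Z_N\in\bbZ)=1$, which is immediate since $G$ is integer-valued in the lattice case.
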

\begin{proof}
This is a direct consequence of Lemmas \ref{ZeLem2} and \ref{LastLemma}.
\end{proof}

\begin{proof}[Proof of Lemma \ref{LastLemma}]
In the non-arithmetic case, let $J\subset\bbR\setminus\{0\}$ be a compact set, while in the lattice case let $J	\subset[-\pi,\pi]\setminus\{0\}$ be a compact set. Let us fix some $t\in J$.
We first note that the Assumptions in Theorems \ref{LCLT1} and \ref{LCLT2} imply that $\te_2>\frac{3}{2\te\al}\geq\frac{3}{2\ka}$.  
Let us take $\frac{3}{2\te_2\ka}<b<1$. Since $\te_1\geq \te_2\ka$ by taking $b$ close enough to $\frac{3}{2\te_2\ka}$ we can also insure that $\te_1b>3/2$. Let us also set
$r_N=[N^{b}]$. Then $Nr_N^{-\te_2\ka}=o(N^{-1/2})$ and so by \eqref{L2} we have
\begin{equation}\label{L2AP}
\left|\bbE\left[\prod_{n=M_N+1}^{N}\zeta(Y_{n,r_N},t)\right]-\bbE\left[\prod_{n=M_N+1}^{N}\zeta(Y_{n},t)\right]\right|\leq|t|b_N\leq C(J)b_N
\end{equation}
where $b_N=O(Nr_N^{-\te_2\ka})=o(N^{-1/2})$ and $C(J)=\max\{|t|: t\in J\}<\infty$.
 Next, since $\zeta(y,t)\in[0,1]$ we have 
\begin{equation}\label{L3AP}
\bbE\left[\prod_{n=M_N+1}^{N}\zeta(Y_{n,r_N},t)\right]\leq 
\bbE\left[\prod_{j=1}^{[(N-M_N)/3r_N]}\zeta(Y_{M_N+3jr_N,r_N},t)\right].
\end{equation}
Now, by Lemma \ref{Mix Y} we have
\begin{equation}\label{L4AP} 
\bbE\left[\prod_{j=1}^{[(N-M_N)/3r_N]}\zeta(Y_{M_N+3jr_N,r_N},t)\right]=\prod_{j=1}^{[(N-M_N)/3r_N]}\bbE\left[\zeta(Y_{M_N+3jr_N,r_N},t)\right]+O(N\phi(r_N))
\end{equation}
$$
=\prod_{j=1}^{[(N-M_N)/3r_N]}\bbE\left[\zeta(Y_{M_N+3jr_N,r_N},t)\right]+o(N^{-1/2})
$$
where we have used that $N\phi(r_N)=O(N^{1-\te_1 b})=o(N^{-1/2})$. 

Finally, by  \eqref{L1} we have 
\begin{equation}\label{L5AP}
\prod_{j=1}^{[(N-M_N)/3r_N]}\bbE\left[\zeta(Y_{M_N+3jr_N,r_N},t)\right]=
\prod_{j=1}^{[(N-M_N)/3r_N]}\left(\bbE\left[\zeta(Y_{M_N+3jr_N},t)\right]+O(r_N^{-\te_2\ka})\right)
\end{equation}
$$
=\left(\zeta(t)\right)^{[(N-M_N)/3r_N]}+O(Nr_N^{-\te_2\ka})\leq \left(\zeta(t)\right)^{N^{c_0}}+o(N^{-1/2})
$$
where $c_0\in(0,1)$ is some constant.  
As in the proof of Theorems \ref{LCLT1} and \ref{LCLT2} in the absence of linear indexes, in both lattice and non-arithmetic cases we have $\zeta(t)<1$ for all $t\in J$. Since $\zeta(\cdot)$ is continuous we have
\begin{equation}\label{SuP}
\sup_{t\in J}\zeta(t)<1.
\end{equation}
The proof of the lemma is completed by successively applying \eqref{L2AP}-\eqref{SuP}.
\end{proof}

\section{Application to Bernoulli shfits}\label{BS}
 Let $\epsilon=\{\epsilon_j:\,j\in\bbZ\}$ be a sequence of iid random variables taking values in some measurable space $\cE$, which are defined on some probability space $(\Om,\cF,\bbP)$. 
  Let $(\cX,d)$ be a metric space and $g:\cE^\bbZ\to\cX$ be a measurable function. We consider here a stationary sequence of random variables $X_n:\Om\to\cX$ of the form  
\begin{equation}\label{BerFor}
X_n=g(...,\epsilon_{n-1},\epsilon_n,\epsilon_{n+1},...).
\end{equation}
Sequences of this form have been studied extensively in weak dependence theory, see \cite{Bill,IL}.
Next, let us fix some $r\in\bbN$ and take an independent copy $\ve'$ of $\ve$. Let us define 
$$
X_{n,r}=g(...,\ve'_{n-r-2},\ve'_{n-r-1},\ve_{n-r},...,\ve_{n-1},\ve_n,\ve_{n+1},...,\ve_{n+r},\ve'_{n+r+1},\ve'_{n+r+2},...).
$$
\begin{proposition}\label{MixProp}
After enlarging the probability space $(\Om,\cF,\bbP)$, there exists a family of $\sig$-algebras $\cF_{n,m}\subset\cF$ satisfying the conditions of Section \ref{sec2} with $\phi(n)=0$ and $$\beta_{p}(r)\leq \sup_{n}\|d(X_n,X_{n,r})\|_{L^p}$$
for all $r\in\bbN$ and $p\geq1$.
\end{proposition}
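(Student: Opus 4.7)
My plan is to enlarge $(\Om,\cF,\bbP)$ by attaching a distinct iid copy of $\epsilon$ to each integer ``center'' $n$, and then build $\cF_{n,m}$ so that the copies labelled by $j\in[n,m]$ sit inside $\cF_{n,m}$ in their entirety. Such a localized enlargement is what simultaneously yields $\phi(n)=0$ (which forbids any nontrivial common noise between $\cF_{-\infty,k}$ and $\cF_{k+n,\infty}$) and the measurability of an $X_{n,r}$-style coupling (which needs $\cF_{n-r,n+r}$ to contain enough independent ``outside'' noise to stand in for $\epsilon'$).

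Concretely, enlarge the probability space so that besides $\epsilon$ it carries an independent family $\{\epsilon^{(n)}=(\epsilon^{(n)}_j)_{j\in\bbZ}:n\in\bbZ\}$ of iid sequences each having the same marginal law as $\epsilon$, all mutually independent and independent of $\epsilon$. For $n\leq m$ set
$$\cF_{n,m}=\sig\big(\epsilon_n,\ldots,\epsilon_m;\ \epsilon^{(j)}_k:j\in[n,m]\cap\bbZ,\ k\in\bbZ\big).$$
Monotonicity $\cF_{n,m}\subset\cF_{n_1,m_1}$ for $n_1\leq n\leq m\leq m_1$ is immediate, and the joint law of $(X_n,X_m)$ depends only on $m-n$ since $\{\epsilon_j\}$ is iid. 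For the $\phi$-mixing coefficient,
$$\cF_{-\infty,k}=\sig(\epsilon_j:j\leq k)\vee\sig(\epsilon^{(j)}:j\leq k),\qquad \cF_{k+n,\infty}=\sig(\epsilon_j:j\geq k+n)\vee\sig(\epsilon^{(j)}:j\geq k+n);$$
for $n\geq 1$ the index sets on the two sides are disjoint, so by mutual independence of all these sequences the two $\sig$-algebras are independent, whence $\phi(n)=0$.

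For the approximation rate, set
$$Z_{n,r}=g(\ldots,\epsilon^{(n)}_{n-r-2},\epsilon^{(n)}_{n-r-1},\epsilon_{n-r},\ldots,\epsilon_{n+r},\epsilon^{(n)}_{n+r+1},\epsilon^{(n)}_{n+r+2},\ldots),$$
that is, the defining formula for $X_{n,r}$ but with the single fresh copy $\epsilon^{(n)}$ in place of $\epsilon'$. Since $n\in[n-r,n+r]$, the entire sequence $\epsilon^{(n)}$ is $\cF_{n-r,n+r}$-measurable, as is the block $\epsilon_{n-r},\ldots,\epsilon_{n+r}$, so $Z_{n,r}$ is $\cF_{n-r,n+r}$-measurable. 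Both $(\epsilon,\epsilon^{(n)})$ and $(\epsilon,\epsilon')$ are pairs of mutually independent iid copies of $\epsilon$ and so share the same joint law; applying the same deterministic rule to each gives $(X_n,Z_{n,r})\overset{d}{=}(X_n,X_{n,r})$, whence $\|d(X_n,Z_{n,r})\|_{L^p}=\|d(X_n,X_{n,r})\|_{L^p}$. Using these particular $Z_{n,r}$ to bound the infimum in the definition of $\beta_p(r)$ and then taking $\sup_n$ gives the claimed inequality.

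The delicate point is precisely the tension flagged at the start: $\cF_{n,m}=\sig(\epsilon_n,\ldots,\epsilon_m)$ gives $\phi\equiv0$ for free but leaves no faithful substitute for $\epsilon'$ inside $\cF_{n-r,n+r}$, whereas injecting a single common iid sequence $\epsilon'$ into every $\cF_{n,m}$ renders $X_{n,r}$ measurable but makes $\sig(\epsilon')$ a nontrivial shared $\sig$-algebra between past and future, killing independence. Attaching one copy $\epsilon^{(n)}$ per integer $n$ and localizing its appearance to those $\cF_{a,b}$ with $n\in[a,b]$ is the minimal device that threads this needle.
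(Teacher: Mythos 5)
Your proof is correct and follows essentially the same route as the paper: enlarge the space with a bi-infinite family of independent fresh copies $\epsilon^{(n)}$ of $\epsilon$, set $\cF_{n,m}=\sigma(\epsilon_n,\dots,\epsilon_m;\epsilon^{(s)}:n\leq s\leq m)$, get $\phi\equiv 0$ from disjointness of the index sets, and use $\textbf{X}_{n,r}$ (your $Z_{n,r}$), defined by substituting $\epsilon^{(n)}$ for $\epsilon'$, as the $\cF_{n-r,n+r}$-measurable approximant whose joint law with $X_n$ matches that of $(X_n,X_{n,r})$. The only cosmetic difference is that you spell out the joint-law equality $(X_n,Z_{n,r})\overset{d}{=}(X_n,X_{n,r})$ a bit more explicitly than the paper does.
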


\begin{proof}
After enlarging the probability space we can assume that it also  supports independent copies 
$\epsilon^{(n)}=\{\epsilon_{k}^{(n)}:\,k\in\bbZ\},\,n\in\bbZ$ of $\epsilon$. 
For instance, this can be done by considering the probability space $\Om\times\Om^{\bbZ}$ and viewing $\epsilon$ and $\epsilon^{(n)}$ as functions of the appropriate $\Om$-directions.
 Let $\cF_{n,m}$ be the $\sig$-algebra generated by $\epsilon_{n},\epsilon_{n+1},...,\epsilon_{m}$ and $\epsilon^{(s)}$ for $n\leq s\leq m$. Then $\cF_{n,m}$ and $\cF_{n',m'}$ are independent if $n'>m$. Thus $\phi(n)=0$ for every $n\in\bbN$.
Moreover, set
\begin{equation}\label{bold X}
\textbf{X}_{n,r}=g(...,\epsilon^{(n)}_{n-r-2},\epsilon^{(n)}_{n-r-1},\epsilon_{n-r},...,\epsilon_{n-1},\epsilon_n,\epsilon_{n+1},...,\epsilon_{n+r},\epsilon^{(n)}_{n+r+1},\epsilon^{(n)}_{n+r+2},...).
\end{equation}
Then $\textbf{X}_{n,r}$ is $\cF_{n-r,n+r}$ measurable and it has the same distribution as $X_{n,r}$. Therefore, 
$$
\beta_p(r)\leq\sup_n\|d(X_n,\textbf{X}_{n,r})\|_{L^p}=\sup_n\|d(X_n,X_{n,r})\|_{L^p}.
$$
\end{proof}

\begin{remark}\label{Rem8.2}
If $Y_{n,k}$, $k\in\bbZ$ is obtained by replacing  the coordinate at place $n+k$ by $\ve'_{n+k}$ then 
\begin{equation}\label{Triangle}
\|d(X_n,X_{n,r})\|_{L^p}\leq\sum_{|k|\geq r}\|d(X_n,Y_{n,k})\|_{L^P}
\end{equation}
and so, with $\beta_p(r)$ defined by \eqref{beta} we have
\begin{equation}\label{beta2}
\beta_p(r)\leq \sup_{n}\max\left(\|d(X_n,Y_{n,r})\|_{L^p}, \|d(X_n,Y_{n,-r})\|_{L^p}\right):=\tilde\be_p(r).
\end{equation}
Thus, we can control the decay rate of $\be_p(r)$ as $r\to\infty$ in terms of the more familiar approximation rates $\tilde\be_p(k)=\sup_{n}\|d(X_n,Y_{n,k})\|_{L^P}$.
\end{remark}


\section{A nonconventional LLT with linear indexes}\label{Arith}
As in the previous section, Theorem \ref{LCLT2} will follow once we verify Assumptions \ref{DecRate1} and 
\ref{DecRate2} (in the non-arithmetic case) or Assumption \ref{DecRate3} with $h_0=1$ (in the lattice case)  with $Z_N=S_N^{\{q_j\}}G$. However, in comparison with the case when $q_\ell$ grows faster than linearly, the proof will require working with a certain ``associated" cocycles of random complex transfer operators, and to use certain ``spectral" properties of them, which are studied independently in Section \ref{sec tower}. 

\subsection{The CLT}
In order to apply Theorem \ref{General LLT } we first  need to establish the CLT for $N^{-1/2}\big(Z_N-\bar G N\big)$.
Let $X_0$ be a $\Del$-valued random variable which is distributed according to $\mu$, and let $X_n=F^nX_0$. 
\begin{proposition}\label{CLTPROP}
The sequence $\{X_n\}$  can be written in the form \eqref{BerFor}, and there are constants $c,C>0$ so that for every $r\in\bbN$ and $p\geq1$ we have $\be_p(r)\leq Ce^{-cr/p}$.
\end{proposition}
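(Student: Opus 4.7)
The plan is to build the representation \eqref{BerFor} from Korepanov's semi-conjugacy \cite{Kor1} between the tower $(\Del,F,\mu)$ and a two-sided Bernoulli shift, and then quantify the approximation rate using the exponential tail assumption \eqref{ExTails} together with the definition of the uniform metric $d_U$.

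First I would invoke the Korepanov construction to obtain an iid sequence $\{\ve_j\}_{j\in\bbZ}$ taking values in some measurable space $\cE$, defined on an enlarged probability space, together with a measurable factor map $\pi:\cE^{\bbZ}\to\Del$ such that $\pi_*\bbP=\mu$ and $\pi\circ\sig=F\circ\pi$, where $\sig$ is the shift. Taking $X_0=\pi(\ve)$, stationarity gives
\[
X_n=F^nX_0=F^n\pi(\ve)=\pi(\sig^n\ve)=g(\ldots,\ve_{n-1},\ve_n,\ve_{n+1},\ldots)
\]
with $g=\pi$, which is \eqref{BerFor}. Next I would define $X_{n,r}$ as in Section \ref{BS} by replacing all coordinates outside $[n-r,n+r]$ by an independent copy $\ve'$, and estimate $\|d_U(X_n,X_{n,r})\|_{L^p}$.

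The key quantitative step is that, by Korepanov's coupling, for each $k\in\bbN$ there is an event $E_{n,r,k}$ with $\bbP(E_{n,r,k}^c)\leq C_1 e^{-c_1 k}$ (with constants coming from \eqref{ExTails} and Korepanov's construction) on which the symbolic representations of $X_n$ and $X_{n,r}$ agree on the orbit through the partition $\{\Del_k^j\}$ for $2k+1$ consecutive iterates centered at time $n$, provided $k\le c_2 r$ for some $c_2>0$. On this event $X_n$ and $X_{n,r}$ lie in a common element of $\cC_k$, hence $d_U(X_n,X_{n,r})\le \be^k$; off this event we have the trivial bound $d_U\le 1$. Therefore, using $d_U\le 1$ everywhere,
\[
\bbE\big[d_U(X_n,X_{n,r})^p\big]\leq \be^{pk}+\bbP(E_{n,r,k}^c)\leq \be^{pk}+C_1 e^{-c_1 k}.
\]
Choosing $k=\lfloor c_3 r\rfloor$ for $c_3$ small enough gives a bound of the form $C_2 e^{-c_4 r}$, and extracting the $p$-th root yields
\[
\|d_U(X_n,X_{n,r})\|_{L^p}\le C e^{-c r/p}.
\]
Taking the sup over $n$ (which is uniform by stationarity) and applying Proposition \ref{MixProp} gives $\be_p(r)\le Ce^{-cr/p}$.

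The main obstacle will be step (iii): verifying from Korepanov's construction that the event ``$X_n$ and $X_{n,r}$ share a common $\cC_k$-cylinder'' fails with probability exponentially small in $\min(k,r)$. The semi-conjugacy itself is provided in \cite{Kor1}, but to read off the tail bound one must trace through the coupling scheme and use that \eqref{ExTails} controls the length of tower excursions, so that except on a set of exponentially small probability the past and future of the orbit of $X_n$ are already determined by finitely many of the nearby $\ve_j$'s. The factor $1/p$ in the final rate reflects the optimization between the deterministic bound $\be^{pk}$ and the probabilistic bound $e^{-c_1 k}$, which is characteristic of $L^p$-approximation of a bounded variable.
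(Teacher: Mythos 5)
Your proposal is correct in outline, and both approaches ultimately rest on Korepanov's construction in \cite[Section~5.3]{Kor1}, but you route the estimate differently from the paper. The paper's proof is short: it cites Korepanov for the per-coordinate approximation rate $\tilde\beta_p(r)\le C_0 e^{-c_0 r/p}$ (i.e.\ the $L^p$ cost of resampling a \emph{single} coordinate $\ve_{n\pm r}$), and then applies the telescoping/triangle inequality from Remark~\ref{Rem8.2} (equations \eqref{Triangle}--\eqref{beta2}) together with Proposition~\ref{MixProp} to pass from the per-coordinate rate to the full $X_{n,r}$. You instead estimate $\|d_U(X_n,X_{n,r})\|_{L^p}$ directly: you set up a coupling event $E_{n,r,k}$ on which $X_n$ and $X_{n,r}$ share a $\cC_k$-cylinder, bound $\bbE[d_U(X_n,X_{n,r})^p]\le\be^{pk}+\bbP(E_{n,r,k}^c)$, optimize in $k$, and extract the $p$-th root. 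Both arguments correctly identify that the $1/p$ in the exponent comes from taking the $p$-th root of a bounded quantity, and both are correct modulo the Korepanov input.

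Two points worth noting. First, your version is somewhat heavier to justify: the exponential tail $\bbP(E_{n,r,k}^c)\le C_1 e^{-c_1 k}$ for $k\le c_2 r$ is not literally a stated result in \cite{Kor1}; you would need to re-trace the coupling and combine it with \eqref{ExTails}, whereas the per-coordinate rate the paper quotes is effectively what Korepanov records at the start of \cite[Section~5.3]{Kor1}. You flag this yourself as ``the main obstacle,'' and it is indeed where the real work would go. Second, the paper's chain through Remark~\ref{Rem8.2} is more modular: once you have a single-coordinate $L^1$ (or $L^p$) rate, the triangle inequality over $|k|\ge r$ and the $d_U\le1$ bound give the claim for all $p$ at once. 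Your decomposition is not wrong, but it essentially re-derives the per-coordinate content rather than citing it, which is more work without a corresponding gain here.
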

\begin{proof}
It follows from the arguments at the beginning of \cite[Section 5.3]{Kor1} that $X_n$ has the form \eqref{BerFor}, and that
$\tilde\be_p(r)$ given by the right hand side of \eqref{beta2} satisfies $\tilde\be_p(r)\leq C_0e^{-c_0r/p}$ for every $r,p\geq1$, where $c_0,C_0>0$ are some constants not depending on $r$ and $p$. Now the proposition follows from \eqref{Triangle} and \eqref{beta2}.
\end{proof}
Next, let us fix some $N$, and using \eqref{Inversion} let us write $\xi_n=X_{\ell N-n}$. We also  set $\xi_{n,r,N}:=\textbf{X}_{\ell N-n,r}$, where $\textbf{X}_{n,r}$ are defined in \eqref{bold X}. Then for any two sets $A,B\subset\{1,2,...,\ell N\}$ the random variables $\{\xi_{n,r,N}:\, n\in A\}$ and $\{\xi_{n,r,N}:\,n\in B\}$ are independent if $\inf_{n\in A,m\in B}\rho(n,m)>2r$, where $\rho$ is defined in \eqref{rhoDef}. Using this local dependence structure, taking into account Proposition \ref{CLTPROP} the proof of all the results concerning the asymptotic variance
$D^2$ are also proved  similarly to \cite{HK2}.

Next, arguing as in Section \ref{S1},  when $D^2>0$, in order to prove the CLT for $N^{-1/2}S_N^{\{q_j\}}G$, assuming again without loss of generality that $\bar G=0$, it is enough to prove the CLT 
for $W_N=\sum_{n}Z_{n,N}$ where with
 $$Q_{n,N}=G(\xi_{q_1(n),N,r_N},\xi_{q_2(n),N,r_N},...,\xi_{q_\ell(n),N,r_N})$$
 and 
$r_N=[N^\zeta]$ (for some $\zeta\in(0,1/4)$) we have
$$
Z_n=Z_{n,N}=\frac{Q_{n,N}-\bbE[Q_{n,N}]}{\sqrt N D}.
$$
The CLT for $W_N$ is proved similarly to Section \ref{Step2}. In fact, the proof is easier in our situation since
 $Z(A)=\{Z_{n}:\, n\in A\}$ and $Z(B)=\{Z_n:\,n\in B\}$ are independent if $A$ and $B$ are not connected by an edge in the graph $\cG=\cG_N$ defined in Section \ref{Step2}. Thus we get a true dependency graph, and so the terms $\del_i(N)$ from Section \ref{Step2} actually vanish.
\subsection{The LCLT}

\subsection{The conditioning step}
The first part in the proof of the LCLT is  a certain conditioning argument, whose purpose is to obtain upper bounds of the form \eqref{Red}.
 In the case of nonlinear indexes such a step was carried out in Lemmas \ref{ZeLem1} and \ref{ZeLem2} (leading to \eqref{Re1}), but when all $q_i$'s are linear the conditioning step is executed differently, and requires $\{\xi_n\}$ to be a Markov chain. The point is that since $q_\ell$ is linear, it is impossible to pass to independent copies of the variables $\xi_{q_\ell(n)}$, and instead we will use the Markov property, which will yield upper bounds involving certain type of random operators that will be studied in the next sections.
\vskip0.1cm
We first need some notations.
Let $\{\xi_n^{(j)}:\,n\in\bbN\}$, $j=1,2,...,\ell$ be $\ell$ independent copies of the Markov chain $\{\xi_n:\,n\geq0\}$. Consider the stationary Markov chain $\{\Xi_n:\,n\geq0\}$ given by 
$$
\Xi_n=(\xi_n^{(1)},\xi_{2n}^{(2)},...,\xi_{(\ell-1)n}^{(\ell-1)}).
$$
Next,  let us consider the operators $R_{it,\bar x},\,t\in\bbR,\bar x\in\Del^{\ell-1}$ which map a function $g$ on $\Del$ to another function $R_{it,\bar x}g$ given by 
\begin{equation}\label{R def}
R_{it,\bar x}g(x)=\bbE[e^{it G_\ell(\bar x,\xi_\ell)}g(\xi_\ell)|\xi_0=x]=\cA^\ell\big(e^{itG_{\ell,\bar x}}g\big)(x)=\frac{P^\ell\big(e^{itG_{\ell,\bar x}}gh\big)(x)}{h(x)}
\end{equation}
where $G_{\ell,\bar x}(x)=G_\ell(\bar x,x)=G(\bar x,x)-\int G(\bar x,y)d\mu(y)$ (which was also defined in \eqref{G ell}).
Let us consider the random operators 
$$
R_{it}^{\Xi,n}=R_{it}^{\Xi_{n-1}}\circ\dots\circ R_{it}^{\Xi_1}\circ R_{it}^{\Xi_0}
$$
and set $M_N=[a_\ell N]$, where $a_\ell=1-\frac1{4\ell}$.

The main result in this section is the following.
\begin{proposition}[Conditioning step for linear indexes]\label{Basic}
There  is a constant $C>0$ and a sequence $(\ve_N)$ so that $\lim_{N\to\infty}\sqrt N\ve_N=0$ and
for all $t\in\bbR$ and $N\geq 1$ we have
\begin{equation}\label{basic bound}
\left|\mathbb E\left[\exp\left(it S_N^{\{q_j\}}G\right)\right]\right|\leq\bbE\left[\int \left|R_{it}^{\Xi,N-M_N}\textbf{1}(x)\right|d\mu(x)\right]
+(|t|+1)\ve_N
\end{equation}
where $\textbf{1}$ is the function taking the constant value $1$.
\end{proposition}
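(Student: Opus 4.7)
My plan is to combine two structural features of the Markov chain $\{\xi_n\}$: its Markov property with transition $\cA$, which activates the transfer operators $R_{it,\bar x}$, and the Bernoulli-shift representation of the tower chain established in \cite{Kor1} (see Section \ref{BS}), which produces local approximations $\xi_{k,r}$ depending only on iid inputs in a window of radius $r$ around $k$, with error $\beta_p(r)\le Ce^{-cr/p}$ by Proposition \ref{CLTPROP}. I choose $r_N=A\log N$ with $A$ large enough that replacing every $\xi_k$ appearing in $e^{itS_N^{\{q_j\}}G}$ by $\xi_{k,r_N}$ costs at most $O(|t|Ne^{-cr_N})=o(N^{-1/2})$ by the mean value theorem and \eqref{G Hold1}; this is absorbed into $(|t|+1)\varepsilon_N$. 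Writing $\bar\xi_n=(\xi_n,\xi_{2n},\ldots,\xi_{(\ell-1)n})$ and $c(\bar x)=\int G(\bar x,z)\,d\mu(z)$, I factor
\begin{equation*}
e^{itG(\bar x,y)}=e^{itc(\bar x)}\,e^{itG_\ell(\bar x,y)},
\end{equation*}
so that the $\bar x$-dependent phase has modulus one and the remainder is precisely the centered-in-the-last-variable piece $G_\ell$ that feeds into $R_{it,\bar x}$.

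The conditioning step is the core of the proof. Split $S_N^{\{q_j\}}G=S_{M_N}^{\{q_j\}}G+\sum_{n=M_N+1}^NG(\bar\xi_n,\xi_{\ell n})$. Since $a_\ell=1-1/(4\ell)$, one checks $\ell M_N-(\ell-1)N\ge 3N/4$, so every index $jn$ with $j<\ell$ and $n\le N$ lies (well) below $\ell M_N$, while $\ell n>\ell M_N$ for $n>M_N$. Conditioning on $\cF=\sigma(\xi_k:k\le\ell M_N)$, the factor $e^{itS_{M_N}^{\{q_j\}}G+it\sum_{n>M_N}c(\bar\xi_n)}$ is $\cF$-measurable with modulus one, and all $\bar\xi_n$ for $n\le N$ are $\cF$-measurable. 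Since $\{\xi_k\}_{k\ge\ell M_N}$ is Markov with transition $\cA$, the sub-chain $\{\xi_{\ell(M_N+j)}\}_{j\ge 0}$ is Markov with transition $\cA^\ell$; a direct induction (applying $\cA^\ell$ with the multiplier $e^{itG_\ell(\bar\xi_{M_N+j},\cdot)}$ one step at a time) yields
\begin{equation*}
\bbE\!\left[\prod_{n=M_N+1}^Ne^{itG_\ell(\bar\xi_n,\xi_{\ell n})}\,\bigg|\,\cF\right]=\bigl(R_{it,\bar\xi_{M_N+1}}\circ R_{it,\bar\xi_{M_N+2}}\circ\cdots\circ R_{it,\bar\xi_N}\bigr)\mathbf 1(\xi_{\ell M_N}).
\end{equation*}
Taking absolute value and total expectation bounds $|\bbE[e^{itS_N^{\{q_j\}}G}]|$ by $\bbE\bigl[\bigl|(R_{it,\bar\xi_{M_N+1}}\!\circ\!\cdots\!\circ R_{it,\bar\xi_N})\mathbf 1(\xi_{\ell M_N})\bigr|\bigr]+o(N^{-1/2})$.

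The final step is to decouple the $\bar\xi_n$'s and $\xi_{\ell M_N}$. Replacing each $\xi_{jn}$ and $\xi_{\ell M_N}$ by its $r_N$-local version, the $\ell$ windows $[jM_N-r_N,jN+r_N]$ for $j=1,\ldots,\ell-1$ together with $[\ell M_N-r_N,\ell M_N+r_N]$ are pairwise disjoint (minimum pairwise gap $\ge 3N/4-2r_N$). Because the underlying $\epsilon$-inputs are iid, the $\sigma$-algebras generated by the inputs in these disjoint windows are mutually independent, so replacing each block by a fresh copy of the input sequence does not change the joint law; in the $j$-th window this produces an independent copy $\xi^{(j)}$ of the chain, and the block around $\ell M_N$ becomes an independent $\mu$-distributed point $x$. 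By stationarity of each copy and an appropriate reindexing of $n$, the tuple $(\bar\xi_{M_N+1},\ldots,\bar\xi_N)$ coincides in distribution with the reindexed process $(\Xi_{N-M_N-1},\ldots,\Xi_0)$ (so that the outermost operators are matched up), producing the desired $\bbE[\int|R_{it}^{\Xi,N-M_N}\mathbf 1(x)|\,d\mu(x)]$. The main obstacle is controlling the accumulated error from these local substitutions across a composition of $N-M_N$ random operators: from \eqref{G Hold1} and $\|\cA^\ell\|_{L^\infty\to L^\infty}\le 1$ one gets the Lipschitz estimate $\|R_{it,\bar x}-R_{it,\bar x'}\|_{L^\infty\to L^\infty}\le 2K|t|\,d_U(\bar x,\bar x')$, and since each operator in the composition has norm $\le 1$ (because $|e^{itG_\ell}|=1$), a telescoping estimate bounds the total substitution error by $2K|t|\sum_n\bbE[d_U(\bar\xi_n,\bar\xi_{n,r_N})]=O(|t|Ne^{-cr_N})$, which for $r_N=A\log N$ with $A$ large is $o(N^{-1/2})$ and is absorbed into $(|t|+1)\varepsilon_N$.
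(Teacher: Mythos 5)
Your high-level plan (condition on $\cF=\sigma(\xi_k:k\le\ell M_N)$ to extract the product of $R_{it,\cdot}$'s, then decouple the arguments via the Bernoulli-shift local approximations) is the same as the paper's. But there is a genuine gap in your error control. After conditioning, you must estimate
\begin{equation*}
\bbE\bigl[\,\bigl|H_{N,t}(\bar\xi_N,\xi_{\ell M_N})\bigr|\,\bigr]
-\bbE\bigl[\,\bigl|H_{N,t}(\tilde\xi_N,\xi^{(\ell)}_{\ell M_N})\bigr|\,\bigr],
\end{equation*}
where $H_{N,t}(\bar x_N,y)=\bigl(\prod_n R_{it,\tilde x_n}\bigr)\mathbf 1(y)$, and the passage to independent copies requires you to perturb \emph{both} arguments: the tuple $\bar\xi_N$ \emph{and} the evaluation point $\xi_{\ell M_N}$. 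Your telescoping estimate controls only the $\bar x$-dependence: $\|R_{it,\bar x}-R_{it,\bar x'}\|_{L^\infty\to L^\infty}\le 2K|t|\,d_U(\bar x,\bar x')$ and $\|R_{it,\bar x}\|_{L^\infty\to L^\infty}\le 1$ give $\|H_{N,t}(\bar x_N,\cdot)-H_{N,t}(\bar z_N,\cdot)\|_\infty\le 2K|t|\sum_n d_U(\bar x_n,\bar z_n)$, which is a sup-norm bound over $y$ but says nothing about $|H_{N,t}(\bar x_N,y)-H_{N,t}(\bar x_N,w)|$. When you ``replace the block around $\ell M_N$ by an independent $\mu$-distributed point,'' you are implicitly substituting $\xi_{\ell M_N}\mapsto\xi_{\ell M_N,r_N}\mapsto x$, which requires a Lipschitz modulus of continuity of $y\mapsto H_{N,t}(\bar x_N,y)$ that is \emph{uniform in $N$ and $\bar x_N$}. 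You never establish this, and it is not automatic: $H_{N,t}(\bar x_N,\cdot)$ is $\cA^{\ell(N-M_N)}$ applied with a varying weight, and naive iteration would give a constant growing with $N$.

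This is exactly what the paper's ``Regularity of conditional expectations'' lemma (inequality \eqref{FOUR}) does: it proves $|H_{N,t}(\bar x_N,y)-H_{N,t}(\bar z_N,w)|\le C(|t|+1)\bigl(\sum_{k,n}d(x_{kn},z_{kn})+d(y,w)\bigr)$ with $C$ independent of $N$. The uniform $y$-Lipschitz bound is nontrivial — the argument splits $|H_{N,t}(\bar x_N,y)-H_{N,t}(\bar z_N,w)|$ into four pieces ($I_1,\dots,I_4$) and the last one ($I_4$, the Jacobian distortion $|JF^{L_N}(y_j)/JF^{L_N}(w_j)-1|$) requires the \emph{uniform} tower regularity \eqref{Jack Reg1} together with the exponential backward contraction along cylinders to get a bound summable in $n$, hence uniform in $N$. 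Without some version of this estimate your decoupling step is unjustified; you cannot localize $\xi_{\ell M_N}$ (and you must, since the unlocalized $\xi_{\ell M_N}$ depends on all inputs of the Bernoulli shift and is therefore not independent of the localized $\bar\xi_{N,r_N}$), and you cannot undo the localization after replacing the blocks by fresh copies. To repair the argument you need to prove and invoke this uniform two-argument Lipschitz property of $H_{N,t}$.
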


\subsection{Preparations for the proof of Proposition \ref{Basic}}

Let us consider the variable 
$\bar x_N=\{x_{jn}:\,1\leq j<\ell, M_N<n\leq N\}$, $x_{jn}\in\Del$ and the function 
\begin{equation}\label{THEREE'}
H_{N,t}(\bar x_N,y)=\bbE\left[\exp\left(it\sum_{n=M_N+1}^{N}G(x_{n},x_{2n},...,x_{(\ell-1)n},\xi_{\ell n}^{(\ell)})\right)\Big|\xi_{\ell M_N}^{(\ell)}=y\right].
\end{equation}
Then, by the Markov property $H_{N,t}$ can also be written as \begin{equation}\label{THREE}
H_{N,t}(\bar x_N,y)=\left(\prod_{n=M_N+1}^{N}R_{it,(x_{n},x_{2n},...,x_{(\ell-1)n})}\right)\textbf{1}(y)
\end{equation}
where $\prod_{j=1}^{s}A_j=A_{s}\circ\dots \circ A_2\circ A_1$ for any operators $A_1,...,A_s$.

In the course of the proof of Proposition \ref{Basic} we will need the following result.
\begin{lemma}[Regularity of conditional expectations]
There exists a constant $C>0$ so that for every $N\in\bbN$, $t\in\bbR$
and $(\bar x_N,y)$ and $(\bar z_N,w)$ we have
\begin{equation}\label{FOUR}
|H_{N,t}(\bar x_N,y)-H_{N,t}(\bar z_N,w)|\leq C(|t|+1)\left(\sum_{k,n} d(x_{kn},z_{kn})+d(y,w)\right)
\end{equation}
where the sum ranges over the pairs $(k,n)$ of positive integers so that $1\leq k<\ell$ and $M_N<n\leq N$.
\end{lemma}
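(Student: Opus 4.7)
The plan is to bound the two sources of variation separately via the triangle inequality,
\begin{align*}
|H_{N,t}(\bar x_N,y)-H_{N,t}(\bar z_N,w)|
&\leq |H_{N,t}(\bar x_N,y)-H_{N,t}(\bar z_N,y)|\\
&\quad +|H_{N,t}(\bar z_N,y)-H_{N,t}(\bar z_N,w)|.
\end{align*}

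The $\bar x$-dependent piece is immediate from the defining formula \eqref{THEREE'}: inserting $|e^{i\alpha}-e^{i\beta}|\leq|\alpha-\beta|$ under the conditional expectation and applying the Lipschitz assumption \eqref{G Hold1} to $G$ yields the pointwise bound
$$
|H_{N,t}(\bar x_N,y)-H_{N,t}(\bar z_N,y)|\leq K|t|\sum_{k,n}d_U(x_{kn},z_{kn})
$$
uniformly in $y$, which is already of the required form.

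For the $y$-dependent piece I would use the operator representation \eqref{THREE} and control $\text{Lip}_U(H_{N,t}(\bar x_N,\cdot))$ uniformly in $N$ and $\bar x_N$ by iterating a Lasota-Yorke inequality. The key input is the classical Lasota-Yorke estimate for $\cA$ on $d_U$-Lipschitz functions,
$$
\text{Lip}_U(\cA g)\leq\theta\,\text{Lip}_U(g)+C_0\|g\|_\infty,\qquad \theta\in(0,1),
$$
which holds for uniform Young towers with exponential tails under \eqref{Jack Reg1} and \eqref{ExTails}; it follows from the analogous inequality for the transfer operator $P$ together with the fact that $h$ is $d_U$-Lipschitz and bounded away from $0$. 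Iterating $\ell$ times produces the same type of estimate for $\cA^\ell$ with Lipschitz contraction rate $\theta^\ell<1$. Since $|e^{itG_{\ell,\bar x}}|=1$ and $\text{Lip}_U(e^{itG_{\ell,\bar x}})\leq K|t|$ uniformly in $\bar x$ by \eqref{G Hold1}, the product rule for $\text{Lip}_U$ gives
$$
\text{Lip}_U(R_{it,\bar x}g)\leq\theta^\ell\,\text{Lip}_U(g)+C'(|t|+1)\|g\|_\infty,
$$
while each $R_{it,\bar x}$ is an $L^\infty$-contraction because $\cA$ is a Markov operator and the modulation factor has modulus one. Starting from $\text{Lip}_U(\mathbf{1})=0$, $\|\mathbf{1}\|_\infty=1$, iterating over the $N-M_N$ factors in \eqref{THREE} and summing a geometric series with ratio $\theta^\ell$ yields
$$
\text{Lip}_U\bigl(R_{it}^{\Xi,N-M_N}\mathbf{1}\bigr)\leq \frac{C'(|t|+1)}{1-\theta^\ell}
$$
uniformly in $N$ and in the choice of the indexing tuples, whence $|H_{N,t}(\bar x_N,y)-H_{N,t}(\bar x_N,w)|\leq C''(|t|+1)d_U(y,w)$. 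Combining the two pieces gives \eqref{FOUR}.

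The only nontrivial ingredient is the Lasota-Yorke inequality for $\cA$ with respect to the $d_U$-norm; this is classical for uniform Young towers with exponential tails (see \cite{Y1}), so in the final write-up it should suffice to cite the relevant estimate rather than to reproduce its derivation, and the remaining steps are routine bookkeeping with the triangle inequality and geometric series.
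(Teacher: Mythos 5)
Your proof is correct, but it follows a genuinely different route from the paper. The paper treats the difference directly at the level of the $L_N$-step transfer operator: writing $H_{N,t}(\bar x_N,y)=h(y)^{-1}\sum_j h(y_j)e^{itU(\bar x_N,y_j)}/JF^{L_N}(y_j)$ and decomposing the difference into four terms $I_1,\dots,I_4$ (variation in $h(y)^{-1}$, in $h(y_j)$, in the phase, and in the Jacobian), each estimated by the distortion bound \eqref{Jack Reg1}, the Lipschitzness of $h$, and the geometric contraction along backward orbits. You instead split by the triangle inequality into an $\bar x$-variation (handled pointwise by $|e^{i\alpha}-e^{i\beta}|\leq|\alpha-\beta|$ and \eqref{G Hold1}) and a $y$-variation, and you control the latter by iterating a one-step Lasota--Yorke inequality for $\cA$ in the $(\|\cdot\|_\infty,\mathrm{Lip}_U)$ pair and summing a geometric series; this is a cleaner, more modular argument that automatically absorbs the paper's case split on whether $y,w$ sit above or below the critical floor $L_N$. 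The one point you should be careful about is the claimed estimate $\mathrm{Lip}_U(\cA g)\leq\theta\,\mathrm{Lip}_U(g)+C_0\|g\|_\infty$ with the \emph{sup norm} on the weak side: the Lasota--Yorke inequalities in \cite{Y1} and elsewhere in this paper (Proposition \ref{Prop LY}) are typically formulated with an $L^1(m)$-type weak norm, or with the weighted norm $\|\cdot\|_W$, so this exact form is not quite a direct citation. It does hold here, but for a specific reason worth recording: since $Ph=h$ and preimages off the base have Jacobian $1$, $h$ is constant along columns, so $\cA g(x,k)=g(x,k-1)$ for $k\geq 1$ gives pure $\beta$-contraction there, while on $\Delta_0$ the Markov property $\sum a_{j'}=1$ together with \eqref{Jack Reg1} and the regularity of $h$ yields $\sum_{j'}|a_{j'}-b_{j'}|\leq Cd_U(x,y)$. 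With that verification in place, the rest of your argument (the product rule for $\mathrm{Lip}_U$, $\|\cA^\ell\|_{\infty\to\infty}\leq1$, and the geometric series) is routine and correct, and the two approaches use the same basic ingredients to prove the same uniform Lipschitz bound on $H_{N,t}(\bar x_N,\cdot)$.
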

 \begin{proof}
First, since $|H_{N,t}|\leq 1$ if $y$ and $w$ do not belong to the same floor of $\Del$ then \eqref{FOUR} trivially holds true with $C=2$ since $d(y,w)=1$. Let us now assume that $y=(\tilde y_0,s),w=(\tilde w_0,s)\in\Del_{s}$ for some $s$. Let us also set $L_N=(N-M_N)\ell$.
In the case when $s\geq L_N$ the only preimages of $(y_0,s)$ and $(w_0,s)$ under $F^{L_N}$ are $y_N:=(\tilde y_0,s-L_N)$ and $w_N:=(\tilde w_0,s-L_N)$, respectively. In this case, with $\tilde x_n=(x_n,x_{2n},...,x_{(\ell-1)n})$
 we have 
$$
|H_{N,t}(\bar x_N,y)-H_{N,t}(\bar z_N,w)|
$$
$$=\left|\frac{h(y_N)e^{it\sum_{n=0}^{N-M_N-1}G_{\ell,\tilde x_{n+M_N+1}}(F^{\ell n}y_N)}}{h(y)}-\frac{h(w_N)e^{it\sum_{n=0}^{N-M_N-1}G_{\ell,\tilde z_{n+M_N+1}}(F^{\ell n}w_N)}}{h(w)}\right|.
$$
Using now that the invariant density $h$ is Lipschitz continuous,  bounded and bounded away from $0$, that 
$$
d_U(F^{\ell n}y_N,F^{\ell n}w_N)\leq \be^{L_N-\ell n}d_U(y,w),\,0\leq n<N-M_N
$$
and that the function $G$ satisfies \eqref{G Hold1} 
we obtain \eqref{FOUR} with some $C$ (the exact details are similar to the more complicated case $s<L_N$ considered below).

Let us assume next that $s<L_N$. Then we can write 
$$F^{-L_N}\{y\}=\{y_j\}\,\text{ and }\,F^{-L_N}\{w\}=\{w_j\}$$ 
where for each $j$ we have
\begin{equation}\label{Pair}
d_U(F^{\ell n}y_j,F^{\ell n}w_j)\leq \be^{L_N-\ell n}d_U(y,w)\leq d_U(y,w),\,\, \,0\leq n<N-M_{N}
\end{equation}
(the pairs $(y_j,w_j)$ belong to the same cylinder of length $L_N$).
Let us set $$U(\bar x_N,y_j)=\sum_{n=0}^{N-M_N-1}G_{\ell,\tilde x_{n+M_N+1}}(F^{\ell n}y_j),\,\,\,\tilde x_n=(x_n,x_{2n},...,x_{(\ell-1)n}),$$
and $V(\bar z_N,w_j)=\sum_{n=0}^{N-M_N-1}G_{\ell,\tilde z_{n+M_N+1}}(F^{\ell n}w_j)$.
Then
\begin{equation}\label{I's}
|H_{N,t}(\bar x_N,y)-H_{N,t}(\bar z_N,w)|
\end{equation}
$$=\left|h(y)^{-1}\sum_{j}\frac{h(y_j)e^{itU(\bar x_N,y_j)}}{JF^{L_N}(y_j)}-
h(w)^{-1}\sum_{j}\frac{h(w_j)e^{itV(\bar z_N,w_j)}}{JF^{L_N}(w_j)}\right|$$
$$
\leq I_1+I_2+I_3+I_4
$$
where
$$
I_1=|h(y)^{-1}-h(w )^{-1}|\left|\sum_{j}\frac{h(y_j)e^{itU(\bar x_N,y_j)}}{JF^{L_N}(y_j)}\right|,
$$
$$
I_2=h(w)^{-1}\left|\sum_{j}\frac{h(y_j)e^{itU(\bar x_N,y_j)}}{JF^{L_N}(y_j)}-
\sum_{j}\frac{h(w_j)e^{itU(\bar x_N,y_j)}}{JF^{L_N}(y_j)}\right|,
$$
$$
I_3=h(w)^{-1}\left|\sum_{j}\frac{h(w_j)e^{itU(\bar x_N,y_j)}}{JF^{L_N}(y_j)}-
\sum_{j}\frac{h(w_j)e^{itV(\bar z_N,w_j)}}{JF^{L_N}(y_j)}\right|
$$
and
$$
I_4=h(w)^{-1}\left|\sum_{j}\frac{h(w_j)e^{itV(\bar z_N,w_j)}}{JF^{L_N}(y_j)}-
\sum_{j}\frac{h(w_j)e^{itV(\bar z_N,w_j)}}{JF^{L_N}(w_j)}\right|.
$$
To estimate $I_1$, using that $h$ is bounded and bounded away from $0$ and that $\cA\textbf{1}=\textbf{1}$ (or $Ph=h$) we have
$$
I_1\leq C|h(y)-h(w)|h(y)^{-1}\sum_{j}\frac{h(y_j)}{JF^{L_N}(y_j)}= C|h(y)-h(w)|\leq C'd_U(w,y)
$$
where $C>0$ is some constant.
To bound $I_2$, since $h$ is bounded away from $0$ there is a constant $C_1>0$ so that
$$
I_2\leq C_1\sum_{j}\frac{|h(y_j)-h(w_j)|}{JF^{L_N}(y_j)}.
$$
Now, since $d_U(y_j,w_j)\leq d_U(y,w)$ and $h$ is bounded and bounded away from $0$ we have $|h(y_j)-h(w_j)|\leq C_1'h(y_j)h(y)^{-1}d_U(y,w)$, where $C_1'>0$ is another constant. Hence,
$$
I_2\leq C_1''d_U(y,w)\cA^{L_N}\textbf{1}(y)= C_1''d_U(y,w).
$$
Next, in order to estimate $I_3$, since $G$ satisfies \eqref{G Hold1}, by the mean value theorem we have
$$
|e^{itU(\bar x_N,y_j)}-e^{itV(\bar z_N,w_j)}|\leq |t|\cdot |U(\bar x_N,y_j)-V(\bar z_N,w_j)|
$$
$$\leq
C_2K|t|\left(\sum d_U(x_{kn},z_{kn})+d_U(w,y)\right)
$$
where we have also used \eqref{Pair} and $C=C_2$ is a constant which depends only on $\beta$ and $\ell$. Thus, using again that $h$ is bounded and bounded away from $0$,
with some constant $C_3>0$ we have
$$
I_3\leq C_3 |t|\left(\sum d_U(x_{kn},z_{kn})+d_U(w,y)\right)\cA^{L_N}\textbf{1}(y)$$$$=
C_3 |t|\left(\sum d_U(x_{kn},z_{kn})+d_U(w,y)\right).
$$
Finally, we have 
$$
I_4\leq h(w)^{-1}\sum_{j}h(w_j)\left|\frac{1}{JF^{L_N}(y_j)}-\frac{1}{JF^{L_N}(w_j)}\right|.
$$
Observe now that since $s<L_N$ we have $F^{L_N-s+k}y_j=(\tilde y_0,k)$ and 
$F^{L_N-s+k}w_j=(\tilde w_0,k)$, $k=0,1,...,s$ where we recall that $y=(\tilde y_0,s)$ and $w=(\tilde w_0,s)$. Thus, 
$$
JF^{L_N}(y_j)=\prod_{n=0}^{L_N-s-1}JF(F^{\ell n}y_j)
$$
and 
$$
JF^{\ell(N-M_N)}(w_j)=\prod_{k=0}^{L_N-s-1}JF(F^{\ell n}w_j)
$$
since the other values of $JF$ we have omitted equal $1$ because at the corresponding point the tower map just lifts the points to the next floor.  Using \eqref{Jack Reg1}, \eqref{Pair} and that $F^{\ell n}y_j$ and $F^{\ell n}w_j$, $0\leq n\leq L_N-s-1$, return to the base before reaching $y$ and $w$ (in the original $L_N$-th iterate), respectively, we have 
$$
\left|\frac{JF(F^{\ell n}w_j)}{JF(F^{\ell n}y_j)}-1\right|\leq C_4\be^{L_N-\ell n}d_U(y,w),\,\,0\leq n<L_N-s
$$
and so (by summing up the logarithms),
$$
\left|\frac{JF^{L_N}(w_j)}{JF^{L_N}(y_j)}-1\right|\leq C_4'd_U(y,w)
$$
where $C_4$ and $C_4'$ are some positive constants.
Thus,
$$
I_4\leq C_4'd_U(y,w)h(w)^{-1}\sum_j \frac{h(w_j)}{JF^{L_N}(w_j)}
=C_4'd_U(y,w).
$$
Now, in the case when $s<L_N$ we obtain \eqref{FOUR} from \eqref{I's} and the above estimates on $I_1,I_2,I_3$ and $I_4$.
 \end{proof}

\subsection{Proof of Proposition \ref{Basic}: the conditioning step}

First, recall that with $a_\ell=1-\frac4{\ell}$ we have $M_N=[a_\ell N]$. Thus, there exists $\del_0>0$ so that  for all $N$ large enough,
\begin{equation}\label{M N}
iN+\del_0N<(i+1)M_N-\del_0 N,\,\,i=1,2,...,\ell-1.
\end{equation}
Let $\cM=\cM_N$ be the $\sig$-algebra generated by the random variables $\xi_1,...,\xi_{\ell M_N}$.
Let us set $S_N=S_N^{\{q_j\}}G$ and $S_N G_\ell=S_N^{\{q_j\}}G_\ell$. Then $S_{M_N}$ is $\cM$-measurable and
so
\begin{equation}\label{ONE}
|\bbE[\exp(it S_N)]|=|\bbE[\exp(itS_{M_N})\bbE[\exp(it(S_N-S_{M_N}))|\cM]]|
\end{equation}
$$
\leq
\bbE[|\bbE[\exp(it(S_N-S_{M_N}))|\cM]|=\bbE[|\bbE[\exp(it(S_NG_\ell-S_{M_N}G_\ell))|\cM]|
$$
where in the last inequality we have used that the random variable $\int G(\xi_{n},...,\xi_{(\ell-1)n},x)d\mu(x)$ is $\cM$-measurable for all $1\leq n\leq N$.

Next, by the Markov property, with $\bar\xi_N=\{\xi_{jn}:\,1\leq j<\ell, M_N<n\leq N\}$ we have
\begin{equation}\label{TWO}
\bbE[\exp(it(S_NG_\ell-S_{M_N}G_\ell))|\cM]=H_{N,t}(\bar \xi_N,\xi_{\ell M_N})
\end{equation}
where $H_{N,t}$ was defined in \eqref{THEREE'}.
Now, using \eqref{Inversion} we can write $\xi_n=X_{\ell N-n}=F^{\ell N-n}X_0$, for $n=1,2,...,N\ell$. Let us set $r_N=[\del_0 N]$, $\xi_{n,r}=\xi_{n,N,r}=\textbf{X}_{\ell N-n,r_N}$ and $$\bar\xi_{N,r_N}=\{\xi_{jn,N,r_N}:\,1\leq j<\ell, M_N<n\leq N\},$$ where $\textbf{X}_{n,r}$ are defined in \eqref{bold X} (recall Proposition \ref{CLTPROP}).
Using \eqref{ONE}, \eqref{TWO} and \eqref{FOUR} we have
\begin{equation}\label{FIVE}
|\bbE[e^{it S_N}]|\leq\bbE[|H_{N,t}(\bar \xi_{N,r_N},\xi_{\ell M_N,r_N})|]+(|t|+1)O(N e^{-c\del_0 N})
\end{equation}
for some $c>0$.
Now, by \eqref{M N}, the random variables 
$$U_{j,N}=\{\xi_{jn,N,r_N}:\,M_N<n\leq N\}, \,j=1,2,...,\ell-1$$ 
and $\xi_{\ell M_N,N,r_N}$ are independent. By considering independent copies $$\{\xi_{jn}^{(j)}:\,M_N<n\leq N\}\,\, \text{ and }\,\,\{\xi_{jn,N,r_N}^{(j)}:\,M_N<n\leq N\}$$
of $\{\xi_{jn}:\,M_N<n\leq N\}$ and $\{\xi_{jn,N,r_N}:\,M_N<n\leq N\}$ for $j=1,2,...,\ell-1$ and applying again \eqref{FOUR} we see that  
\begin{equation}\label{SIX}
|\bbE[e^{it S_N}]|\leq\bbE[|H_{N,t}(\tilde \xi_{N},\xi_{\ell M_N}^{(\ell)})|]+(|t|+1)O(N e^{-c\del_0 N})
\end{equation}
where $\tilde\xi_{N}=\{\xi_{jn}^{(j)}:\,1\leq j<\ell, M_N<n\leq N\}\overset{d}{=}\{\Xi_n:\,1\leq j<\ell\}$ (where $\overset{d}{=}$ stands for equality in distribution). The lemma follows now by \eqref{SIX} and \eqref{THREE}, taking into account that $N e^{-c\del_0 N}=o(N^{-1/2})$, that $\xi_{\ell M_N}$ is distributed according to $\mu$ and that $\{\Xi_n:\,n\geq0\}$ is stationary.
\qed


\subsection{Verification of Assumption \ref{DecRate1}}\label{SecVer1}
We will prove here the following result.
\begin{proposition}\label{Prp}
There are constants  $\del_0,C_0,c_0>0$ and measurable sets $\Gamma_N\subset\Del^{(\ell-1) N}$ so that 
\vskip0.2cm
(i) $\lim\sqrt N\bbP((\Xi_{0},...,\Xi_{N-1})\not\in\Gamma_N)=0$;
\vskip0.2cm

(ii) when $(\Xi_{0},...,\Xi_{N-1})\in\Gamma_N$ and  $t\in[-\del_0,\del_0]$  we have 
\begin{equation}\label{Eq}
\left|\int R^{\Xi,N}_{it}\textbf{1}(x)d\mu(x)\right|\leq C_0e^{-c_0 N t^2}.
\end{equation}
\end{proposition}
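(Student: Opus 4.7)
The plan is to exploit the projective contraction of the random operators $R_{it}^{\bar x}$ on certain complex cones, which will be the main content of Section~\ref{sec tower}. Once established, Rugh's machinery will provide, for $|t|$ in a neighborhood of $0$, a multiplicative decomposition
\begin{equation}\label{plan:dec}
R_{it}^{\Xi,N}\mathbf{1} \;=\; \Lambda_N(t,\Xi)\, h_N(\,\cdot\,;t,\Xi) \;+\; r_N(\,\cdot\,;t,\Xi),
\end{equation}
where $\Lambda_N(t,\Xi)\in\bbC$ is a scalar cocycle (obtained as a product of one-step multipliers extracted from the leading cone direction), $\|h_N(\,\cdot\,;t,\Xi)\|_{L^\infty}$ is uniformly bounded, and $\|r_N\|_{L^\infty}\leq C\rho^N|\Lambda_N(t,\Xi)|$ for some $\rho<1$ and $C>0$ independent of $\Xi$ and of small $t$. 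At $t=0$ one has $R_{0,\bar x}\mathbf{1}=\cA^\ell\mathbf{1}=\mathbf{1}$, and using that $\int G_{\ell,\bar x}\,d\mu=0$ for every $\bar x$, a second-order expansion in $t$ of the one-step multipliers will give
\begin{equation}\label{plan:exp}
\log|\Lambda_N(t,\Xi)| \;=\; -\frac{t^2}{2}\sum_{j=0}^{N-1} V(\Xi_j) \;+\; O(N|t|^3),
\end{equation}
uniformly in $\Xi$, where $V:\Del^{\ell-1}\to[0,\infty)$ is a bounded measurable function representing the one-step asymptotic variance of the perturbation.

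The nondegeneracy hypothesis $D_\ell^2>0$ will translate into $v:=\bbE[V(\Xi_0)]>0$, since $V\equiv 0$ $\mu^{\ell-1}$-a.s.\ would force $G_\ell$ to be a coboundary over $F\times F^2\times\cdots\times F^\ell$, contradicting $D_\ell^2>0$. I would then define
\[
\Gam_N \;=\; \Big\{(\bar x_0,\dots,\bar x_{N-1})\in\Del^{(\ell-1)N}\,:\,\sum_{j=0}^{N-1} V(\bar x_j)\;\geq\;\tfrac{v}{2}N\Big\}.
\]
Since the stationary chain $\{\Xi_n\}$ inherits exponential mixing from the exponential tails of the tower and from $\gcd\{R_j\}=1$, a standard Bernstein-type concentration inequality applied to the bounded function $V$ gives
\[
\bbP\big((\Xi_0,\dots,\Xi_{N-1})\notin\Gam_N\big)\;\leq\;C_1 e^{-c_1 N}\;=\;o(N^{-1/2}),
\]
which establishes (i). For (ii), on $\Gam_N$ the expansion \eqref{plan:exp} together with the choice of $\del_0>0$ small enough that the cubic remainder is dominated by the quadratic main term for all $|t|\leq\del_0$ gives $|\Lambda_N(t,\Xi)|\leq\exp(-\tfrac{v}{8}Nt^2)$. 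Plugging this into \eqref{plan:dec} and integrating against $\mu$ then yields
\[
\Big|\int R_{it}^{\Xi,N}\mathbf{1}\,d\mu\Big|\;\leq\;|\Lambda_N(t,\Xi)|\cdot\|h_N\|_{L^\infty}+\|r_N\|_{L^\infty}\;\leq\;C_0 e^{-c_0 Nt^2},
\]
which is \eqref{Eq}.

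The principal obstacle is precisely the derivation of the decomposition \eqref{plan:dec} together with the uniform second-order expansion \eqref{plan:exp}. This is the genuinely random-cocycle analogue of the classical Nagaev--Guivarch perturbation of a quasi-compact transfer operator: one must verify that the complex cones of Section~\ref{sec tower} are preserved (uniformly over $\bar x$) by $R_{it}^{\bar x}$ for small $|t|$, that the projective diameters of their images are uniformly bounded, and that the extracted leading direction depends sufficiently smoothly on $t$ so that the one-step multipliers admit the advertised Taylor expansion with second derivative $-V(\bar x)$. Once these ingredients are established, the concentration and exponential-bound arguments above are comparatively standard.
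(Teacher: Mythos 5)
The high-level strategy you describe (complex-cone RPF decomposition, second-order Taylor expansion of a pressure cocycle, defining $\Gam_N$ as a high-variance event, concentration) is indeed the paper's strategy, but the crucial expansion \eqref{plan:exp} is not correct as stated, and this gap propagates into the concentration step.

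You assert $\log|\Lambda_N(t,\Xi)| = -\tfrac{t^2}{2}\sum_{j=0}^{N-1}V(\Xi_j) + O(N|t|^3)$ with $V$ a \emph{one-step} function of a single coordinate $\Xi_j$. What the RPF theory actually delivers is that the second logarithmic derivative of the cocycle $\la_{\om,N}(z)$ at $z=0$ equals, up to an $O(1)$ error, $\operatorname{Var}_\mu(S_N^\Xi u)$ where $S_N^\Xi u=\sum_{j=0}^{N-1}u_{\Xi_j}\circ F^{j\ell}$. This variance contains all the cross-covariances $\operatorname{Cov}_\mu(u_{\Xi_j}\circ F^{j\ell},\,u_{\Xi_k}\circ F^{k\ell})$, so it is \emph{not} a Birkhoff-type additive functional $\sum_j V(\Xi_j)$; each cross term depends on \emph{pairs} of coordinates $(\Xi_j,\Xi_k)$, and there is no single-site observable $V$ for which the advertised identity holds. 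As a consequence, your definition of $\Gam_N$ (a sub-level set of a Birkhoff sum of $V$) does not control the quantity that the cone argument actually produces.

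This matters for the concentration step. Once $\Gam_N$ must be $\{\sig_{\bar a,N}^2 > c_1 N\}$ with $\sig_{\bar a,N}^2=\operatorname{Var}_\mu(S_N^{\bar a}u)$, your appeal to a ``standard Bernstein-type concentration inequality for the bounded function $V$'' no longer applies: there is no bounded single-site $V$ to concentrate. The paper closes this gap by truncating to a block length $k$, setting $V_k(\bar y_0)=\operatorname{Var}_\mu(\sum_{j<k}u_{(T^j\bar y_0)}\circ F^{j\ell})$ (a bounded function of $k$ coordinates with $\bbE[V_k]\geq b/2$ once $k$ is large, using $D_\ell^2>0$), proving a polynomial concentration bound for the Birkhoff sum $\sum_{j<N}V_k(T^j\bar y_0)$ via the results of \cite{ChazGO}, and then invoking \cite[Proposition 5.2.1]{Hafouta ETDS} to convert a lower bound on $\sum_j V_k(T^j\bar y_0)$ into a lower bound on the genuine, non-additive variance $\sig_{\bar y_0,N}^2$. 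That last conversion is a real step that your outline does not account for, and it is also why the paper ends up with $\bbP(\cdot\not\in\Gam_N)=O(N^{-1})$ rather than the exponential rate you claim (which is still $o(N^{-1/2})$, so sufficient, but your stated bound is not what comes out). You should revise \eqref{plan:exp} to use the variance of the random Birkhoff sum and add the truncation-plus-comparison argument needed to obtain (i).
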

\begin{proof}[Verification of Assumption \ref{DecRate1} relying on Proposition \ref{Prp}]
Since $N-M_N\geq a_1N$ for some $a_1>0$ and all $N$ large enough, 
by \eqref{basic bound} and Proposition \ref{Prp} for every $t\in[-\del_0,\del_0]$ and $N$ large enough we have
$$
\left|\mathbb E\left[\exp\left(it S_{N}^{\{q_j\}}G\right)\right]\right|\leq \bbP((\Xi_{1},...,\Xi_{N})\notin\Gamma_N)+ C_0e^{-c_0 N t^2}+\del_0\ve_N.
$$
It is evident now that Assumption \ref{DecRate1} holds with $$b_N=\bbP((\Xi_{0},...,\Xi_{N-1})\notin\Gamma_N)+\del_0\ve_N.$$
\end{proof}

\subsubsection{Proof of Proposition \ref{Prp}}
We first need the following.

\subsubsection{Associated random transfer operators and the RPF theorem.}
Let $\cH$ be the space of bounded Lipschitz continuous functions $u:\Del\to\bbC$ equipped with the norm 
$$
\|u\|=\|u\|_\infty+\text{Lip}(u)
$$
where $\|u\|_\infty=\sup|u|$ and $\text{Lip}(u)=\text{Lip}_{NU}(u)$ is the smallest number $A$ so that $$|u(x)-u(y)|\leq Ad_{NU}(x,y)$$ for all $x,y\in\Del$ which belong to the same floor.  Let us consider the function $v:\Del\to\bbR$ which is constant of the floors of $\Del$ and $v|\Del_k=e^{kp/2}:=v_k$, where $p$ comes from \eqref{ExTails}.
 Let $L$ be the operator which maps a function $g:\Del\to\bbC$ to a function $Lg$ on $\Del$ defined by $$Lg=P(gv)/v,$$ where $P$ is defined in (\ref{P def}).

For every $a\in\Delta^{\ell-1}$ consider the function $u_a:\Del\to\bbR$ given by $u_a= G_{\ell,a}=G_\ell(a,\cdot)$, where $G_\ell$ was defined as in (\ref{G ell}). Then $u_a:\Del\to\bbR,\,a\in\Del^{\ell-1}$ are Lipschitz continuous functions and $\sup_{a}\|u_a\|<\infty$. For every $z\in\bbC$ and $a\in\Delta^{\ell-1}$ consider the transfer operator $\cL_z^a$ given by
\[
\cL_z^{a}g=L^\ell(e^{zG_\ell(a,\cdot)}g)=L^\ell(e^{zu_a}g).
\] 
Set $\cL_z^{\Xi,n}=\cL_{z}^{\Xi_{n-1}}\circ\dots\circ\cL_z^{\Xi_1}\circ\cL_z^{\Xi_0}$. 
Then, since $P$ is the dual of the  Koopman operator $g\to g\circ F$ with respect to the measure $m$,
 the integral inside the absolute value on the right hand side of \eqref{Eq} can be rewritten as
\begin{equation}\label{Char0}
\int R^{\Xi,N}_{it}\textbf{1}(x)d\mu(x)=\int \Big(\cL_{it}^{\Xi,N}(h/v)\Big)(y)dm_L(y)
\end{equation}
where $d m_L(y)=v d m(y)$ (note that $L$ is the dual of $F$ with respect to $m_L$) and $h=d\mu/d m$, which is  Lipschitz continuous and it is bounded and bounded away from $0$.

The proof of Proposition \ref{Prp} is based on the following two results.

\begin{lemma}\label{Lem III}
There exist  constants $\del_1,c_0, C_0>0$ so that, for almost every realization of $\{\Xi_n:n\geq0\}$ and for all $t\in[-\del_1,\del_1]$ and $N\in\bbN$ we have
\begin{equation}\label{III.1}
\|\cL_{it}^{\Xi,N}\|\leq Ce^{-t^2\sig_{\Xi,N}^2/2+C_0t^2/2+c_0N|t|^3}
\end{equation}
where with $S_n^{\Xi} u=\sum_{j=0}^{n-1}u_{\Xi_j}\circ F^{j\ell}$,
$$
\sig_{\Xi,N}^2=\sig_{(\Xi_0,...,\Xi_{N-1}),N}^2=\text{Var}_{\mu}(S_N^\Xi u).
$$
\end{lemma}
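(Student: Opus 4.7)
The plan is to exploit the cocycle identity
$$\cL_{it}^{\Xi,N}g = L^{\ell N}\bigl(e^{itS_N^{\Xi}u}\,g\bigr),$$
which follows by iterating the dual relation $L^\ell\bigl(f\cdot\phi\circ F^{\ell}\bigr)=\phi\cdot L^\ell(f)$. This identity rewrites the product of random complex operators as a single deterministic transfer operator iterate $L^{\ell N}$ twisted by the random weight $e^{itS_N^{\Xi}u}$, and so reduces the lemma to a Nagaev--Guivarch-style perturbation analysis of $L$.

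First I would establish quasi-compactness of $L$ on $\cH$. A Lasota--Yorke inequality built from the bounded distortion \eqref{Jack Reg1} and the weight $v_k=e^{kp/2}$ (calibrated against the exponential tails \eqref{ExTails}), together with the aperiodicity $\gcd\{R_j\}=1$, yields that $L$ is quasi-compact on $\cH$ with simple leading eigenvalue $1$ (eigenvector $h_L=h/v$) separated from the rest of the spectrum by a gap $\theta_0<1$. Since $\sup_{a}\|u_a\|_{\cH}<\infty$, Kato's analytic perturbation theorem then supplies, uniformly in $a\in\Del^{\ell-1}$ and for $|t|\leq\delta_1$, a spectral decomposition
$$\cL_{it}^{a}=\lambda(it,a)\,\Pi_{it,a}+N_{it,a},$$
with $\Pi_{it,a}$ of rank one, $\Pi_{it,a}N_{it,a}=N_{it,a}\Pi_{it,a}=0$, $\|N_{it,a}^{n}\|_{\cH}\leq C\theta^{n}$ for some $\theta<1$ independent of $a,t$, and with $\lambda,\Pi,N$ jointly Lipschitz in $(it,a)$. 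Because $u_a=G_{\ell,a}$ is centered, $\lambda'(0,a)=0$ and Taylor expansion gives $\log\lambda(it,a)=-\tfrac{1}{2}t^{2}\lambda''(0,a)+O(|t|^{3})$ uniformly in $a$.

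Next, I would expand the product $\cL_{it}^{\Xi,N}=\prod_{j=0}^{N-1}\bigl(\lambda_{j}\Pi_{j}+N_{j}\bigr)$ into $2^{N}$ terms and sort them by the number of $N_{j}$-factors they contain. Using the Lipschitz dependence of $\Pi_{it,a}$ on $a$ (so that products of neighbouring projections stay close to rank one) together with the uniform geometric decay $\|N_{j}^{n}\|\leq C\theta^{n}$, a standard Dunford-calculus bookkeeping bounds the sum of every non-pure-$\Pi$ monomial by $C\theta^{N}$, which is absorbed into the error. What remains is the ``pure-$\Pi$'' monomial, whose norm is at most $C\bigl|\prod_{j}\lambda(it,\Xi_{j})\bigr|$.

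The main obstacle is to pass from $\bigl|\prod_{j}\lambda(it,\Xi_{j})\bigr|$ to the exponential $Ce^{-t^{2}\sigma_{\Xi,N}^{2}/2+C_{0}t^{2}/2+c_{0}N|t|^{3}}$. Taylor-expanding each $\log\lambda(it,\Xi_{j})$ separately produces only the ``diagonal'' Green--Kubo contribution $\sum_{j}\lambda''(0,\Xi_{j})=\sum_{j}\!\bigl(\int u_{\Xi_{j}}^{2}\,d\mu+2\sum_{k\geq 1}\int u_{\Xi_{j}}\cdot (u_{\Xi_{j}}\circ F^{\ell k})\,d\mu\bigr)$, which can differ from $\sigma_{\Xi,N}^{2}=\text{Var}_{\mu}(S_{N}^{\Xi}u)=\sum_{j}\int u_{\Xi_{j}}^{2}\,d\mu+2\sum_{i<j}\int u_{\Xi_{i}}\cdot(u_{\Xi_{j}}\circ F^{(j-i)\ell})\,d\mu$ by an $O(N)$ amount through the \emph{off-diagonal} covariances. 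I would remedy this by not decomposing each factor in isolation; instead, one works with the normalized cocycle $\check\cL_{it}^{a}g:=\cL_{it}^{a}(gh_L)/h_L$, whose pairing against $\mu$ satisfies $\int\check\cL_{it}^{\Xi,N}(\mathbf{1})\,d\mu=\bbE_{\mu}[e^{itS_{N}^{\Xi}u}]$, and expands the logarithm of this characteristic function to second order in $t$ using the identity $\int f\cdot(g\circ F^{\ell k})\,d\mu=\int L^{\ell k}(fh_L)\cdot g\,dm_L$. The spectral gap of $L^{\ell}$ realises each covariance, including the off-diagonal ones, as a matrix element of $L^{\ell k}$ that decays geometrically in $k$; resumming these contributions restores the missing off-diagonal terms and produces the full $\sigma_{\Xi,N}^{2}$ in the exponent, while the transient/boundary contributions from the rank-one part of $L^{\ell k}$ contribute an $O(1)$ correction uniform in $\Xi$ and $N$ (which becomes $C_{0}$). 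The remainder $c_{0}N|t|^{3}$ collects the Taylor errors at each of the $N$ steps. Lifting this identification from the scalar pairing $\int\cL_{it}^{\Xi,N}(h_L)\,dm_L$ to the full operator norm on $\cH$, while keeping all constants uniform in $\Xi$ and $|t|\leq\delta_{1}$, is the technical heart of the argument.
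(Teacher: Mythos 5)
Your proposal identifies the right target and correctly diagnoses the central difficulty (the product $\prod_j\lambda(it,\Xi_j)$ of per-step Kato eigenvalues recovers only the diagonal Green--Kubo variance, not $\sigma_{\Xi,N}^2$), but it has two genuine gaps, and the route you take is quite different from the paper's.

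The first gap is the claim that a ``standard Dunford-calculus bookkeeping bounds the sum of every non-pure-$\Pi$ monomial by $C\theta^N$.'' The Kato projections $\Pi_{it,\Xi_j}$ and complements $N_{it,\Xi_j}$ satisfy $\Pi_{it,a}N_{it,a}=N_{it,a}\Pi_{it,a}=0$ only for \emph{matching} index $a$; for $j\neq j'$, $\Pi_{it,\Xi_j}N_{it,\Xi_{j'}}\neq 0$. The Lipschitz continuity in $a$ only tells you each $\Pi_{it,\Xi_j}$ is $O(|t|)$-close to the \emph{common} unperturbed projection $h_L\otimes m_L$, so every transition between a $\Pi$-block and an $N$-block costs a factor $O(|t|)$ rather than being zero, and there are $2^N$ monomials to control. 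Making this combinatorics close requires a substantial argument (and, unless one does it carefully, it leaks exactly the $O(N t^2)$ discrepancy you are trying to avoid); it is not standard bookkeeping.

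The second and more serious gap is the one you flag yourself but do not resolve. Replacing the product of per-step eigenvalues by a Taylor expansion of the scalar pairing $\int\cL_{it}^{\Xi,N}(h/v)\,dm_L=\bbE_\mu[e^{itS_N^\Xi u}]$ does recover the full $\sigma_{\Xi,N}^2$, but it gives no bound on the operator norm $\|\cL_{it}^{\Xi,N}\|$, which is what the lemma asserts. You acknowledge that ``lifting this identification \ldots to the full operator norm on $\cH$ \ldots is the technical heart of the argument,'' but you offer no mechanism for it. The paper supplies precisely this mechanism through the random complex RPF theorem (Theorem \ref{RPFthm}, proved via the complex cone contraction in Theorem \ref{YT cones thm}). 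That theorem produces an $\om$-dependent eigenvalue $\la_\om(z)$ and \emph{equivariant} eigenfunctions with $\cL_z^\om h_\om^{(z)}=\la_\om(z)h_{\te\om}^{(z)}$, so the cocycle admits a genuine rank-one-plus-exponentially-small decomposition and $\|\cL_{it}^{\om,N}\|\leq C|\la_{\om,N}(it)|$ follows at once. Crucially, $\la_\om(z)$ is \emph{not} the Kato eigenvalue of the single operator $\cL_z^{\om_0}$; it encodes the whole random trajectory, which is why $\Pi_{\om,N}=\log\la_{\om,N}$ carries the correct off-diagonal covariances, as the paper verifies in the estimate $|\Pi_{\om,N}''(0)-\sigma_{\om,N}^2|\leq C_0$ by differentiating the same characteristic-function identity you propose. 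Without some replacement for the random RPF theorem, your argument bounds a scalar quantity rather than the operator norm, and the proof as proposed does not close.
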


\begin{lemma}\label{Lemm IIII}
There exists a constant $c_1>0$ with the following property. For every $N$ 
set $\Gam_N=\{\bar a\in(\Del^{\ell-1})^{N}:\,\sig_{\bar a,N}^2>c_1N\}$. Then there exists a constant $c_2>0$ so that for all $N\geq1$ we have
\begin{equation}\label{Gamm prob}
\bbP((\Xi_0,\Xi_1,...,\Xi_{N-1})\notin\Gam_N)\leq c_2N^{-1}.
\end{equation}
\end{lemma}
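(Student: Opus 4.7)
The plan is to view $\sig_{\bar a,N}^2$ as a real-valued functional of $\bar a=(a_0,\ldots,a_{N-1})$ and apply Chebyshev's inequality, which reduces the task to (i) a linear lower bound $\bbE[\sig_{\Xi,N}^2]\geq c'N$ and (ii) an $O(N)$ bound on the variance of $\sig_{\Xi,N}^2$. Expanding the square and using $F$-invariance of $\mu$ together with $\int u_a\,d\mu=0$ gives
\begin{equation*}
\sig_{\bar a,N}^2=\sum_{i,j=0}^{N-1}\tilde g(a_i,a_j,j-i),\qquad \tilde g(a,b,k):=\int u_a(x)u_b(F^{k\ell}x)\,d\mu(x)\quad(k\geq 0),
\end{equation*}
with the analogous definition for $k<0$. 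Under \eqref{Jack Reg1}, \eqref{ExTails} and $\gcd\{R_j\}=1$ the map $F$ has exponential decay of correlations for Lipschitz observables, so the uniform bound $\sup_a\|u_a\|<\infty$ yields $|\tilde g(a,b,k)|\leq C_0\vr^{|k|}$ for some $\vr\in(0,1)$, uniformly in $a,b$. Conditioning on $\Xi$ and integrating against an independent $X\sim\mu$, and using $\int u_a\,d\mu=0$ to cancel $\bbE_X[S_N^\Xi u(X)\,|\,\Xi]$, one obtains the key identity
\begin{equation*}
\bbE\bigl[\sig_{\Xi,N}^2\bigr]=\text{Var}(Y),\qquad Y:=\sum_{j=0}^{N-1}G_\ell(\Xi_j,F^{j\ell}X).
\end{equation*}

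For (i), I claim $\text{Var}(Y)\geq c'N$ for some $c'>0$. Using \eqref{Inversion} the sequence $\{F^{j\ell}X\}_j$ is realized (up to time reversal) as a trajectory of an $\ell$-th independent copy of the Markov chain, so $Y$ is a ``decoupled'' version of $S_N^{\{q_j\}}G_\ell$ in which the tuples $(\xi_n,\xi_{2n},\ldots,\xi_{\ell n})$ are replaced by $(\xi_n^{(1)},\xi_{2n}^{(2)},\ldots,\xi_{\ell n}^{(\ell)})$. An analogue of the conditioning step of Proposition \ref{Basic} applied to the second moment (i.e.\ the same computation underlying the existence of $D_\ell^2$ in Theorem \ref{CLT1}) yields $\bbE[Y^2]=\bbE[(S_N^{\{q_j\}}G_\ell)^2]+o(N)$, so the hypothesis $N^{-1}\bbE[(S_N^{\{q_j\}}G_\ell)^2]\to D_\ell^2>0$ forces $\bbE[\sig_{\Xi,N}^2]\geq c'N$ for all $N$ large. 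This comparison is the step I expect to be the main obstacle, since one must carefully quantify how the decoupling errors accumulate over $N^2$ pairs of indices; the key inputs are exponential $\phi$-mixing of $\{\xi_n\}$ (inherited from quasi-compactness of $\cA$ under \eqref{ExTails} and $\gcd\{R_j\}=1$) together with the exponential bound $|\tilde g|\leq C_0\vr^{|k|}$.

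For (ii), I would decompose into diagonals $\sig_{\Xi,N}^2=\sum_{|k|<N}V_k$ with $V_k=\sum_i\tilde g(\Xi_i,\Xi_{i+k},k)$. Each summand of $V_k$ is bounded by $C_0\vr^{|k|}$ and depends on $\Xi_i,\ldots,\Xi_{i+k}$, while $\{\Xi_n\}$ inherits exponential $\phi$-mixing $\phi_\Xi(n)\leq C\vr^n$ from the independent copies of $\{\xi_n\}$. Nearby pairs $|i-j|\leq 2|k|$ contribute $\leq C_0^2\vr^{2|k|}$ per pair by Cauchy--Schwarz, while far pairs satisfy
\[
\bigl|\text{Cov}\bigl(\tilde g(\Xi_i,\Xi_{i+k},k),\tilde g(\Xi_j,\Xi_{j+k},k)\bigr)\bigr|\leq C_0^2\vr^{2|k|}\phi_\Xi(|i-j|-2|k|)\leq C_1\vr^{|i-j|}.
\]
Summing gives $\text{Var}(V_k)\leq C_2N(|k|+1)\vr^{2|k|}$, so $(\text{Var}(\sig_{\Xi,N}^2))^{1/2}\leq\sum_k(\text{Var}(V_k))^{1/2}\leq C_3\sqrt N$. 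Fixing any $c_1\in(0,c')$, Chebyshev then gives
\[
\bbP\bigl(\sig_{\Xi,N}^2\leq c_1N\bigr)\leq\bbP\bigl(|\sig_{\Xi,N}^2-\bbE[\sig_{\Xi,N}^2]|\geq(c'-c_1)N\bigr)\leq\frac{\text{Var}(\sig_{\Xi,N}^2)}{((c'-c_1)N)^2}\leq\frac{c_2}{N},
\]
which is the desired bound.
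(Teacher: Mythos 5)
Your plan (Chebyshev on $\sig_{\Xi,N}^2$ after establishing $\bbE[\sig_{\Xi,N}^2]\gtrsim N$ and $\text{Var}(\sig_{\Xi,N}^2)=O(N)$) is genuinely different from the paper's route. The paper first rewrites $\sig_{\Xi,N}^2\overset{d}{=}V_N(\bar Y_0)$ with $\bar Y_0\sim\mu^{\ell-1}$ and $T=F\times\cdots\times F^{\ell-1}$, then deduces the positive mean from $D_\ell^2>0$ (as you also do), but obtains the $O(N^{-1})$ tail from the concentration inequalities of Chazottes--Gou\"ezel \cite{ChazGO} applied to Birkhoff-type sums of the blocked functional $\hat V_k$ under $T$, followed by \cite[Prop.~5.2.1]{Hafouta ETDS}, which avoids ever estimating $\text{Var}(\sig_{\Xi,N}^2)$ directly. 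Your step~(i) is, modulo phrasing, the same observation the paper makes. Your step~(ii), if it worked, would be more elementary, but it contains a real gap.

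The gap is the appeal to exponential $\phi$-mixing of $\{\xi_n\}$ (and hence of $\{\Xi_n\}$). Quasi-compactness of $\cA$ on a Lipschitz-type space gives exponential decay of correlations for Lipschitz observables; it does \emph{not} give $\phi$-mixing, which is a statement about sigma-algebras and bounded observables. In fact $\phi$-mixing fails for Young towers: conditioned on the current point sitting on a floor whose return time exceeds $n$, the next $n$ steps of $F$ are deterministic, so $\bbP(B\,|\,A)$ is $0$ or $1$ while $\bbP(B)$ is not, and the sup over such events $A$ stays bounded away from $0$ regardless of $n$. Your covariance estimate
\[
\bigl|\text{Cov}\bigl(\tilde g(\Xi_i,\Xi_{i+k},k),\tilde g(\Xi_j,\Xi_{j+k},k)\bigr)\bigr|\leq C_0^2\vr^{2|k|}\phi_\Xi(|i-j|-2|k|)
\]
crucially uses a mixing bound involving only the $\sup$-norms of the two factors (so that both $\vr^{|k|}$'s survive); decay of correlations for Lipschitz observables would force you to pay the Lipschitz norm of $\tilde g(\cdot,\cdot,k)$, which is $O(1)$ and not $O(\vr^{|k|})$, destroying the summability in $k$. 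The argument could be rescued if one had exponential $\alpha$- or $\beta$-mixing of the tower's cylinder filtration (which plausibly holds for exponential tails and gives the same $\|\cdot\|_\infty\cdot\|\cdot\|_\infty$ covariance bound), but this is a nontrivial fact that you neither state correctly nor prove, and it is not what the hypotheses of the paper provide. This is precisely the place where the paper takes a different, more robust route through \cite{ChazGO}.
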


\paragraph{\textbf{Completing the proof of Proposition \ref{Prp} relying on Lemmas \ref{Lem III} and \ref{Lemm IIII}.}}
We first infer from (\ref{III.1}) that there exist constants $\del_0,C_3>0$ and $c_3>0$ so that 
when $(\Xi_0,...,\Xi_{N-1})\in\Gamma_N$, for every $t\in[-\del_0,\del_0]$ we have
\begin{equation}\label{Conc.1}
\|\cL_{it}^{\Xi,N}\|\leq C_3e^{-t^2c_3 N}.
\end{equation}
Combing this with \eqref{Char0}, when $(\Xi_0,\Xi_1,...,\Xi_{N-1})\in\Gamma_N$ we get that for every $t\in[-\del_0,\del_0]$,
$$
\left|\int R^{\Xi,N}_{it}\textbf{1}(x)d\mu(x)\right|\leq C_1e^{-t^2c_3 N}
$$
where $C_1$ is another constant, and
the proof of Proposition \ref{Prp} is complete, taking into account \eqref{Gamm prob}.

\begin{proof} [Proof of Lemma \ref{Lemm IIII}]
Let us first observe that Lemma \ref{Lemm IIII} concerns only the distribution of 
$(\Xi_0,...,\Xi_{N-1})$ for any fixed $N$. Set $T=F\times F^{2}\times\dots\times F^{\ell-1}$. Then by 
\eqref{Inversion},
\begin{equation}\label{Inv2}
(\Xi_0,\Xi_1,...,\Xi_{N-1})\overset{d}{=}(T^{N-1}\bar Y_0,T^{N-2}\bar Y_0,...,T\bar Y_0, \bar Y_0)
\end{equation}
where $\bar Y_0$ is distributed according to $\mu\times\mu\dots\times\mu=\mu^{\ell-1}$.
Therefore
$$
\sig_{\Xi,N}^2\overset{d}{=}\text{Var}_{\mu}(S_N^{\bar Y_0} u):=V_N(\bar Y_0)
$$
where $\overset{d}{=}$ stands for equality in distribution and
$$
S_N^{\bar y_0} u=\sum_{n=0}^{N-1}G_{\ell,T^{N-n}\bar y_0}\circ F^{\ell j}.
$$
Observe next that our assumption that $D_\ell^2>0$ (in Theorems \ref{Non lattice LLT3} and \ref{Lattice LLT3}) is equivalent to $\lim_{k\to\infty}\frac 1k\mathbb E[V_k(\bar Y_0)]:=b>0$, since 
$$
\bbE[V_N(\bar Y_0)]=\bbE[\sig_{\Xi,N}^2]=\bbE\left[\left(\sum_{n=1}^{N}G_\ell(\xi_{n}^{(1)},\xi_{2n}^{(2)},...,\xi_{\ell n}^{(\ell)})\right)^2\right]=
\bbE_{\mu^\ell}\left[\left(\sum_{n=0}^{N-1}G_\ell\circ F_\ell^n\right)^2\right]
$$
where $F_\ell=T\times F^\ell=F\times F^2\times\cdots\times F^\ell$.
 Therefore if $k$ is sufficiently large then 
\begin{equation}\label{EXP b}
\bbE[V_k(\bar Y_0)]\geq c:=\frac b2.
\end{equation}
It is clear that each $F^j$ has a tower extension, and therefore by \cite{TowerProduct} the map $T=F\times F^2\times\dots\times F^{\ell-1}$ also has a Tower extension (which is mixing since $F$ is mixing). Consider the functions $\hat V_k:(\Del^{\ell-1})^k\to\bbR$ given by
\[
\hat V_k(a_0,...,a_{k-1})=\text{Var}_{\mu}\left(\sum_{j=0}^{k-1}u_{a_j}\circ F^{j\ell}\right)=
\bbE\left[\Big(\sum_{j=0}^{k-1}G_\ell(a_j,F^{j\ell}X_0)\Big)^2\right]
\]
where $X_0$ is distributed according to $\mu$.
Then, since $G$ is bounded and satisfies \eqref{G Hold1}, for every $j$ the H\"older constant of $V_k$ at the direction of the variable $a_j$ does not exceed $ck$ for some constant $c$ not depending on $k$. Observe that
\[
V_k(\bar Y_0)=\hat V_k(T^{k-1}\bar Y_0,...,T\bar Y_0,\bar Y_0).
\]
Applying the results in \cite[Section 3]{ChazGO} with the function $\hat V$ and the map $T$, taking into account (\ref{EXP b}), we conclude (in particular) that for every $\eta>0$ there exists a constant $d_1>0$ so that for all sufficiently large $k$ we have
\begin{equation}\label{VarProb}
\mu^{\ell-1}\left\{\bar y_0:\sum_{j=0}^{N-1} V_k(T^{j}\bar y_0)\leq d_1 kN\right\}\leq d_2(k)N^{-\eta}
\end{equation}
where $d_2(k)$ is a constant which depends only on $k$ (using \eqref{EXP b} we can take $d_1=c/2$).
Let us fix some $k$ large enough.
Since \eqref{VarProb} holds true, we can apply \cite[Proposition 5.2.1]{Hafouta ETDS} and get that there exist constants $c_1,c_2>0$ so that for every $N\geq1$ we have
\[
\mu^{\ell-1}\left\{\bar y_0: \sig_{\bar y_0,N}^2\leq c_1N\right\}=\bbP((\Xi_0,...,\Xi_{N-1})\not\in\Gamma_N)\leq c_2 N^{-1}.
\] 
\end{proof}

\subsubsection{Proof of Lemma \ref{Lem III}: the random complex RPF theorem}
The proof of Lemma \ref{Lem III} relies on a more precise study of the asymptotics of the iterates $\cL_z^{\Xi,N}$.  To do that we first need to express the right hand side of \eqref{basic bound} by means of a random dynamical system. This will be useful since we eventually want to apply a theorem from \cite[Ch.4]{book} concerning random operators.

Set $\tilde\Om=(\Del^{\ell-1})^{\bbZ}$ and let $\te:\tilde\Om\to\tilde\Om$ be the left shift given by $(\te\om)_i=\om_{i+1}$ for $\om=(\om_i)\in\tilde\Om$. Let $\cB$ be  the Borel $\sig$-algebra of $\tilde \Om$.
By the Kolmogorov extension theorem there exists a unique $\te$-invariant probability measure $\tilde\bbP$ on $\tilde\Om$ so that for all $A_{0},A_1,...,A_s\in\Del^{\ell-1}$ we have $$\tilde\bbP(A_0\times A_1\times\cdots\times A_s)=\bbP(\Xi_i\in A_i;\, i=0,1,...,s).$$ Since $\{\Xi_n\}$ is mixing, the probability preserving system $(\tilde\Om,\cB,\tilde\bbP,\te)$ is mixing. 
We will abuse these notations and for $\om=(\om_i)\in\tilde\Om$ we write  $G_{\ell,\om}=G_{\ell,\om_0}$ and
$\cL_z^{\om}=\cL_{z}^{\om_0}$. Set $$\cL_z^{\om,n}=\cL_{z}^{\te^{n-1}\om}\circ\dots\circ\cL_z^{\te\om}\circ\dots\circ\cL_z^{\om}=\cL_{z}^{\om_{n-1}}\circ\dots\circ\cL_z^{\om_1}\circ\dots\circ\cL_z^{\om_0}.$$
Then, roughly speaking, the strategy of the proof of  Lemma \ref{Lem III} is to show that when $|z|$ is small enough  the iterates 
$\cL_z^{\om,n}$ behave like a one dimensional operator times the exponent of the real part of some pressure function $\Pi_{\om,n}(z)$ so that $\Pi_{\om,n}(0)=\Pi_{\om,n}'(0)=0$ and $\Pi_{\om,n}''(0)=\sig_{\om,n}^2+O(1)$, where $\sig_{\om,n}^2=\sig_{(\om_0,\om_1,...,\om_{n-1}),n}^2$.
Using that the desired estimates will follow from Taylor expansion of order $2$ of $\Pi_{\om,n}(z)$ around $0$. The one dimensional asymptotic behavior is the content of the following result.

\begin{theorem}[Random complex RPF theorem]\label{RPFthm}
There exist $r_0,c>0$ and $\del\in(0,1)$
so that for $\tilde \bbP$-almost every $\om$, for every complex number $z$ whose modulus does not exceed $r_0$ there is a function $h_{\om}^{(z)}\in\cH$, a non-zero complex number $\la_{\om}(z)$ and a complex linear functional $\nu_{\om}^{(z)}\in\cH^*$ so that $h_{\om}^{(0)}=h/v$, $\la_{\om}(0)=1$, $\nu_{\om}^{(0)}=m_L$ and
\begin{equation}\label{RPF}
\left\|\cL_z^{\om,n}/{\la_{\om,n}(z)}-h_{\te^n\om}^{(z)}\otimes \nu_{\om}^{(z)}\right\|\leq c\delta^n
\end{equation}
where $\la_{\om,n}(z)=\prod_{j=0}^{n-1}\la_{\te^j\om}(z)$ and $(f\otimes \nu)(g)=\nu(g)\cdot f$.
Moreover, 
\[
\nu_{\om}^{(z)}(h_{\om}^{(z)})=\nu_{\om}^{(z)}(h_\om^{(0)})=1,\, \cL_z^{\om} h_{\om}^{(z)}=h_{\te\om}^{(z)}\, \text{ and }\,(\cL_z^{\om})^* \nu_{\te\om}^{(z)}=\la_{\om}(z)\nu_{\om}^{(z)}.
\]
Furthermore, $\la_{\om}(z),h_{\om}^{(z)}$ and $\nu_{\om}^{(z)}$ are measurable in $\om$, analytic in $z$ and are uniformly bounded in $(\om,z)$.
\end{theorem}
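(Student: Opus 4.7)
My plan is to deduce the random complex Ruelle--Perron--Frobenius theorem from the complex cone contraction results that are to be established in Section \ref{sec tower}. The deterministic case $z=0$ is classical: since the Young tower has exponential tails \eqref{ExTails} and the Jacobian regularity \eqref{Jack Reg1} holds, the operator $L^\ell$ acting on $\cH$ has a spectral gap with simple top eigenvalue $1$, eigenfunction $h/v$ (using that $Ph=h$ together with the definition $Lg=P(gv)/v$), and eigen-functional $m_L$. This gives the triple $(\la_\om(0),h_\om^{(0)},\nu_\om^{(0)}) = (1,h/v,m_L)$, which is constant in $\om$.

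For nonzero $z$ with $|z|\leq r_0$, the core technical input is that there exists a complex cone $\cC\subset\cH$ --- the canonical complexification, in the sense of \cite{Viv2}, of a real cone of regular positive functions adapted to the metric $d_{NU}$ --- such that, uniformly in $\om$ and in $|z|\leq r_0$, the operator $\cL_z^\om$ maps $\cC$ into a subcone whose diameter in Rugh's complex projective metric is bounded by some $\Delta<\infty$. Rugh's abstract theory \cite{Rug} (see also \cite{Dub1,Dub2}) then implies that compositions of such operators are strict projective contractions with factor $\tanh(\Delta/4)<1$, so that for any $g_0\in\cC$ the normalized iterates $\cL_z^{\om,n}g_0/\nu_\om^{(z)}(g_0)$ converge exponentially fast to a limit of the form $c(\om,n,g_0)h_{\te^n\om}^{(z)}$ for some $h_{\te^n\om}^{(z)}\in\cC$. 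Applying a parallel construction to the adjoints $(\cL_z^\om)^*$ produces the functionals $\nu_\om^{(z)}$, and defining $\la_\om(z)$ by $\cL_z^\om h_\om^{(z)}=\la_\om(z)h_{\te\om}^{(z)}$ yields the required cocycle, with the normalization $\nu_\om^{(z)}(h_\om^{(z)})=1$ imposed to fix the scalars.

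The operator-norm convergence \eqref{RPF}, rather than just the projective one, will follow from the fact that on normalized subsets of $\cC$ Rugh's projective metric dominates the $\cH$-norm, combined with the exponential contraction rate $\tanh(\Delta/4)^n$. Measurability of $\la_\om(z),h_\om^{(z)},\nu_\om^{(z)}$ in $\om$ is inherited from the construction via pointwise limits of measurable objects; analyticity in $z$ follows from the fact that $z\mapsto\cL_z^\om$ is entire and preserves $\cC$ on the disk $\{|z|\leq r_0\}$, so the fixed point of the induced projective dynamics depends holomorphically on $z$ by a standard implicit-function / holomorphic-motion argument applied inside the cone; uniform bounds in $(\om,z)$ follow from the uniform cone parameters. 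The identifications at $z=0$ are immediate since $\cL_0^\om=L^\ell$ does not depend on $\om$ and its RPF triple is the deterministic one.

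The main obstacle, which is precisely the novelty claimed in Section \ref{sec tower}, is to construct the cone $\cC$ and verify that $\cL_z^\om$ contracts it with parameters uniform in $\om$ and in $z$ ranging over a complex disk. Unlike the expanding case where the classical log-Hölder cones suffice, the tower setting forces one to use a cone adapted to the non-uniform separation metric $d_{NU}$ together with a control on the behavior across high floors, for which the exponential tails \eqref{ExTails} and the intertwining by $v_k=e^{kp/2}$ are essential. Verifying the hypotheses of Rugh's theory for such a cone, and in particular the uniform finite-diameter bound for $\cL_z^\om(\cC)$ after one step, is the heart of the matter and is where the main effort of the proof will lie.
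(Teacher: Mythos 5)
Your proposal follows essentially the same route as the paper: the paper's proof of Theorem~\ref{RPFthm} consists precisely of invoking Theorem~\ref{YT cones thm} (the complex cone contraction result of Section~\ref{sec tower}) to verify the hypotheses of the abstract random complex RPF theorems in \cite[Theorems 4.1, 4.2]{book}, which then deliver measurability, analyticity, uniform bounds and the exponential convergence \eqref{RPF} in one package. One small inaccuracy: you write that a single application of $\cL_z^\om$ sends $\cC$ into a finite-diameter subcone, whereas Theorem~\ref{YT cones thm}(iii) shows this only for the block compositions $\cL_z^{\om,k}$ with $k_0\le k\le 2k_0$; for towers a single step does not contract, and the abstract framework in \cite{book} is set up to handle exactly this block-wise contraction, so the conclusion is unaffected.
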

\begin{proof}
Applying  Theorem \ref{YT cones thm} we see that the conditions of \cite[Theorems 4.1]{book} and \cite[Theorem 4.2]{book} in \cite{book} hold true, which yields Theorem \ref{RPFthm}. 
\end{proof}

\subsubsection{The random pressure function}
Since $\la_{\om}(0)=1$ and $\la_{\om}(z)$  are analytic and uniformly bounded, by decreasing $r_0$ we can also assume that $|\la_{\om}(z)|$ 
is uniformly bounded from below by some positive constant. Therefore, we can construct analytic functions $\Pi_{\om}(z)$ around $0$ so that $\Pi_{\om}(0)=0$, $|\Pi_{\om}(z)|\leq c$ and $e^{\Pi_{\om}(z)}=\la_{\om}(z)$, where $c>0$ is some constant. For each $n$ set $\Pi_{\om,n}(z)=\sum_{j=0}^{n-1}\Pi_{\te^j\om}(z)$.

Next, set $u_\om(x)=u_{\om_0}(x)=G_\ell(\om_0,x)$ and for every $n\in\bbN$,
$$S_n^{\om} u=\sum_{j=0}^{n-1}u_{\te^{j}\om}\circ  F^{j\ell}=\sum_{j=0}^{n-1}u_{\om_j}\circ  F^{j\ell}.$$
 Then since $\int G_\ell(x_1,...,x_{\ell-1},x)d\mu(x)=0$ and $\mu$ is $F$-invariant we have $\int S_n^{\om} u d\mu=0$.
\begin{lemma}
For $\tilde\bbP$-almost all $\om$ and  every $n\in\bbN$ we have
\begin{equation}\label{P'=0}
\Pi_{\om,n}'(0)=\frac{d}{dz}\Pi_{\om,n}'(z)\big|_{z=0}=\int S_n^{\om} u d\mu=0.
\end{equation}
\end{lemma}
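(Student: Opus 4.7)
The plan is to express $\la_{\om,n}(z)$ as an integral involving $e^{zS_n^\om u}$ and then use the normalization of the eigenfunction/eigenvector pair to kill the derivative of $h_\om^{(z)}$ in $z$.

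First, I would establish the cocycle identity
\[
\cL_z^{\om,n} g = L^{n\ell}\!\left(e^{zS_n^\om u}\,g\right),
\]
which follows by induction from $\cL_z^\om g=L^\ell(e^{zu_\om}g)$ together with the basic transfer operator identity $L(f\cdot g\circ F)=g\cdot Lf$. Applying this to $g=h_\om^{(z)}$, integrating against $m_L$, and using the $L^\ell$-invariance of $m_L$ (which follows from $m_L$ being $L$-invariant because $L$ is dual to $g\mapsto g\circ F$ with respect to $m_L$), gives
\[
\la_{\om,n}(z)\int h_{\te^n\om}^{(z)}\,dm_L
=\int \cL_z^{\om,n}h_\om^{(z)}\,dm_L
=\int e^{zS_n^\om u}\,h_\om^{(z)}\,dm_L.
\]

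Taking logarithms (valid on a neighborhood of $0$ since all factors are close to $1$ at $z=0$ by the normalizations $\la_\om(0)=1$, $h_\om^{(0)}=h/v$ and $\int(h/v)\,dm_L=\int h\,dm=1$), I get
\[
\Pi_{\om,n}(z)=\log\!\int e^{zS_n^\om u}h_\om^{(z)}\,dm_L-\log\!\int h_{\te^n\om}^{(z)}\,dm_L.
\]
Differentiating at $z=0$ and using $\int(h/v)\,dm_L=1$ and $\int S_n^\om u\cdot(h/v)\,dm_L=\int S_n^\om u\,d\mu$,
\[
\Pi_{\om,n}'(0)=\int S_n^\om u\,d\mu+\int \partial_z h_\om^{(z)}\big|_{z=0}\,dm_L-\int \partial_z h_{\te^n\om}^{(z)}\big|_{z=0}\,dm_L.
\]

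The key observation is that the last two terms individually vanish. Indeed, the normalizations $\nu_\om^{(z)}(h_\om^{(z)})=1$ and $\nu_\om^{(z)}(h_\om^{(0)})=1$ (both stated in Theorem \ref{RPFthm}) give, by subtraction, $\nu_\om^{(z)}(h_\om^{(z)}-h_\om^{(0)})=0$; differentiating at $z=0$ and using $\nu_\om^{(0)}=m_L$ yields
\[
\int \partial_z h_\om^{(z)}\big|_{z=0}\,dm_L=0
\]
for $\tilde\bbP$-a.e.\ $\om$, and likewise with $\om$ replaced by $\te^n\om$. Finally, the equality $\int S_n^\om u\,d\mu=0$ is immediate: by $F$-invariance of $\mu$,
\[
\int S_n^\om u\,d\mu=\sum_{j=0}^{n-1}\int u_{\om_j}\,d\mu=\sum_{j=0}^{n-1}\int G_\ell(\om_j,x)\,d\mu(x)=0,
\]
since by the very definition \eqref{G ell} of $G_\ell$ we have $\int G_\ell(\bar x,x)\,d\mu(x)=0$ for every $\bar x$.

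The only place any care is needed is in justifying the differentiation under the integral and the validity of $\log$, but both are routine given the analyticity and uniform boundedness of $z\mapsto h_\om^{(z)}$, $z\mapsto\nu_\om^{(z)}$ and $z\mapsto\la_\om(z)$ on a fixed disk around $0$ provided by Theorem \ref{RPFthm}; no obstacle of substance arises beyond invoking those regularity statements.
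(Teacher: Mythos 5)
Your proposal is correct and follows essentially the same route as the paper: both differentiate the cocycle eigenvalue relation $\cL_z^{\om,n}h_{\om}^{(z)}=\la_{\om,n}(z)h_{\te^n\om}^{(z)}$ at $z=0$, pair against $m_L=\nu_\om^{(0)}$, and invoke the two normalizations $\nu_\om^{(z)}(h_\om^{(z)})=\nu_\om^{(z)}(h_\om^{(0)})=1$ to kill the $\partial_z h$ contribution (and its shifted analogue at $\te^n\om$), leaving $\la_{\om,n}'(0)=\Pi_{\om,n}'(0)=\int S_n^\om u\,d\mu=0$. Passing to logarithms before differentiating, as you do, is a cosmetic rearrangement of the same computation.
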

\begin{proof}
Differentiating both sides of the identities $\nu_{\om}^{(z)}(h_{\om}^{(z)})=1$ and $\nu_{\om}^{(z)}(h_{\om}^{(0)})=1$ with respect to $z$ and plugging in $z=0$ we get that 
\begin{equation}\label{USING}
\nu_{\om}^{(0)}\left(\frac{d}{dz}h_{\om}^{(z)}\Big|_{z=0}\right)=0.
\end{equation}
Differentiating both sides of the identity 
$\cL_z^{\om,n}h_{\om}^{(z)}=\la_{\om,n}(z)h_{\te^n\om}^{(z)}$, plugging in $z=0$ and then integrating both sides with respect to $m_L=\nu_\om^{(0)}$ and using \eqref{USING} we get that 
\[
\la_{\om,n}'(0)=m_L(h_w^{(0)}S_n^{\om} u)=\int S_n^{\om} u d\mu=0.
\]
Since $\la_{\om,n}'(0)=\Pi_{\om,n}'(0)$ the proof of the claim is complete.
\end{proof}
The following lemma is an important ingredient in the verification of Assumption \ref{DecRate1},
and it connects between the variance of $S_n^{\om}u$ and  the second derivative of the pressure function at $z=0$. 
\begin{lemma}
There exists a constant $C_0>0$ so that $\tilde\bbP$-a.s. for every $n\geq1$ we have
\begin{equation}\label{I.1}
\left|\Pi_{\om,n}''(0)-\text{Var}_{\mu}(S_n^{\om} u)\right|\leq C_0.
\end{equation}
\end{lemma}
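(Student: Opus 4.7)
First, I would derive the identity
\begin{equation*}
\bbE_\mu[e^{zS_n^\om u}] = \int \cL_z^{\om,n}(h/v)\, dm_L.
\end{equation*}
This follows from the fact that $L$ is dual to $g\mapsto g\circ F$ with respect to $dm_L = v\,dm$, so iterating the adjoint relation $\int \cL_z^\om g\cdot\phi\, dm_L = \int e^{zu_\om}g\cdot\phi\circ F^\ell\, dm_L$ with $\phi\equiv 1$, and then using $(h/v)\,dm_L = h\,dm = d\mu$, gives the claim. Applying Theorem \ref{RPFthm} to $h/v$ then yields
\begin{equation*}
\bbE_\mu[e^{zS_n^\om u}] = \la_{\om,n}(z)\bigl[A_\om(z) B_{\te^n\om}(z) + \tilde R_n(\om,z)\bigr],
\end{equation*}
where $A_\om(z):=\nu_\om^{(z)}(h/v)$, $B_\om(z):=\int h_\om^{(z)}\,dm_L$, and the remainder satisfies $|\tilde R_n(\om,z)|\leq c\delta^n$ uniformly in $\om$ and $|z|\leq r_0$. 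The normalizations $\nu_\om^{(0)}=m_L$, $h_\om^{(0)}=h/v$ give $A_\om(0) = B_\om(0)=1$, and hence $\tilde R_n(\om,0)=0$.

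Next, I would pass to logarithms. By Theorem \ref{RPFthm}, $A_\om(z)$ and $B_\om(z)$ are analytic and uniformly bounded in $(\om,z)$ on $|z|\leq r_0$, and Cauchy's bound applied to $A_\om-1,\,B_\om-1$ converts $A_\om(0)=B_\om(0)=1$ into a uniform-in-$\om$ lower bound $|A_\om(z)B_{\te^n\om}(z)|\geq 3/4$ on some smaller disk $|z|\leq r_1$. Choosing $N_0$ with $c\delta^{N_0}\leq 1/2$, for every $n\geq N_0$ the bracket has modulus $\geq 1/4$ on $|z|\leq r_1$, so its logarithm is holomorphic there and yields
\begin{equation*}
\log\bbE_\mu[e^{zS_n^\om u}] = \Pi_{\om,n}(z) + G_{\om,n}(z),
\end{equation*}
with $G_{\om,n}$ analytic and uniformly bounded by some $K$ on $|z|\leq r_1$, uniformly in $(\om,n)$. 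Since the left hand side is the cumulant generating function of $S_n^\om u$ under $\mu$, and $\bbE_\mu[S_n^\om u]=0$ (already noted in \eqref{P'=0}), its second derivative at $0$ equals $\text{Var}_\mu(S_n^\om u)$. Cauchy's estimate gives $|G_{\om,n}''(0)|\leq 2K/r_1^2$ uniformly, so $|\Pi_{\om,n}''(0)-\text{Var}_\mu(S_n^\om u)|\leq 2K/r_1^2$ for $n\geq N_0$. For the finitely many $n<N_0$, $|\text{Var}_\mu(S_n^\om u)|\leq (N_0\|u\|_\infty)^2$ trivially, and $|\Pi_{\om,n}''(0)|\leq 2N_0 C/r_1^2$ follows from $\Pi_{\om,n}=\sum_{j=0}^{n-1}\Pi_{\te^j\om}$ together with the uniform bound $|\Pi_\om(z)|\leq C$ on $|z|\leq r_1$ coming from the uniform control of $\la_\om(z)$ in Theorem \ref{RPFthm}; enlarging $C_0$ finishes the proof of \eqref{I.1}.

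The main obstacle is securing uniformity in $n$ of the bound on $G_{\om,n}$. The remainder $\tilde R_n$ has the $n$-dependent bound $c\delta^n$, but what matters once we take logs is only that this bound is absolutely smaller than the $n$-independent lower bound $|A_\om B_{\te^n\om}|\geq 3/4$; the latter is the delicate point and rests on $A_\om(0)=B_\om(0)=1$ plus the uniform analytic control in Theorem \ref{RPFthm}. Once this is arranged, Cauchy's integral formula upgrades a uniform supremum bound on $G_{\om,n}$ to a uniform bound on $G_{\om,n}''(0)$, which is exactly what \eqref{I.1} asserts.
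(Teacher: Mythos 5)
Your proposal is correct and follows essentially the same route as the paper's proof: the identity $\mu(e^{zS_n^\om u})=m_L(\cL_z^{\om,n}(h/v))$, the RPF decomposition $\la_{\om,n}(z)\bigl[\nu_\om^{(z)}(h/v)\,m_L(h_{\te^n\om}^{(z)})+\del_{\om,n}(z)\bigr]$, taking logarithms, and comparing second derivatives at $z=0$. What you add is a more explicit justification of the two points the paper leaves terse --- that the bracket is uniformly bounded away from zero on a smaller disk (so the logarithm is well-defined with uniform control) and that Cauchy's estimate converts a uniform bound on $G_{\om,n}$ into a uniform bound on $G_{\om,n}''(0)$ --- together with a separate treatment of the finitely many $n<N_0$ where the remainder bound $c\del^n$ need not be small; this is a legitimate gap-filling of the paper's compressed argument rather than a different method.
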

\begin{proof}
For every complex number $z$ we have
\begin{equation}\label{CharFunc}
\mu(e^{zS_n^{\om} u})=m_L\big(\cL_{z}^{\om,n}(h/v)\big).
\end{equation}
Using (\ref{RPF}) we can write
\[
m_L\big(\cL_{z}^{\om,n}(h/v)\big)=\la_{\om,n}(z)\left(m_L(h_{\te^n \om}^{(z)})\nu_{\om}^{(z)}(h/v)+\del_{\om,n}(z)\right)
\]
where $\del_{\om,n}(z)$ is an analytic function so that $|\del_{\om,n}(z)|\leq c\del^n$. Since the first summand inside the  brackets on the above right hand side is analytic in $z$, uniformly bounded in $\om, n$ and $z$ and takes the value $1$ when $z=0$, taking the logarithm of both sides of  (\ref{CharFunc}) and then considering the second derivative at $z=0$ we get that 
\begin{equation}\label{I.1}
\left|\text{Var}_{\mu}(S_n^{\om} u)-\Pi_{\om,n}''(0)\right|\leq C_0
\end{equation}
where $C_0>0$ is some constant which does not depend on $n$. 
\end{proof}

\subsubsection{Employing the pressure: completing proof of Lemma \ref{Lem III}}
First, by (\ref{RPF}) and using that both $\|h_{\om}^{(z)}\|$ and $\|\nu_{\om}^{(z)}\|$ are bounded in $\om$ and $z$, we see that there exist $r_1,C>0$  so that $\tilde\bbP$-a.s. for all $t\in[-r_1,r_1]$ and $N\geq 1$ we have
\begin{equation}\label{II.1}
\|\cL_{it}^{\om,N}\|\leq C|\la_{\om,N}(it)|= Ce^{\Re(\Pi_{\om,N}(it))}.
\end{equation}
Next, using (\ref{P'=0}) and that $\Pi_\om(z)$ is bounded in $z$ and $\om$, expanding $\Pi_{\om,N}(\cdot)$ around $0$ yields that there is $r_1'>0$ so that $\tilde \bbP$-a.s. for every $t\in[-r_1',r_1']$ and $N\in\bbN$ we have
\[
\left|\Pi_{\om,N}(it)+\frac12 t^2\Pi_{\om,N}''(0)\right|\leq cN|t|^3
\]
where $c>0$ is some constant.
This together with (\ref{I.1}) and (\ref{II.1}) yield that there exists $r_2>0$ so that $\tilde \bbP$ a.s. for every $t\in[-r_2,r_2]$ and all $N\in\bbN$ we have
\begin{equation}\label{III}
\|\cL_{it}^{\om,N}\|\leq Ce^{-t^2\sig_{\om,N}^2/2+C_0t^2/2+cN|t|^3}.
\end{equation}
Lemma \ref{Lem III} follows since both sides of \eqref{III} depend only on $(\om_0,\om_1,...,\om_{N-1})$.
\qed

\subsection{Verification of Assumption \ref{DecRate2} or Assumption \ref{DecRate3}}\label{TO2}
In this section we cannot use the RPF theorem, since we need to analyze $\bbE\left[\exp(it S_N^{\{q_j\}} G)\right]$ for $t$'s which might be far away from $0$.
  Therefore, there will be no need in passing to the invertible probability preserving system $(\tilde\Om,\cB,\tilde \bbP,\te)$.
Another difference, in comparison with Section \ref{SecVer1},  is that we consider here the following weighted Lipschitz norm. For every  $g:\Del\to\bbC$ we set
$$
\|g\|_W=\|g\|_s+\|g\|_h
$$
where with $v_k=e^{kp/2}$ and $p$ coming from \eqref{ExTails},
\[
\|g\|_{s}=\sup_k v_k^{-1}\|g\bbI_{\Del_{k}}\|_\infty, \,\|g\|_h=\sup_k v_k^{-1}\big|g\big|_{\beta,\Del_{k}}
\]
where for every $A\subset \Del$, 
\[
|g|_{\be,A}=\sup_{x,y\in A\,\,x\not=y} \frac{|g(x)-g(y)|}{d_U(x,y)}.
\]
Let us denote by $X$ the space of all functions $g:\Del\to\bbC$ so that $\|g\|_W<\infty$.
A third difference is that we will be using here 
the transfer operators $P_z^a$, $a\in\Del^{\ell-1}$, $z\in\bbC$ given by
\[
P_z^a g(x)=P^\ell (ge^{z u_a})(x)=
\sum_{y\in F^{-\ell}\{x\}}\frac{g(y)e^{z u_a(y)}}{J F^\ell(y)}
\]
where we recall that $u_a(x)=G_\ell(a,x)$.
Let us also set 
$$P_z^{\Xi,n}=P_z^{\Xi_{n-1}}\circ\cdots\circ P_z^{\Xi_1}\circ P_z^{\Xi_0}.$$
Then the first term on the right hand side of \eqref{basic bound} can be written as
\begin{equation}\label{Char01}
\bbE\left[\int \left|R_{it}^{\Xi,N-M_N}\textbf{1}(x)\right|d\mu(x)\right]=
\bbE\left[\int \left|P_{it}^{\Xi,N-M_N}h(x)\right|dm(x)\right].
\end{equation}
In the next section we will prove the following result.

\begin{proposition}\label{LeP}
In the non-arithmetic case, set $I=\bbR\setminus\{0\}$, while in the lattice case set $I=[-\pi,\pi]\setminus\{0\}$. Then in both cases, for every compact set $J\subset I$, there exist measurable sets
$B_N\subset(\Del^{\ell-1})^{N}$ and constants $c_1,C_1,c_2,C_2>0$, which might depend on $J$, so that for every $N\in\bbN$ we have
\vskip0.2cm
(1) $\bbP((\Xi_0,...,\Xi_{N-1})\not\in B_N)\leq C_1e^{-c_1N}$;
\vskip0.2cm
(2) when  $(\Xi_0,...,\Xi_{N-1})\in B_N$ then
$$
\sup_{t\in J}\|P_{it}^{\Xi,N}\|_W\leq C_2 2^{-c_2 N}
$$
where
 $\|A\|_W=\sup_{\|g\|_W=1}\|A_g\|_W$ for any linear operator $A:X\to X$.
\end{proposition}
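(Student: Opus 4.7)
My plan is to reduce the problem to spectral properties of a deterministic twisted transfer operator built from the periodic orbit $x_0$, and then to transfer the resulting contraction to the random cocycle by combining a perturbation argument with a concentration bound for the chain $\{\Xi_n\}$. Setting $a_k^\ast=(F^kx_0,F^{2k}x_0,\ldots,F^{(\ell-1)k}x_0)$ for $k=0,\ldots,n_0-1$ and iterating the identity $P^\ell(g\cdot f\circ F^\ell)=f\cdot P^\ell g$, a direct calculation gives
\[
P_{it}^{a_{n_0-1}^\ast}\circ\cdots\circ P_{it}^{a_0^\ast}(g)=e^{-itc_\ast}Q_t g,\qquad Q_t g:=P^{n_0\ell}\!\left(g\,e^{itG_{x_0,n_0}}\right),
\]
with $c_\ast=\sum_{k=0}^{n_0-1}\int G(F^kx_0,\ldots,F^{(\ell-1)k}x_0,z)\,d\mu(z)$ and $G_{x_0,n_0}$ as in \eqref{F x0 m}. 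So whenever the chain $\Xi$ follows the periodic configuration for $mn_0$ consecutive steps the random composition coincides, up to a phase, with the deterministic iterate $Q_t^m$.

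The deterministic step is to establish $\mathrm{spr}(Q_t)<1$ for every $t\in I\setminus\{0\}$, uniformly for $t$ in a compact subset $J\subset I$. Since $(\Del,F)$ is a mixing uniform Young tower with exponential tails, $P^{n_0\ell}$ satisfies a Lasota--Yorke inequality on $(X,\|\cdot\|_W)$, and the bounded multiplicative twist by $e^{itG_{x_0,n_0}}$ inherits such an inequality with constants continuous in $t$; hence each $Q_t$ is quasi-compact on $(X,\|\cdot\|_W)$. The non-arithmeticity (resp. non-lattice) hypothesis from Theorem \ref{Non lattice LLT3} (resp. Theorem \ref{Lattice LLT3}) is precisely the Aaronson--Denker--Gou\"ezel cohomological condition which, in the presence of quasi-compactness, forbids eigenvalues of $Q_t$ on the unit circle for $t\in I\setminus\{0\}$, so $\mathrm{spr}(Q_t)<1$. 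A compactness argument (using that $t\mapsto Q_t$ is norm-continuous on $(X,\|\cdot\|_W)$) then yields $m_0\in\bbN$ and $\rho_1<1$ with $\sup_{t\in J}\|Q_t^{m_0}\|_W\leq\rho_1$, where $\rho_1$ can be made as small as required below by enlarging $m_0$.

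Next I would transfer the bound to the random cocycle. The a priori uniform bound $K_0:=\sup_{\Xi,\,t\in J,\,n}\|P_{it}^{\Xi,n}\|_W<\infty$ follows by iterating a Lasota--Yorke inequality for $P^\ell$ together with the pointwise domination $|P_{it}^{\Xi,n}g|\leq P^{n\ell}|g|$, which controls the $L^\infty$-contribution uniformly. Using Assumption \ref{Second continuity of F} and the standard estimate $\|P_{it}^a-P_{it}^{a'}\|_W\leq C|t|\,\|u_a-u_{a'}\|_{\text{Lip},U}$, choose $\ve>0$ small enough that every $M_0$-tuple $(b_0,\ldots,b_{M_0-1})$ with $d(b_j,a_{j\bmod n_0}^\ast)<\ve$ for all $j$ (a \emph{good block}, with $M_0:=m_0n_0$) satisfies, uniformly in $t\in J$,
\[
\|P_{it}^{b_{M_0-1}}\circ\cdots\circ P_{it}^{b_0}-e^{-itm_0c_\ast}Q_t^{m_0}\|_W<\rho_1,
\]
so that the operator associated to a good block has norm at most $2\rho_1$ on $(X,\|\cdot\|_W)$. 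Partition $\{0,\ldots,N-1\}$ into $\lfloor N/M_0\rfloor$ consecutive blocks of length $M_0$. The stationary probability that a given block is good is a fixed $p>0$ (after slightly perturbing $x_0$ along its orbit, if necessary, so that the $\ve$-neighbourhood of the periodic configuration carries positive $\mu^{\ell-1}$-mass). Since each $\{\xi_n^{(j)}\}$ admits the Bernoulli representation of Proposition \ref{CLTPROP} with exponentially decaying approximation coefficients, and the $\ell-1$ copies are mutually independent, a block-approximation together with a Chernoff-type inequality for approximately independent indicators produces a set $B_N\subset(\Del^{\ell-1})^N$ with $\bbP((\Xi_0,\ldots,\Xi_{N-1})\notin B_N)\leq C_1e^{-c_1N}$ on which at least $pN/(3M_0)$ of the blocks are good. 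Submultiplicativity of $\|\cdot\|_W$, with good blocks of norm $\leq 2\rho_1$ and the remaining blocks each of norm $\leq K_0$, then gives a bound of the form $K_0^{O(N/M_0)}\,(2\rho_1)^{pN/(3M_0)}$ on $B_N$, which for $m_0$ chosen large enough at the outset (so that $(2\rho_1)^p K_0^{3-p}<1$) decays as $C_2\,2^{-c_2N}$ uniformly for $t\in J$.

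The hardest part of this programme is the deterministic step of the second paragraph: verifying, on precisely the weighted Lipschitz space $(X,\|\cdot\|_W)$ used in this paper, that the non-arithmeticity/non-lattice condition on $G_{x_0,n_0}$ relative to $F^{n_0\ell}$ does force $\mathrm{spr}(Q_t)<1$ with enough uniformity in $t$ to take a supremum over the compact set $J$. Matching the paper's Banach-space setup to the Aaronson--Denker--Gou\"ezel quasi-compactness framework for mixing Young towers with exponential tails is where the technical weight lies; the telescoping perturbation estimate and the Chernoff-type concentration bound for good blocks then follow by essentially standard arguments once the mixing structure of $\{\Xi_n\}$ has been established via the Bernoulli representation.
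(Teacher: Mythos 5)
Your overall strategy matches the paper's: reduce to the deterministic operator $Q_t=P^{n_0\ell}(\cdot\,e^{itG_{x_0,n_0}})$ at the periodic configuration, use quasi-compactness plus the non-arithmeticity/non-lattice hypothesis to get $\mathrm{spr}(Q_t)<1$ for $t\in I$, upgrade to a uniform bound over $J$ via norm-continuity and compactness (this is Corollary \ref{Cor 5.10} in the paper), approximate the cocycle by $Q_t^{m_0}$ on ``good blocks'' near the periodic orbit (Lemma \ref{ContLem}), and use a measure-concentration estimate to guarantee linearly many good blocks with exponentially high probability (the paper invokes \cite{ChazGO}; your route through the Bernoulli representation is a reasonable alternative). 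The deterministic spectral step, which you flag as the hardest, is indeed handled in the paper via the Lasota--Yorke inequality of Proposition \ref{Prop LY} together with Young's argument for quasi-compactness.

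However, the final submultiplicativity step, as you have written it, does not close. You bound each of the $\lfloor N/M_0\rfloor$ length-$M_0$ blocks by its operator norm --- $2\rho_1$ for a good block and $K_0$ for a bad one --- and require the per-block product $(2\rho_1)^{p/3}K_0^{1-p/3}$ to be $<1$, i.e.\ $(2\rho_1)^pK_0^{3-p}<1$, promising to arrange this by ``choosing $m_0$ large enough''. This cannot be arranged: the probability $p$ that $m_0n_0$ consecutive $\Xi$-values stay $\ve$-close to the periodic configuration decays at least geometrically in $m_0$ (it is essentially the measure of a cylinder of length $m_0n_0$), whereas $\log(1/\rho_1)$ grows only linearly in $m_0$. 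Hence $(2\rho_1)^p=\exp(p\log(2\rho_1))\to 1$ as $m_0\to\infty$, so $(2\rho_1)^pK_0^{3-p}\to K_0^3>1$, and your tuning condition fails precisely in the regime you need.

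The fix is to exploit the fact that the a priori bound $K_0=\sup_{\Xi,t\in J,n}\|P_{it}^{\Xi,n}\|_W$ from Proposition \ref{Prop LY} is uniform in the \emph{length} $n$ of the composition. This means the entire stretch of operators sitting between two chosen good blocks should be bounded by a \emph{single} factor $K_0$, not by $K_0$ raised to the number of bad blocks it contains. With $g$ pairwise non-overlapping good blocks one obtains $\|P_{it}^{\Xi,N}\|_W\le K_0\,(2\rho_1K_0)^g$, and the tuning condition becomes simply $2\rho_1K_0<1$, which \emph{is} achievable by fixing $m_0$ large enough and then shrinking $\ve$. This is exactly what the paper does in its decomposition into operators $\cA_{j,t}$ of norm $\le B_J$ interlaced with good blocks of norm $\le \tfrac{1}{2B_J}$, yielding the bound $2B_J\,2^{-q_N}$ with $q_N=[cN/(n_0n_J)]$. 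With this correction your argument goes through; the rest (the Chernoff-type lower bound on the number of good blocks, the spectral and continuity estimates) is sound. Also, the parenthetical about ``slightly perturbing $x_0$'' is both unnecessary (the $\ve$-ball around the periodic configuration contains a cylinder, hence carries positive $\mu^{\ell-1}$-mass automatically) and potentially harmful (the non-arithmeticity hypothesis is stated for the orbit of $x_0$ itself, not a perturbed point).
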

Before proving the proposition, let us show that it indeed implies that the conditions of either Assumption \ref{DecRate2} or Assumption \ref{DecRate3} are met. 
Let $J$ be a compact set as specified in the lemma.
 It follows that for every $t\in J$ we have
$$
\bbE\left[\int \left|P_{it}^{\Xi,N-M_N}h(x)\right|dm(x)\right]\leq
$$
$$
\bbE\left[\bbI((\Xi_0,...,\Xi_{N-M_N-1})\in B_{N-M_N})\int \left|P_{it}^{\Xi,N-M_N}h(x)\right|dm(x)\right]+C_1e^{-c_1(N-M_N)}
$$
where we have used that $$\int \left|P_{it}^{\Xi,N-M_N}h(x)\right|dm(x)\leq \int P^{\ell(N-M_N)}h(x)dm(x)=\int h(x)dm(x)=1.$$
Next, by the definition of the norm $\|\cdot\|_W$,  for every $k\geq0$  and any realization $\Xi$ of $\{\Xi_n:\,n\geq 0\}$ we have 
\[
\sup|\bbI_{\Del_k}P_{it}^{\Xi,N-M_N}h|\leq \|h\|_W\|P_{it}^{\Xi,N-M_N}\|_W e^{kp/2}
\]
and we also note that $\|h\|_W<\infty$ and that on $\Del_k$ we have $|h|\leq\|h\|_W e^{kp/2}$. Here $p$ comes from \eqref{ExTails}.
Let $k=k_N$ be of the form $k_N=C\ln N$ where $C$ is so large that $m\{R\geq k_N+1\}\leq N^{-1}$.
Then,
$$
\bbE\left[\bbI((\Xi_0,...,\Xi_{N-M_N-1})\in B_{N-M_N})\int \left|P_{it}^{\Xi,N-M_N}h(x)\right|dm(x)\right]
$$
$$\leq \sum_{k\leq k_N}\bbE\left[\bbI((\Xi_0,...,\Xi_{N-M_N-1})\in B_{N-M_N})\int \left|P_{it}^{\Xi,N-M_N}h(x)\right|\bbI_{\Del k}(x)dm(x)\right]
$$
$$+\sum_{k> k_N}\int \left(\bbI_{\Del k}P^{\ell (N-M_N)}h(x)\right)dm(x)\leq $$
$$
 C_2\|h\|_W 2^{-c_2(N-M_N)}\sum_{k\leq k_N}e^{kp/2}\\+C_3\bar m\{R\geq k_N+1\}$$
$$\leq C_2\|h\|_W  2^{-c_2(N-M_N)}N^{p(C+1)/2}+O(N^{-1})=O(N^{-1})$$
where in the second inequality we have used that $Ph=h$ and that the density function $h=d\mu/dm$ is bounded by some constant $C_3$. Using the above estimates together with \eqref{Char01} and \eqref{basic bound} we see that the conditions of Assumption \ref{DecRate2} are met in the non-arithmetic case for $Z_N=S_N^{\{q_j\}}G$, and that the conditions of Assumption \ref{DecRate3} are met in the lattice case for $Z_N=S_N^{\{q_j\}}G$ with $h_0=1$.

\subsubsection{Proof of Proposition \ref{LeP}}
Let us fix some $N\in\bbN$. Then, by \eqref{Inv2} we can replace $\Xi_n$ for $n\leq N$ with $T^{N-n-1}\bar Y_0$, where  we recall that $T=F\times F^2\times\dots\times F^{\ell-1}$. We will abuse the notations and write
\begin{equation}\label{P TWO}
P_{it}^{\bar y_0,n}=P_{it}^{\bar y_0}\circ P_{it}^{Ty_0}\circ\cdots\circ P_{it}^{T^{N-1}y_0}.
\end{equation}
Then $P_{it}^{\bar Y_0,N}$ and $P_{it}^{\Xi,N}$ have the same distribution, when considered as random operators taking values in the Banach space of bounded operators on $X$.

\subsubsection{Auxiliary lemmas}
\begin{lemma}
 For every compact set $J\subset\bbR$ there exists a constant $B_J\geq 1$ so that
 \begin{equation}\label{BJ bound}
\sup\{\|P_{it}^{\bar y_0,n}\|:\,\bar y_0\in\Del^{\ell-1},\,n\geq 1,\, t\in J\}\leq B_J.
\end{equation}
\end{lemma}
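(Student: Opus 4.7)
The plan is to bound separately the two components $\|\cdot\|_s$ and $\|\cdot\|_h$ of the norm $\|\cdot\|_W$, exploiting the fact that $u_a(\cdot)=G_\ell(a,\cdot)$ is real-valued (so $|e^{itu_a}|\equiv 1$) and, by \eqref{G Hold1}, is Lipschitz on each floor with constant bounded by some $K$ uniformly in $a\in\Del^{\ell-1}$. Thus every multiplier appearing in $P_{it}^{\bar y_0,n}$ is unimodular and has Lipschitz norm of order $1+|t|$.

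The sup-norm bound is essentially immediate: iterating the pointwise inequality $|P_{it}^a g|\le P^\ell|g|$ yields $|P_{it}^{\bar y_0,n}g|\le P^{\ell n}|g|$, and the uniform boundedness $\|P^n g\|_s\le C\|g\|_s$ is a standard fact for uniform Young towers with exponential tails, following from the quasi-compactness of $P$ on $(X,\|\cdot\|_W)$ together with simplicity of the peripheral eigenvalue $1$. The H\"older component requires a one-step Lasota--Yorke inequality
$$
\|P_{it}^a g\|_h\le\be^\ell\|g\|_h+C(1+|t|)\|g\|_s,
$$
with $C$ independent of $a$ and $t$. To derive it, I would expand $P_{it}^a g(x)-P_{it}^a g(y)$ for $x,y$ in the same floor by pairing preimages inside each cylinder of length $\ell$, exactly in the spirit of the three-term splitting $I_2,I_3,I_4$ from the proof of \eqref{FOUR}: (a) differences of $g$ at paired preimages yield the contractive $\be^\ell\|g\|_h$ contribution via $d_U(w_C(x),w_C(y))\le\be^\ell d_U(x,y)$; (b) differences of the multipliers, bounded by the mean value theorem by $K|t|\be^\ell d_U(x,y)$, produce a $C|t|\|g\|_s$ contribution after summing $v_{k(C)}/JF^\ell(w_C(x))$ over cylinders; and (c) Jacobian distortion, controlled by \eqref{Jack Reg1}, produces a $C\|g\|_s$ contribution. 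Iterating this inequality and combining with the sup-norm bound yields $\|P_{it}^{\bar y_0,n}g\|_h\le\|g\|_h+C'(1+|t|)\|g\|_s/(1-\be^\ell)$ uniformly in $n$ and $\bar y_0$, and adding to the sup-norm estimate gives the desired uniform bound $\|P_{it}^{\bar y_0,n}\|\le B_J$ for all $t\in J$.

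The main technical obstacle is carrying out the weighted H\"older estimate across floors. The preimages of a point on $\Del_k$ under $F^\ell$ live on a variety of floors, and the distortion estimate \eqref{Jack Reg1} is stated for the return map $F^R$ rather than for $F^\ell$; transferring it requires partitioning preimages by how many returns to $\Del_0$ occur during the $\ell$ backward steps and applying the chain rule for $JF^\ell$. The resulting weighted series $v_k^{-1}\sum_C v_{k(C)}/JF^\ell(w_C(x))=v_k^{-1}(P^\ell v)(x)$, with $v_k=e^{kp/2}$ the floor weight, must be shown to be uniformly bounded in $x\in\Del_k$; this is the place where the exponential tail assumption \eqref{ExTails} enters critically, since it is precisely what makes $P^\ell v$ comparable to $v$ on every floor and keeps the Lasota--Yorke constants independent of $n$, $\bar y_0$ and $t$ in any compact set.
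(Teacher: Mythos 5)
Your overall strategy — a Lasota--Yorke inequality for the weighted norm $\|\cdot\|_W$ and iteration — is precisely what the paper does: the lemma is deduced from Proposition \ref{Prop LY}, an $N$-step Lasota--Yorke estimate. However, your one-step inequality
\[
\|P_{it}^a g\|_h\leq\be^\ell\|g\|_h+C(1+|t|)\|g\|_s
\]
is not correct as stated, and this is exactly where the technical obstacle you flag at the end actually bites. When you expand the ``difference of $g$'' term over $\ell$-cylinders $C$, each piece is bounded by $\|g\|_h\,v_{k(C)}\,\be^\ell d_U(x,x')/JF^\ell(w_C(x))$; after summing over $C$ and dividing by $v_k$ you pick up the factor $v_k^{-1}(P^\ell v)(x)=L^\ell\textbf{1}(x)$, so the true coefficient of $\|g\|_h$ is $\be^\ell\|L^\ell\textbf{1}\|_\infty$, not $\be^\ell$. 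On floors $k\geq\ell$ one has $L^\ell\textbf{1}=e^{-\ell p/2}<1$ and all is well, but for $k<\ell$ (preimages crossing the base) $L^\ell\textbf{1}(x)$ is only controlled by a constant depending on the distortion and tail constants $C,q,p$, and there is no reason for it to be below $\be^{-\ell}$. Thus the one-step estimate need not contract, and the geometric series $\sum_j\be^{\ell j}$ in your iteration does not close.

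The fix is standard and keeps your argument otherwise intact: since $\sup_m\|L^m\textbf{1}\|_\infty=:M_L<\infty$ (cited in Section \ref{sec tower} from \cite[Lemma 1.4]{Viv2}), choose $N_0$ with $\be^{\ell N_0}M_L<1$ and prove the Lasota--Yorke for the $N_0$-block operator $P_{it}^{\bar y_0,N_0}$; then iterate in blocks, bounding the at most $N_0-1$ remaining factors crudely by the trivial $\|\cdot\|_W$-boundedness of each $P_{it}^a$. This is morally what Proposition \ref{Prop LY} accomplishes by giving a direct $N$-step bound (with the separate regimes $N\leq k$ / $N>k$) whose constants do not accumulate in $N$. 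One more small point: invoking quasi-compactness of $P$ to bound the $\|\cdot\|_s$-component is circular in spirit, but your alternative observation — $|P_{it}^{\bar y_0,n}g|\leq P^{\ell n}|g|\leq\|g\|_s\,P^{\ell n}v$ together with $\sup_n\|P^{n}v/v\|_\infty<\infty$ — is the right elementary route and does not need quasi-compactness.
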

\begin{proof}
The lemma follows from Proposition \ref{Prop LY}.
\end{proof}
Next, let $x_0$ be the periodic point of $F$ from Assumption \ref{Second continuity of F} and set $\bar v_0=(x_0,...,x_0)\in \Del^{\ell-1}$. Let $n_0$ be the period of $x_0$.
\begin{lemma}
The transfer operator $P^{\bar v,n_0}_{it}$ is quasi-compact when its spectral radius equals $1$.
\end{lemma}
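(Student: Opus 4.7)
The plan is to treat $A := P_{it}^{\bar v_0, n_0}$ as a deterministic bounded operator on $(X, \|\cdot\|_W)$ and apply the Hennion--Ionescu-Tulcea-Marinescu theorem. Since $x_0$ has period $n_0$ under $F$, we have $T^{n_0}\bar v_0 = \bar v_0$, so all indices in the composition defining $A$ are deterministic and $A$ is a single bounded linear operator, with no randomness involved. Because $|e^{itu_a}|\equiv 1$, we have $|Ag|\le P^{\ell n_0}|g|$ pointwise, hence $\|A^n g\|_{L^1(m)}\le \|g\|_{L^1(m)}$ for every $n\ge 1$; this will serve both to make $L^1(m)$ the auxiliary weaker norm and to absorb intermediate error terms.

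First I would iterate the one-step Lasota--Yorke bound from Proposition \ref{Prop LY} around the periodic orbit to obtain an estimate of the form
\[
\|A^n g\|_W \le C\rho^n\|g\|_W + C_n\|g\|_{L^1(m)}, \qquad n \ge 1,
\]
for some $\rho\in(0,1)$ and constants $C,C_n>0$ independent of $g$. The one-step inequality for each $P_{it}^{T^j\bar v_0}$ should produce a contraction on the H\"older part of $\|\cdot\|_W$ plus a weaker-norm remainder, and composing $n$ times and using the $L^1$-contraction above allows all intermediate remainders to be collected into a single $\|g\|_{L^1(m)}$ term (the bound picks up powers of $|t|$ and of the Lipschitz constant of $u_\cdot$, both harmless since $t$ is fixed).

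The main obstacle will be the compactness step: showing that the inclusion $(X,\|\cdot\|_W)\hookrightarrow L^1(m)$ is compact. The exponential-tails assumption \eqref{ExTails} is crucial here: if $\|g\|_W \le 1$ then $\|g\bbI_{\Del_k}\|_\infty \le e^{kp/2}$ and $m(\Del_k)\le qe^{-pk}$, so $\int_{\Del_k}|g|\,dm \le qe^{-kp/2}$, a geometric tail in $k$ that reduces the problem to compactness on a truncated tower $\bigcup_{k\le K}\Del_k$. On such a truncation, uniform boundedness together with uniform $d_U$-Lipschitz continuity gives precompactness by Arzel\`a--Ascoli, after an additional finite-dimensional approximation by functions constant on cylinders $\cC_j$ of sufficiently large length $j$, which is legitimate because $\cC$ is generating.

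With these two ingredients Hennion's theorem yields that the essential spectral radius of $A$ on $(X,\|\cdot\|_W)$ is at most $\rho<1$. Under the hypothesis that the spectral radius of $A$ equals $1$, this is strictly larger than the essential spectral radius, so every spectral value of modulus $1$ is an isolated eigenvalue of finite multiplicity with a finite-dimensional corresponding spectral projector, which is precisely the required quasi-compactness of $A$.
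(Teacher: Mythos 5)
Your overall strategy---extract a Lasota--Yorke inequality from Proposition \ref{Prop LY} at the periodic point $\bar v_0$, check precompactness of the unit $\|\cdot\|_W$-ball in $L^1(m)$, and invoke the Hennion/Ionescu-Tulcea--Marinescu theorem---is exactly what the paper does; the proof in the paper is nothing more than a pointer to \cite[Section 3.4]{Y1}, whose argument has precisely this structure, and you correctly identify the role of the factor $|e^{itu_a}|=1$ (which gives the $L^1(m)$-contraction) and of the exponential tails (which make the $L^1$ part of $R_N(g)$ the only term without a $\rho^N$ factor). One small simplification: you do not need to iterate a one-step bound around the orbit; Proposition \ref{Prop LY} already gives the $N$-step inequality directly, with the $\|g\|_s$, $\|g\|_h$ contributions coming with $e^{-Np/2}$, $\beta^N$ factors and the $\int|g|\,dm$ contribution coming with no $N$-dependent factor, which is the LY inequality you want, valid uniformly in $\bar y_0$.

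The one place where your sketch is not quite right as stated is the compactness step. Arzel\`a--Ascoli does not apply directly on the truncated tower $\bigcup_{k\le K}\Del_k$, because this set need not be compact (or even totally bounded) for the metric $d_U$: if $\Del_0$ is cut into infinitely many $\Del_0^j$'s, then there are already infinitely many pairwise $\beta$-separated points, so bounded equicontinuous families need not be uniformly precompact. Likewise, the space of functions constant on cylinders of a fixed length $j$ is typically infinite-dimensional, so that step is not a ``finite-dimensional approximation.'' The correct argument (and the one in \cite{Y1}) does not go through uniform precompactness at all: one keeps the cylinder approximation (whose $L^1$-error is $\le \mathrm{Lip}\cdot\beta^j\cdot m(\Del)$), then uses that the set of cylinders of bounded length in $\bigcup_{k\le K}\Del_k$ is countable with summable measures, so one can truncate to finitely many cylinders of total measure $\ge m(\bigcup_{k\le K}\Del_k)-\ve$, and only then is there a genuine finite-dimensional reduction; a diagonal argument over the countable cylinder family (or pointwise extraction plus dominated convergence) then yields $L^1(m)$-precompactness. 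With that replacement the proof is complete and matches the paper's intent.
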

\begin{proof}
This is explained in Section \ref{AppB}.
\end{proof}

\begin{lemma}\label{Lemma 5.9}
In the non-arithmetic case set $I=\bbR\setminus\{0\}$ while in the lattice case set $I=[-\pi,\pi]\setminus\{0\}$. Then, in both cases for every $t\in I$ the spectral radius of $P^{\bar v_0,n_0}_{it}$ is smaller than $1$.
\end{lemma}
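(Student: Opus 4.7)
My plan is to prove Lemma \ref{Lemma 5.9} by contradiction, via a Nagaev--Guivarch-style argument adapted to the composition $P^{\bar v_0,n_0}_{it}$. The starting point is to rewrite this composition as a single twisted transfer operator for $F^{n_0\ell}$. Iterating the pointwise formula $P_z^a g(x)=\sum_{y:F^\ell y=x}g(y)e^{zu_a(y)}/JF^\ell(y)$ and keeping careful track of the order in which the $u$'s appear (rightmost operator contributes $u$ without composition, leftmost contributes $u\circ F^{(n_0-1)\ell}$) gives
\[
P^{\bar v_0,n_0}_{it}g = P^{n_0\ell}\bigl(g\,e^{it\Phi_{n_0}}\bigr),\qquad
\Phi_{n_0}(y)=\sum_{k=0}^{n_0-1}u_{T^{n_0-1-k}\bar v_0}\bigl(F^{k\ell}y\bigr),
\]
a bounded Lipschitz function on $\Delta$. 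Since $|e^{it\Phi_{n_0}}|=1$, the pointwise bound $|P^{\bar v_0,n_0}_{it}g|\le P^{n_0\ell}|g|$ together with $P^*m=m$-type estimates yields $\|P^{\bar v_0,n_0}_{it}\|_{L^\infty(m)}\le 1$, so the spectral radius is at most $1$.

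Next I would assume for contradiction that $\rho(P^{\bar v_0,n_0}_{it})=1$ for some $t\in I$. The preceding quasi-compactness lemma (Section \ref{AppB}) then supplies $\lambda\in\bbC$ with $|\lambda|=1$ and a nonzero Lipschitz $g$ with $P^{\bar v_0,n_0}_{it}g=\lambda g$. Passing to the normalized Markov operator via $\tilde g=g/h$, this reads $\cA^{n_0\ell}(e^{it\Phi_{n_0}}\tilde g)=\lambda\tilde g$, and $\cA\mathbf{1}=\mathbf{1}$, $\int\cA f\,d\mu=\int f\,d\mu$. The standard Birkhoff averaging step ($|\tilde g|\le \cA^{n_0\ell}|\tilde g|$, then integrate against $\mu$ to force equality a.e., then invoke mixing of $F^{n_0\ell}$ coming from $\gcd\{R_j\}=1$) gives $|\tilde g|$ constant, hence $\tilde g=e^{i\psi}$ for a Lipschitz $\psi$. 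Conditioning on the Markov step then yields the cohomological identity
\[
e^{it\Phi_{n_0}(y)} \;=\; \lambda\,\frac{e^{i\psi(F^{n_0\ell}y)}}{e^{i\psi(y)}}, \qquad \mu\text{-a.e. } y.
\]

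The main obstacle, and where the bulk of the work must go, is transferring this arithmetic identity from $\Phi_{n_0}$ to the observable $G_{x_0,n_0}$ appearing in the hypothesis. The two weights are built from the same $n_0$ building blocks $u_{T^k\bar v_0}\circ F^{j\ell}$, but with different pairings: $j+k\equiv n_0-1$ for $\Phi_{n_0}$ versus $j=k$ for $G_{x_0,n_0}$, and they are not $F^{n_0\ell}$-cohomologous in general (as one can already check by explicit computation when $\ell=n_0=2$ and $G(a,b)=ab$). The plan is to exploit that every cyclic shift $P^{T^j\bar v_0,n_0}_{it}$ has the same nonzero spectrum as $P^{\bar v_0,n_0}_{it}$ (the classical $\rho(AB)=\rho(BA)$ extended cyclically), and that the intertwining eigenfunctions $g_j=P^{T\bar v_0}_{it}\circ\cdots\circ P^{T^j\bar v_0}_{it}g$ produce $n_0$ analogous cohomological identities. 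Combining these identities along the $F^\ell$-orbit of the fixed point $x_0$ of $F^{n_0\ell}$---where the $\psi$-boundary terms cancel since $F^{n_0\ell}x_0=x_0$---one reconstructs the Birkhoff-type sum $G_{x_0,n_0}$ and derives $e^{itG_{x_0,n_0}}=\mu\,h/(h\circ F^{n_0\ell})$ for some $|\mu|=1$ and nonvanishing Lipschitz $h$ (respectively, in the lattice case where $G$ is integer-valued, that $G_{x_0,n_0}$ has the forbidden form $a+\beta-\beta\circ F^{n_0\ell}+q_0\mathbf{k}$ for some $q_0>1$ and integer-valued $\mathbf{k}$). This contradicts the non-arithmeticity (respectively, lattice) hypothesis on $G_{x_0,n_0}$, so $\rho(P^{\bar v_0,n_0}_{it})<1$ for all $t\in I$, completing the proof.
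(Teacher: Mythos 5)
Your first half is solid and matches the intended structure of the paper's proof: rewriting the composition as a single twisted operator $P^{n_0\ell}(\cdot\, e^{it\Phi_{n_0}})$ with the weight
\[
\Phi_{n_0}(y)=\sum_{k=0}^{n_0-1}u_{T^{n_0-1-k}\bar v_0}\bigl(F^{k\ell}y\bigr),
\]
deducing $\rho\le 1$ from the pointwise bound and $Pm=m$, invoking the quasi-compactness furnished in Section 9 to extract a modulus-one eigenvalue, and running the standard equality-case argument through the normalized Markov operator $\cA$ to reach the multiplicative coboundary identity for $e^{it\Phi_{n_0}}$. You have also put your finger on exactly the step the paper glosses over: its proof asserts that the eigenvalue condition is equivalent to the arithmeticity of $e^{itS_{n_0}^{\bar v_0}u}$ and then appeals to the hypothesis on $G_{x_0,n_0}$, but a careful unwinding of \eqref{P TWO} via \eqref{Inv2} shows the weight that actually arises is $\Phi_{n_0}$, whose pairing $j\mapsto n_0-1-j$ is the time-reversal of the pairing $j\mapsto j$ used to define $G_{x_0,n_0}$ in \eqref{F x0 m}. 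So you are right that there is something to justify here.

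Where the proposal does not close the gap is in the proposed bridge. Cyclic permutation gives $\rho(P_{it}^{\bar v_0,n_0})=\rho(P_{it}^{T^j\bar v_0,n_0})$ and $n_0$ cohomological identities for the weights $\Phi_{n_0}^{(j)}=\sum_m\phi_{m+j}\circ F^{(n_0-1-m)\ell}$; one can check directly that each $\Phi_{n_0}^{(j)}$ is cohomologous to $\Phi_{n_0}^{(0)}$ (indeed $\Phi^{(j+1)}_{n_0}=\Phi^{(j)}_{n_0}\circ F^\ell-(\phi_j\circ F^{n_0\ell}-\phi_j)$), so the cyclic shifts give you no new information. The operator whose weight is $G_{x_0,n_0}$ is the \emph{reversed} product $Q_{it}=P_{it}^{T^{n_0-1}\bar v_0}\circ\cdots\circ P_{it}^{\bar v_0}$, which for $n_0\ge 3$ is not a cyclic shift of $P_{it}^{\bar v_0,n_0}$, and products of non-commuting operators in reversed order do not in general share a spectral radius. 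Your $n_0=2$ example is therefore a bit misleading: there $\Phi_2$ and $G_{x_0,2}$ indeed fail to be $F^{2\ell}$-cohomologous, yet the two cohomology-to-constant conditions \emph{are} equivalent precisely because the reversal is a cyclic shift when $n_0=2$, i.e.\ via $\rho(AB)=\rho(BA)$ — not via any combinatorial identity between $\Phi_2$ and $G_{x_0,2}$. Finally, the suggestion to combine the identities along the $F^\ell$-orbit of $x_0$ does not obviously help: evaluating each $e^{it\Phi_{n_0}^{(j)}}=\la\,e^{i\psi_j\circ F^{n_0\ell}}/e^{i\psi_j}$ at points $F^{m\ell}x_0$ indeed kills the $\psi_j$-boundary (since $F^{n_0\ell}x_0=x_0$) and forces $e^{it\Phi_{n_0}^{(j)}(F^{m\ell}x_0)}=\la$, but summing over $j$ only recovers $\sum_j\Phi^{(j)}_{n_0}=\sum_j G_{F^jx_0,n_0}$, which is symmetric in the two pairings and does not isolate $G_{x_0,n_0}$. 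Concretely, one can compute for $\ell=2,n_0=3$ and $G_2(a,b)=ab$ that $\Phi_3-G_{x_0,3}=(\phi_2-\phi_0)-(\phi_2-\phi_0)\circ F^{2\ell}$, whose value at an $F^{n_0\ell}$-fixed point $p$ is $(F^2x_0-x_0)(p-F^{2\ell}p)$ and varies with $p$, so the two periodic-orbit spectra differ and no Livsic-type argument will identify the two cohomology classes. If you want to repair the lemma you will need a genuinely different mechanism relating $\rho(A_0A_1\cdots A_{n_0-1})$ to $\rho(A_{n_0-1}\cdots A_1A_0)$ (or, alternatively, the non-arithmeticity hypothesis must be restated with the reversed pairing so that it is literally the condition about $\Phi_{n_0}$). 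As it stands, your sketch of the second half, like the paper's one-line assertion, does not establish the needed implication.
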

\begin{proof}
Let $t\in I$.
First, by \eqref{BJ bound} the spectral radius of $P^{\bar v_0,n_0}_{it}$ does not exceed $1$.
If the spectral radius in question equals $1$ then  $P^{\bar v_0,n_0}_{it}$ is quasi compact, and $P^{\bar v_0,n_0}_{it}$ has  an eigenvalue of modulus one, but this is 
 equivalent to the function $e^{itS_{n_0}^{\bar v_0}u}$
 being  cohomologous to a constant w.r.t. the map $F^{\ell n_0}$. The latter is excluded in  Theorems \ref{Non lattice LLT3}  and \ref{Lattice LLT3}.
\end{proof}

\begin{corollary}\label{Cor 5.10}
In both lattice and non-arithmetic  cases, for every compact set $J\subset I$ there exist constants $\del_J\in(0,1)$ and $C_J>0$ so that for all sufficiently large $n$ we have 
\[
\sup_{t\in J}\left\|\left(P^{\bar v_0,n_0}_{it}\right)^n\right\|_W\leq C_J(1-\del_J)^n.
\]
\end{corollary}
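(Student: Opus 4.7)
The plan is to combine the quasi-compactness of $A_t := P^{\bar v_0, n_0}_{it}$ with compactness of $J$ and a Cauchy integral estimate. First, the Lasota--Yorke inequality of Proposition \ref{Prop LY}, applied to the $n_0$-fold composition along the periodic orbit of $\bar v_0$, yields a Doeblin--Fortet bound on $X$ of the form $\|A_t g\|_W \le \alpha\|g\|_W + M\|g\|_{\infty}$ for each $A_t$, with constants $\alpha \in (0,1)$ and $M > 0$ that can be arranged to be independent of $t \in J$ (since $J$ is compact and the only $t$-dependence is through the unimodular factors $e^{itu_a}$). Hennion's theorem then gives a uniform bound by $\alpha$ on the essential spectral radius of $A_t$, so outside the closed disk of radius $\alpha$ the spectrum of each $A_t$ is a finite set of eigenvalues of finite multiplicity.

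Next I would upgrade the pointwise inequality of Lemma \ref{Lemma 5.9} to a uniform one. The map $t \mapsto A_t$ is Lipschitz continuous in the operator norm on $(X, \|\cdot\|_W)$: the only $t$-dependence enters through multiplication by the bounded functions $e^{itu_a}$ in each of the $n_0$ factors, and for $t, t'$ in a bounded set one has $\|e^{itu_a} - e^{it'u_a}\|_W = O(|t - t'|)$, uniformly in the base points $a = T^j\bar v_0$. By Lemma \ref{Lemma 5.9} the spectral radius $r(A_t) < 1$ for every $t \in J$. For each $t_0 \in J$ pick $\eta(t_0) \in (r(A_{t_0}), 1)$; the resolvent $(zI - A_{t_0})^{-1}$ is bounded on the circle $\{|z| = \eta(t_0)\}$, so a Neumann series perturbation argument produces a neighborhood $V(t_0) \subset J$ of $t_0$ on which $r(A_t) \le \eta(t_0)$. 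Compactness of $J$ gives a finite subcover, and $\eta := \max_i \eta(t_i) < 1$ then satisfies $\sup_{t \in J} r(A_t) \le \eta$.

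Finally, to pass from the spectral bound to operator-norm decay, fix $\eta' \in (\max(\alpha, \eta), 1)$. The resolvent $(z, t) \mapsto (zI - A_t)^{-1}$ is jointly continuous on the compact set $\{|z| = \eta'\} \times J$ with values in the bounded operators on $X$, so
\[
M_J := \sup_{|z| = \eta',\, t \in J}\|(zI - A_t)^{-1}\|_W < \infty.
\]
Cauchy's integral formula
\[
A_t^{\,n} \;=\; \frac{1}{2\pi i}\oint_{|z|=\eta'} z^n (zI - A_t)^{-1}\,dz
\]
then yields $\|A_t^{\,n}\|_W \le M_J\,(\eta')^{n+1}$ uniformly in $t \in J$, which is the claimed decay with $1 - \delta_J := \eta'$ and $C_J := M_J\,\eta'$. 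The main obstacle is the \emph{uniformity} in $t$: at each fixed $t$, Gelfand's formula already provides geometric decay of $\|A_t^n\|_W$, but the rate a priori depends on $t$; the uniform gap of size $1-\alpha$ between the essential spectrum and the unit circle, together with compactness of $J$, is what makes the rate independent of $t \in J$.
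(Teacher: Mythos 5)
Your proposal reaches the right conclusion and is in the same spirit as the paper's one-line proof, which simply invokes Lemma \ref{Lemma 5.9}, compactness of $J$, and the upper semi-continuity of the spectral radius from \cite[Lemma III.9]{HH}. You reconstruct that upper semi-continuity by hand, which is reasonable, but one step is stated too quickly: from ``the resolvent of $A_{t_0}$ is bounded on $\{|z|=\eta(t_0)\}$'' a Neumann-series argument gives existence and boundedness of $(zI-A_t)^{-1}$ on that circle for $t$ near $t_0$, but that alone does \emph{not} imply $r(A_t)\le\eta(t_0)$ --- spectrum of $A_t$ could in principle sit in the annulus between the circle and the unit circle without touching the circle. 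To close this you either (a) note that the spectral projection $I-\frac{1}{2\pi i}\oint_{|z|=\eta(t_0)}(zI-A_t)^{-1}\,dz$ onto the exterior part is an idempotent, that it equals $0$ at $t_0$, that it varies continuously, and that an idempotent of norm $<1$ is $0$; or (b) skip the resolvent entirely and use the Gelfand formula $r(A)=\inf_n\|A^n\|^{1/n}$ directly: pick $n$ with $\|A_{t_0}^n\|^{1/n}<\eta(t_0)$, use norm-continuity of $t\mapsto A_t^n$ to get the same bound on a neighborhood, then compactness. Route (b) is exactly \cite[Lemma III.9]{HH}, needs no quasi-compactness, and is what the paper relies on.

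Two further small inaccuracies. First, the Doeblin--Fortet inequality you write, $\|A_tg\|_W\le\alpha\|g\|_W+M\|g\|_\infty$, has the wrong weak norm: the unit ball of $(X,\|\cdot\|_W)$ is not precompact in $\|\cdot\|_\infty$ on the (non-compact) tower, and the inequality actually proved (Proposition \ref{Prop LY} together with the argument in Section \ref{AppB}) controls iterates $A_t^N$ with a $\rho^N\|g\|_W$ term plus the $L^1(m)$ norm, with respect to which the $\|\cdot\|_W$-ball \emph{is} precompact. This is what gives the essential spectral radius bound. Second, your Cauchy-integral step and the paper's implicit passage from ``uniform spectral radius $\eta<1$'' to ``uniform geometric decay of $\|A_t^n\|_W$'' are equivalent; the submultiplicativity argument $\|A_t^n\|\le\|A_t^{n_i}\|^{\lfloor n/n_i\rfloor}\|A_t^{n-n_i\lfloor n/n_i\rfloor}\|$ gives it without integration, but your integral formula is fine given the joint continuity and boundedness of the resolvent on $\{|z|=\eta'\}\times J$, which you correctly identify as the key point.
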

\begin{proof}
The corollary follows from Lemma \ref{Lemma 5.9}, the compactness of $J$ and the arguments in the proof of  \cite[Lemma III.9]{HH}, which states that the spectral radius is upper semi-continuous.
\end{proof}

\begin{lemma}[Parametric continuity of transfer operators at the periodic orbit]\label{ContLem}
Let us fix some compact set $J\subset\bbR$ and let $m_0\in\bbN$. 
Then for every $0\leq j<n_0$,
$$
\lim_{\bar y\to T^j\bar v_0}\sup_{t\in J}\|P_{it}^{\bar y,m_0}-P_{it}^{T^j\bar v_0,m_0}\|_W=0
$$
where $T=F\times F^2\times\dots\times F^{\ell-1}$.
 \end{lemma}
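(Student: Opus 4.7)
The plan is to reduce the continuity of the composition to continuity of a single operator factor at the points of the periodic orbit, and then to obtain the latter from Assumption \ref{Second continuity of F} combined with the fact that $F$ is Lipschitz with respect to $d_U$.

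First I would expand $P_{it}^{\bar y, m_0}-P_{it}^{T^j\bar v_0, m_0}$ as a telescoping sum of operators of the form
\[
(\text{product of }B_l)\circ\bigl(P_{it}^{T^k\bar y}-P_{it}^{T^{k+j}\bar v_0}\bigr)\circ(\text{product of }A_l),
\]
where $B_l=P_{it}^{T^{l+j}\bar v_0}$ and $A_l=P_{it}^{T^l\bar y}$. By \eqref{BJ bound}, each of the surrounding products has operator norm bounded by $B_J^{m_0}$ uniformly in $t\in J$ and in $\bar y$. Therefore it suffices to show that for every $0\leq k\leq m_0-1$,
\[
\lim_{\bar y\to T^j\bar v_0}\sup_{t\in J}\bigl\|P_{it}^{T^k\bar y}-P_{it}^{T^{k+j}\bar v_0}\bigr\|_W=0.
\]

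Next I would observe that $s_U(Fx,Fy)\geq s_U(x,y)-1$ (immediate from the definition of $s_U$), so $F$ is $\beta^{-1}$-Lipschitz with respect to $d_U$, and hence so is $T=F\times F^2\times\cdots\times F^{\ell-1}$ in the product metric; in particular $T^k\bar y\to T^{k+j}\bar v_0$ as $\bar y\to T^j\bar v_0$. Since $T^{k+j}\bar v_0$ equals $(F^{k'}x_0,\ldots,F^{(\ell-1)k'}x_0)$ with $k'=(k+j)\bmod n_0$, Assumption \ref{Second continuity of F} yields that $a\mapsto u_a:=G_\ell(a,\cdot)$ is continuous at $T^{k+j}\bar v_0$ in the norm $\|\cdot\|_{\text{Lip},U}$.

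Writing $P_{it}^{a}g=P^\ell(M_{it}^a g)$ with $M_{it}^a g:=g\,e^{itu_a}$, and using that $P^\ell$ is a fixed bounded operator on $X$ (Proposition \ref{Prop LY}), the problem reduces to proving
\[
\lim_{\bar y\to T^j\bar v_0}\sup_{t\in J}\bigl\|M_{it}^{T^k\bar y}-M_{it}^{T^{k+j}\bar v_0}\bigr\|_{X\to X}=0.
\]
For this I would use $|e^{is}-e^{is'}|\leq|s-s'|$ together with the estimate, for $x,y$ in the same floor,
\[
\bigl|(e^{itu_a}-e^{itu_b})(x)-(e^{itu_a}-e^{itu_b})(y)\bigr|\leq C|t|\bigl(\|u_a-u_b\|_\infty+\|u_a-u_b\|_{\text{Lip},U}\,d_U(x,y)\bigr),
\]
with $C$ depending on a uniform bound on $|t|\,\|u_a\|_{\text{Lip},U}$, which is finite since $J$ is compact and $u_a$ is uniformly Lipschitz by \eqref{G Hold1}. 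Since $|e^{itu_a}|\equiv 1$ and the Lipschitz constant of $e^{itu_a}$ on $\Del_k$ is bounded independently of $k$, a product-rule computation on each floor yields
\[
\sup_{t\in J}\|M_{it}^a-M_{it}^b\|_{X\to X}\leq C_J\,\|u_a-u_b\|_{\text{Lip},U},
\]
and the right hand side tends to $0$ by Assumption \ref{Second continuity of F}. The main obstacle I anticipate is the careful bookkeeping of the weighted Lipschitz norm $\|\cdot\|_W$ on the multiplication operator; but since $|e^{itu_a}|\equiv 1$ and its Lipschitz constant on $\Del_k$ is bounded uniformly in $k$, the weights $v_k^{-1}$ appear symmetrically and no genuine difficulty remains.
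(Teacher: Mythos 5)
Your argument is correct and follows essentially the same route as the paper's: telescope to reduce to a single factor (using the uniform bound \eqref{BJ bound}), write $P_{it}^a=P^\ell\circ M_{it}^a$ with $M_{it}^a g=g\,e^{itG_\ell(a,\cdot)}$, bound the multiplication-operator difference in $\|\cdot\|_W$ by the $\|\cdot\|_{\text{Lip},U}$-distance of the exponential weights, and invoke Assumption \ref{Second continuity of F} together with the continuity of $T$. You work out the $\|\cdot\|_W$ bookkeeping for the multiplier more explicitly than the paper does, but the decomposition and the key input are identical.
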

 \begin{proof}
Taking into account \eqref{BJ bound}, since $T$ is continuous 
it is enough to prove the claim when $m_0=1$. In this case, for every $g\in X$  we have 
$$
\|P_{it}^{\bar y}g-P_{it}^{T^j\bar v_0}g\|_W=\|P^{\ell}\big(g(e^{it G_\ell(\bar y,\cdot)}-e^{it G_\ell(T^j\bar v_0,\cdot)})\big)\|_W
$$
$$
\leq \|P^\ell\|_W\|g\|_W\|e^{it G_\ell(\bar y,\cdot)}-e^{it G_\ell(T^j\bar v_0,\cdot)}\|_W.
$$
Using Assumption \ref{Second continuity of F} the last factor on the above right hand side converges to $0$ uniformly in $t\in J$ as $\bar y\to T^j\bar v_0$.
 \end{proof}

\begin{proof}[Proof of Proposition \ref{LeP}]
Since $P_{it}^{\bar Y_0,N}$ and $P_{it}^{\Xi,N}$ have the same distribution, it is enough to prove that for every compact set $J\subset I$, there are measurable sets $A_N\subset\Del^{\ell-1}$ and constants $c_1,c_2,C_1,C_2>0$ so that $1-\mu^{\ell-1}(A_N)\leq C_1e^{-c_1N}$
and 
$$
\sup_{t\in J}\sup_{\bar y_0\in A_N}\|P_{it}^{\bar y_0,N}\|_W\leq C_2 2^{-c_2 N}.
$$

Fix some compact set $J \subset I$ and let $n_J$ be so that $$C_J(1-\del_J)^{n_J}<\frac1{4B_J}$$
where $B_J$ comes from \eqref{BJ bound} and $\del_J$ and $C_J$ come from Corollary \ref{Cor 5.10}.
Let $\mathscr C _{\ell-1}=\mathscr C\times\mathscr C\times\cdots\times\mathscr C=\mathscr C^{\ell-1}$ be a Cartesian power of a  cylinder $\mathscr C= \bigcap_{j=0}^{M-1}F^{-j}\Del_{s_j,k_j}$ of length $M$ around $x_0$. Then $\bar v_0\in\mathscr C_{\ell-1}$. By applying Lemma \ref{ContLem} with $m_0=n_0 n_J$ we see that there exists $M_J\in\bbN$ so that
if  $M\geq M_J$ 
then for every $\bar y_0\in \mathscr C _{\ell-1}$ we have
\begin{equation}\label{UsIng}
\sup_{t\in J}\left\|P_{it}^{\bar y_0,n_0 n_J}-\left(P^{\bar v_0,n_0}_{it}\right)^{n_J}\right\|_W\leq \frac1{4B_J}.
\end{equation}
Let us set $M=M_J$.
For every $c>0$ we define 
\[
A_{N,c}=\left\{\bar y_0\in \Del^{\ell-1}: \sum_{j=0}^{N-1}\bbI(T^j\bar y_0\in \mathscr C_{\ell-1})\geq cN\right\}.
\]
Let $\bar y_0\in A_{N,c}$, and let $0\leq j_1<j_2<...<j_{R}\leq N$ be the indexes between $0$ to $N-1$  so that $T^{j}\bar y_0\in\mathscr C_\ell$. Then $R=R_{\bar y_0,N,J}\geq cN$. Let  $1\leq u\leq R$. Then, since $T^{j_u}\bar y_0\in\mathscr C_\ell$, by \eqref{UsIng} and Corollary \ref{Cor 5.10} we have
\begin{equation}\label{ThusBy}
\sup_{t\in J}\|P_{it}^{T^{j_u}\bar y_0,n_0n_J}\|_W\leq \frac1{4B_J}+\sup_{t\in J}\|P_{it}^{\bar v_0,n_0n_J}\|_W\leq 
\frac1{4B_J}+C_J(1-\del_J)^{n_0 n_J}<\frac{1}{2B_J}.
\end{equation}
Let us now set $a(m)=j_{1+mn_0n_J}$, where $0\leq m<[cN/{n_0n_J}]:=q_N$.  
Then, using \eqref{BJ bound}, for every $\bar y_0\in A_{N}$ the operator $P_{it}^{\bar y_0,N}$ can be decomposed as 
$$
P_{it}^{\bar y_0,N}=\cA_{1,t}\circ P_{it}^{T^{a(1)}\bar y_0,n_0n_J}\circ\cA_{2,t}\circ P_{it}^{T^{a(2)}\bar y_0,n_0n_J}\circ\dots\circ \cA_{q_N-1,t}P_{it}^{T^{a(q_N-1)}\bar y_0,n_0n_J}\circ \cA_{q_N,t}
$$
where the operators $\cA_{j,t}$ satisfy
$$
\sup_{t\in J}\|\cA_{j,t}\|_W\leq B_J.
$$
Thus, using also \eqref{ThusBy}, we see that for every $\bar y_0\in A_{N,c}$ we have 
$$
\sup_{t\in J}\|P_{it}^{\bar y_0,N}\|_W\leq 2B_J 2^{-a_JN}
$$
where  $a_J=\frac{c}{n_0n_J}>0$.

Using the above estimates together with (\ref{Char01}), the proposition would follow for $A_N=A_{N,c}$ if we show that exists $c>0$ so that 
\begin{equation}\label{Need}
1-\mu^{\ell-1}(A_{N,c})\leq C_1e^{-c_1N}.
\end{equation}
To establish that, we first notice that indicators of cylinder sets are Lipschitz continuous functions. Applying the results from \cite[Section 3]{ChazGO} with the map $T=F\times F^2\times\dots\times F^{\ell-1}$ we obtain that 
\[
\mu^{\ell-1}\left\{\bar y_0: \sum_{j=0}^{N-1}\bbI(T^j\bar y_0\in \mathscr C_{\ell-1})\leq \frac 12\mu^{\ell-1}(\mathscr C_{\ell-1})N\right\}\leq C_1e^{-c_1 N}
\]
for some $C_1,c_1>0$ which depend only on $\ell$ and $\mathscr C$. Thus (\ref{Need}) holds true with $c=c_J=\frac 12\mu^{\ell-1}(\mathscr C_{\ell-1})$.
\end{proof}


\section{Applications to partially hyperbolic maps}\label{HYPER}
In this section we  consider the hyperbolic maps $f$ from \cite{Y1}. To increase readability we only list the abstract properties of such maps. Moreover, in order not to overload the paper we will not explicitly formulate results, and instead we will explain how to derive  the LCLT for sums of the form 
$$
\sum_{n=1}^{N}G(f^{-n}X_0,f^{-2n}X_0,...,f^{-\ell n}X_0)
$$
in a way similar to \cite[Section 2.11.5]{book}.

Our abstract description of the maps $f$ is as follows.
Let $M$ be a Riemannian manifold
with finite volume and let $f:M\to M$ be a $C^{1+\ve}$ diffeomorphism. Let us denote by $\nu_0$ the volume measure on $M$.
 We assume that there is a set $\Gamma\subset M$ with hyperbolic structure (see \cite{Y1}), and at most countable partition $\{\Gamma_i\}$ of $\Gamma$ (up to measure $0$) so that for each $i$ there is a return time $R_i\in\bbN$ such that $f^{R_i}(\Gamma_i)\subset \Gamma$. Moreover, the sets $\Gamma_i$ are $s$-subsets (see again \cite{Y1}) and if we denote by $\gamma^s(x)$ and $\gamma^u(x)$ the stable and unstable folliations on $\Gamma$ passing through $x\in\Gamma$ then 
 $f^{R_i}(\gamma^s(x))\subset \gamma^s(f^{R_i}x)$ and  $\gamma^u(f^{R_i}x)\subset f^{R_i}(\gamma^u(x))$. In particular, $f^{R_i}(\Gamma_i)$ is a $u$-subset. 
 \begin{assumption}
 We have $\gcd\{R_i\}=1$. Moreover,
 there are $p>0$ and $q>0$ so that for every $n\geq1$,
 $$
 \nu_0(\{x\in\Lambda: R(x)>n\})\leq qe^{-pn}
 $$
 where $R:\Lambda\to\bbN$ is given by $R|\Lambda_i=R_i$.
 \end{assumption}

 Let us define a tower $\tilde \Del$ by setting its $k$-th floor $\tilde \Del_{k}$ to be the set of pairs $(x,k)$ with $x\in\Gamma_i$ and $R_i>k$. In particular $\tilde\Del_0$ is a copy of $\Gamma$. The corresponding tower map $\tilde F:\tilde\Del\to\tilde\Del$ is  defined similarly to Section \ref{YTsec} with $f_0=f^R$ (namely $f_0|\Gamma_i\times\{0\}=(f^{R_i}(\cdot),0)$). Let us denote by $\tilde d_U$ the uniform metric on $\tilde\Del$. 
 \begin{assumption}
 There is a $\beta\in(0,1)$ for which the map $\tilde\pi:\tilde \Del\to M$ given by $(x,k)\to f^{k}x$  is H\"older continuous with respect to the uniform metric determined by $\be$ and the Riemannian metric on $M$.
 \end{assumption}

 Let $\bar\Lambda$ be the quotient space generated by $\Lambda$ and the equivalence relation 
 $$x\equiv y\,\Leftrightarrow y\in \gamma^s(x)$$
 and let $(\Del,F)$ be  the tower map defined by this relation with the base $\Del_0=\bar\Lambda\times\{0\}$ and $f_0=\bar f^{R_i}$ (the map induced on the quotient space). 
 Let $\pi:\tilde\Del\to\Del$ be the projection map given by $\pi(x,k)=(\bar x,k)$, where $\bar x$ is the equivalence class of $x$. Let $d_U$ be the uniform metric in $\Del$ determined by the above $\be$. 
\begin{assumption}\label{JackAss}
The tower $(\Del,F, m)$ satisfies \eqref{Jack Reg1}, where $m$ is  the volume measure on the quotient space $\bar \Gamma$. 
\end{assumption} 
 \begin{assumption}\label{AssDiam}
The projection map $\pi:\tilde\Del\to\Del$ mapping $x$ to its equivalence class $\bar x$ is H\"older continuous with respect to $\tilde d_U$ and $d_U$. In fact, we have the following exponential approximation: there are constants $\del\in(0,1)$ and $C>0$ so that for any cylinder $\cM_{2k}$ of length $2k$ in $\tilde\Del$ we have
$$
\text{diam}_{\Del}\left(\pi(\tilde F^k(\cM_{2k}))\right)\leq C\del^k.
$$
 \end{assumption} 
 
In \cite[Theorem]{Y1} it was shown that there exists an  $f$-invariant SRB measure $\mu_{M}$ with exponential decay of correlations for bounded H\"older continuous observables. This measure has the form $\mu_M=\tilde\pi_*\tilde\mu$ for some invariant measure $\tilde\mu$ on $\tilde\Del$. Moreover, the measure $\mu:=\pi_*\tilde\mu$ is the absolutely continuous invariant measure from Section \ref{YTsec} (see the beginning of \cite[Section 4]{Y1}).
 Let $G:M^\ell\to\bbR$ be a bounded H\"older continuous function and let $X_0$ be an $M$-valued random variable whose distribution is $\mu_M$. In what follows we will explain how to prove the LCLT for $Z_N=S_NG$ given by  
 $$
 S_N G=\sum_{n=1}^{N}G(f^{-n}X_0,f^{-2n}X_0,...,f^{-\ell n}X_0).
 $$
The first step is to observe that
$$
S_NG\overset{d}{=}G(\tilde \pi \tilde F^{N\ell-n}\tilde X_0,\tilde\pi \tilde F^{N\ell-2n}\tilde X_0,...,\tilde\pi \tilde F^{N\ell-\ell n}\tilde X_0)
$$ 
where $\tilde X_0$ is distributed according to $\tilde\mu$, and $\overset{d}{=}$ stands for equality in distribution.

The second step is the following result, which is proved essentially in the same way as \cite[Lemma 1.6]{Bow}, and it corresponds to \cite[Lemma 2.11.2]{book}.

\begin{lemma}\label{SinLemma}
Denote  $F_\ell=F\times F^2\times F^3\times\cdots\times F^{\ell}$.
There exist bounded H\"older continuous functions $\psi:\tilde \Del^\ell\to\bbR$ and $\bar G:\Del^\ell\to\bbR$ so that 
\begin{equation}\label{CobHold}
G\circ\tilde \pi=\bar G\circ\pi+\psi-\psi\circ F_\ell.
\end{equation}
Moreover, if the map $x\to G(x,\cdot)$ is continuous with respect to the H\"older norm then the maps $\bar x\to \bar G(\bar x,\cdot)$ are continuous with respect to the appropriate H\"older norm.
 Moreover, the limits $D^2$ and $D_\ell^2$ remain unchanged if we replace $\bar G$ with $G$.
\end{lemma}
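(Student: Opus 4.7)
The plan is to mimic the classical Bowen--Sinai coboundary argument on the extension $\tilde\Del^\ell$. Set $H:=G\circ\tilde\pi^\ell$, which is bounded H\"older on $(\tilde\Del^\ell,\tilde d_U^\ell)$ since $G$ is H\"older on $M^\ell$ and $\tilde\pi$ is H\"older on $(\tilde\Del,\tilde d_U)$, and write $\tilde F_\ell:=\tilde F\times\tilde F^2\times\cdots\times\tilde F^\ell$ for the natural lift of $F_\ell$ to $\tilde\Del^\ell$ (so that $\pi^\ell\circ\tilde F_\ell=F_\ell\circ\pi^\ell$). I aim to produce H\"older $\bar G$ and $\psi$ satisfying $H=\bar G\circ\pi^\ell+\psi-\psi\circ\tilde F_\ell$; pulled through $\tilde\pi$ and $\pi$ this is exactly \eqref{CobHold}. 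To do so, I would fix a measurable section $\sigma:\Del\to\tilde\Del$ of $\pi$ (one representative per stable leaf of $\Gamma$, lifted by the tower structure), write $\sigma^\ell:=\sigma\times\cdots\times\sigma$ and $\Sigma:=\sigma^\ell\circ\pi^\ell$, and set
$$
\psi(y)\;:=\;\sum_{n=0}^\infty\bigl(H(\tilde F_\ell^{\,n} y)-H(\tilde F_\ell^{\,n}\Sigma y)\bigr),\qquad y\in\tilde\Del^\ell.
$$
A direct telescoping using $\Sigma\tilde F_\ell y=\sigma^\ell F_\ell\pi^\ell y$ shows that $H(y)+\psi(\tilde F_\ell y)-\psi(y)$ depends on $y$ only through $\pi^\ell y$, which defines $\bar G$ on $\Del^\ell$ by the formula $\bar G(\bar y)=H(\sigma^\ell\bar y)+\sum_{n\geq1}\bigl(H(\tilde F_\ell^{\,n}\sigma^\ell\bar y)-H(\tilde F_\ell^{\,n-1}\sigma^\ell F_\ell\bar y)\bigr)$.

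The key geometric input is that if $y_i\in\tilde\Del$ and $\sigma\pi y_i$ lie in a common stable leaf of $\Gamma$, then $d_M(f^{jn}\tilde\pi y_i,f^{jn}\tilde\pi\sigma\pi y_i)\leq C\rho^{jn}$ for some $\rho\in(0,1)$, by definition of stable manifolds of $f$ and uniform contraction, and the rate only improves for $j\geq2$. Combined with H\"older continuity of $G$ on $M^\ell$, this yields $|H(\tilde F_\ell^n y)-H(\tilde F_\ell^n\Sigma y)|\leq C\lambda^n$ uniformly in $y$, so both $\psi$ and the series for $\bar G$ converge uniformly. H\"older regularity of $\psi$ on $(\tilde\Del^\ell,\tilde d_U^\ell)$ then follows from the standard two-regime split at $n_0\sim-\log\tilde d_U(y,z)$: bound the first $n_0$ terms using H\"older continuity of $H$ (losing at most an exponential factor from expansion of $\tilde F_\ell$), and bound the tail by the uniform exponential estimate. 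H\"older regularity of $\bar G$ on $(\Del^\ell,d_U^\ell)$ is obtained by the same split, now using Assumption \ref{AssDiam} to convert $d_U^\ell$-closeness of two points in $\Del^\ell$ into $\tilde d_U^\ell$-closeness of the iterates $\tilde F_\ell^{\,k}\sigma^\ell\bar y$ and $\tilde F_\ell^{\,k}\sigma^\ell\bar z$ after a few steps.

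I expect the main technical difficulty is the last clause: showing that if $x\mapsto G(x,\cdot)$ is continuous in the H\"older norm, then $\bar y\mapsto\bar G(\bar y,\cdot)$ inherits the analogous continuity in the appropriate H\"older norm on $\Del$. This amounts to propagating the quantitative estimate of Assumption \ref{AssDiam} uniformly in the last variable through the infinite series for $\bar G$, balancing the expansion of $\tilde F_\ell$ in the distinguished direction against the contraction provided by the assumption. Once \eqref{CobHold} is established, the equality of $D^2$ and $D_\ell^2$ computed from $G$ versus $\bar G$ is immediate, since the two partial sums of length $N$ differ by the telescoping coboundary $\psi-\psi\circ\tilde F_\ell^{\,N}$, which is bounded and hence negligible after division by $\sqrt N$.
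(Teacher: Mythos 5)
Your proposal reproduces the Sinai--Bowen coboundary construction -- a measurable section $\sigma$ of $\pi$, the one-sided telescoping series defining $\psi$ and $\bar G$, exponential decay of the terms coming from stable contraction (Young's condition (P3) in \cite{Y1} gives the uniform rate $d_M(f^n x, f^n y)\le C\alpha^n$ on stable leaves), and a two-regime split for the H\"older estimates -- which is precisely the argument the paper invokes by citing \cite[Lemma 1.6]{Bow} and \cite[Lemma 2.11.2]{book}. The approach is correct and essentially identical to the paper's.
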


Using the above lemma and Assumption \ref{AssDiam}, similar arguments to the ones in \cite[Section 2.11.5]{book} show that it is essentially enough to verify Assumptions \ref{DecRate1}, \ref{DecRate2} and \ref{DecRate3} with
$$
Z_N=\sum_{n=0}^{N-1}\bar G(F^{N\ell-n}\bar X_0,...,F^{N\ell-\ell n}\bar X_0)\overset{d}{=}
\sum_{n=0}^{N-1}\bar G(\xi_n,\xi_{2n},...,\xi_{\ell n})
$$
where $\bar X_0$ is distributed according to $\mu$ and $\{\xi_n\}$ is the Markov chain described in Section \ref{MC}. 


\section{Appendix A: complex projective metrics on non-uniform towers}\label{sec tower}

 Let $(\Del,F,d,m_0)$ be a non-uniform Young tower, as described in Section \ref{YTsec} so that there exist constants $p>0$ and $q>0$ such that for every $n\geq1$,
\begin{equation}\label{ExTails1}
m_0\{x:\,R(x)>n\}\leq qe^{-pn}.
\end{equation}
Let the transfer operator $L_0$ be defined\footnote{For notational convenience the operator $P$ from  Section \ref{YTsec} is denoted here by $L_0$, the function $h$ is denoted by $h_0$ and  the measure $m$ by $m_0$.} by 
\[
L_0f(x)=\sum_{y\in F^{-1}\{x\}}JF(y)^{-1}f(y)
\]
where $J_F$ is the Jacobian of $F$ ($L_0$ is the dual of the Koopman operator corresponding to $F$ w.r.t. $m_0$).
Note that on $\Del_k,\,k>0$ we have $L_0f(x,k)=f(x,k-1)$, while on $\Del_0$ the members of the set $F^{-1}\{x\}$ are of the form $y=(y^0,k)$ with $R(y^0)=k+1$, and then $JF(y)=JF^R(y^0,0)$.

Next, for each function $f:\Del\to\bbC$, let  $\|f\|_\infty$ denote its supremum and let $\text{Lip}(f)$ denote the infimum of all possible values $L$ so that for all $k$ and $x,y\in\Del_k$ we have
\[
|f(x)-f(y)|\leq Ld(x,y).
\]
We will say that $f$ is locally Lipschitz continuous if $\|f\|:=\max\{\|f\|_\infty,\text{Lip}(f)\}<\infty$, and let us denote by $\cH$ the Banach spaces of all complex valued functions $f$ so that $\|f\|<\infty$.
We will also assume here that the greatest common divisor of the $R_i$'s equals $1$. In this case, by \cite[Theorem 1]{Y2}, there exists a locally Lipschitz continuous function $h_0$ which is bounded, positive and bounded away from $0$ so that $L_0h_0=h_0$, $m_0(h_0)=1$, the measure $\mu=h_0dm_0$ is $F$-invariant and the measure preserving system $(\Del,\cF_0,\mu,F)$ is mixing. Now, for each $k\geq0$ set $v_k=e^{\frac12k p}$ (where $p$ comes from \eqref{ExTails1}). We view $\{v_k\}$ as a function $v:\Del\to\bbR$ so that $v|\Del_k\equiv v_k$, and  let $m$ be the measure on $\Del$ given by  $dm=vdm_0$ (which is finite in view of \eqref{ExTails1}). We also set $h=\frac{h_0}v$. Following \cite{Viv2}, consider the transfer operator $L$ given by 
\[
Lg=\frac{L_0(gv)}{v}.
\]
Then $Lh=h$ and $L^*m=m$ (since $L_0^*m_0=m_0$), and the space $\cH$ is $L$-invariant. In fact (see \cite[Lemma 1.4]{Viv2} and \cite[Lemma 3.4]{Viv2}), the operator norms $\|L^n\|$ are uniformly bounded in $n$.

Next, let $(\Om,\cF,\bbP,\te)$ be an invertible ergodic measure preserving system, $\ell_0$ be a positive integer and $u_\om:\Del\to\bbR$ be a family of functions (where $\om\in\Om$),  so that $(\om,x)\to u_\om(x)$ is measurable  and
$B_u:=\text{ess-sup}\|u_\om\|<\infty$. For each $\om\in\Om$ and $z\in\bbC$ let the transfer operator $\cL_z^{\om}$ be defined by 
\[
\cL_z^{\om}g=L^{\ell_0}(ge^{zu_\om}).
\]
Then, for each $\om$ and $z\in\bbC$, the space $\cH$ is $\cL_z^{\om}$-invariant (since $e^{zu_\om}$  are members of $\cH$ ).
Since the map $z\to e^{zu_\om}\in\cH$ is analytic, the operators $\cL_z^{\om}$ are analytic  in $z$, when viewed as  maps to the space of continuous linear operators $A:\cH\to\cH$, equipped with the operator norm. For each $\om$, a complex number $z$ and $n\in\bbN$ set 
\[
S_n^\om u=\sum_{j=0}^{n-1}u_{\te^j\om}\circ F^{j\ell_0}
\]
and 
\[
\cL_z^{\om,n}=\cL_z^{\te^{n-1}\om}\circ\cdots\circ\cL_z^{\te\om}\circ \cL_z^{\om}
\]
which satisfy $\cL_z^{\om,n}g=\cL_0^{\om,n}(ge^{zS_n^\om u})=L^{\ell_0 n}(ge^{zS_n^\om u})$. 
Henceforth, we will refer to $q$ and $p$ from \eqref{ExTails}, $C$ from (\ref{Jack Reg}) and  $B_u$  as the ``initial parameters".


Next, for any $\ve_0>0$ and $s\geq1$ we can partition $\Del$ into a finite number of disjoint sets $P_2$ and $P',\,P'\in\cP_1$ so that $m(P_2)<\ve_0$ and the diameter each one of the $P'$'s is less than $\gam_s$, where $\gam_s\to 0$ when $s\to\infty$. One way to construct such partitions is as in \cite{Viv2}, and another way is to take a finite collection $\Gam_s$ of the $\Del_\ell^j$'s so that the set 
\[
P_2=\bigcup_{i=0}^s\big(F^R\big)^{-i}\bigcup_{\Del_\ell^j\not\in\Gam_s}\Del_\ell^j
\]
satisfies $m(P_2)<\ve_0$. Denote the above partition by $\cP$. Note that since $\cP$ is finite, then by applying \cite[Theorem 1.2]{Viv2} we deduce that for every $0<\al<1<\al'$ there exists $q_0$ so that for all $k\geq q_0$ and $P,P'\in\cP$ we have 
\[
\al<\frac{m(P\cap F^{-k}P')}{m(P)\mu(P')}<\al'.
\]

Following \cite{Viv2}, for every $a,b,c>0$ let the real cone $\cC_{a,b,c,\ve_0,s}$ consist of all the real-valued  locally Lipschitz continuous functions $f$ so that:
\begin{itemize}
\item
$0\leq\frac{1}{\mu(P)}\int_P fdm=\frac{1}{\mu(P)}\int_P (f/h) d\mu\leq a\int fdm;\,\,\forall\,P\in\cP$.
\\
\item
$\text{Lip}(f)\leq b\int fdm$.
\\
\item
$|f(x)|\leq c\int fdm,\,\,\text{for any }\,x\in P_2$.
\end{itemize}
If $f\in\cC_\bbR$ then for every $x\in\Del\setminus P_2$,
\[
|f(x)|\leq\frac{1}{m(P_1(x))}\int_{P_1(x)} fdm+\gam_s\text{Lip}(f)\leq (a\|h\|_\infty+b\gam_s)\int fdm
\]
where $P_1(x)\in\cP_1$ is the partition element containing $x$, and we have used that $\mu=hdm$. Therefore, with 
\[
c_1=c_1(s,a,b)=a\|h\|_\infty+b\gam_s
\]
and $c_2=\max\{c,c_1\}$ we have 
\begin{equation}\label{f bound}
\|f\|_\infty\leq c_2\int fdm.
\end{equation}
This essentially means that we could have just required that the third condition holds true for all $x\in\Del$, and not only on $P_2$ (by taking $c>c_1$).
Note that if $\int L^{k\ell_0}fdm=0$ for some $k$ and
$f\in\cC_{a,b,c,\ve_0,s}$ then, since 
\[
\int L^{k\ell_0}fdm=\int fdm=0
\]
it follows from (\ref{f bound}) that $f=0$. This means that if, for some $k$, the cone $\cC_{a,b,c,\ve_0,s}$ is $\cL_0^{\om,k}$-invariant then $\cL_0^{\om,k}$ is strictly positive with respect to this cone (recall that $\cL_0^{\om,k}=L^{k\ell_0}$)

The following result was (essentially) proven in \cite{Viv2}:
\begin{theorem}\label{Viv2Thm}
For every  $\sig\in(0,1)$ small enough and positive numbers $a_0,b_0,c_0$ there is a positive integer $k_0$, positive numbers $a\geq a_0,b\geq b_0$ and $c\geq c_0$ and $\ve_0>0$ and $s\geq1$ so that  with $\cC_\bbR=\cC_{a,b,c,\ve_0,s}$, for every $k\geq k_0$ we have 
\[
L^k\cC\subset\cC_{\sig a,\sig b,\sig c,\ve_0,s}
\]
and for all $f,g\in\cC_\bbR$,
\[
d_{\cC_\bbR}(L^kf,L^kg)\leq d_0<\infty
\]
where $d_{\cC_\bbR}$ is the real Hilbert (projective) metric corresponding to the cone $\cC_\bbR$ and $d_0$ is some constant.
\end{theorem}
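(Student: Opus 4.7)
The strategy is essentially that of \cite{Viv2}, with minor adaptations to accommodate the slightly richer cone considered here (the additional third clause requiring a uniform pointwise bound on $P_2$). My plan is to assemble the two standard analytic inputs for $L$, then verify the three cone conditions in the correct order of parameter selection, and finally deduce the finite diameter bound from the shape of the contracted cone.

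First, I would record the two inputs: a Lasota--Yorke type inequality
\[
\text{Lip}(L^n f) \leq A\theta^n \text{Lip}(f) + A \|f\|_\infty, \quad \theta\in(0,1),
\]
obtained from the Jacobian regularity \eqref{Jack Reg}, the exponential tails \eqref{ExTails1} and a standard distortion argument; and the exponential decay of correlations from \cite{Y2}, which after rewriting using $L$ reads
\[
\left|\int_P L^n f\,dm - \mu(P)\int f\,dm\right| \leq D\rho^n \|f\|
\]
for some $\rho\in(0,1)$ and $D>0$ independent of $f$ and $P\in\cP$. With these in hand, the averaging bound for the image cone follows from $\int L^k f\,dm=\int f\,dm$ ($L$-invariance of $m$) combined with the decay of correlations, once $a$ is taken large enough to absorb the main term and $k$ large enough to kill the exponentially small error. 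The Lipschitz bound follows from \eqref{f bound}, namely $\|f\|_\infty\leq c_2\int f\,dm$ with $c_2=\max(c,a\|h\|_\infty+b\gam_s)$, by closing the Lasota--Yorke inequality into $A\theta^k b+Ac_2\leq \sig b$; since $c_2$ depends linearly on $b$ through the term $b\gam_s$, the bookkeeping forces the order: fix $\sig$, then $s$ large enough so that $\gam_s\ll \sig/A$, then $a,c$, then $b$ large, and finally $k_0$. The pointwise bound on $P_2$ uses a standard averaging argument: $L^kf(x)$ is a weighted sum over preimages under $F^{k\ell_0}$, most of which lie outside the small-measure set $P_2$ once $k$ is large enough, so the bound on $|L^kf(x)|$ for $x\in P_2$ reduces to an integral bound controlled by $\int f\,dm=\int L^k f\,dm$, giving the third cone condition with the chosen $c$.

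For the finite diameter conclusion I would invoke the standard Birkhoff principle: for any $f,g\in L^k\cC\subset \cC_{\sig a,\sig b,\sig c,\ve_0,s}$, the quantities
\[
\al(f,g)=\sup\{t\geq0:g-tf\in\cC\},\quad \be(f,g)=\inf\{t\geq0:tf-g\in\cC\}
\]
satisfy $\al>0$ and $\be<\infty$ with bounds depending only on $a,b,c,\sig$ (because every element of the image cone is sandwiched in a uniform way by $\int \cdot\,dm$), and the Hilbert metric distance $\log(\be/\al)$ is therefore uniformly bounded by some $d_0=d_0(a,b,c,\sig)<\infty$. The main technical obstacle, already present in \cite{Viv2}, is precisely the interplay among the parameters $a,b,c,s,\ve_0$: one must ensure that the newly added third cone condition does not couple circularly to the Lasota--Yorke step through the dependence of $c_2$ on $c$. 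This is avoided because $c$ enters $c_2$ only inside a $\max$, so $c$ can be fixed after $s,a$ and $b$ are selected, allowing the parameter choice of \cite{Viv2} to be extended without circularity to the present setting.
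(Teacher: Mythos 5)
The paper does not actually prove Theorem \ref{Viv2Thm}: the sentence introducing it says the result was ``(essentially) proven in \cite{Viv2}'' and the surrounding text simply records the facts needed from that reference (including \cite[Theorem 1.2]{Viv2}, the two-sided estimate $\al<\frac{m(P\cap F^{-k}P')}{m(P)\mu(P')}<\al'$). So there is no internal proof to compare your sketch against; what follows is an assessment of the reconstruction on its own terms. Your skeleton — a Lasota--Yorke bound on $\mathrm{Lip}$, exponential mixing for the averaging condition, parameter bookkeeping, then a Birkhoff diameter estimate — is the right shape.

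Two steps, however, have genuine gaps. First, the parameter $\ve_0$ is never chosen in your argument, yet it is one of the quantities the theorem asserts you must select, and it is exactly what closes the third cone condition. The correct mechanism is not ``most preimages avoid $P_2$''; rather, one splits $L^kf(x)=L^k(f\bbI_{P_2^c})(x)+L^k(f\bbI_{P_2})(x)$, controls the first piece by the already-proved bound $|f|\le c_1\int f\,dm$ off $P_2$ together with $\sup_n\|L^n\textbf{1}\|_\infty<\infty$, and controls the second piece via $L^k\bbI_{P_2}\approx h\,m(P_2)<h\,\ve_0$, so $\ve_0$ small is what lets this term be absorbed into $\sig c\int f\,dm$; omitting $\ve_0$ leaves that term uncontrolled. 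Second, the finite-diameter conclusion does not follow from $L^k\cC\subset\cC_{\sig a,\sig b,\sig c,\ve_0,s}$ alone, as your text suggests when you write that the sandwich comes ``because every element of the image cone is sandwiched in a uniform way by $\int\cdot\,dm$''. The first cone inequality only gives $\frac1{\mu(P)}\int_P g\,dm\ge 0$, so a $g\in\cC_{\sig a,\sig b,\sig c}$ with $\int_{P'}g\,dm=0$ for some $P'\in\cP_1$ forces $\al(f,g)=0$ and infinite distance whenever $\int_{P'}f\,dm>0$. What produces the positive lower bound $\frac1{\mu(P)}\int_P L^k f\,dm\ge(1-\text{small})\int f\,dm$ is the two-sided mixing estimate applied to the image — this is precisely why the paper quotes \cite[Theorem 1.2]{Viv2} just before Theorem \ref{Viv2Thm}. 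Your decay-of-correlations inequality can play that role, but only after combining it with the cone estimate $\|f\|\le (c_2+b)\int f\,dm$ and inserting the resulting two-sided bound into the $\al,\be$ computation; the claim that $\al>0,\be<\infty$ with constants depending ``only on $a,b,c,\sig$'' via cone membership is not justified. (Also: the iterates in the theorem are $L^k$, i.e.\ preimages under $F^k$, not $F^{k\ell_0}$; $\ell_0$ enters only in $\cL_z^\om=L^{\ell_0}(e^{zu_\om}\cdot)$.)
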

Let us denote by $\cC=\cC_{a,b,c,\ve_0,s}$ the canonical complexification of the real cone $\cC_\bbR$ from Theorem \ref{Viv2Thm} (we refer to \cite[Appendix A]{book} for all the relevant definitions regarding real and complex cones).
The main result in this section is the following:

\begin{theorem}\label{YT cones thm}
For all sufficiently large $a,b$ and $c$ we have:

(i) The cone $\cC$ is linearly convex, it contains the functions $h$ and $\textbf{1}$ (the function which takes the constant value $1$). Moreover,  the measure $m$, when viewed as a linear functional, is a member of the dual cone $\cC_\bbR^*$ and
the cone  $\cC$ and its dual $\cC^*$ have bounded aperture. In fact,
there exist constants $K,M>0$ so that for every $f\in\cC$ and $\mu\in\cC^*$, 
\begin{equation}\label{aperture0}
\|f\|\leq K|m(f)|
\end{equation}
and
\begin{equation}\label{aperture}
\|\mu\|\leq M|\mu(h)|.
\end{equation}

(ii) The cone $\cC$ is reproducing. In fact, there exists a constant $K_1$ so that for every $f\in\cH$ there is $R(f)\in\bbC$ so that $|R(f)|\leq K_1\|f\|$
and 
\[
f+R(f)h\in\cC.
\]

(iii) There exist  constants $r>0$ and $d_1>0$ so that for P-almost every $\om$, a complex number $z\in B(0,r)$ and $k_0\leq k\leq 2k_0$, where $k_0$ comes from Theorem \ref{Viv2Thm}, we have 
\[
\cL_z^{\om,k}\cC'\subset\cC'
\]
and 
\[
\sup_{f,g\in\cC'}\del_{\cC}(\cL_z^{\om,k}f,\cL_z^{\om,k}g)\leq d_1
\]
where $\cC'=\cC\setminus\{0\}$ and $\del_{\cC}$ is the complex (projective) Hilbert metric corresponding to $\cC$ (see \cite[Appendix A]{book} for the definition of this metric as well as for the definitions of real and complex dual cones).
\end{theorem}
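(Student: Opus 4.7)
The plan is to establish (i), (ii), (iii) in order, invoking the general theory of canonical complexifications of real projective cones developed by Rugh \cite{Rug} and Dubois \cite{Dub1,Dub2} (as exposed in \cite[Appendix A]{book}). Parts (i) and (ii) are largely bookkeeping reductions to properties of the real cone $\cC_\bbR$, while part (iii) is the substantive step and combines Theorem \ref{Viv2Thm} with a perturbation argument.

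For (i), linear convexity of a canonical complexification is automatic from properness of $\cC_\bbR$, which is immediate from the bound $\|f\|_\infty \leq c_2\int f\,dm$ derived in \eqref{f bound}; this bound also yields $m \in \cC_\bbR^*$. The functions $h$ and $\textbf{1}$ lie in $\cC_\bbR$ once $a,b,c$ are sufficiently large (compatible with the parameters produced by Theorem \ref{Viv2Thm}), since both are Lipschitz, bounded, and bounded away from $0$. For \eqref{aperture0}, adding $\|f\|_\infty \leq c_2\int f\,dm$ to $\text{Lip}(f) \leq b\int f\,dm$ gives $\|f\| \leq (c_2+b)\,m(f)$ for real $f \in \cC_\bbR$, and this extends to the canonical complexification up to a universal multiplicative constant. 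The dual bound \eqref{aperture} follows from the fact that $h$ lies strictly in the interior of $\cC_\bbR$ in the $\|\cdot\|$-norm, a consequence of $h$ being positive and Lipschitz. For (ii), given $f = f_1 + if_2 \in \cH$ with $f_1, f_2$ real, one shows that for a real $R_0$ proportional to $\|f\|$, each of the three defining conditions of $\cC_\bbR$ is satisfied by $f_j + R_0 h$ for $j=1,2$, since the $R_0 h$ term dominates in every condition; the complex shift $R(f)\,h$ with $|R(f)| = O(\|f\|)$ is then assembled from the two real shifts by a standard construction for canonical complexifications.

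The substance is (iii). By Theorem \ref{Viv2Thm}, for $k \geq k_0$ the real operator $L^{k\ell_0}$ maps $\cC_\bbR$ into $\cC_{\sig a,\sig b,\sig c,\ve_0,s}$, a cone compactly contained in $\cC_\bbR$, with real Hilbert-metric diameter bounded by $d_0$. The perturbed operator $\cL_z^{\om,k} g = L^{k\ell_0}(g\,e^{zS_k^\om u})$ is multiplication by $e^{zS_k^\om u}$ followed by $L^{k\ell_0}$, and since $B_u = \text{ess-sup}\|u_\om\| < \infty$ and $k \leq 2k_0$, one obtains $\|e^{zS_k^\om u}-\textbf{1}\| = O(|z|)$ uniformly in $\om$. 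Rugh's theorem on canonical complexifications (as formulated in \cite[Appendix A]{book}) then guarantees that for $|z|$ smaller than a threshold depending only on the aperture constants of $\cC$, on the strict inclusion $L^{k\ell_0}(\cC_\bbR) \Subset \cC_\bbR$, and on $d_0$, the complex operator $\cL_z^{\om,k}$ preserves $\cC$ and contracts it in the complex Hilbert metric $\del_\cC$ with uniformly bounded diameter $d_1$. The main obstacle will be verifying the precise regularity hypotheses on $(\cC_\bbR,\|\cdot\|)$ demanded by Rugh's theorem (outer regularity, bounded aperture, reproducibility --- precisely the content of (i) and (ii)), and then tracking the perturbation estimate uniformly in $\om \in \Om$ and in $k \in [k_0,2k_0]$. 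Because the constants from Theorem \ref{Viv2Thm} and the bound $B_u$ are deterministic, once (i) and (ii) are in place a single pair $(r,d_1)$ suffices $\bbP$-almost surely.
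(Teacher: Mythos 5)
The sketches of (i) and (ii) match the paper's route in broad strokes, though both understate the verification effort: \eqref{aperture} is established in the paper via \cite[Lemma A.2.7]{book} by exhibiting an explicit $\|\cdot\|$-ball around $h$ inside the \emph{complex} cone $\cC$, which requires checking $\Re\big(\overline{\nu_1(h+q)}\,\nu_2(h+q)\big)>0$ over all pairs of defining functionals $\nu_1,\nu_2\in\cS$; ``$h$ is positive and Lipschitz'' is not by itself a proof of that.

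The real gap is in (iii). You claim that once the aperture/reproducing properties of $\cC$ and the strict inclusion $L^{k\ell_0}\cC_\bbR\subset\cC_{\sig a,\sig b,\sig c,\ve_0,s}$ are known, the norm bound $\|e^{zS_k^\om u}-\textbf{1}\|=O(|z|)$ feeds into Rugh's theorem and yields cone-preservation for small $|z|$. That is not how the perturbation theory works here. The criterion actually invoked (Theorem A.2.4 of \cite{book}, i.e.\ \cite[Theorem 4.5]{Dub2}) requires the \emph{relative} bound
$|s(\cL_z^{\om,k}f)-s(\cL_0^{\om,k}f)|\le\ve\,s(\cL_0^{\om,k}f)$
for every defining functional $s\in\cS$ and every $f\in\cC_\bbR'$, with $\ve$ small compared to $d_0$. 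The functionals $\Gam_{x,x'}(g)=b\int g\,dm-\frac{g(x)-g(x')}{d(x,x')}$ are \emph{unbounded} on $\cH$ (since $d(x,x')$ can be arbitrarily small), so an $O(|z|)$ operator-norm perturbation does not automatically give $O(|z|)\,s(\cL_0^{\om,k}f)$. Establishing the relative bound for these Lipschitz-type functionals is precisely where the structure of the transfer operator enters: one must split into the case where $x,x'$ sit at height $\ge k\ell_0$ (single preimages on the same column) and the case where the preimage branches have returned to the base, and in the latter case invoke the distortion estimate \eqref{Jack Reg} together with the contraction along inverse branches. The paper packages the bookkeeping in the elementary Lemma \ref{A A' lemma} and carries out a case-by-case check over $\Gam_P$, $\Gam_{x,\pm}$ and $\Gam_{x,x'}$. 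None of this appears in your sketch, and the phrase ``depending only on the aperture constants, the strict inclusion, and $d_0$'' suggests you are expecting an abstract shortcut that is not available. You should add the verification of the Dubois criterion for each class of functionals, paying particular attention to the unbounded Lipschitz functionals.
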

Once this theorem is obtained the random complex Ruelle-Perron-Frobenius theorem for the operators $\cL_z{\om}$ follows from \cite[Theorems 4.2.1, 4.2.2]{book}. This theorem essentially means that Theorem \ref{RPF} also holds for the more general operators  $\cL_z^\om$.

\subsection{Proof of Theorem \ref{YT cones thm}}\label{Sec 4.1}
(i) We begin with the proof of the first item. First, since 
\[
\int_A hdm=\int Ad\mu=\mu(A)
\]
for any measurable  set $A$, 
it is clear that $h\in\cC_\bbR$ if $a>1$, $b>\text{Lip}(h)$ and $c>\|h\|_\infty$. Moreover, if $c>1$ and $a>D$, where
\begin{equation}\label{D def}
D=\max\Big\{\frac{m(P)}{\mu(P)}:\,P\in\cP\Big\}
\end{equation}
then $\textbf{1}\in\bbC_\bbR$.

Next, if $f\in\bbC_\bbR'$ and $m(f)=0$ then by (\ref{f bound}) we have $f=0$ and so $m\in\cC_\bbR^*=\{\mu\in\cH^*: \mu|\cC_\bbR'>0\}$ (since $m\geq0$ on $\cC_\bbR$). In fact, it follows
from the definition of the norm $\|f\|$ and from (\ref{f bound}) that 
\[
\|f\|\leq\|f\|_\infty+\text{Lip}(f)\leq (c_2+b)m(f)=(c_2+b)\int fdm
\]
and therefore by \cite[Lemma 5.2]{Rug} the inequality (\ref{aperture0}) holds true with $K=2\sqrt 2(c_2+b)$. According to \cite[Lemma A.2.7]{book} (appearing in the appendix there), for every $M>0$,
inequality (\ref{aperture}) holds true for all $\mu\in\cC^*=\{\mu\in\cH^*: \mu(\cC_\bbR')\subset\bbC'\}$ if
\begin{equation}\label{Const M}
\left\{x\in X: \|x-h\|<\frac1M\right\}\subset\cC.
\end{equation}
Now we will show how to find a constant $M$ for which (\ref{Const M}) holds true.
For any $f\in\cH$, $P\in\cP$ and $x_1\in P_2$, and distinct $x,y$ which belong to the same level  $\Del_\ell$ (for some $\ell$) set
\begin{eqnarray*}
\Upsilon_P(f)=\frac1{\mu(P)}\int_P fdm,\,\,
\Gam_P(f)=a\int fdm-\frac1{\mu(P)}\int_P fdm,\\
\Gam_{x,y}(f)=b\int fdm-\frac{f(x)-f(y)}{d(x,y)}
\, \text{ and }\,
\Gam_{x_1,\pm}(f)=c\int fdm\pm f(x_1).
\end{eqnarray*}
Let $\cS$ be the collection of all the above linear functionals. Then 
\[
\cC_\bbR=\{f\in\cH:\,s(f)\geq0,\,\forall s\in\cS\}
\]
and so, by the definition of the canonical complexification of a real cone (see \cite{Rug, book}), we have
\begin{equation}\label{Complexification1}
\cC_\bbC=\{f\in \cH:\,\Re\big(\overline{\nu_1(f)}\nu_2(f)\big)
\geq0,\,\,\,\,\forall\nu_1,\nu_2\in\cS\}.
\end{equation}
Let $g\in\cH$ be of the form $g=h+q$ for some $q\in\cH$. We need to find a constant $M>0$ so that $h+q\in \cC$ if $\|q\|<\frac1M$. In view of (\ref{Complexification1}), there are several cases to consider. First, suppose that $\nu_1=\Upsilon_{P}$ and $\nu_2=\Upsilon_Q$ for some $P,Q\in\cP$. Since
\[
\frac{1}{\mu(A)}\int hdm=\frac{1}{\mu(A)}\int 1d\mu=1
\]
for any measurable set $A$ with positive measure, we have 
\[
\Re\big(\overline{\nu_1(h+q)}\nu_2(h+q)\big)\geq 1-(D^2\|q\|^2+2D\|q\|)
\]
where $D$ was defined in (\ref{D def}).
Hence
\[
\Re\big(\overline{\nu_1(h+q)}\nu_2(h+q)\big)>0
\]
if $\|q\|$ is sufficiently small. Now consider the case when $\nu_1=\Upsilon_P$ for some $P\in\cP$ and
$\nu_2$ is one of the $\Gamma$'s, say $\nu=\Gam_{x,y}$. Then
\begin{eqnarray*}
\Re\big(\overline{\nu_1(h+q)}\nu_2(h+q)\big)\geq b-\|h\|-bm(\textbf{1})\|q\|-\|q\|\\-D\|q\|(b+\|h\|+bm(\textbf{1})\|q\|+\|q\|)
\geq b-\|h\|-C(D,b)(\|h\|+\|q\|+\|q\|)^2 
\end{eqnarray*}
where $C(D,b)>0$ depends only on $D$ and $b$. If $\|q\|$ is sufficiently small and $b>\|h\|$ then 
the above left hand side is clearly positive. Similarly, if $\|h\|<\min\{a,b,c\}$ and $\|q\|$ is sufficiently small then 
\[
\Re\big(\overline{\mu(h+q)}\nu(h+q)\big)>0
\]
when either $\nu_2=\Gam_{x_1,\pm}$ or $\nu_2=\Gam_{x,y}$.

Next, consider the case when $\nu_1=\Gam_{x_1,\pm}$ for some $x_1\in P_2$ and  $\nu_2=\Gam_{x,y}$ for some distinct $x$ and $y$ in the same floor. Then
\begin{eqnarray*}
\Re\big(\overline{\nu_1(h+q)}\nu_2(h+q)\big)\geq 
\left(c-\|h\|-cm(\textbf{1})\|q\|-\|q\|\right)\cdot\left(b-\|h\|-bm(\textbf{1})\|q\|-\|q\|\right)
\end{eqnarray*}
where we have used  again that $\int hdm=1$. Therefore, if $\|q\|$ is sufficiently small and $c$ and $b$ are sufficiently large then
\[
\Re\big(\overline{\nu_1(h+q)}\nu_2(h+q)\big)>0.
\] 
Similarly, since 
\[
\left|\frac1{\mu (P)}\int_P qdm\right|\leq D\|q\|
\]
when $a,b,c$ are large enough there are constants $A_1,A_2>0$ so that 
for any other choice of $\mu,\nu\in\cS\setminus\{\Upsilon_P\}$
we have
\begin{eqnarray*}
\Re\big(\overline{\nu_1(h+q)}\nu_2(h+q)\big)\geq A_1(1-A_2(\|q\|+\|q\|^2))
\end{eqnarray*}
and so, when $\|q\|$ is sufficiently small then the above left hand side is positive. The proof of Theorem \ref{YT cones thm} (i) is now complete. 
\vskip0.1cm
(ii) The proof of Theorem \ref{YT cones thm} (ii) proceeds exactly as the proof of  \cite[Lemma 3.11]{Viv2}: for a real-valued function $f\in\cH$, it is clearly enough to take any $R(f)>0$ so that
\begin{eqnarray*}
R(f)>(a-1)^{-1}\cdot\max\Big\{\frac1{\mu(P)}\int_P fdm-a\int fdm:\,\,P\in\cP\Big\},\\
R(f)>\frac{\text{Lip}(f)-b\int fdm}{b-\text{Lip}(h)},\,\,R(f)>\max\Big\{-\frac1{\mu(P)}\int_P fdm:\,\,P\in\cP\Big\}\,\,\text{ and }\\
R(f)>\frac{c\int fdm-\|f\|_\infty}{c-\|h\|_\infty}
\end{eqnarray*}
where we take $a,b$ and $c$ so that all the denominators appearing in the above inequalities  are positive,
and we have used that $\frac{1}{\mu(A)}\int_A hdm=1$ for any measurable set $A$ (apply this with $A=P\in\cP$).
For complex valued $f$'s we can write $f=f_1+if_2$, then take $R(f)=R(f_1)+iR(f_2)$ and use that with $\bbC'=\bbC\setminus\{0\}$, 
\[
\cC=\bbC'(\cC_\bbR+i\cC_\bbR).
\]
We refer to \cite[Appendix A]{book} for references regarding the above polar decomposition of $\cC$.

\vskip0.1cm
(iii) Now we will prove Theorem \ref{YT cones thm} (iii). Let $k_0\leq k\leq 2k_0$, where $k_0$ comes from Theorem \ref{Viv2Thm}. Let  $\ve>0$ be so that 
\[
\del:=2\ve\Big(1+\cosh\big(\frac12 d_0\big)\Big)<1
\]
where $d_0$ comes from Theorem \ref{Viv2Thm}. Then,
according to Theorem A.2.4  in Appendix A of \cite{book} (which is \cite[Theorem 4.5]{Dub2}), if 
\begin{equation}\label{Comp1}
|s(\cL_z^{\om,k} f)-s(\cL_0^{k}f)|\leq \ve s(\cL_0^{j,k}f)
\end{equation}
for all nonzero $f\in\cC_\bbR$ and $s\in\cS$ ($\cS$ was defined before (\ref{Complexification1})), 
then, with $\cC'=\cC\setminus\{0\}$,
\begin{equation}\label{I}
\cL_z^{\om,k}\cC'\subset\cC'
\end{equation}
and
\begin{equation}\label{II} 
\sup_{f,g\in\cC}(\cL_z^{\om,k}f,\cL_z^{\om,k}g)\leq d_0+6|\ln(1-\del)|.
\end{equation}
We will show now that there exists a constant $r>0$ so that (\ref{Comp1}) holds true for every $z\in B(0,r)$ and $f\in\cC_\bbR$. We first need the following very elementary result, which for the sake of convenience is formulated here as a lemma.
\begin{lemma}\label{A A' lemma}
Let $A$ and $A'$ be complex numbers, $B$ and $B'$ be real numbers, and let $\ve_1>0$ and $\sig\in(0,1)$ so that
\begin{itemize}
\item
$B>B'$
\item
$|A-B|\leq\ve_1B$
\item
$|A'-B'|\leq\ve_1 B $
\item
$|B'/B|\leq\sigma$.
\end{itemize}
Then 
\[
\left|\frac{A-A'}{B-B'}-1\right|\leq 2\ve_1(1-\sig)^{-1}.
\]
\end{lemma}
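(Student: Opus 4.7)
The plan is a short algebraic manipulation. I would begin by writing the identity
\[
A - A' = (A - B) - (A' - B') + (B - B'),
\]
so that, after dividing both sides by $B - B'$ (which is strictly positive by the first hypothesis),
\[
\frac{A-A'}{B-B'} - 1 = \frac{(A-B) - (A'-B')}{B-B'}.
\]

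Next I would estimate the numerator and the denominator separately. For the numerator, the triangle inequality together with the second and third hypotheses gives
\[
|(A-B) - (A'-B')| \leq |A-B| + |A'-B'| \leq 2\ve_1 B,
\]
where I use that the bound on both deviations is $\ve_1 B$ (and hence implicitly $B \geq 0$, as otherwise the hypotheses are vacuous). For the denominator, I factor $B - B' = B(1 - B'/B)$ and observe that $|B'/B| \leq \sigma$ yields $1 - B'/B \geq 1 - \sigma > 0$, so that $B - B' \geq B(1 - \sigma)$.

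Combining the two estimates immediately produces
\[
\left|\frac{A-A'}{B-B'}-1\right| \leq \frac{2\ve_1 B}{B(1-\sigma)} = \frac{2\ve_1}{1-\sigma},
\]
which is the asserted bound. There is essentially no obstacle here: the statement is a bookkeeping lemma whose only subtle point is recognizing that the hypothesis $|B'/B|\leq\sigma<1$ is precisely what is needed to turn $B - B'$ from something possibly small into something comparable to $B$. I would include it as a single short paragraph in the appendix, with no additional machinery required.
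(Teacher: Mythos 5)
Your argument is correct and is essentially the same as the paper's: you both rewrite $\frac{A-A'}{B-B'}-1$ as $\frac{(A-B)-(A'-B')}{B-B'}$, bound the numerator by $2\ve_1 B$ via the triangle inequality, and bound $B-B'$ from below by $B(1-\sigma)$ using $|B'/B|\leq\sigma$. Your explicit remark that the hypotheses force $B>0$ is a small clarification the paper leaves implicit, but the route is identical.
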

The proof of Lemma \ref{A A' lemma} is very simple, just write
\[
\left|\frac{A-A'}{B-B'}-1\right|\leq\left|\frac{A-B}{B-B'}\right|+
\left|\frac{A'-B'}{B-B'}\right|\leq \frac{2B\ve_1}{B-B'}=\frac{2\ve_1}{1-B'/B}.
\]
Next, let $f\in\cC_\bbR'$. First, suppose that $s$ has the form 
$s=\Gam_{P}$ for some $P\in\cP$. Set
\begin{eqnarray*}
A=a\int \cL_z^{\om,k}fdm,\,\, A'=\frac1{\mu(P)}\int_P \cL_z^{\om,k}fdm,\\
B=a\int\cL_0^{\om,k}fdm\,\,\text{ and }\,\,B'=\frac1{\mu(P)}\int_P \cL_0^{\om,k}fdm.
\end{eqnarray*}
Then $B=a\int fdm$ (since $m$ is conformal) and
\[
|s(\cL_z^{\om,k})-s(\cL_0^{\om,k})|=|A-A'-(B-B')|.
\]
We want to show that the conditions of Lemma \ref{A A' lemma} hold true. 
By Theorem \ref{Viv2Thm} we have 
\begin{equation}\label{Smaller cone}
\cL_0^{\om,k}f\in\cC_{\sig a,\sig b,\sig c,s,\ve_0}
\end{equation}
which in particular implies that 
\[
0\leq B'\leq \sig a\int\cL_0^{\om,k}fdm=\sig B.
\]
Since $f$ is nonzero and $\int\cL_0^{\om,k}fdm=\int fdm\geq 0$ the number $B$ is positive 
(since (\ref{aperture0}) holds true). It follows that $B>B'$ and that 
\[
|B'/B|\leq\sig<1. 
\]
Now we will estimate $|A-B|$. Let us fix some complex number $z$ so that $|z|\leq1$. Then
\begin{eqnarray*}
|A-B|=a\left|\int L^{k\ell_0}\big(f(e^{zS_k^\om u}-1)\big)dm\right|\leq a\|f\|_\infty\|e^{zS_k^\om u}-1\|_\infty
\int L^{k\ell_0}\textbf{1}dm\\= a\|f\|_\infty\|e^{zS_k^\om u}-1\|_\infty\int \textbf{1}dm=
m(\textbf{1})a\|f\|_\infty\|e^{zS_k^\om u}-1\|_\infty\\
\leq ac_2m(\textbf{1})\int fdm\,\cdot(2k_0 e^{2k_0\|u\|_\infty}\cdot|z|\|u\|_\infty)\\=2am(\textbf{1})c_2k_0\|u\|_\infty|z|\int L^{k\ell_0}f dm=R_1|z|B
\end{eqnarray*}
where $\textbf{1}$ is the function which takes the constant value $1$, 
\[
\|u\|_\infty=\text{ess-sup}\|u_\om\|_\infty
\] 
and 
\[
R_1=2c_2k_0e^{k_0\|u\|_\infty}m(\textbf{1})\|u\|_\infty.
\]
In the latter estimates we have also used (\ref{f bound}).
It follows  that in the second condition of Lemma \ref{A A' lemma} we can take $\ve\leq R_1|z|$. Now we will estimate $|A'-B'|$. 
First, we have
\begin{eqnarray*}
|A'-B'|\leq\frac1{\mu(P)}\int_P\big|\cL_z^{\om,k}f-\cL_0^{\om,k}f\big|dm=
\frac1{\mu(P)}\int_P\big|L^{k\ell_0}\big(f(e^{zS_k^\om u}-1)\big)|dm\\
\leq\|f\|_\infty \|e^{zS_k^\om u}-1\|_\infty\frac1{\mu(P)}\int_P L^{k\ell_0}\textbf{1}dm
\leq M_1\|f\|_\infty \|e^{zS_k^\om u}-1\|_\infty\frac{m(P)}{\mu(P)}\\\leq
 Dc_2\int fdm\,\cdot 2k_0\|u\|_\infty e^{2k_0\|u\|_\infty}|z|
=R_2|z|B
\end{eqnarray*}
where $D$ was defined in (\ref{D def}), $M_1$ is an upper bound on $\|L^{k\ell_0}\textbf{1}\|_\infty$ for $k_0\leq k\leq 2k_0$ (in fact, we can use \cite[Lemma 1.4]{Viv2} and obtain an upper bound which does not depend on $k_0$)
 and
\[
R_2= Da^{-1}2c_2k_0\|u\|_\infty e^{2k_0\|u\|_\infty}.
\]
We conclude now from Lemma \ref{A A' lemma} that 
\[
|s(\cL_z^{j,k})-s(\cL_0^{j,k})|\leq 2R_3(1-\sig)^{-1}|z|s(\cL_0^{\om,k})
\]
where $R_3=\max(R_1,R_2)$.

Next, consider the case when $s$ has the form $s=\Gam_{x,\pm}$ for some $x\in P_2$. Set
\begin{eqnarray*}
A=c\int \cL_z^{\om,k}fdm,\,\, A'=\pm \cL_z^{\om,k}f(x),\\
B=c\int\cL_0^{\om,k}fdm\,\,\text{ and }\,\,B'=\pm\cL_0^{\om,k}f(x).
\end{eqnarray*}
Then $B>0$ and by (\ref{Smaller cone}) we have 
\[
|B'|\leq \sig B.
\]
Similarly to the previous case, we have 
\[
|A-B|\leq R_4B|z|
\]
where $R_4=2c_2k_0\|u\|_\infty$. Now we will estimate $|A'-B'|$. Using (\ref{f bound}) 
we have
\begin{eqnarray*}
|A'-B'|=|\cL_z^{\om,k}f(x)-\cL_0^{\om,k}f(x)|\leq \|f\|_\infty\|e^{zS_k^\om u}-1\|_\infty\cL_0^{\om,k}\textbf{1}(x)\\
\\\leq c_2\int fdm\,\cdot(2k_0|z|\|u\|_\infty e^{2k_0\|u\|_\infty} M_1)=BR_5|z| 
\end{eqnarray*}
where $R_5=2c_2k_0\|u\|_\infty M_1$ and
$M_1$ is an upper bound on $\|L^{k\ell_0}\textbf{1}\|_\infty$ for $k_0\leq k\leq 2k_0$.
Since 
\[
|s(\cL_z^{\om,k})-s(\cL_0^{\om,k})|=|A-A'-(B-B')|,
\]
we conclude from Lemma \ref{A A' lemma} that 
\[
|s(\cL_z^{\om,k})-s(\cL_0^{\om,k})|\leq 2R_6(1-\sig)^{-1}|z|s(\cL_0^{\om,k})
\]
where $R_6=\max\{R_4,R_5\}$. 

Finally,  consider the case when $s=\Gam_{x,x'}$ for some distinct $x'$ and $x'$ which belong to the same floor of $\Del$. 
Set 
\begin{eqnarray*}
A=b\int \cL_z^{\om,k}fdm,\,\, A'=\frac{\cL_z^{\om,k}f(x)-\cL_z^{\om,k}f(x')}{d(x,x')},\\
B=b\int\cL_0^{\om,k}fdm\,\,\text{ and }\,\,B'=\frac{\cL_0^{\om,k}f(x)-\cL_0^{\om,k}f(x')}{d(x,x')}.
\end{eqnarray*}
Then, exactly as in the previous cases, $B>0$,  $|B'|\leq \sig B$,
\[
|s(\cL_z^{\om,k})-s(\cL_0^{\om,k})|=|A-A'-(B-B')|
\]
and 
\[
|A-B|\leq R_7B|z|
\]
where $R_7=2c_2k_0b^{-1}\|u\|_\infty$.  Now we will estimate $|A'-B'|$.
Let $\ell$ be so that $x,x'\in\Del_\ell$ and write
$x=(x_0,\ell)$ and $x'=(x_0',\ell)$. Then $d(x,x')=d((x_0,m),(x_0',m))$ for every  $0\leq m\leq\ell$.
If $k\ell_0\leq \ell$ then for every $z\in\bbC$,
\[
\cL_z^{\om,k}f(x)=v_\ell^{-1}v_{\ell-k\ell_0}e^{zS_k^\om u(x_0,\ell-k\ell_0)}f(x_0,\ell-k\ell_0)
\]
and a similar equality holds true with $x'$ in place of $x$.
Set 
\begin{eqnarray*}
U(z)=f(x_0,\ell-k\ell_0)e^{zS_k^\om u(x_0,\ell-k\ell_0)} \,\text{ and }\,  V(z)=f(x_0',\ell-k\ell_0)e^{zS_k^\om u(x_0',\ell-k\ell_0)}
\end{eqnarray*}
and $W(z)=U(z)-V(z)$.
Then for every $z\in\bbC$ so that $|z|\leq1$ we have
\[
d(x,x')|A'-B'|=v_\ell^{-1}v_{\ell-k\ell_0}|W(z)-W(0)|\leq |z|\sup_{|\zeta|\leq1}|W'(\zeta)|.
\]
Since the functions $u_\om$ and $f$ are locally Lipschitz continuous (uniformly in $\om$) we obtain that for all complex $\zeta$ so that $|\zeta|\leq1$,
\[
|W'(\zeta)|\leq C_1d(x,x')\|f\| \leq d(x,x')C_1(b+c_2)\int fdm=d(x,x')C_1b^{-1}(b+c_2)B
\]
where $C_1$ depends only on $k_0$ and $B_u=\text{ess-sup}\|u_\om\|$.

Next, suppose that $k\ell_0>\ell$, where $\ell$ is such that $x,x'\in\Del_\ell$.
The approximation of  $|A'-B'|$ in this case relies on classical arguments from the theory of distance expanding map.
Since $k\ell_0>\ell$  we can write 
\[
F^{-k\ell_0}\{x\}=\{y\},\,\,F^{-k\ell_0}\{x'\}=\{y'\}
\] 
where both sets are at most countable, the map $y\to y'$ is bijective and satisfies that for every $0\leq q\leq k\ell_0$,
\[
d(F^qy,F^qy')\leq \beta^{m_q(y)} d(x,x')\leq d(x,x').
\]
Here $m_q(y)$ is the number of the points among $F^{q+m}y$, $0\leq m\leq k-q$ which belong
to the base $\Del_0$ (so $m_0(y)\geq1$, since $\ell<k$). Note also that the pairs $(y,y')$ also belong to the same partition element $\Del_\ell^j$.
Using these notation, for every $z\in\bbC$ we can write
\[
\cL_z^{\om,k}f(x)=v_\ell^{-1}\sum_{y}v(y)JF^{k\ell_0}(y)^{-1}e^{zS_k^\om u(y)}f(y)
\] 
and 
\[
\cL_z^{\om,k}f(x')=v_\ell^{-1}\sum_{y}v(y)JF^{k\ell_0}(y')^{-1}e^{zS_k^\om u(y')}f(y')
\] 
where we note that $v(y)=v(y')$ since $y$ and $y'$ belong to the same floor. For every $y$ set 
\[
U_{y}(z)=JF^{k\ell_0}(y)^{-1}e^{zS_k^\om u(y)}f(y)
\]
and 
\[
W_{y,y'}(z)=U_{y}(z)-U_{y'}(z).
\]
Then for every complex $z$ so that $|z|\leq1$ we have
\[
|W_{y,y'}(z)-W_{y,y'}(0)|\leq|z|\sup_{|\zeta|\leq 1}|W'_{y,y'}(\zeta)|.
\]
Since $JF^{R}$ satisfies (\ref{Jack Reg}) and $u_\om$ and $f$ are locally Lipschitz continuous (uniformly in $\om$) we 
derive that 
\begin{equation}\label{Der Bound}
\sup_{|\zeta|\leq 1}|W'_{y,y'}(\zeta)|\leq C_2\|f\|d(x,x')(JF^{k\ell_0}(y)^{-1}+JF^{k\ell_0}(y')^{-1})
\end{equation}
for some constant $C_2$ which depends only on $B_u,k_0$ and $C$ from (\ref{Jack Reg}).
Using that 
\[
\|f\|\leq(c_2+b)\int fdm
\]
we derive now from (\ref{Der Bound})  that 
\begin{eqnarray*}
d(x,x')|A'-B'|=v_\ell^{-1}\left|\sum_{y}v(y)\big(W_{y,y'}(z)-W_{y,y'}(0)\big)\right|
\\\leq \big(|z|d(x,x')C_2\|f\|\big)v_\ell^{-1}\sum_{y}v(y)(JF^{k\ell_0}(y)^{-1}+JF^{k\ell_0}(y')^{-1})\\=
\big(|z|d(x,x')C_2\|f\|\big)\cdot\big(L^{k\ell_0}\textbf{1}(x)+L^{k\ell_0}\textbf{1}(x')\big)\leq E_1|z| B
\end{eqnarray*}
where $E_1=2M_1C_2b^{-1}(c_2+b)$ and $M_1=\sup_n\|L^n\|_\infty$, which is finite in view of \cite[Lemma 1.4]{Viv2}.
We conclude that there exists a constant $C_0$ so that for every $s\in\cS$, $f\in\cC'$, $z\in\bbC$ and $k_0\leq k\leq 2k_0$,
\[
|s(\cL_z^{\om,k})-s(\cL_0^{\om,k})|\leq C_0|z|s(\cL_0^{\om,k}).
\]
Let $r>0$ be a positive number so that 
\[
\del_r:=2C_0r\Big(1+\cosh\big(\frac12 d_0\big)\Big)<1.
\]
Then, by (\ref{Comp1}) and what proceeds it, (\ref{I}) and (\ref{II}) hold true
for every  $z\in\bbC$ with $|z|<r$, $\om\in\Om$ and $k_0\leq k\leq 2k_0$, and the proof of Theorem \ref{YT cones thm} is complete.
\qed

\section{Appendix B: A Lasota-Yorke inequality for random transfer operators  and Quasi-Compactness of deterministic ones}\label{AppB}

The following result is proved for the transfer operators $P_{it}^{\bar y_0,n},\,\bar y_0\in\Del^{\ell-1}$ defined in \eqref{P TWO}, exactly as \cite[Proposition 2.2.1]{RandTower} (taking into account that  $v_k=e^{kp/2}$).
\begin{proposition}\label{Prop LY}
(i) For every $N$ and $k$ so that $N\leq k$, a function $g:\Del\to\bbC$, $\bar y_0\in \Del^{\ell-1}$ and $x,y\in\Del$ we have 
\begin{equation}\label{LY1.1}
|P_{it}^{\bar y_0,N}g(x)|\leq e^{(k-N)p/2}\|g\|_s
\end{equation}
and 
\begin{equation}\label{LY1.2}
|P_{it}^{\bar y_0,N}g(x)-P_{it}^{\bar y_0,N}g(y)|\leq (\|g\|_h\beta^N+(A|t|+2\be^{-1})\|g\|_s)e^{(k-N)p/2}d_{U}(x,y)
\end{equation}
where $A=(1-\be)^{-1}\sup_{a}\sup_{s}|u_{a}|_{\be,\Del_s}$ (recall $u_a=G_\ell(a,\cdot)$).

(ii) For every $N$ and $k$ so that $N>k$, a function $g:\Del\to\bbC$, $\bar y_0\in \Del^{\ell-1}$ and $x,y\in\Del$ we have 
\begin{equation}\label{LY2.1}
|P_{it}^{\bar y_0,N}g(x)| \leq Q\left(\int |g|d m+\be^N\|g\|_h\cdot C_2\right):=R_{N}(g)
\end{equation}
and 
\begin{equation}\label{LY2.2}
|P_{it}^{\bar y_0,N}g(x)-P_{it}^{\bar y_0,N}g(y)|\leq \left(C_1+2\be^{-1}+|t|A\right)R_N(g)d_{U}(x,y)
\end{equation}
where $C_2$ and $Q$ are some constants.

In particular 
\begin{eqnarray*}
\|P_{it}^{\bar y_0,N}g\|_{W}\\\leq \max\left(e^{-Np/2}\left((1+|A|t)\|g\|_s+\beta^N\|g\|_h\right), R_N(g)(2+C_1+|t|A)\right).
\end{eqnarray*}
Therefore, for every compact set $J\subset\bbR$ the operator norms $\|P_{it}^{\bar y_0,N}\|_{W}$ with respect to the norm $\|\cdot\|_{W}$ are uniformly bounded in $\bar y_0\in \Del^{\ell-1}, N\geq1$ and $t\in J$.
\end{proposition}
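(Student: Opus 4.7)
The strategy is to follow the standard Lasota-Yorke / Doeblin-Fortet scheme for Young-tower transfer operators, adapted to the random product appearing here. The starting observation is that composing $N$ random twists gives the clean factorization
\begin{equation*}
P_{it}^{\bar y_0,N} g = P^{N\ell}\bigl(g\, e^{it U_{\bar y_0,N}}\bigr), \quad U_{\bar y_0,N} = \sum_{j=0}^{N-1} u_{T^j\bar y_0}\circ F^{j\ell},
\end{equation*}
where $u_a = G_\ell(a,\cdot)$ is uniformly bounded in the local Lipschitz norm by the assumptions on $G$. Thus the proof reduces to controlling the deterministic tower operator $P^{N\ell}$ with a uniformly bounded, uniformly Lipschitz (in the spatial variable) twist cocycle. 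Then I would split into cases depending on whether the base point $x=(x_0,k)\in\Del_k$ lies high enough so that $N$ iterations of $P^\ell$ descend without returning to the base (case (i)), or not (case (ii)).

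In case (i), the preimage under the relevant descent reduces to a single point on a lower floor along which $JF^{N\ell}=1$, so $P_{it}^{\bar y_0,N}g(x) = e^{it U_{\bar y_0,N}(\tilde x)}g(\tilde x)$ with $\tilde x$ deeper in the tower. The sup bound (\ref{LY1.1}) then follows directly from $|g(\tilde x)| \le v_{k-N\ell}\|g\|_s$ combined with $v_k = e^{kp/2}$. The Lipschitz bound (\ref{LY1.2}) decomposes into the contribution of $g$ itself, contracted by $\beta^N$ in the separation metric along the descent, and the contribution of the twist, which is controlled by $|e^{it\sigma_1}-e^{it\sigma_2}|\le |t||\sigma_1-\sigma_2|$ applied to $U_{\bar y_0,N}$, yielding the factor $A|t|$ with $A=(1-\beta)^{-1}\sup_{a,s}|u_a|_{\beta,\Del_s}$ after summing the geometric series along the orbit.

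Case (ii), where returns to the base occur, is more involved and is where the real work sits. I would group preimage chains under $F^{N\ell}$ by their return-type sequence and use three ingredients: the invariance identity $\int P^{N\ell}|g|\,dm = \int |g|\,dm$ to produce the $\int|g|\,dm$ term in (\ref{LY2.1}); the exponential tails (\ref{ExTails}) together with $v_k=e^{kp/2}$ to absorb the contribution of long returns and produce the constant $Q$; and the distortion estimate (\ref{Jack Reg}) to control how $1/JF^{N\ell}$ varies across a cylinder of length $N$, which together with the $\beta^N$ contraction of cylinders under $(F^R)^{-N}$ gives the $\beta^N\|g\|_h$ correction with constant $C_2$. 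For the Lipschitz bound (\ref{LY2.2}), pair up preimages $y\leftrightarrow y'$ in the same cylinder and separate: variation of $g$ (yielding $\beta^N\|g\|_h$), Jacobian distortion (yielding the $2\beta^{-1}$ term after telescoping via (\ref{Jack Reg})), and variation of the twist.

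The main technical obstacle is the Lipschitz estimate on the twist cocycle $e^{itU_{\bar y_0,N}}$ along trajectories that undergo returns. For a pair $(y,y')$ in the same $N$-cylinder, one telescopes
\begin{equation*}
|U_{\bar y_0,N}(y) - U_{\bar y_0,N}(y')| \le \sum_{j=0}^{N-1} |u_{T^j\bar y_0}|_{\beta,\Del_{s(j)}} \cdot d_U(F^{j\ell}y,F^{j\ell}y'),
\end{equation*}
and uses the tail-side contraction $d_U(F^{j\ell}y,F^{j\ell}y') \le \beta^{N-j} d_U(x,y)$ to bound the sum by $A\,d_U(x,y)$, keeping the $|t|$-dependence linear via the mean-value inequality. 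The delicate point is that each return resets the separation metric, so one must carefully track how Lipschitz variation of the twist, of $g$, and of the Jacobian interact cylinder by cylinder; this is where (\ref{Jack Reg}) is essential. Assembling the sup and Lipschitz bounds in both cases and taking the maximum yields the stated operator-norm estimate with respect to $\|\cdot\|_W$, uniform in $\bar y_0$ and $N$, and in particular uniformly bounded on any compact set $J\subset\bbR$ of values of $t$.
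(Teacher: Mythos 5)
The paper offers no proof of this proposition; it simply asserts that the result is ``proved\ldots exactly as \cite[Proposition 2.2.1]{RandTower} (taking into account that $v_k=e^{kp/2}$),'' so there is no in-text argument against which to compare your outline. Your reconstruction is nonetheless the standard Lasota--Yorke / Doeblin--Fortet scheme for twisted tower transfer operators, and it is almost certainly the approach taken in the cited reference: the factorization $P_{it}^{\bar y_0,N}g = P^{N\ell}\bigl(g\,e^{itU_{\bar y_0,N}}\bigr)$ with a Birkhoff cocycle twist is exactly right (modulo reversing the composition order in \eqref{P TWO}, which only reshuffles which $u_{T^j\bar y_0}$ sits at which floor and changes nothing in the estimates), the case split by whether the $N\ell$-fold preimage chain stays above the base versus returns is the right decomposition, and the three ingredients you isolate for case (ii) — conformality $\int P^{N\ell}|g|\,dm = \int|g|\,dm$, the exponential-tails/$v_k$ balance with $v_k = e^{kp/2}$ absorbing the weighted sum over returns, and the distortion bound \eqref{Jack Reg1} controlling the Jacobian variation across cylinders — are precisely the ones that produce $Q$, $C_2$, and the $2\beta^{-1}$ term in the statement. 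Your telescoping estimate on the twist cocycle, $|U(y)-U(y')|\le\sup|u_a|_\beta\sum_j\beta^{(N-j)\ell}d_U(x,x')\le A\,d_U(x,x')$, is the source of the $|t|A$ factor, as claimed.

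One point to be more careful about: the proposition states case (i) under ``$N\le k$,'' but your justification (``$N$ iterations of $P^\ell$ descend without returning to the base, so $JF^{N\ell}=1$'') in fact requires $k\ge N\ell$, since each $P_z^a$ involves $P^\ell$. If $N\le k<N\ell$ the preimage chain does hit $\Del_0$ and case (i) as you prove it doesn't cover it. I believe this reflects a slight inconsistency inherited from the cited statement (where presumably $\ell=1$) rather than an error in your scheme — indeed your sharper bound $|g(\tilde x)|\le v_{k-N\ell}\|g\|_s\le v_{k-N}\|g\|_s$ matches the claim when $k\ge N\ell$, and the remaining range $N\le k<N\ell$ is absorbed by the case (ii) estimate — but it deserves an explicit remark rather than a silent assumption. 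The other small imprecisions (writing $\beta^{N-j}$ where $\beta^{(N-j)\ell}$ is the true contraction factor, and the geometric-series constant being $\beta(1-\beta)^{-1}$ rather than $(1-\beta)^{-1}$) all err on the safe side and don't affect validity.
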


The above proposition holds true for the periodic point $\bar y_0=\bar v_0$, and it yields a deterministic  Lasota-Yorke inequality for the operators $P_{it}^{\bar v_0,n_0}$. Using that,
the quasi-compactness of these operators follow from arguments similar to \cite[Section 3.4]{Y1} (the main key is that only the $L^1(m)$ norm  appears without a factor of the form $\rho^N$ for some $\rho\in(0,1)$).

\end{document}